\renewcommand{\ldots}{\ensuremath{\dotsc}}
\renewcommand{\theequation}{\arabic{section}.\arabic{equation}}
\newcommand{\BIGOP}[1]{\mathop{\mathchoice%
{\raise-0.22em\hbox{\huge $#1$}}%
{\raise-0.05em\hbox{\Large $#1$}}{\hbox{\large $#1$}}{#1}}}
\newcommand{\BIGboxplus}{\mathop{\mathchoice%
{\raise-0.35em\hbox{\huge $\boxplus$}}%
{\raise-0.15em\hbox{\Large $\boxplus$}}{\hbox{\large $\boxplus$}}{\boxplus}}}
\newcommand{\Rplus}{{\mathbb R}_{>0}}
\def\epsilon{\varepsilon}
\def\hat{\widehat}
\def\undertilde#1{{\baselineskip=0pt\vtop
{\hbox{$#1$}\hbox{$\scriptscriptstyle\sim$}}}{}}
\def\underdtilde#1{{\baselineskip=0pt\vtop
{\hbox{$#1$}\hbox{$\scriptscriptstyle\approx$}}}{}}
\def\epsilon{\varepsilon}
\def\hat{\widehat}
\def\b{\beta}
\def\nabq{\undertilde{\nabla}_{q}}
\def\nabqi{\undertilde{\nabla}_{q_i}}
\def\nabx{\undertilde{\nabla}_{x}}
\def\nabr1{\undertilde{\nabla}_{r_1}}
\def\nabr2{\undertilde{\nabla}_{r_2}}
\def\delx{\Delta_{x}}
\def\at{\undertilde{a}}
\def\ut{\undertilde{u}}
\def\utLDt{\ut_L^{\Delta t}}
\def\utLDtm{\ut_L^{\Delta t,-}}
\def\utLDtp{\ut_L^{\Delta t,+}}
\def\utLDtpm{\ut_L^{\Delta t(,\pm)}}
\def\vt{\undertilde{v}}
\def\wt{\undertilde{w}}
\def\xt{\undertilde{x}}
\def\qt{\undertilde{q}}
\def\ft{\undertilde{f}}
\def\gt{\undertilde{g}}
\def\nt{\undertilde{n}}
\def\yt{\undertilde{y}}
\def\Bt{\undertilde{B}}
\def\Ct{\undertilde{C}}
\def\Dt{\undertilde{D}}
\def\Ft{\undertilde{F}}
\def\Ht{\undertilde{H}}
\def\Lt{\undertilde{L}}
\def\Ltdiv{\Lt_{\rm div}}
\def\St{\undertilde{S}}
\def\Vt{\undertilde{V}}
\def\Wt{\undertilde{W}}
\def\dq{\,{\rm d}\undertilde{q}}
\def\dx{\,{\rm d}\undertilde{x}}
\def\dt{\,{\rm d}t}
\def\zerot{\undertilde{0}}
\def\zerott{\underdtilde{0}}
\def\tautt{\underdtilde{\tau}}
\def\xitt{\underdtilde{\xi}}
\def\utOB{\ut_{\rm OB}}
\def\hpsi{\widehat{\psi}}
\def\hpsiOB{\widehat{\psi}_{\rm OB}}
\def\hpsi{\widehat{\psi}}
\def\hpsiLDt{\widehat{\psi}_{L}^{\Delta t}}
\def\hpsiLDtp{\widehat{\psi}_{L}^{\Delta t,+}}
\def\hpsiLDtm{\widehat{\psi}_{L}^{\Delta t,-}}
\def\hpsiLDtpm{\widehat{\psi}_{L}^{\Delta t,\pm}}
\def\utst{\ut_\star}
\def\psist{\psi_\star}
\def\hpsist{\widehat{\psi}_\star}
\def\hpsistL{\widehat{\psi}_{\star,L}}
\def\hpsistLd{\widehat{\psi}_{\star,L,\delta}}
\def\hpsistLDt{\widehat{\psi}_{\star,L}^{\Delta t}}
\def\hpsistLDtp{\widehat{\psi}_{\star,L}^{\Delta t,+}}
\def\hpsistLDtm{\widehat{\psi}_{\star,L}^{\Delta t,-}}
\def\sigOB{\sigtt_{\rm OB}}
\def\Btt{\underdtilde{B}}
\def\Ctt{\underdtilde{C}}
\def\Dtt{\underdtilde{D}}
\def\Htt{\underdtilde{H}}
\def\Itt{\underdtilde{I}}
\def\Ltt{\underdtilde{L}}
\def\Wtt{\underdtilde{W}}
\def\nabxtt{\underdtilde{\nabla}_{x}\,}
\def\vnabtt{\underdtilde{\nabla}_{x}\,\vt}
\def\sigtt{\underdtilde{\sigma}}
\def\zetatt{\underdtilde{\zeta}}
\def\ztt{\underdtilde{z}}
\def\ptut{\frac{\partial \ut}{\partial t}}
\def\dd {{\,\rm d}}
\def\Ttt{\underdtilde{T}}
\def\ocirc#1{\ifmmode\setbox0=\hbox{$#1$}\dimen0=\ht0
    \advance\dimen0 by1pt\rlap{\hbox to\wd0{\hss\raise\dimen0
    \hbox{\hskip.2em$\scriptscriptstyle\circ$}\hss}}#1\else
    {\accent"17 #1}\fi}
\newcommand{\bet}{\noalign{\vskip6pt plus 3pt minus 1pt}}
\newcounter{ind}
\def\eqlabstart{%
 \setcounter{ind}{\value{equation}}\addtocounter{ind}{1}%
 \setcounter{equation}{0}%
 \renewcommand{\theequation}{\arabic{section}.\arabic{ind}\alph{equation}}%
}
\def\eqlabend{%
 \renewcommand{\theequation}{\arabic{section}.\arabic{equation}}%
 \setcounter{equation}{\value{ind}}%
}
\newtheorem{definition}{Definition}[section]
\newtheorem{lemma}{Lemma}[section]
\newtheorem{theorem}{Theorem}[section]
\newtheorem{remark}{Remark}[section]
\newenvironment{proof}[1][Proof]{\begin{trivlist}
\item[\hskip \labelsep {\bfseries #1}]}{ $\Box$\end{trivlist}}
\newcounter{appendix}\renewcommand\appendix{\par
        \refstepcounter{appendix}
        \setcounter{section}{0}%
        \setcounter{theorem}{0}
        \setcounter{equation}{0}
\renewcommand\thesection{\appendixname\ \Alph{section}}
\renewcommand\thesubsection{\Alph{section}.\arabic{subsection}}%
\renewcommand\theequation{\Alph{section}.\arabic{equation}}}%
\begin{document}

\title{\textbf{\large
EXISTENCE OF GLOBAL WEAK SOLUTIONS TO \\[3mm]
THE KINETIC HOOKEAN DUMBBELL MODEL FOR \\
INCOMPRESSIBLE DILUTE POLYMERIC FLUIDS
%IN TWO DIMENSIONS
}}

\author{\textit{\normalsize John W. Barrett}\\
{\normalsize \textit{Department of Mathematics, Imperial College London, London SW7 2AZ, UK}}\\
{\small \texttt{j.barrett@imperial.ac.uk}}
\\
~\\
\textit{\normalsize Endre S\"uli}\\
{\normalsize \textit{Mathematical Institute, University of Oxford, Oxford OX1 3LB, UK}}\\
{\small \texttt{endre.suli@maths.ox.ac.uk}}}

\date{~}
\maketitle

\begin{abstract}
We explore the existence of global weak solutions to the Hookean dumbbell model,
a system of nonlinear partial differential equations that arises from the kinetic theory of
dilute polymers, involving the unsteady incompressible Navier--Stokes equations in a bounded domain in two or
three space dimensions, coupled to a Fokker--Planck-type parabolic equation. We prove the existence
of large-data global weak solutions in the case of two space dimensions. Indirectly,
our proof also rigorously demonstrates that, in two space dimensions at least, the Oldroyd-B model is the macroscopic closure
of the Hookean dumbbell model. In three space dimensions, we prove the existence of large-data global weak subsolutions to the model, which are weak solutions with a defect measure, where the defect measure appearing in the Navier--Stokes momentum equation is the divergence of a symmetric positive semidefinite matrix-valued Radon measure.

\medskip

\noindent
\textit{Keywords:} Kinetic polymer models, Hookean dumbbell model,
Navier--Stokes--Fokker--Planck system, dilute polymer, Oldroyd-B model
\end{abstract}

\section{Introduction}
\label{sec:1}
The aim of this paper is to explore the existence of global weak solutions to the Hookean dumbbell model,
--- a system of nonlinear partial differential equations involving the coupling of the time-dependent
incompressible Navier--Stokes equations to a parabolic Fokker--Planck type equation, --- which
arises from the kinetic theory of dilute polymeric fluids.
In this model the solvent is assumed to be an isothermal, viscous, incompressible,
Newtonian fluid in a bounded open Lipschitz domain $\Omega \subset \mathbb{R}^d$, $d=2$ or $3$. We will admit
both $d=2$ and $d=3$ for the vast majority of the paper, even though our main result concerning the existence
of large-data global weak solutions is, ultimately, restricted to the case of $d=2$.
In the model the equation for conservation of linear momentum in the Navier--Stokes system
involves, as a source term, an elastic {\em extra-stress} tensor $\tautt$ (i.e.,\ the polymeric part of the
Cauchy stress tensor), to be defined below in terms of the solution of a coupled Fokker--Planck type equation.

Given $T \in \mathbb{R}_{>0}$, we seek a nondimensional velocity field  $\ut\,:\,(\xt,t)\in
\overline\Omega \times [0,T] \mapsto \ut(\xt,t) \in {\mathbb R}^d$ (which, for simplicity,
we shall require to satisfy a no-slip boundary condition on $\partial \Omega \times (0,T]$)
and a nondimensional pressure $p\,:\, (\xt,t) \in \Omega \times (0,T] \mapsto p(\xt,t) \in {\mathbb R}$,
such that
\begin{subequations}
\begin{alignat}{2}
\ptut + (\ut \cdot \nabx)\, \ut - \nu \,\delx \ut + \nabx p
&= \ft + \nabx \cdot \tautt \qquad &&\mbox{in } \Omega \times (0,T],\label{ns1a}\\
\nabx \cdot \ut &= 0        \qquad &&\mbox{in } \Omega \times (0,T],\label{ns2a}\\
%\bet
\ut &= \zerot               \qquad &&\mbox{on } \partial \Omega \times (0,T],\label{ns3a}\\
%\bet
\ut(\xt,0)&=\ut_{0}(\xt)    \qquad &&\forall \xt \in \Omega.\label{ns4a}
\end{alignat}
\end{subequations}
In these equations $\nu\in \mathbb{R}_{>0}$ is the reciprocal of the Reynolds number
and $f$ is the nondimensional density of body forces.

The simplest kinetic model for a dilute polymeric fluid is the {\em dumbbell model}, where long polymer chains suspended
in the viscous incompressible Newtonian solvent are assumed not to interact with each other, and each chain
is idealized as a pair of massless beads, connected with an elastic spring.
The associated elastic extra-stress tensor
$\tautt$ is defined by the \textit{Kramers expression} in terms of $\psi$,
the probability density function of the (random) conformation vector
$\qt$ %:= (\qt_1^{\rm T},\dots, \qt_K^{\rm T})^{\rm T}
of the spring (cf. (\ref{tau1}) below), and the
domain $D$ of admissible conformation vectors $\qt$ is either the
whole of $\mathbb{R}^d$ or a bounded open $d$-dimensional ball
centred at the origin $\zerot \in \mathbb{R}^d$.
The evolution of $\psi$ from a given nonnegative initial
datum, $\psi_0$, is governed by a second-order parabolic partial differential equation,
the Fokker--Planck equation, whose transport coefficients depend on the velocity field $\ut$.

In \cite{BS2011-fene} we were concerned with models where $D$ %, $i=1,\dots,K$,
is a bounded open ball in $\mathbb{R}^d$, $d=2,3$, resulting in, what are known as,
finitely extensible nonlinear (FENE) models. Here, as in
\cite{BS2010-hookean}, we shall be concerned with
the technically more subtle case
when $D = \mathbb{R}^d$, %$i=1,\dots,K$;
i.e., the spring between the beads is allowed to have an arbitrarily large extension.
In fact, in both \cite{BS2011-fene} and \cite{BS2010-hookean} we considered the more general
case of polymer models involving linear chains of $K+1$ beads coupled with $K$ springs, where $K \geq 1$.
Much of the analysis below carries across to $K>1$, but as our final result is restricted to $K=1$
we shall confine ourselves to this case from the start for simplicity.
Although springs with arbitrarily large extension are physically unrealistic,
thanks to their simplicity models of this kind are, nevertheless, frequently used in practice.
The elastic spring-force $\Ft\,:\, D=\mathbb{R}^d \rightarrow \mathbb{R}^d$ of the spring
is then defined by
\begin{equation}\label{eqF}
\Ft(\qt) = U'(\textstyle{\frac{1}{2}}|\qt|^2)\,\qt,
\end{equation}
where $U \!\in\! W^{2,\infty}_{\rm loc}([0,\infty);\mathbb {R}_{\geq 0})$,
$U(0)=0$ and $U$ is monotonic nondecreasing and unbounded on $[0,\infty)$.
We note that, among all such potentials $U$, only the Hookean potential
$U(s)=s$, with associated spring force
${\Ft}({\qt}) = {\qt}$, ${\qt} \in {D}=\mathbb{R}^d$,
yields, formally at least, closure to a macroscopic model, the Oldroyd-B model (cf. \cite{Oldroyd}), which
we shall state below.

In our paper \cite{BS2010-hookean}, we further assumed that
there exist constants $c_{j}>0$,
$j=1, 2, 3, 4$, %and $\gamma_i > 1$
such that the (normalized) Maxwellian $M$, is defined on $D = \mathbb{R}^d$
by
%
%\begin{align}
\begin{equation}
M(\qt) := \frac{1}{\mathcal{Z}}{\rm e}^{-U(\frac{1}{2}|\qt|^2)},
\quad \mbox{where}\quad \mathcal{Z}:=
{ \displaystyle \int_{D} {\rm e}^{-U(\frac{1}{2}|\qt|^2)} \dq},
\quad \mbox{yielding}\quad  \int_D M(\qt) \dq =1,
\label{MN}
\end{equation}
%\end{align}
%
and the associated spring potential $U$ satisfies,
for $\vartheta>1$,
\eqlabstart
\begin{alignat}{2}
U(\textstyle{\frac{1}{2}}|\qt|^2) & = c_{1} \,(\textstyle{\frac{1}{2}}|\qt|^2)^{\vartheta}
\qquad &&\mbox{as } |\qt| \rightarrow \infty,
 \label{growth1}\\
U'(\textstyle{\frac{1}{2}}|\qt|^2) & \leq c_{2} + c_{3} \,(\textstyle{\frac{1}{2}}|\qt|^2)
^{\vartheta-1}
\qquad &&\forall \qt \in D,
 \label{growth2}\\
 \hspace*{-1.55in}
\mbox{and hence}\hspace{2in}&&\nonumber \\
 M(\qt)
& =
c_{4}\,%[\mbox{dist}(\qt_i, \,\partial D_i)]^{\gamma_i}
{\rm e}^{-c_{1}(\frac{1}{2}|\qt|^2)^{\vartheta}}
\qquad &&\mbox{as } |\qt| \rightarrow \infty.
 \label{growth3}
\end{alignat}
\eqlabend

Therefore, in \cite{BS2010-hookean} we were unable to cover the case of the Hookean
%and Rouse
model, which corresponds to the choice $\vartheta =1$ in (\ref{growth1}--c).
%for $K=1$ and $K >1$, respectively.
This was due to the failure of a compactness argument,
used in passing to a limit on the extra-stress tensor
in the existence proof there, which required that the mapping
$s\mapsto U(s)$ %, $i=1,\ldots,K$,
had superlinear growth at infinity;
see Section \ref{remexist} below for further details.
We note that apart from this step, all the other results
in \cite{BS2010-hookean} remain valid with $\vartheta \geq 1$
in (\ref{growth1}--c).
Thus in \cite{BS2010-hookean}
we could deal
%,for example in the case $K=1$,
with a spring potential of the form
\begin{align}
U(s) = \left\{ \begin{array}{ll}
s \qquad &\mbox{for } s \in [0,s_\infty],\\
\frac{s_\infty}{\vartheta} \left[ \left(\frac{s}{s_\infty}\right)^{\vartheta} + (\vartheta-1) \right]
\qquad & \mbox{for } s \geq s_\infty
\end{array}
\right.
\label{eqHM}
\end{align}
for any $s_\infty >0$ and $\vartheta > 1$,
which approximates the Hookean potential $U(s)=s$.
%Similarly, in the case of $K\geq 1$ we could deal with spring potentials
%$U_i$, $i=1,\dots,K$, defined analogously, for any $s_{\infty,i}>0$ and $\vartheta>1$.
%Our assumptions in \cite{BS2010-hookean} were (\ref{growth1}--c)
%restricted to $\vartheta_i=\vartheta>1$, $i=1,\cdots,K$.
Hence our use of the terminology {\em Hookean-type model} throughout the paper
\cite{BS2010-hookean} (instead
of {\em Hookean model}, which would have corresponded to taking $\vartheta=1$ in the above).

In this paper, we overcome the difficulties encountered with the compactness
result used on the extra-stress term
in \cite{BS2010-hookean}
and so we are able to address %with the more general assumptions (\ref{growth1}--c), and hence
the Hookean model.
This is achieved by relating the Hookean model to its macroscopic closure,
the Oldroyd-B model,
and using an existence result, and establishing regularity results, for the latter.
%We note that it is only Hookean potentials $U_i(s)=s$, $i=1,\ldots,K$,
%that yield, formally at
%least, closure to a macro model. In the case $K=1$ one obtains the Oldroyd-B model.
We shall assume henceforth, in place of (\ref{growth1}--c), that
\begin{align}
U(\textstyle\frac{1}{2}|\qt|^2) = %c_i\,(
\frac{1}{2}|\qt|^2. %),
%\qquad i=1,\ldots,K.
\label{U}
\end{align}
%where $c_i>0$.
%The Maxwellian in these models is defined by
%\begin{align}
%M(\qt) := %\prod_{i=1}^K M_i(\qt_i)
%\quad \forall \qt:=(\qt_1^{\rm T},\ldots,\qt_K^{\rm T})^{\rm T} \in D
%:= D_1 \times  \cdots \times D_K
%\quad \Rightarrow \quad \int_D M(\qt) \dq = 1.
%\label{MN}
%\end{align}
By recalling (\ref{MN}), we observe that the Maxwellian satisfies
\begin{equation}
M(\qt)\,\nabq [M(\qt)]^{-1} = - [M(\qt)]^{-1}\,\nabq M(\qt) =
\nabq U(\textstyle{\frac{1}{2}}|\qt|^2)
=U'(\textstyle{\frac{1}{2}}|\qt|^2)\,\qt
=%c_i\,
\qt. \label{eqM}
\end{equation}
%
%Since
%$[U_i(\textstyle{\frac{1}{2}}|\qt_i|^2)]^2 = (-\log M_i(\qt_i) + {\rm Const}.)^2$,
It follows from (\ref{MN}) and (\ref{U}) that, for any $r \in [0,\infty)$,
\begin{equation}\label{additional-1}
\int_{D} M(\qt)\,
|\qt|^r
%[U(\textstyle{\frac{1}{2}}|\qt|^2)]^r
%+ [U'(\textstyle{\frac{1}{2}}|\qt|^2)]^r\right]
\, \dd
\qt
< \infty. %\qquad i=1, \dots, K.
\end{equation}

The governing equations of the Hookean dumbbell model are
(\ref{ns1a}--d), where the extra-stress tensor $\tautt$, dependent
on the probability density function
$\psi :(\xt,\qt,t) \in \Omega \times D \times [0,T] \mapsto \psi(\xt,\qt,t) \in
{\mathbb R}_{\geq 0}$,
is defined by the \textit{Kramers expression}:
\begin{equation}\label{tau1}
\tautt(\psi) :=  k \left( \sigtt(\psi) - \rho(\psi)\, \Itt \right)\!,
\end{equation}
where
the dimensionless constant $k>0$ is a constant multiple of
the product of the Boltzmann constant $k_B$ and the absolute temperature $\mathcal{T}$,
$\Itt$ is the unit $d \times d$ tensor,
\eqlabstart
\begin{equation}
\sigtt(\psi)(\xt,t) :=
\int_{D} \psi(\xt,\qt,t)\, \qt\,
\qt^{\rm T}\, %\Lambda_i(\qt)\,
U'\left(\textstyle \frac{1}{2}|\qt|^2\right)
{\dd} \qt = %c_i
\int_{D} \psi(\xt,\qt,t)\, \qt\,
\qt^{\rm T}\, %\Lambda_i(\qt)\,
{\dd} \qt %\qquad i=1,\ldots,K,
\label{C1}
\end{equation}
and the density of polymer chains located at $\xt$ at time $t$ is given by
\begin{equation}\label{rho1}
\rho(\psi)(\xt,t) := \int_{D} \psi(\xt, \qt,t)
{\dd}\qt.
\end{equation}
\eqlabend
The probability density function $\psi$ is a solution, in $\Omega \times D \times (0,T]$, of the
Fokker--Planck equation
\begin{align}
\label{fp0}
%&
\hspace{-3mm}\frac{\partial \psi}{\partial t} + (\ut \cdot\nabx) \psi +
%\sum_{i=1}^K
\nabq
\cdot \left((\nabxtt \ut) \, \,\qt\, %\Xi(\qt)
\psi \right)
=
\epsilon\,\Delta_x\,\psi +
\frac{1}{4 \,\lambda}\,
%\sum_{i,\,j=1}^K %\sum_{j=1}^K
%A_{ij}\,
\nabq \cdot \left(
M\,\nabq\! \left(\frac{\psi}{M}\right)\right),
\end{align}
where, for $\vt=\vt(\xt,t) \in \mathbb{R}^d$, $(\vnabtt)(\xt,t) \in {\mathbb
R}^{d \times d}$ and $\{\vnabtt\}_{ij} := \textstyle
\frac{\partial v_i}{\partial x_j}$.
In \eqref{fp0}, $\varepsilon>0$ is the dimensionless centre-of-mass diffusion coefficient
%defined
%as $\varepsilon := (\ell_0/L_0)^2/(8\lambda)$ with
%$\ell_0:=\sqrt{k_B \mathcal{T}/H}$ signifying the characteristic microscopic length-scale
%and $\lambda :=(\zeta/4H)(U_0/L_0)$,
%where $\zeta>0$ is a friction coefficient and $H>0$ is a spring-constant.
and the dimensionless parameter $\lambda \in \Rplus$ is the Deborah number;
%(and usually denoted by {\sf De}), characterizes
%the elastic relaxation property of the fluid.
%and
%$\Att=(A_{ij})_{i,j=1}^K$ is a symmetric positive definite
%matrix, the \textit{Rouse matrix}, or connectivity matrix;
%for example, $\Att = {\tt tridiag}\left[-1, 2, -1\right]$ in the case
%of a linear chain; see, for example, Nitta \cite{Nitta}.
%Concerning these scalings and notational
%conventions, we remark that the factor $1/(4\lambda)$ in equation \eqref{fp0} above
%appears as a factor $1/(2\lambda)$ in the Fokker--Planck equation in our earlier papers
we refer the reader to \cite{BS2011-fene,BS2010-hookean,BS2010,BS2011-feafene} for further details.

Finally, we impose the following decay/boundary and initial conditions on $\psi$:
\begin{subequations}
\begin{alignat}{2}
&\left|M \left[ \frac{1}{4\,\lambda} %\sum_{j=1}^K A_{ij}\,
\,\nabq \!\left(\frac{\psi}{M}\right)
- (\nabxtt \ut) \,\,\qt\,%\Xi(\qt)\,
\frac{\psi}{M}
\right] \right|%\cdot \frac{\qt_i}{|\qt_i|}
\rightarrow 0\quad \mbox{as} \quad |\qt| \rightarrow \infty %&&~~\nonumber\\
\qquad  %\hspace{-5.1cm}
&&\mbox{on }
\Omega %\times \left(\bigtimes_{j=1,\, j \neq i}^K D_j\right)
\times (0,T],
%\mbox{~~for $i=1,\dots, K$,}
\label{eqpsi3a}\\
&\epsilon\,\nabx \psi\,\cdot\,\nt =0 \qquad &&\mbox{on }
\partial \Omega \times D\times (0,T],\label{eqpsi3b}\\
&\psi(\cdot,\cdot,0)=\psi_{0}(\cdot,\cdot) \geq 0
\qquad&&\mbox{on $\Omega\times D$},\label{eqpsi3c}
\end{alignat}
\end{subequations}
where %$\qt_i$ is normal to $\partial D_i$, as $D_i$
%is a bounded ball centred at the origin, and
$\nt$ is the unit outward normal vector to $\partial \Omega$.

Here $\psi_0$ is a nonnegative function
defined on $\Omega\times D$,
with $\int_{D} \psi_0(\xt,\qt) \dd \qt = 1$ for a.e. $\xt  \in \Omega$.
The boundary conditions for $\psi$
on $\partial\Omega\times D\times(0,T]$ and the decay conditions  for $\psi$
on $\Omega
\times (0,T]$ as $|\qt| \rightarrow \infty$
have been chosen so as to ensure that
\begin{align}
\int_{D}\psi(\xt,\qt,t) \dd \qt  =
\int_{D} \psi(\xt,\qt,0) \dd \qt  \qquad \forall (\xt,t) \in \Omega \times (0,T].
\label{intDcon}
\end{align}
%Henceforth, we shall frequently write $\hpsi:= \psi/M$, $\hpsi_0 := \psi_0/M$.

%We shall also assume that $\psi_0$ has finite relative entropy with
%respect to the Maxwellian $M$; i.e., $\int_{\Omega \times D} \psi_0(\xt,\qt)
%\log (\psi_0(\xt,\qt)/M(\qt)) \dq \dx < \infty$.

%
\begin{definition} \label{Pedef}
The set of equations and hypotheses
{\rm (\ref{ns1a}--d)}, \eqref{MN}, \eqref{U}
and \eqref{tau1}--{\rm (\ref{eqpsi3a}--c)} will be referred to henceforth as model
$({\rm P})$, or as the Hookean dumbbell
%$(K=1)$ or Rouse
%bead-spring chain
%$(K>1)$
model
(with centre-of-mass diffusion).
\end{definition}

Next, for any $\at \in {\mathbb R}^d$, we note the following results:
%for $i,\,j=1, \ldots,K$
%\eqlabstart
\begin{align}
&(\at \cdot \nabq) \, \qt \,\qt^{\rm T} = \at \,\qt^{\rm T}
%\qquad
%(\at \cdot \nabqj) \, \qt_i \,\qt_j^{\rm T} =
+\qt \,\at^{\rm T}, \label{adel} %\\
%\qquad \mbox{and} \qquad
%&\sum_{m,\,n=1}^K %\sum_{s=1}^K
\quad
%A_{mn} \,
%\nabq \cdot \left( \nabq
\Delta_q\,[ \qt\,\qt^{\rm T} ]   = %(A_{ij} + A_{ji})\, \Itt= 2\,A_{ij}\,
2\,\Itt,
%\qquad i,\,j=1,\ldots,K,
%\label{Delrs}
%\\&
\quad(\at \cdot \nabq) \,|\qt|^2 = 2 \,\at \cdot \qt
\quad \mbox{and}\quad
%\sum_{m,\,n=1}^K
%A_{mn} \,\nabqm \cdot \left( \nabqn
\Delta_q\,|\qt|^2 %\right)
= 2\,d.%\sum_{m=1}^K A_{mm},
%\label{modqt2}
\end{align}
%\eqlabend
%where $\at \in {\mathbb R}^d$. %and $\Att \in \mathbb{R}^{d \times d}$ is the
%symmetric Rouse matrix.

Multiplying (\ref{fp0}) by $\qt \,\qt^{\rm T}$, integrating over $D$,
performing integration by parts (assuming that $\psi$ and $\nabq \psi$ decay to zero,
sufficiently fast as $|\qt| \rightarrow \infty$) %, $i,\,j = 1,\ldots,K$)
and noting (\ref{adel}),
and, similarly, integrating (\ref{fp0}) over $D$ and noting (\ref{eqpsi3a}),
yields formally that
$\sigtt(\psi)(\xt,t) \in {\mathbb R}^{d \times d}$
and $\rho(\psi)(\xt,t) \in {\mathbb R}$
satisfy
\eqlabstart
\begin{align}
&\frac{\partial \sigtt(\psi)}{\partial t} + (\ut \cdot\nabx) \sigtt(\psi)
- \left[ (\nabxtt \ut) \, \sigtt(\psi) + \sigtt(\psi) \, (\nabxtt \ut)^{\rm T}
\right] - {\epsilon}\,\Delta_x\,\sigtt(\psi) =
\frac{1}{2 \,\lambda}
\left[\rho(\psi)\,\Itt -\sigtt(\psi) \right]
\nonumber \\
&\hspace{4.5in} \mbox{in } \Omega \times (0,T],
\label{sigijeq}
\\
&\frac{\partial \rho(\psi)}{\partial t} + (\ut \cdot\nabx) \rho(\psi)
- \epsilon\,\Delta_x\,\rho(\psi)=0
\hspace{2.2in} \mbox{in } \Omega \times (0,T],
\label{rhoeq}
\end{align}
subject to the boundary and initial conditions
\begin{alignat}{2}
&\epsilon\,\nabx \sigtt(\psi)\,\cdot\,\nt =\zerott \quad\mbox{on }
\partial \Omega \times (0,T], \qquad
\sigtt(\psi)(\cdot,0)&&=\sigtt(\psi_{0})(\cdot)
\quad\mbox{on $\Omega$},
\nonumber \\
&\epsilon\,\nabx \rho(\psi)\,\cdot\,\nt =0 \quad\mbox{on }
\partial \Omega \times (0,T], \qquad \;  \; \,
\rho(\psi)(\cdot,0)&&=\rho(\psi_{0})(\cdot)
\quad\mbox{on $\Omega$}.
\label{eqpsig3c}
\end{alignat}
\eqlabend

As it is assumed that $\int_D \psi_0(\xt,\qt) \,\dq=1$ for a.e.\ $\xt \in \Omega$,
it follows that $\rho(\psi)\equiv 1$ is the unique solution of (\ref{rhoeq},c).

\begin{definition} \label{Qedef}
The collection of equations
{\rm (\ref{ns1a}--d)}, \eqref{tau1} %with $C_i(\cdot) \equiv \sigtt_{ii}(\cdot)$,
%$i=1,\ldots,K$,
and  {\rm (\ref{sigijeq}--c)}
will be referred to throughout the paper as model
$({\rm Q})$, or as the Oldroyd-B model (with stress-diffusion).
\end{definition}

A remark is in order concerning the evolution equation \eqref{sigijeq} for the extra stress tensor $\sigtt$.

\begin{remark}
By suppressing in our notation the dependence of $\sigtt$ and $\rho$ on $\psi$ and setting $\varepsilon=0$, equation \eqref{sigijeq} becomes
\begin{align}\label{eq:OB-nodiffusion}
&\frac{\partial \sigtt}{\partial t} + (\ut \cdot\nabx) \sigtt
- \left[ (\nabxtt \ut) \, \sigtt + \sigtt \, (\nabxtt \ut)^{\rm T}
\right] = \frac{1}{2 \,\lambda}
\big(\rho \,\Itt -\sigtt \big).
\end{align}
This is precisely the classical Oldroyd-B evolution equation for the elastic extra-stress, with our factor $1/2\lambda$ usually replaced by $1/\lambda$,
which is easily derived from equation (59) in Oldroyd's paper \cite{Oldroyd}. Indeed, by interpreting the convective derivative $\frac{\mathfrak{d}}{\mathfrak{d}t}$ in equation (59) in \cite{Oldroyd} as the upper convected derivative
\[ \overset{\tiny \nabla}{\cdot}\qquad \mbox{defined by}\qquad \overset{\tiny \nabla}{\sigtt}:= \frac{\partial \sigtt}{\partial t} + (\ut \cdot\nabx) \sigtt
- \left[ (\nabxtt \ut) \, \sigtt + \sigtt \, (\nabxtt \ut)^{\rm T}
\right],\]
and by additively splitting the (total) Cauchy stress tensor into a part $\Ttt$,
to be defined below, related to the
distortion (deformation at constant volume), and an isotropic tensor that is a scalar multiple of
$\Itt$, as in equation (52) in \cite{Oldroyd}, equation (59) in \cite{Oldroyd} states that
\[ \Ttt + \lambda\,\overset{\tiny \nabla}{\Ttt} = 2\mu\,(\Dtt + \alpha \,\overset{\tiny \nabla}{\Dtt}),\qquad \mbox{where}\qquad \Dtt = \Dtt(\ut):=\frac{1}{2}(\nabxtt \ut + (\nabxtt \ut)^{\rm T})\]
is the symmetric velocity gradient,
$\mu=\mu_s + \mu_p$ is the sum of the solvent viscosity $\mu_s>0$ and the polymeric viscosity $\mu_p>0$; and
$\lambda>0$, the relaxation time, and $\alpha>0$, the retardation time, are two constants
with the dimension of time. By splitting the tensor $\Ttt$ additively into its solvent part $\Ttt_s$ and
polymeric part $\Ttt_p$ as
$\Ttt = \Ttt_s + \Ttt_p$, where $\Ttt_s:= 2\mu_s \,\Dtt$, and setting $\alpha:=\lambda\mu_s/(\mu_p + \mu_s)$, it follows that
\[\Ttt_p + \lambda \,\overset{\tiny \nabla}{\Ttt_p} = 2\mu_p \,\Dtt.\]
As $\overset{\tiny \nabla}{\Itt} = -2\Dtt$, we then have that
$\Ttt_p + \lambda\,\overset{\tiny \nabla}{\Ttt_p}  = -\mu_p  \,\overset{\tiny \nabla}{\Itt}$.
Thus, by defining $ \sigtt  :=   \frac{\rho \lambda}{\mu_p} \,\Ttt_p + \rho \,\Itt$, where
$\rho$ is the solution of  $\frac{\partial \rho}{\partial t} + (\ut \cdot \nabx) \rho = 0$
(i.e. \eqref{rhoeq} with $\varepsilon=0$) subject to a given initial condition $\rho(\xt,0)=\rho_0(\xt)$,
we deduce that
\[ \overset{\tiny \nabla}{\sigtt}   =   \frac{1}{\lambda} \,(\rho \,\Itt - \sigtt),\]
which is precisely \eqref{eq:OB-nodiffusion} upon replacing Oldroyd's $\lambda$ by our $2\lambda$;
in particular, the choice of $\rho_0(\xt) \equiv 1$ yields $\rho(\xt,t)\equiv 1$ for all $(\xt,t)
\in \Omega \times (0,T]$, in agreement with the statement in the sentence preceding Definition
\ref{Qedef}. In summary then, $\Ttt:=\Ttt_s + \Ttt_p$, with $\Ttt_s:= 2\mu_s \,\Dtt$ and $\Ttt_p:=
\frac{\mu_p}{\rho\lambda}(\sigtt - \rho \Itt)$, where $\sigtt$ and $\rho$ are solutions to
the partial differential equations appearing in the previous sentence, with $\rho(x,t)\equiv 1$ being
an admissible special case. For an alternative, thermodynamically consistent, derivation of the
Oldroyd-B model we refer to \cite{MRT}.

\end{remark}

We continue with
a brief literature survey. In our paper \cite{BS2010-hookean} we proved the existence and equilibration of large-data global weak solutions to general noncorotational Hookean-type bead-spring chain models
with stress-diffusion in both two and three space dimensions, under the assumption that the spring potentials appearing in the model exhibit superlinear growth at infinity. We were, however, unable to cover the classical Hookean
dumbbell model, where the spring potential has
linear growth at infinity. Our objective here is to close this gap, in the case of two space dimensions at least.
The relevance of the Hookean dumbbell model is that it has a formal macroscopic closure: the Oldroyd-B model. Lions and Masmoudi \cite{LM}
proved global existence of large-data weak solutions to a corotational Oldroyd-B model (i.e. one where the gradient of the velocity field in
the stress evolution equation is replaced by the skew-symmetric part of the velocity gradient) without stress-diffusion,
in both two and three space dimensions. %Their existence proof relied upon showing the propagation in time of the compactness of solutions.
Returning to the general noncorotational case, Hu and Lin \cite{HuLin2016} proved the global existence of weak solutions to incompressible viscoelastic flows, including the Oldroyd-B model, without stress-diffusion, in two spatial dimensions, under the assumption that the initial deformation gradient is close to the identity matrix in $\Ltt^2(\Omega) \cap \Ltt^\infty(\Omega)$
and the initial velocity is small in $\Lt^2(\Omega)$ and bounded in $\Lt^p(\Omega)$ for some $p > 2$.
In \cite{barrett-boyaval-09}, Barrett \& Boyaval proved the existence of large-data global weak solutions to the Oldroyd-B model, in the presence of stress-diffusion, again in two spatial dimensions. Constantin and Kliegl \cite{CK12}  subsequently showed the global regularity of solutions to the Oldroyd-B model with stress-diffusion in two space
dimensions. Motivated by \cite{barrett-boyaval-09} and maximal regularity results for the unsteady Stokes and Navier--Stokes systems (cf. \cite{GigaSohr91}, \cite{Sol01}, \cite{FarwigGigaHsu16}, for example, and references therein), we revisit the classical
Hookean dumbbell model and prove, in the general noncorotational case, in two space dimensions, the existence of large-data global weak solutions. Indirectly, our argument also rigorously proves that, in two space dimensions at least, the Oldroyd-B model with stress-diffusion is the macroscopic closure of the Hookean dumbbell model with centre-of-mass diffusion.
%;approximate macroscopic closures of micro-macro models for polymeric fluids are discussed in \cite{DuLiuYu05}, %\cite{YuDuLiu05}, \cite{HyonCarrilloDuLiu08}, and \cite{HyonDuLiu08}.
In the case of three dimensions the question of existence of large-data global weak solutions to both the general noncorotational Hookean dumbbell model
and the noncorotational Oldroyd-B model with stress-diffusion remains open, although we will show here the existence of large-data global weak
\textit{subsolutions} to the general noncorotational Hookean dumbbell model for $d=3$.

The paper is structured as follows. In the next section we introduce
our notation and useful results, such as compactness theorems.
Henceforth, we shall frequently write
\[\hpsi:= \psi/M, \qquad \hpsi_0 := \psi_0/M.\]
In Section \ref{OB}, we recall from \cite{barrett-boyaval-09}
a global-in-time existence result for the Oldroyd-B model when $d=2$.
We then establish some regularity results for this solution, $(\utOB, \sigOB)$,
and a uniqueness result
for the stress equation.
In Section \ref{sec:FP}, we prove the existence of a global-in-time weak solution,
$\hpsist$,
to the Fokker--Planck equation, for a given velocity field
$\utst$, via regularization and time discretization. In addition,
we show that $\sigtt(M\,\hpsist)$ solves the corresponding
Oldroyd-B stress equation with given velocity field $\utst$.
In Section \ref{HDM} we combine the results of the previous
two sections to establish the the existence of
a global-in-time weak solution, $(\utOB,\hpsiOB)$, to the Hookean dumbbell
model when $d=2$. Moreover, we show that $(\utOB,\sigtt(M\,\hpsiOB))$
solves the Oldroyd-B model. Finally in Section
\ref{remexist} we explain why we had to resort to the reasoning upon which
our existence proof is based, and why a more direct argument is only capable
of showing, for both $d=2$ and $d=3$, the existence of large-data global weak
\textit{subsolutions} (in a sense to be made precise in Section
\ref{remexist}).

%%%%%%%%%%%%%%%%%%%%%%%%%%%%%%%%%%%%%%%%%%%%%%%%%%%%%%%%

%\section{Existence of global weak solutions to the model $({\rm P}_{\varepsilon,L})$}
\section{Preliminaries} %$({\rm P}_{\varepsilon,L}^{\Delta t})$}
%a discrete-in-time problem}
\label{sec:prem}
\setcounter{equation}{0}
Let $\Omega \subset {\mathbb R}^d$, $d=2$ or 3, be a bounded open set with a
Lipschitz-continuous boundary $\partial \Omega$, and
the set  %:= D_1\times \cdots \times D_K$ of admissible
of elongation vectors $\qt %:= (\qt_1^{\rm T}, \ldots ,\qt_K^{\rm T})^{\rm T}$ in (\ref{fp0}) is
\in D  \equiv {\mathbb R}^d$.
Let
\begin{eqnarray}
&\Ht :=\{\wt \in \Lt^2(\Omega) : \nabx \cdot \wt =0, \; (\wt \cdot \nt)|_{\partial \Omega}
=0\} \quad
\mbox{and}\quad \Vt :=\{\wt \in \Ht^{1}_{0}(\Omega) : \nabx \cdot
\wt =0\},&~~~ \label{eqVt}
\end{eqnarray}
where the divergence operator $\nabx\cdot$ is to be understood in
the sense of vector-valued distributions on $\Omega$.
\begin{comment}
Let $\Vt'$ be
the dual of $\Vt$.
Let
$\St: \Vt' \rightarrow \Vt$ be such that $\St \,\vt$
is the unique solution to the Helmholtz--Stokes problem
%
\begin{eqnarray}
&\displaystyle\int_{\Omega} \St\,\vt\cdot\, \wt \dx +
\int_{\Omega}
\nabx (\St\,\vt)
: \nabx \wt \dx
= \langle \vt,\wt \rangle_V
\qquad \forall \wt \in \Vt,
\label{eqvn1}
\end{eqnarray}
%
where $ \langle \cdot,\cdot\rangle_V$ denotes the duality pairing
between $\Vt'$ and $\Vt$. We note that
%
\begin{eqnarray}
\left\langle \vt, \St\,\vt \right\rangle_V =
%\int_{\Omega}
%\left[ |\nabx [\St\, \vt]|^2 + |\St\,
%\vt|^2
%\right] \dx
\|\St\,\vt\|_{H^1(\Omega)}^2
\qquad \forall \vt \in
\Vt' \supset [\Ht^1_0(\Omega)]', \label{eqvn2}
\end{eqnarray}
%
and $\|\St \cdot\|_{H^{1}(\Omega)}$ is a norm on $\Vt'$. More generally, let
$\Vt_\sigma$ denote the closure of the set of all divergence-free $\Ct^\infty_0(\Omega)$ functions
in the norm of $\Ht^1_0(\Omega)\cap \Ht^\sigma(\Omega)$, $\sigma \geq 1$, equipped with the Hilbert
space norm, denoted by $\|\cdot\|_{V_\sigma}$, inherited from $\Ht^\sigma(\Omega)$, and let $\Vt_\sigma'$
signify the dual space of $\Vt_\sigma$, with duality pairing $\langle \cdot , \cdot \rangle_{V_\sigma}$.
As $\Omega$ is a bounded Lipschitz domain, we have that $\Vt_1 = \Vt$ (cf. Temam \cite{Temam},
Ch.\ 1, Thm.\ 1.6).
Similarly, $ \langle \cdot , \cdot \rangle_{H^1_0(\Omega)}$ will
denote the duality pairing between
\end{comment}
Let $[\Ht^1_0(\Omega)]'$ denote the dual of  $\Ht^1_0(\Omega)$.
%The norm on $(\Ht^1_0(\Omega))'$  will be that resulting from taking
%$\|\nabx \cdot\|_{L^2(\Omega)}$ to be the norm on $\Ht^1_0(\Omega)$.
We recall the following well-known
Gagliardo--Nirenberg inequality. Let $r \in [2,\infty)$ if $d=2$,
and $r \in [2,6]$ if $d=3$ and $\theta = d \,\left(\frac12-\frac
1r\right)$. Then, there is a constant $C=C(\Omega,r,d)$, %depending only  on $\Omega$, $r$ and $d$,
such that, for all $\eta \in H^{1}(\Omega)$:
\begin{equation}\label{eqinterp}
%\left(\int_{\Omega}
\|\eta\|_{L^r(\Omega)}
%\dx\right)^{\frac{1}{r}}
\leq C\,
%\left(\int_{\Omega}
\|\eta\|_{L^2(\Omega)}^{1-\theta} %^2 \dx \right)^{\frac{1-\theta}{2}}
%\,\left(\int_{\Omega}
\,\|\eta\|_{H^{1}(\Omega)}^\theta. %\dx \right)^{\frac{\theta}{2}}.
\end{equation}

Let $L^p_M(\Omega \times D)$, $p\in [1,\infty)$,
denote the Maxwellian-weighted $L^p$ space over $\Omega \times D$ with norm
$$
\| \hat \varphi\|_{L^{p}_M(\Omega\times D)} :=
\left\{ \int_{\Omega \times D} \!\!M\,
|\hat \varphi|^p \dq \dx
\right\}^{\frac{1}{p}}.
$$
Similarly, we introduce $L^p_M(D)$,
the Maxwellian-weighted $L^p$ space over $D$. On defining
%
%\begin{subequations}
%\begin{eqnarray}
%\| \varphi\|_{H^{0,1}(\Omega\times D;M)} &:=&
%\left\{ \int_{\Omega \times D} %\!\!M\, \left[
%|\varphi|^2 + \left|\nabq \varphi \right|^2\,\right] \dq \dx \right\}^{\frac{1}{2}}
%\label{H01Mnorm}
%\end{eqnarray}
%\noindent and\vspace{-5 pt}
\begin{align}
\| \hat \varphi\|_{H^{1}_M(\Omega\times D)} &:=
\left\{ \int_{\Omega \times D} \!\!M\, \left[
|\hat \varphi|^2 + \left|\nabx \hat \varphi \right|^2 + \left|\nabq \hat
\varphi \right|^2 \,\right] \dq \dx
\right\}^{\frac{1}{2}}\!\!, \label{H1Mnorm}
\end{align}
%\end{subequations}
%
we then set
%
%\begin{subequations}
%\begin{eqnarray}
%H^{0,1}(\Omega \times D;M) &:=&
%\left\{ \varphi \in L^1_{\rm loc}(\Omega\times D): \|
%\varphi\|_{H^{0,1}(\Omega\times D;M)} < \infty \right\} \label{H01M}
%\end{eqnarray}
%
%\noindent and\vspace{-5 pt}
\begin{align}
\quad \hat X \equiv H^{1}_M(\Omega \times D)
&:= \left\{ \hat \varphi \in L^1_{\rm loc}(\Omega\times D): \|
\hat \varphi\|_{H^{1}_M(\Omega\times D)} < \infty \right\}. \label{H1M}
\end{align}
%\end{subequations}
%
Similarly, we introduce $H^1_M(D)$, the Maxwellian-weighted $H^1$ space over $D$.
It is shown in Appendix A of \cite{BS2010-hookean} %this paper
that
\begin{align}
C^\infty_0(D)
\mbox{ is dense in } H^1_M(D) \quad \mbox{and hence} \quad
C^{\infty}(\overline{\Omega},C^\infty_0(D))
\mbox{ is dense in } \hat X.
\label{cal K}
\end{align}
In addition, we note that the embeddings
\begin{subequations}
\begin{align}
 H^1_M(D) &\hookrightarrow L^2_M(D) ,\label{wcomp1}\\
H^1_M(\Omega \times D) \equiv
L^2(\Omega;H^1_M(D)) \cap H^1(\Omega;L^2_M(D))
&\hookrightarrow
L^2_M(\Omega \times D) \equiv L^2(\Omega;L^2_M(D))
\label{wcomp2}
\end{align}
\end{subequations}
are compact; %if $\gamma_i \geq 1$, $i=1, \dots,  K$, in (\ref{growth1},b);
%and (\ref{growth2});
see Appendix D and Appendix F of \cite{BS2010-hookean}, respectively.

%Throughout we will assume that
%(\ref{U}) and (\ref{inidata}) hold,
%so that (\ref{additional-1}) and (\ref{wcomp1},b) hold.
Next, we note %for future reference
that (\ref{C1}) and
(\ref{additional-1}) yield, for
$\hat \varphi \in L^2_M(\Omega \times D)$ and any $r \in [0,\infty)$,
that
\begin{subequations}
\begin{align}
\label{eqCttbd}
\int_{\Omega} |\sigtt( M\,\hat \varphi)|^2\,\dx & =
\int_{\Omega} %\sum_{i=1}^d \, \sum_{j=1}^d
\left|
\int_{D} M\,\hat \varphi %\,U_i'
\,\qt\,\qt^{\rm T} %\Lambda_i
\dq \right|^2 \dx
%\nonumber
%\\
%%\bet
%&
\leq %\|\Lambda_i\|^2_{L^\infty(D)}
\left(\int_{D} M\,%(U_i')^2 \,
|\qt|^4 \,\dq\right)
\left(\int_{\Omega \times D} M\,|\hat \varphi|^2 \dq \dx\right)
\nonumber \\
%\bet
& \leq C
\,\|\hat \varphi\|^2_{L^2_M(\Omega \times D)}, \\
\left(\int_{\Omega \times D} M\,|\qt|^r\,|\hat \varphi| \dq \dx \right)^2& \leq
\left(\int_{D} M\,|\qt|^{2r}  \dq \right)
\left(\int_{\Omega \times D} M\,|\hat \varphi|^2 \dq \dx\right) \leq C
\,\|\hat \varphi\|^2_{L^2_M(\Omega \times D)}.
\label{qrbd}
\end{align}
\end{subequations}
%where $C$ is a positive constant.
In addition, the following simple lemma will be useful (cf. Lemma 5.1 in \cite{BS2010} for the proof).

\begin{lemma}\label{le:supplementary}
Suppose that a sequence $\{\hat\varphi_n\}_{n=1}^\infty$ converges in
$L^1(0,T; L^1_{M}(\Omega \times D))$ to $\hat\varphi
\in L^1(0,T; L^1_{M}(\Omega \times D))$,
and is bounded in $L^\infty(0,T;L^1_{M}(\Omega \times D))$, i.e.,
there exists $K_0>0$ such that
$\|\varphi_n\|_{L^\infty(0,T; L^1_{M}(\Omega \times D))} \leq K_0$ for all $n \geq 1$.
Then, $\hat\varphi \in L^p(0,T; L^1_{M}(\Omega \times D))$ for all $p \in [1,\infty)$,
and the sequence $\{\hat\varphi_n\}_{n \geq 1}$ converges to $\hat\varphi$ in
$L^p(0,T; L^1_{M}(\Omega \times D))$ for all $p \in [1,\infty)$.
\end{lemma}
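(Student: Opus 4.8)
The plan is to prove the three assertions — that $\hat\varphi$ lies in $L^p(0,T;L^1_M(\Omega\times D))$ for every finite $p$, and that $\hat\varphi_n\to\hat\varphi$ in that space — by exploiting interpolation between the $L^1$-in-time convergence and the $L^\infty$-in-time boundedness, a standard device. First I would observe that the function $t\mapsto \|\hat\varphi_n(\cdot,\cdot,t)\|_{L^1_M(\Omega\times D)}$ is (essentially) bounded by $K_0$ uniformly in $n$, and that by Fatou's lemma (applied along a subsequence converging a.e.\ in $t$ to $\hat\varphi$ in the $L^1_M(\Omega\times D)$-norm, extracted from the $L^1(0,T;L^1_M)$ convergence) the limit $\hat\varphi$ also satisfies $\|\hat\varphi(\cdot,\cdot,t)\|_{L^1_M(\Omega\times D)}\le K_0$ for a.e.\ $t$. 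Hence $\hat\varphi\in L^\infty(0,T;L^1_M(\Omega\times D))$, which immediately gives $\hat\varphi\in L^p(0,T;L^1_M(\Omega\times D))$ for all $p\in[1,\infty)$.

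For the convergence statement, the key point is the elementary inequality, valid for the nonnegative scalar function $g_n(t):=\|\hat\varphi_n(\cdot,\cdot,t)-\hat\varphi(\cdot,\cdot,t)\|_{L^1_M(\Omega\times D)}$,
\[
\int_0^T g_n(t)^p \dt \;\le\; \big(\sup_{t\in(0,T)} g_n(t)\big)^{p-1}\int_0^T g_n(t)\dt \;\le\; (2K_0)^{p-1}\,\|\hat\varphi_n-\hat\varphi\|_{L^1(0,T;L^1_M(\Omega\times D))}.
\]
Here the uniform bound $g_n(t)\le \|\hat\varphi_n(\cdot,\cdot,t)\|_{L^1_M}+\|\hat\varphi(\cdot,\cdot,t)\|_{L^1_M}\le 2K_0$ for a.e.\ $t$ uses both the hypothesis on $\{\hat\varphi_n\}$ and the bound on $\hat\varphi$ just established. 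Since the right-hand side tends to $0$ as $n\to\infty$ by hypothesis, we conclude $\|\hat\varphi_n-\hat\varphi\|_{L^p(0,T;L^1_M(\Omega\times D))}\to 0$ for every $p\in[1,\infty)$, which is the claim.

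There is no real obstacle here: the only mild technical point to be careful about is the measurability and the a.e.-in-$t$ nature of the bounds — one should note that the map $t\mapsto g_n(t)$ is measurable (being built from the $L^1_M$-norm of a difference of functions in $L^1(0,T;L^1_M(\Omega\times D))$), and that ``$\sup$'' should be read as ``essential supremum''. One also has to make sure the Fatou argument for the bound on $\hat\varphi$ is run correctly: from $\hat\varphi_n\to\hat\varphi$ in $L^1(0,T;L^1_M(\Omega\times D))$ one extracts a subsequence with $g_{n_k}(t)\to 0$ for a.e.\ $t$, whence $\|\hat\varphi(\cdot,\cdot,t)\|_{L^1_M}=\lim_k\|\hat\varphi_{n_k}(\cdot,\cdot,t)\|_{L^1_M}\le K_0$ a.e. With these points in place the whole proof is two or three lines, so in the write-up I would simply cite Lemma~5.1 of \cite{BS2010} as indicated and sketch the interpolation inequality above.
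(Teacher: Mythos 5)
Your proof is correct, and it is the standard argument one expects here: establish an essential supremum bound on $\hat\varphi$ in $L^1_M(\Omega\times D)$ by extracting an a.e.-convergent subsequence from the $L^1$-in-time convergence, and then interpolate $\|g_n\|_{L^p}^p \le \|g_n\|_{L^\infty}^{p-1}\|g_n\|_{L^1}$. The paper itself does not spell out a proof but defers to Lemma~5.1 of \cite{BS2010}, which uses essentially this same interpolation device, so there is nothing to reconcile.
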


We shall use the symbol $|\cdot|$ to denote the absolute value when the argument is
a real number, the Euclidean norm when the argument is a vector, and the Frobenius
norm when the argument is a square matrix. For a square matrix $\Btt \in  \mathbb{R}^{d\times d}$,
we recall that the symbol $\mathfrak{tr}(\Btt)$ will signify the trace of $\Btt$.

We state a simple integration-by-parts formula (cf. Lemma 3.1 in \cite{BS2010-hookean}
and note that $U' \equiv 1$).

\begin{lemma}
Suppose that $\hat\varphi \in H^1_M(D)$ and let $\Btt \in \mathbb{R}^{d\times d}$ be a square
matrix such that $\mathfrak{tr}(\Btt)=0$; then,
%\begin{subequations}
\begin{align}
\int_{D} M(\qt)\,%\Xi(\qt)\,
%\alpha_i\,
(\Btt \,\qt) \cdot \nabq \hat\varphi(\qt) \dd \qt &= \int_{D} M(\qt)\,
%(\alpha_i - \alpha_i')\,
\hat\varphi(\qt) \, %U_i'(\textstyle{\frac{1}{2}|\qt_i|^2})\,
\qt \,\qt^{\rm T}
%\Lambda_i(\qt)
: \Btt \dq.
%\nonumber \\
%\mbox{for a.e.\ } (\qt_,\ldots, \qt_{i-1},\,&\qt_{i+1},\ldots, \qt_K)^{\rm T}
%\in \left( \bigtimes_{j=1,\, j \neq i}^K D_j\right),
%\quad i=1,\ldots, K,
\label{intbypartsi}
%\\
%\label{intbyparts}
%\int_D M(\qt)\,%\Xi(\qt)\,
%\sum_{i=1}^K %\alpha_i\,
%(\Btt\,\qt) \cdot \nabq \hat\varphi(\qt) \dd \qt &= \int_D M(\qt)\,
%\hat\varphi(\qt) \sum_{i=1}^K
%U_i'(\textstyle{\frac{1}{2}|\qt_i|^2})\,
%(\alpha_i-\alpha_i')\,
%\qt_i \,\qt_i^{\rm T}
%\, \Lambda_i(\qt)
%:
%\Btt \dq.
\end{align}
%\end{subequations}
\end{lemma}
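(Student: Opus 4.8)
The plan is to prove the identity first for $\hat\varphi \in C^\infty_0(D)$ by a direct integration by parts, and then to extend it to all of $H^1_M(D)$ using the density assertion \eqref{cal K} together with elementary continuity estimates based on the weighted moment bounds \eqref{additional-1}.

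For $\hat\varphi \in C^\infty_0(D)$ there is no boundary contribution at infinity, so integration by parts gives
\[
\int_D M(\qt)\,(\Btt\,\qt)\cdot\nabq\hat\varphi(\qt)\dq = -\int_D \hat\varphi(\qt)\,\nabq\cdot\bigl(M(\qt)\,\Btt\,\qt\bigr)\dq.
\]
I would then expand $\nabq\cdot(M\,\Btt\,\qt) = (\nabq M)\cdot(\Btt\,\qt) + M\,\nabq\cdot(\Btt\,\qt)$. The second term vanishes since $\nabq\cdot(\Btt\,\qt) = \mathfrak{tr}(\Btt) = 0$, while \eqref{eqM} (with $U'\equiv 1$) yields $\nabq M = -M\,\qt$, whence $\nabq\cdot(M\,\Btt\,\qt) = -M\,\qt\cdot(\Btt\,\qt) = -M\,(\qt\,\qt^{\rm T}:\Btt)$. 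Substituting this back produces exactly the right-hand side of \eqref{intbypartsi} for smooth, compactly supported $\hat\varphi$.

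It then remains to pass from $C^\infty_0(D)$ to $H^1_M(D)$. Both sides of \eqref{intbypartsi} are linear functionals of $\hat\varphi$, and I would check that each is bounded with respect to the $H^1_M(D)$-norm: by the Cauchy--Schwarz inequality the left-hand side is controlled by $|\Btt|\,\bigl(\int_D M\,|\qt|^2\dq\bigr)^{1/2}\,\|\nabq\hat\varphi\|_{L^2_M(D)}$ and the right-hand side by $|\Btt|\,\bigl(\int_D M\,|\qt|^4\dq\bigr)^{1/2}\,\|\hat\varphi\|_{L^2_M(D)}$, and both of the weighted moments of $M$ are finite by \eqref{additional-1}. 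Since the two functionals agree on $C^\infty_0(D)$, which is dense in $H^1_M(D)$ by \eqref{cal K}, they agree on the whole of $H^1_M(D)$, which is the assertion.

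There is no serious obstacle here; the only point requiring a little care is ensuring that the weighted second- and fourth-moment integrals of $M$ converge, so that the continuity estimates, and hence the limit passage from $C^\infty_0(D)$, are legitimate — and this is precisely what \eqref{additional-1} (itself a consequence of \eqref{MN} and \eqref{U}) guarantees. Arguing via density also has the advantage of sidestepping any direct verification that no boundary-at-infinity term is generated for a general element $\hat\varphi \in H^1_M(D)$.
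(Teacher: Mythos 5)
Your proof is correct and follows the standard route that the paper implicitly invokes by referring to Lemma~3.1 in the cited reference with $U'\equiv 1$: integrate by parts for $\hat\varphi\in C^\infty_0(D)$, use $\nabq M = -M\,\qt$ from \eqref{eqM} together with $\mathfrak{tr}(\Btt)=0$ to simplify the divergence, and then extend to $H^1_M(D)$ by the density \eqref{cal K} and the continuity bounds furnished by the finite Maxwellian moments \eqref{additional-1}. Nothing is missing.
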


Let ${\cal F}\in C(\mathbb{R}_{>0})$ be defined by
\begin{align}
{\cal F}(s):= s\,(\log s -1) + 1, \qquad s>0; \qquad \mathcal{F}(0):=1.
\label{F}
\end{align}
Clearly, $\mathcal{F} \in C([0,\infty))$, and is a nonnegative,
strictly convex function. We note the following result.

\begin{lemma}\label{Fpsilem}
%Let the assumptions (\ref{inidata}) hold on the data.
%For %$i=1,\ldots,K$ and for
%all $t=t_n$, $n =1,\ldots,N$,
For all $\hat \varphi$ such that ${\cal F}(\hat \varphi) \in L_M^1(\Omega \times D)$
we have that
\begin{align}
\int_{\Omega \times D} M
\left(\frac{1}{2}\,|\qt|^2\right) \hat \varphi \dq \dx &\leq
\frac{2}{c}\, \left[\int_{\Omega \times D} M\,{\cal F}(\hat \varphi) \dq \dx
+ |\Omega| \int_{D} M\, {\rm e}^{\frac{c}{2} (\frac{1}{2}|\qt|^2)}\,
\dq \right],
\label{qt2bd}
\end{align}
where $c=1$.
%where %$\{c_{i}\}_{i=1}^K$ are as defined in {\rm (\ref{U})},
%$c=1$, and
%$$C_{\rm exp}:=  \frac{2}{c}\,|\Omega|\,\int_{D} M\,
%{\rm e}^{\frac{c}{2} (\frac{1}{2}|\qt|^2) } \dq
%\leq C\,  \int_{D} {\rm e}^{-\frac{c}{2} (\frac{1}{2}|\qt|^2)}
%\dq < \infty.$$
\end{lemma}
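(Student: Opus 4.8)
The plan is to obtain \eqref{qt2bd} from the Fenchel--Young inequality attached to the strictly convex entropy function $\mathcal{F}$ defined in \eqref{F}, and then to integrate the resulting pointwise bound against the weight $M$.

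First I would record the convex conjugate of $\mathcal{F}$. Since $\mathcal{F}(s) = s\log s - s + 1$ for $s>0$ and $\mathcal{F}(0)=1$, a one-line computation (differentiating $s \mapsto st - \mathcal{F}(s)$, whose maximum over $s\geq 0$ is attained at $s = {\rm e}^t$) gives $\sup_{s\geq 0}\bigl(st - \mathcal{F}(s)\bigr) = {\rm e}^t - 1$ for every $t \in \mathbb{R}$. Hence the Young inequality
\[
s\,t \leq \mathcal{F}(s) + {\rm e}^t - 1 \qquad \mbox{for all } s \geq 0,\ t \in \mathbb{R}
\]
holds. Applying it with $s = \hat\varphi$ and $t = \frac{c}{2}\bigl(\frac{1}{2}|\qt|^2\bigr)$ (note that $\hat\varphi \geq 0$ a.e.\ is implicit in the hypothesis, since $\mathcal{F}$ is defined only on $[0,\infty)$), and then multiplying through by $2/c$, I obtain the pointwise estimate
\[
\frac{1}{2}|\qt|^2\,\hat\varphi \leq \frac{2}{c}\,\mathcal{F}(\hat\varphi) + \frac{2}{c}\,{\rm e}^{\frac{c}{2}(\frac{1}{2}|\qt|^2)} - \frac{2}{c} \qquad \mbox{a.e.\ on } \Omega \times D.
\]

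Next I would multiply this inequality by $M \geq 0$ and integrate over $\Omega \times D$. The first term on the right integrates to $\frac{2}{c}\int_{\Omega \times D} M\,\mathcal{F}(\hat\varphi)\dq\dx$, which is finite by hypothesis. The integrand of the second term is independent of $\xt$, so that term equals $\frac{2}{c}\,|\Omega|\int_D M\,{\rm e}^{\frac{c}{2}(\frac{1}{2}|\qt|^2)}\dq$; this is finite because, by \eqref{MN} and \eqref{U}, $M(\qt)$ is a constant multiple of ${\rm e}^{-\frac{1}{2}|\qt|^2}$, so for $c=1$ (indeed for any $c<2$) the integrand is a constant multiple of ${\rm e}^{-(\frac{1}{2}-\frac{c}{4})|\qt|^2}$, which is integrable over $D = \mathbb{R}^d$. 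The last term contributes $-\frac{2}{c}\int_{\Omega \times D} M\dq\dx = -\frac{2}{c}|\Omega|$, using $\int_D M\dq = 1$ from \eqref{MN}; being nonpositive it may simply be discarded from the right-hand side. The resulting bound is precisely \eqref{qt2bd}, and it shows in passing that the left-hand side of \eqref{qt2bd} is finite.

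There is no real obstacle here: the only point where the explicit Gaussian form of $M$ is needed (rather than just the polynomial-moment bound \eqref{additional-1}) is in verifying finiteness of the exponential moment $\int_D M\,{\rm e}^{\frac{c}{2}(\frac{1}{2}|\qt|^2)}\dq$, which is what forces $c$ to be chosen small enough; the value $c=1$ is comfortably within the admissible range for the Hookean Maxwellian. All remaining steps use only $\mathcal{F}(\hat\varphi) \in L^1_M(\Omega\times D)$ to control the entropy term and $M \geq 0$ to preserve the inequality under integration.
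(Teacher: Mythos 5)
Your proof is correct and is, in essence, the standard argument used in the reference that the paper cites (Lemma 4.1 of \cite{BS2010-hookean}): one applies the Fenchel--Young inequality $st \leq \mathcal{F}(s) + {\rm e}^t - 1$ with $s=\hat\varphi\geq 0$ and $t=\frac{c}{2}\bigl(\frac{1}{2}|\qt|^2\bigr)$, multiplies by $M$, integrates, and uses the Gaussian form of $M$ under \eqref{U} to secure finiteness of the exponential moment (which is what restricts $c$, and $c=1$ is admissible). The computation of the convex conjugate, the pointwise estimate, and the disposal of the $-\frac{2}{c}|\Omega|$ term are all correct.
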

\begin{proof}
See the proof of Lemma 4.1 in \cite{BS2010-hookean}, where it is proved
with $\frac{1}{2}\,|\qt|^2$ and $c$ replaced by
$(\frac{1}{2}\,|\qt|^2)^{\vartheta}$ and $c_{1}$, respectively,
and $M$, recall (\ref{MN}), based on $U$, %$i=1,\ldots, K$,
satisfying (\ref{growth1}--c); but the proof given there is
valid for $\vartheta=1$ and hence for $M$ based on (\ref{U}).
\end{proof}

We recall the Aubin--Lions--Simon compactness theorem, see, e.g.,
Temam \cite{Temam} and Simon \cite{Simon}. Let ${\cal B}_0$, ${\cal B}$ and
${\cal B}_1$ be Banach
spaces, where ${\cal B}_i$, $i=0,1$, are reflexive, with a compact embedding ${\cal B}_0
\hookrightarrow {\cal B}$ and a continuous embedding ${\cal B} \hookrightarrow
{\cal B}_1$. Then, for $\alpha_i>1$, $i=0,1$, the embedding
\begin{eqnarray}
&\{\,\eta \in L^{\alpha_0}(0,T;{\cal B}_0): \frac{\partial \eta}{\partial t}
\in L^{\alpha_1}(0,T;{\cal B}_1)\,\} \hookrightarrow L^{\alpha_0}(0,T;{\cal B})
\label{compact1}
\end{eqnarray}
is compact.

We shall also require the following generalization of the Aubin--Lions--Simon compactness
theorem due to Dubinski{\u\i} \cite{DUB}; see also \cite{BS-DUB}.
Before stating the result, we recall the concept of a seminormed set (in the sense of
Dubinski{\u\i}).
A subset $\mathcal{M}$ of a linear space
$\mathcal{A}$ over $\mathbb{R}$ is said to be a seminormed set if
$c\,\varphi \in \mathcal{M}$, for any $c \in [0,\infty)$ and $\varphi \in \mathcal{A}$,
%such that
%
%\begin{equation}
%\label{eq:property}
%(\forall \varphi \in \mathcal{M})\; (\forall c \in \mathbb{R}_{\geq 0})\;\;\; c\, \varphi \in \mathcal{M}.
%\end{equation}
%
%In other words, whenever $\varphi$ is contained in $\mathcal{M}$, the ray through
%$\varphi$ from the origin of the linear space $\mathcal{A}$ is also contained in
%$\mathcal{M}$.  Note in particular that while any set $\mathcal{M}$ with property
%\eqref{eq:property} must contain the zero element of the linear space $\mathcal{A}$,
%the set
%$\mathcal{M}$ need not be closed under summation.
%The linear space $\mathcal{A}$
%will be referred to as the {\em ambient space} for $\mathcal{M}$.
%Suppose further that each element $\varphi$ of a set $\mathcal{M}$ with property \eqref{eq:property}
%is assigned a certain real number, denoted
and there exists a functional
(namely the seminorm of $\mathcal M$), denoted by $[\varphi]_{\mathcal M}$,
such that:
\begin{enumerate}
\item[(i)] $[\varphi]_{\mathcal M} \geq 0$ $\forall \varphi \in \mathcal M$;
and $[\varphi]_{\mathcal M} = 0$ if,
and only if, $\varphi=0$;
\item[(ii)] $[c\, \varphi]_{\mathcal M}
= c\,[\varphi]_{\mathcal M}$ $\forall \varphi \in \mathcal M$, $\forall c \in [0,\infty)$.
\end{enumerate}
A subset $\mathcal{B}$ of a seminormed
set $\mathcal{M}$ is said to be bounded if there exists
a positive constant $K_0$ such that $[\varphi]_{\mathcal M} \leq K_0$
for all $\varphi \in \mathcal{B}$.
A seminormed set $\mathcal{M}$ contained in a normed linear space $\mathcal{A}$
with norm $\|\cdot\|_{\mathcal A}$ is said to be continuously embedded in $\mathcal{A}$,
and we write $\mathcal{M} \hookrightarrow \mathcal{A}$, if
%the inclusion map $i : \varphi \in \mathcal{M} \mapsto i(\varphi)
%=\varphi \in \mathcal{A}$ (which is, by definition, injective and positively 1-homogeneous,
%i.e., $i(c\, \varphi) = c\, i(\varphi)$ for all $c \in \mathbb{R}_{\geq 0}$
%and all $\varphi \in \mathcal{M}$) is a bounded operator, i.e.,
there exists a $K_0 \in \mathbb{R}_{>0}$ such that
$\|\varphi \|_{\mathcal A} \leq K_0 [\varphi]_{\mathcal M}$
for all $\varphi \in \mathcal{M}$.
%
%The symbol $i(~)$ is usually omitted from the notation $i(\varphi)$,
%and $\varphi \in \mathcal{M}$ is simply identified with $\varphi \in \mathcal{A}$.
%Thus, a bounded subset of a seminormed set is also a bounded subset of the ambient
%normed linear space the seminormed set is embedded in.
The embedding of a seminormed set $\mathcal{M}$ into a normed linear space
$\mathcal{A}$ is said to be compact if from any bounded infinite set of elements
of $\mathcal{M}$ one can extract a subsequence that converges in $\mathcal{A}$.
%we shall write $\mathcal{M} \hookrightarrow\!\!\!\rightarrow \mathcal{A}$ to
%denote that $\mathcal{M}$ is compactly embedded in $\mathcal{A}$.

\begin{theorem}[Dubinski{\u\i} \cite{DUB}] \label{thm:Dubinski}
Suppose that $\mathcal{A}_0$ and $\mathcal{A}_1$ are Banach spaces, $\mathcal{A}_0
\hookrightarrow \mathcal{A}_1$, and $\mathcal{M}$ is a seminormed subset of
$\mathcal{A}_0$ such that the embedding $\mathcal{M} \hookrightarrow
\mathcal{A}_0$ is compact.
Then, for $\alpha_i>1$, $i=0,1$, the embedding
$$\{\,\eta \in L^{\alpha_0}(0,T;{\cal M}): \frac{\partial \eta}{\partial t}
\in L^{\alpha_1}(0,T;{\cal A}_1)\,\} \hookrightarrow L^{\alpha_0}(0,T;{\cal A}_0)$$
is compact.
\end{theorem}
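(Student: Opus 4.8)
The plan is to run the Aubin--Lions--Simon scheme, with one new ingredient adapted to the fact that $\mathcal{M}$ is merely a seminormed cone rather than a Banach space: a \emph{bilinear} Ehrling-type inequality, namely that for every $\epsilon>0$ there is a constant $C_\epsilon>0$ such that
\[
\|v-w\|_{\mathcal{A}_0}\,\le\,\epsilon\,\big([v]_{\mathcal{M}}+[w]_{\mathcal{M}}\big)+C_\epsilon\,\|v-w\|_{\mathcal{A}_1}\qquad\forall\,v,w\in\mathcal{M}.
\]
I would prove this by contradiction: if it failed there would be $\epsilon_0>0$ and $v_n,w_n\in\mathcal{M}$ which, after rescaling (legitimate since $\mathcal{M}$ is a cone and $[\cdot]_{\mathcal{M}}$ is positively homogeneous), satisfy $\|v_n-w_n\|_{\mathcal{A}_0}=1$, $[v_n]_{\mathcal{M}}+[w_n]_{\mathcal{M}}\le 1/\epsilon_0$ and $\|v_n-w_n\|_{\mathcal{A}_1}\le 1/n$; boundedness of $\{v_n\}$ and $\{w_n\}$ in $\mathcal{M}$ together with compactness of $\mathcal{M}\hookrightarrow\mathcal{A}_0$ yields, along a subsequence, $v_n\to v$ and $w_n\to w$ in $\mathcal{A}_0$, hence in $\mathcal{A}_1$, so that $v-w=0$ in $\mathcal{A}_1$; injectivity of the embedding $\mathcal{A}_0\hookrightarrow\mathcal{A}_1$ then forces $v-w=0$ in $\mathcal{A}_0$, contradicting $\|v-w\|_{\mathcal{A}_0}=1$. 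When $\mathcal{M}$ happens to be a linear space with $[\cdot]_{\mathcal{M}}$ a seminorm this is the classical Ehrling lemma; the point is that it survives for a cone.

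Next I would take an arbitrary bounded sequence $\{\eta_n\}$ in $Y:=\{\eta\in L^{\alpha_0}(0,T;\mathcal{M}):\partial_t\eta\in L^{\alpha_1}(0,T;\mathcal{A}_1)\}$, so that $\int_0^T[\eta_n(t)]_{\mathcal{M}}^{\alpha_0}\,{\rm d}t\le K$ and $\int_0^T\|\partial_t\eta_n(t)\|_{\mathcal{A}_1}^{\alpha_1}\,{\rm d}t\le K$ for all $n$, and aim to show $\{\eta_n\}$ is relatively compact in $L^{\alpha_0}(0,T;\mathcal{A}_0)$; since the sequence is arbitrary this is exactly the asserted compact embedding. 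For this I would verify the three conditions in Simon's characterisation of relatively compact subsets of $L^{\alpha_0}(0,T;\mathcal{A}_0)$ (cf.\ \cite{Simon}): (a) boundedness in $L^{\alpha_0}(0,T;\mathcal{A}_0)$; (b) relative compactness in $\mathcal{A}_0$ of $\{\int_{t_1}^{t_2}\eta_n(t)\,{\rm d}t:n\in\mathbb{N}\}$ for every $0\le t_1<t_2\le T$; and (c) $\sup_n\|\eta_n(\cdot+h)-\eta_n(\cdot)\|_{L^{\alpha_0}(0,T-h;\mathcal{A}_0)}\to 0$ as $h\to 0^+$. Condition (a) is immediate from the continuous embedding $\mathcal{M}\hookrightarrow\mathcal{A}_0$ and the uniform bound on $\|[\eta_n(\cdot)]_{\mathcal{M}}\|_{L^{\alpha_0}(0,T)}$.

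For (b) I would let $\mathcal{K}_R$ be the closure in $\mathcal{A}_0$ of $\{\varphi\in\mathcal{M}:[\varphi]_{\mathcal{M}}\le R\}$, which is compact in $\mathcal{A}_0$ by compactness of $\mathcal{M}\hookrightarrow\mathcal{A}_0$, and split $\int_{t_1}^{t_2}\eta_n$ over $\{[\eta_n(t)]_{\mathcal{M}}\le R\}$ and its complement: the first piece lies in a fixed compact subset of $\mathcal{A}_0$ (a bounded union of scalar multiples of the closed convex hull of $\mathcal{K}_R$), while the $\mathcal{A}_0$-norm of the second is at most $C\int_{\{[\eta_n(t)]_{\mathcal{M}}>R\}}[\eta_n(t)]_{\mathcal{M}}\,{\rm d}t\le C\,K\,R^{-(\alpha_0-1)}$, which tends to $0$ as $R\to\infty$ uniformly in $n$ (here $\alpha_0>1$ enters, via H\"older's inequality and uniform integrability); hence the set is totally bounded, i.e.\ relatively compact, in $\mathcal{A}_0$. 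For (c) I would use the $\mathcal{A}_1$-absolutely continuous representative of $\eta_n$, write $\eta_n(t+h)-\eta_n(t)=\int_t^{t+h}\partial_t\eta_n(s)\,{\rm d}s$, and apply H\"older's inequality (this is where $\alpha_1>1$ is used) to get $\sup_n\|\eta_n(\cdot+h)-\eta_n(\cdot)\|_{L^{\alpha_0}(0,T-h;\mathcal{A}_1)}\le C\,h^{\kappa}$ for some $\kappa=\kappa(\alpha_0,\alpha_1)>0$; then I would apply the bilinear Ehrling inequality with $v=\eta_n(t+h)$, $w=\eta_n(t)$ (both in $\mathcal{M}$ for a.e.\ $t$), raise to the power $\alpha_0$ and integrate in $t$, so that the $[\cdot]_{\mathcal{M}}$-terms contribute at most $C\,\epsilon^{\alpha_0}\int_0^T[\eta_n(t)]_{\mathcal{M}}^{\alpha_0}\,{\rm d}t\le C\,\epsilon^{\alpha_0}K$ and the $\mathcal{A}_1$-term at most $C\,C_\epsilon^{\alpha_0}\,h^{\kappa\alpha_0}$; choosing $\epsilon$ small and then $h$ small gives (c) uniformly in $n$. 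Simon's criterion then produces a subsequence of $\{\eta_n\}$ converging in $L^{\alpha_0}(0,T;\mathcal{A}_0)$, which finishes the proof.

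I expect the genuine difficulty to be concentrated in the bilinear Ehrling inequality and its use in step (c). In the usual Aubin--Lions--Simon proof one controls the time-translation $\eta_n(\cdot+h)-\eta_n(\cdot)$ by applying the ordinary Ehrling lemma \emph{to the difference itself}, which is legitimate only because the strong space there is a vector space; here $\eta_n(t+h)-\eta_n(t)$ need not lie in $\mathcal{M}$ and $[\cdot]_{\mathcal{M}}$ is not subadditive, so one must instead bound $\|v-w\|_{\mathcal{A}_0}$ against $[v]_{\mathcal{M}}+[w]_{\mathcal{M}}$ rather than against $[v-w]_{\mathcal{M}}$ — which is exactly what the bilinear version supplies, and where the compactness of $\mathcal{M}\hookrightarrow\mathcal{A}_0$ and injectivity of $\mathcal{A}_0\hookrightarrow\mathcal{A}_1$ are needed. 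This is the point at which the argument of \cite{BS-DUB} differs from, and repairs, Dubinski{\u\i}'s original reasoning.
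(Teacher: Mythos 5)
The paper does not give a proof of Theorem~\ref{thm:Dubinski}; it states the result with references to \cite{DUB} and \cite{BS-DUB}. Your proposal is correct and reproduces, in its essentials, the corrected argument of \cite{BS-DUB}: the two-argument Ehrling-type inequality $\|v-w\|_{\mathcal{A}_0}\le\epsilon\big([v]_{\mathcal{M}}+[w]_{\mathcal{M}}\big)+C_\epsilon\|v-w\|_{\mathcal{A}_1}$, whose proof uses both the compactness of $\mathcal{M}\hookrightarrow\mathcal{A}_0$ and the injectivity of $\mathcal{A}_0\hookrightarrow\mathcal{A}_1$, is exactly the device that replaces the impermissible application of a one-argument Ehrling lemma to the difference $v-w$ (which need not lie in $\mathcal{M}$, whose seminorm is not subadditive), and the remainder is the standard Simon machinery as you describe.
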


\section{The Oldroyd-B model}\label{OB}
\setcounter{equation}{0}

We start with noting the following existence result for problem (Q), Oldroyd-B,
with $\rho \equiv 1$.
\begin{theorem}\label{Qex}
Let $d=2$ and $\partial \Omega \in C^{0,1}$. %, for $\mu \in (0,1)$.
In addition, let $\ut_0 \in \Ht$, $\sigtt_0 \in \Ltt^2(\Omega)$ with
$\sigtt_0 = \sigtt_0^{\rm T} \geq 0$ a.e.\ in $\Omega$
and  $\ft \in \Lt^2(0,T;[\Ht^1_0(\Omega)]')$.
Then there exist
%a solution $(\utOB, \sigOB)$ of (Q) such that
\begin{align}
&\utOB \in L^\infty(0,T;\Lt^2(\Omega))\cap L^2(0,T;\Vt)
%\nonumber \\
\quad \mbox{and} \quad
%&
\sigOB \in L^\infty(0,T;\Ltt^2(\Omega))\cap L^2(0,T;\Htt^1(\Omega))
\label{Qus}
\end{align}
with
$\sigOB = \sigOB^{\rm T} \geq 0$ a.e.\ in $\Omega \times (0,T]$
such that
\begin{subequations}
\begin{align}\label{equtOB}
&\displaystyle-\int_{0}^{T}
\int_{\Omega} \utOB \cdot \frac{\partial \wt}{\partial t}
\dx \dt
+ \int_{0}^T \int_{\Omega}
\left[ \left[ (\utOB \cdot \nabx) \utOB \right]\,\cdot\,\wt
+ \nu \,\nabxtt \utOB
:
\nabx \wt \right] \dx \dt
\nonumber
\\
&\qquad = \int_{0}^T
\left[ \langle \ft, \wt\rangle_{H^1_0(\Omega)}
- k\int_\Omega
\sigOB : \nabx
\wt  \dx
\right] \dt
+ \int_\Omega \ut_0(\xt) \cdot \wt(\xt ,0) \dx
\nonumber
\\
& \hspace{1.8in}
\forall \wt \in W^{1,1}(0,T;\Vt) \mbox{ {\rm with} $\wt(\cdot,T)=\zerot$},
\end{align}
where $ \langle \cdot , \cdot \rangle_{H^1_0(\Omega)}$ denotes the duality pairing between $[\Ht^1_0(\Omega)]'$ and $\Ht^1_0(\Omega)$, and
\begin{align}\label{eqsigOB}
&\displaystyle-\int_{0}^{T}
\int_{\Omega} \sigOB : \frac{\partial \xitt}{\partial t}
\dx \dt
+ \int_{0}^T \int_{\Omega}
\left[ \left[ (\utOB \cdot \nabx) \sigOB \right] : \xitt
+ \epsilon \,\nabx \sigOB
::
\nabx \xitt \right] \dx \dt
\nonumber
\\
&\hspace{1in} + \int_{0}^T \int_\Omega
\left[ \frac{1}{2\,\lambda}\,( \sigOB- \Itt) -
\left((\nabxtt \utOB)\,\sigOB + \sigOB\,(\nabxtt \utOB)^{\rm T} \right)
\right] :
\xitt  \dx  \dt \nonumber \\
&\qquad
=
\int_\Omega \sigtt_0(\xt) : \xitt(\xt ,0) \dx
\qquad
\forall \xitt \in W^{1,1}(0,T;\Htt^1(\Omega)) \mbox{ {\rm with} $\xitt(\cdot,T)=\zerott$}.
\end{align}
\end{subequations}
\end{theorem}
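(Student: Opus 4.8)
The plan is to deduce Theorem~\ref{Qex} from the existence theory for the Oldroyd-B system with stress-diffusion established by Barrett and Boyaval in \cite{barrett-boyaval-09}. Observe first that, since the prescribed initial density is $\rho_0\equiv 1$, the transport--diffusion problem \eqref{rhoeq} together with the homogeneous Neumann condition in \eqref{eqpsig3c} has $\rho\equiv 1$ as its unique solution; hence model $({\rm Q})$ coincides, up to the normalization of the relaxation constant (our $2\lambda$ playing the role of the $\lambda$ used there), with the system treated in \cite{barrett-boyaval-09}. The first step is therefore to quote from \cite{barrett-boyaval-09} the existence of a pair $(\utOB,\sigOB)$ in the classes \eqref{Qus}, with $\sigOB=\sigOB^{\rm T}$ and $\sigOB$ positive semidefinite a.e.\ in $\Omega\times(0,T]$, solving the momentum balance \eqref{ns1a}--\eqref{ns4a} (with $\tautt=k(\sigOB-\Itt)$ and $\rho\equiv 1$) and the stress equation \eqref{sigijeq}--\eqref{eqpsig3c} in the sense of distributions, and for which $t\mapsto\utOB(\cdot,t)$ and $t\mapsto\sigOB(\cdot,t)$ are weakly continuous into $\Lt^2(\Omega)$ and $\Ltt^2(\Omega)$ respectively.

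The second step is to verify that this distributional formulation is equivalent to the weak formulation \eqref{equtOB}--\eqref{eqsigOB} as stated. For the momentum equation one tests against $\wt\in W^{1,1}(0,T;\Vt)$ with $\wt(\cdot,T)=\zerot$: since $\wt$ is divergence-free the pressure drops out and, because $\nabx\cdot\Itt=\zerot$, the isotropic part of $\tautt$ contributes nothing, leaving the term $-k\int_0^T\!\int_\Omega\sigOB:\nabx\wt\dx\dt$; the time derivative is transferred onto $\wt$ using that $\partial_t\utOB$ lies in $L^r(0,T;\Vt')$ for some $r>1$ together with the weak continuity in $\Lt^2(\Omega)$, which produces the initial term $\int_\Omega\ut_0\cdot\wt(\cdot,0)\dx$, while the convection term is integrable by the $L^\infty(0,T;\Lt^2(\Omega))\cap L^2(0,T;\Ht^1(\Omega))$ regularity of $\utOB$ and the two-dimensional Gagliardo--Nirenberg inequality \eqref{eqinterp}. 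For the stress equation one tests against $\xitt\in W^{1,1}(0,T;\Htt^1(\Omega))$ with $\xitt(\cdot,T)=\zerott$; here no divergence constraint is needed, integrating the stress-diffusion term by parts in $\xt$ and invoking the homogeneous Neumann condition produces the term $\epsilon\int_0^T\!\int_\Omega\nabx\sigOB::\nabx\xitt\dx\dt$, and the remaining terms — in particular $(\nabxtt\utOB)\sigOB+\sigOB(\nabxtt\utOB)^{\rm T}$ and $(\utOB\cdot\nabx)\sigOB$ — are integrable by \eqref{Qus} and \eqref{eqinterp}; the initial term $\int_\Omega\sigtt_0:\xitt(\cdot,0)\dx$ appears by weak continuity at $t=0$.

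For completeness I would also indicate the argument underlying \cite{barrett-boyaval-09}. One regularizes the stress equation — e.g.\ by mollifying the velocity gradient and/or the stress in the quadratic terms and by smoothing $\log\det$ — sets up a Galerkin-type approximation, and derives two a priori bounds. The free-energy estimate, obtained by testing the momentum equation with $\utOB$ and the stress equation with $\tfrac{k}{2}(\Itt-\sigOB^{-1})$, makes the coupling terms $\pm k\int_\Omega\sigOB:\nabxtt\utOB\dx$ cancel and controls $\sup_{t\in[0,T]}\big[\,\|\utOB(\cdot,t)\|_{\Lt^2(\Omega)}^2+\int_\Omega\big(\mathfrak{tr}\,\sigOB-\log\det\sigOB\big)\dx\,\big]$ together with $\nu\int_0^T\|\nabx\utOB\|_{\Lt^2(\Omega)}^2\dt$ and an $\epsilon$-weighted Fisher-information-type dissipation term; this estimate also propagates positive definiteness of the approximations. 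The second, genuinely two-dimensional, bound comes from testing the stress equation with $\sigOB$ itself: the convection term drops because $\nabx\cdot\utOB=0$, the term $\int_\Omega\big[(\nabxtt\utOB)\sigOB+\sigOB(\nabxtt\utOB)^{\rm T}\big]:\sigOB\dx$ is controlled by $C\,\|\nabx\utOB\|_{\Lt^2(\Omega)}\|\sigOB\|_{\Ltt^4(\Omega)}^2\le C\,\|\nabx\utOB\|_{\Lt^2(\Omega)}\|\sigOB\|_{\Ltt^2(\Omega)}\|\sigOB\|_{\Htt^1(\Omega)}$ via \eqref{eqinterp} and absorbed into $\epsilon\|\nabx\sigOB\|_{\Ltt^2(\Omega)}^2$ by Young's inequality, and a Gronwall argument using that $t\mapsto\|\nabx\utOB(\cdot,t)\|_{\Lt^2(\Omega)}^2$ belongs to $L^1(0,T)$ then yields $\sigOB\in L^\infty(0,T;\Ltt^2(\Omega))\cap L^2(0,T;\Htt^1(\Omega))$. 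One passes to the limit in the regularization and in the Galerkin discretization by means of the Aubin--Lions--Simon compactness embedding \eqref{compact1}, obtaining strong convergence of $\utOB$ in $L^2(0,T;\Lt^2(\Omega))$ and of $\sigOB$ in $L^2(0,T;\Ltt^2(\Omega))$, which suffices for all the nonlinear terms; symmetry and positive semidefiniteness survive because they hold along the approximation and are stable under a.e.\ limits.

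The step I expect to be the main obstacle is precisely this last a priori bound: getting $\sigOB$ into $L^\infty(0,T;\Ltt^2(\Omega))\cap L^2(0,T;\Htt^1(\Omega))$ hinges on absorbing the quadratic velocity-gradient term into the stress-diffusion term, which succeeds only when $d=2$, through the interpolation $\Ltt^4(\Omega)\hookrightarrow[\Ltt^2(\Omega),\Htt^1(\Omega)]_{1/2}$ and the fact that $t\mapsto\|\nabx\utOB(\cdot,t)\|_{\Lt^2(\Omega)}^2$ is merely $L^1$ in time; in three space dimensions this breaks down and one is forced into the subsolution framework discussed in Section~\ref{remexist}. A secondary, essentially bookkeeping, difficulty is matching the precise hypotheses and any regularized model used in \cite{barrett-boyaval-09} to the clean statement of Theorem~\ref{Qex}, i.e.\ confirming that the auxiliary regularization parameters there have been removed and that the data classes assumed here ($\ut_0\in\Ht$, $\sigtt_0\in\Ltt^2(\Omega)$ symmetric positive semidefinite, $\ft\in\Lt^2(0,T;[\Ht^1_0(\Omega)]')$) are covered.
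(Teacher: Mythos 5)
Your approach is the same in spirit as the paper's: cite the Oldroyd-B existence theory of Barrett and Boyaval \cite{barrett-boyaval-09} and check the weak formulation. But there is a concrete gap in the data-matching step, which you flag as ``essentially bookkeeping'' and leave unresolved. The result in \cite{barrett-boyaval-09} is proved on a \emph{polygonal} two-dimensional domain (because the proof proceeds by finite element approximation) and under the stronger hypotheses $\sigtt_0 \in \Ltt^\infty(\Omega)$ with $\sigtt_0 = \sigtt_0^{\rm T} > 0$ a.e.\ — strict positive definiteness and boundedness, not the $\Ltt^2(\Omega)$ and positive semidefiniteness assumed in Theorem~\ref{Qex}. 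This mismatch is not cosmetic: the free-energy estimate you sketch, obtained by testing the stress equation with $\tfrac{k}{2}(\Itt - \sigOB^{-1})$, presupposes that $\sigOB$ is invertible, and the resulting functional $\int_\Omega (\mathfrak{tr}\,\sigOB - \log\det\sigOB)\dx$ degenerates when $\sigtt_0$ is only semidefinite or unbounded. So a straight citation of \cite{barrett-boyaval-09} does not deliver the theorem under the stated hypotheses, and some additional argument (approximation of the data, or a different reference) is required.

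The paper bridges this gap differently: it additionally cites the existence proof for the \emph{compressible} Oldroyd-B system in \cite{COMPOB-arxiv}, observing that this proof — which does handle $\ut_0 \in \Ht$, $\ft \in \Lt^2(0,T;[\Ht^1_0(\Omega)]')$ — is easily adapted to the simpler incompressible setting with merely $\partial\Omega \in C^{0,1}$ (its stronger $C^{2,\mu}$ hypothesis being needed only for the high-regularity parabolic Neumann estimate of Lemma~\ref{lem-para}, which is unnecessary here). Your sketch of the genuinely two-dimensional $L^2$ bound on $\sigOB$ via testing with $\sigOB$ itself and absorbing the $(\nabxtt\utOB)\sigOB$ term using the Gagliardo--Nirenberg interpolation \eqref{eqinterp} is correct and is indeed the mechanism that restricts the result to $d=2$; that part is sound. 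What is missing is the explicit route from the stronger data and domain assumptions of \cite{barrett-boyaval-09} to the weaker ones asserted in the theorem.
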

\begin{proof}
Existence of a solution to (Q) with $\rho \equiv 1$ is proved in
\cite{barrett-boyaval-09} via a finite element approximation for
a polygonal domain $\Omega \subset {\mathbb R}^2$
under the stronger assumption
$\sigtt_0 \in \Ltt^\infty(\Omega)$ with
$\sigtt_0 = \sigtt_0^{\rm T} > 0$ a.e.\ in $\Omega$.
The restriction on $\Omega$ was purely for ease of exposition
for the finite element approximation.
Existence of a solution to a compressible version of (Q) is proved in
\cite{COMPOB-arxiv} under the stronger assumptions
$C^{2,\mu}$, for $\mu \in (0,1)$,
and
$\ft \in L^\infty(\Omega \times (0,T))$,
with $\ut_0 \in \Lt^2(\Omega)$.
The proof there is easily adapted
to the far simpler incompressible case for $\partial \Omega \in C^{0,1}$
and $\ft \in \Lt^2(0,T;[\Ht^1_0(\Omega)]')$, with $\ut_0 \in \Ht$.
For example, in the existence proof for
the compressible version of (Q) discussed in \cite{COMPOB-arxiv} the assumption
$\partial \Omega \in C^{2,\mu}$ is only needed because one requires high regularity
for a parabolic Neumann problem; see Lemma \ref{lem-para} below, which, however, is
not needed in the existence proof for (Q) in the incompressible case.
\end{proof}

\begin{remark} \label{precrem}{\rm
Since the test functions in $\Vt$ are divergence-free, the
pressure has been eliminated in (\ref{equtOB});
it can be recovered in a very weak sense following the same
procedure as for the incompressible Navier--Stokes equations
discussed on p.~\!208 in \cite{Temam}; i.e., one obtains that
$\int_0^t p_{OB}(\cdot,t')\,{\rm d}t' \in C([0,T];L^2(\Omega))$.}
\end{remark}

We now deduce additional regularity for this Oldroyd-B model.

\begin{lemma}
\label{OBlem} Let $d=2$ and suppose that the hypotheses of Theorem \ref{Qex} hold; then,
%We deduce from (\ref{Qus}) that
\begin{align}
&\utOB \in L^4(0,T;\Lt^4(\Omega)), \quad \sigOB \in L^4(0,T;\Ltt^4(\Omega))
\quad (\utOB\cdot \nabx) \utOB
\in L^{\frac{4}{3}}(0,T;\Lt^{\frac{4}{3}}(\Omega))
\nonumber \\
 \mbox{and} \qquad
& (\utOB\cdot \nabx) \sigOB,\,(\nabxtt \utOB)\, \sigOB
\in L^{\frac{4}{3}}(0,T;\Ltt^{\frac{4}{3}}(\Omega)).
\label{Qus1}
\end{align}
\end{lemma}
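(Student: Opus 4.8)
The plan is to bootstrap the regularity already available from Theorem \ref{Qex}, namely $\utOB \in L^\infty(0,T;\Lt^2(\Omega))\cap L^2(0,T;\Vt)$ and $\sigOB \in L^\infty(0,T;\Ltt^2(\Omega))\cap L^2(0,T;\Htt^1(\Omega))$, using the Gagliardo--Nirenberg inequality \eqref{eqinterp} in the two-dimensional case $d=2$. First I would treat $\utOB$: interpolating between $L^\infty(0,T;\Lt^2(\Omega))$ and $L^2(0,T;\Ht^1(\Omega))$ — more precisely, applying \eqref{eqinterp} with $r=4$, so that $\theta = 2\,(\tfrac12-\tfrac14) = \tfrac12$, giving $\|\utOB\|_{L^4(\Omega)} \leq C\,\|\utOB\|_{L^2(\Omega)}^{1/2}\,\|\utOB\|_{H^1(\Omega)}^{1/2}$ — and then raising to the fourth power and integrating in time yields $\int_0^T \|\utOB\|_{L^4(\Omega)}^4 \dt \leq C\,\|\utOB\|_{L^\infty(0,T;\Lt^2(\Omega))}^2 \int_0^T \|\utOB\|_{H^1(\Omega)}^2\dt < \infty$, so $\utOB \in L^4(0,T;\Lt^4(\Omega))$. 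The identical interpolation argument applied component-wise to $\sigOB$, using $\sigOB \in L^\infty(0,T;\Ltt^2(\Omega))\cap L^2(0,T;\Htt^1(\Omega))$, gives $\sigOB \in L^4(0,T;\Ltt^4(\Omega))$.

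Next I would handle the nonlinear convective term $(\utOB\cdot\nabx)\utOB$. Since $\nabx\utOB \in L^2(0,T;\Lt^2(\Omega))$ and $\utOB \in L^4(0,T;\Lt^4(\Omega))$ from the previous step, Hölder's inequality in space with exponents $4$ and $2$ (note $\tfrac14+\tfrac12 = \tfrac34$) gives $\|(\utOB\cdot\nabx)\utOB\|_{L^{4/3}(\Omega)} \leq \|\utOB\|_{L^4(\Omega)}\,\|\nabx\utOB\|_{L^2(\Omega)}$; then Hölder in time with exponents $4$ and $2$ (again $\tfrac14+\tfrac12 = \tfrac34$) gives $\int_0^T \|(\utOB\cdot\nabx)\utOB\|_{L^{4/3}(\Omega)}^{4/3}\dt \leq \big(\int_0^T \|\utOB\|_{L^4(\Omega)}^4\dt\big)^{1/3}\big(\int_0^T \|\nabx\utOB\|_{L^2(\Omega)}^2\dt\big)^{2/3} < \infty$, so $(\utOB\cdot\nabx)\utOB \in L^{4/3}(0,T;\Lt^{4/3}(\Omega))$. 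The same pairing of $\utOB \in L^4(0,T;\Lt^4(\Omega))$ with $\nabx\sigOB \in L^2(0,T;\Ltt^2(\Omega))$ gives $(\utOB\cdot\nabx)\sigOB \in L^{4/3}(0,T;\Ltt^{4/3}(\Omega))$, and pairing $\nabxtt\utOB \in L^2(0,T;\Ltt^2(\Omega))$ with $\sigOB \in L^4(0,T;\Ltt^4(\Omega))$ gives $(\nabxtt\utOB)\,\sigOB \in L^{4/3}(0,T;\Ltt^{4/3}(\Omega))$.

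None of these steps presents a genuine obstacle; the lemma is essentially a packaging of the two-dimensional Ladyzhenskaya-type inequality together with Hölder's inequality, and the only point requiring any care is keeping track of the conjugate exponents so that all the Hölder pairings are admissible (which they are, precisely because $d=2$ forces $\theta=\tfrac12$ in \eqref{eqinterp} at the exponent $r=4$). The mildly subtle aspect, if any, is simply that the estimates must be uniform enough to conclude finiteness of the space--time integrals, which follows directly from the bounds \eqref{Qus} recorded in Theorem \ref{Qex}; I would remark that the argument is exactly the standard energy-space bootstrap for the two-dimensional Navier--Stokes equations, now carried along the stress variable as well.
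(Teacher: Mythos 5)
Your proposal follows exactly the paper's own argument: the Gagliardo--Nirenberg interpolation \eqref{eqinterp} applied to \eqref{Qus} gives the $L^4(0,T;\Lt^4(\Omega))$ bounds, and H\"older's inequality in space and time then yields the $L^{4/3}(0,T;\Lt^{4/3}(\Omega))$ bounds for the bilinear terms. The paper states this in two sentences; you have simply written out the exponent bookkeeping, and it is correct.
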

\begin{proof}
The first two results in (\ref{Qus1}) follow directly from (\ref{Qus}) and (\ref{eqinterp}).
The remaining results in (\ref{Qus1}) follow directly from the first two results in (\ref{Qus1})
and (\ref{Qus}) using H\"{o}lder's inequality.
\end{proof}

In order to improve the regularity results (\ref{Qus}) and (\ref{Qus1}),
we consider the parabolic initial-boundary-value
problem:
\begin{subequations}
\begin{alignat}{2}\label{para1}
\frac{\partial \eta}{\partial t} - \epsilon\, \Delta_x \eta &= g
\qquad &&\mbox{in} \ \Omega \times (0,T],\\
\epsilon\,\nabx \eta\cdot \nt  &=0 \qquad &&\mbox{on} \ \partial \Omega \times (0,T],
\label{para2}\\
\eta(\cdot,0) &= \eta_0(\cdot)  \qquad  &&\mbox{on} \   \Omega,
\label{para3}
\end{alignat}
\end{subequations}
where $\epsilon \in \mathbb{R}_{>0}$.
%$w_0$ and $g$ are known functions, and $w$ is the unknown solution.
We require the following definitions to state a regularity result for (\ref{para1}--c).
First we introduce fractional-order Sobolev spaces.
%Let $G$ be a bounded Lipschitz domain in $\mathbb{R}^d$.
For any $k\in \mathbb{N}$, $\b \in (0,1)$ and $s\in (1,\infty) $,
we define
\[
%\be\label{def-frac-Sob}
 W^{k+\b,s}(\Omega):=\left\{\zeta\in W^{k,s}(\Omega) : \| \zeta \|_{W^{k+\b,s}(\Omega)}<\infty \right\}\!,
%\ee
\]
where
$$
\| \zeta \|_{W^{k+\b,s}(\Omega)}:=\| \zeta \|_{W^{k,s}(\Omega)} + \sum_{|\alpha|=k}
\left(\int_\Omega\int_\Omega
\frac{|\partial^\alpha_x \zeta(\xt)- \partial^\alpha_y \zeta(\yt)|^s}{|\xt-\yt|^{d+\b s}} \,
\dx \,{\rm d}\yt\right)^{\frac{1}{s}}
$$
and $\partial^\alpha_x = \partial^\alpha/\partial_{x_1}^{\alpha_1}
\ldots \partial_{x_d}^{\alpha_d}$.
We then define $W^{2-\frac{2}{r},s}_{n}(\Omega)$, for $r,\, s \in (1,\infty)$,
to be the completion of
$\{\zeta \in  C^\infty(\overline \Omega): \nabx \zeta \,\cdot\, \nt = 0 \ \mbox{on } \partial \Omega\}$
 in the norm of $W^{2-\frac{2}{r},s}(\Omega)$.
%We recall the following classical compact embedding theorem (see Theorem 7.1 in \cite{NPV}).
%\begin{lemma}\label{lem-frac-sob} Let $\O\subset \R^d$ be a bounded Lipschitz domain and
%suppose that $k\in \N$, \ $\b \in (0,1)$ and $s\in [1,\infty)$; then, the embedding
%of $W^{k+\b,s}(\O)$ into $W^{k,s}(\O)$ is compact.
%\end{lemma}
%Let $G$ be a bounded domain in $\mathbb{R}^d$ and
We now recall the following regularity result for (\ref{para1}--c); see
e.g.\ Lemma 7.37 in \cite{NovStras}.

\begin{lemma}\label{lem-para}
Let $\Omega \subset {\mathbb R}^d$ with $\partial \Omega \in C^{2,\mu}$, for $\mu \in (0,1)$.
In addition, let
$\eta_0 \in W^{2-\frac{2}{r},s}_{n}(\Omega)$ and $g\in L^r(0,T;L^s(\Omega))$,
for $r,\,s \in (1,\infty)$.
Then, there exists a unique function
$$
\eta \in  C([0,T];W^{2-\frac{2}{r},s}(\Omega))
\cap L^r(0,T;W^{2,s}(\Omega))
\cap W^{1,r}(0,T;L^s(\Omega))
$$
solving (\ref{para1}--c).
%a.e. in $\Omega \times (0,T]$,
%(\ref{para3}) a.e. in $\Omega$; in addition,
%$w$ satisfies
Here (\ref{para2}) is satisfied in the sense of the normal trace,
which is well defined since $\Delta_x \eta \in L^r(0,T;L^{s}(\Omega))$.
Moreover, we have that
\begin{align*}
%&
%\epsilon^{1-\frac{1}{r}}\,
\|\eta\|_{L^\infty(0,T;W^{2-\frac{2}{r},s}(\Omega))}
+ %\epsilon\,
\|\eta\|_{L^r(0,T; W^{2,s}(\Omega))}
+ \left\|\frac{\partial \eta}{\partial t} \right\|_{L^r(0,T; L^s(\Omega))}
\nonumber\\
%&\qquad\qquad
\leq C(\epsilon,r,s,\Omega)\left[ %\epsilon^{1-\frac{1}{r}}
\|\eta_0\|_{W^{2-\frac{2}{r},s}(\Omega)} + \|g\|_{L^r(0,T;L^s(\Omega))}\right]\!.
\end{align*}
\end{lemma}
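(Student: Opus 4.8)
The plan is to recognise (\ref{para1}--c) as the standard parabolic maximal $L^r$-$L^s$ regularity estimate and to assemble it from semigroup theory; alternatively one could run the Agmon--Douglis--Nirenberg / Ladyzhenskaya--Solonnikov--Uraltseva parabolic $L^p$ theory directly after localising and flattening $\partial\Omega$. First I would introduce the realisation $A := -\epsilon\,\Delta_x$ of the Neumann Laplacian on $L^s(\Omega)$, $s \in (1,\infty)$, with domain $D(A) = \{\zeta \in W^{2,s}(\Omega) : \nabx \zeta \cdot \nt = 0 \mbox{ on } \partial\Omega\}$. Since $\partial\Omega \in C^{2,\mu}$, elliptic regularity for the Neumann problem shows that $A+1$ is closed, densely defined and boundedly invertible, with graph norm equivalent to $\|\cdot\|_{W^{2,s}(\Omega)}$; moreover $A+1$ is $\mathcal{R}$-sectorial of angle $<\pi/2$ (equivalently, it admits a bounded $H^\infty$-calculus, hence bounded imaginary powers, of angle $<\pi/2$) on $L^s(\Omega)$ --- the standard fact for second-order elliptic operators with Neumann boundary conditions on a $C^2$ domain.

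I would dispose of the case $\eta_0 = 0$ first. By Weis's characterisation of maximal $L^r$-regularity via $\mathcal{R}$-boundedness (or the Dore--Venni theorem applied to $\partial_t$ and $A+1$), the mild solution $\eta(t) = \int_0^t {\rm e}^{-(t-t')A}\, g(t')\, \dd t'$ of (\ref{para1}--c) with $\eta_0 = 0$ satisfies $\partial_t\eta,\ A\eta \in L^r(0,T;L^s(\Omega))$ together with
\[
\left\|\frac{\partial \eta}{\partial t}\right\|_{L^r(0,T;L^s(\Omega))} + \|\eta\|_{L^r(0,T;W^{2,s}(\Omega))} \leq C(\epsilon,r,s,\Omega)\,\|g\|_{L^r(0,T;L^s(\Omega))},
\]
the constant being at worst $T$-dependent, which is harmless here since $T$ is fixed. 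For nonzero $\eta_0$, the homogeneous solution $t \mapsto {\rm e}^{-tA}\eta_0$ lies in the maximal-regularity class precisely when $\eta_0$ belongs to the real-interpolation trace space $(L^s(\Omega), D(A))_{1-\frac1r,r}$, and for $r,s \in (1,\infty)$ this space coincides, with equivalent norm, with $W^{2-\frac2r,s}_n(\Omega)$ as defined before the statement: when $2-\frac2r < 1+\frac1s$ the boundary condition is invisible and the space is just $W^{2-\frac2r,s}(\Omega)$, whereas for $2-\frac2r > 1+\frac1s$ the constraint $\nabx \eta_0 \cdot \nt = 0$ persists, which is exactly the completion appearing in the statement. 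Adding the two contributions yields the asserted space-time estimate; the standard vector-valued trace embedding $\{\eta \in L^r(0,T;W^{2,s}(\Omega)) : \partial_t\eta \in L^r(0,T;L^s(\Omega))\} \hookrightarrow C([0,T];W^{2-\frac2r,s}(\Omega))$ then gives $\eta \in C([0,T];W^{2-\frac2r,s}(\Omega))$ and the $L^\infty$-in-time control of $\|\eta\|_{W^{2-\frac2r,s}(\Omega)}$. The Neumann condition (\ref{para2}) holds in the normal-trace sense because $\Delta_x \eta \in L^r(0,T;L^s(\Omega))$, and uniqueness follows from uniqueness of the semigroup representation, or, more elementarily, by testing the difference $v$ of two solutions against $|v|^{s-2}\,v$, using (\ref{para2}) to discard the boundary term, and invoking Gronwall's inequality.

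The hard part is the maximal $L^r$-$L^s$ regularity itself, i.e. the $\mathcal{R}$-sectoriality (bounded $H^\infty$-calculus) of the Neumann Laplacian on a merely $C^{2,\mu}$, rather than smooth, domain, together with the precise identification of the trace space $(L^s(\Omega), D(A))_{1-\frac1r,r}$ with $W^{2-\frac2r,s}_n(\Omega)$ across all parameter ranges; both are classical but technical. Since this lemma enters the present paper only as a black box, I would not reprove it but simply cite Lemma 7.37 in \cite{NovStras}, where it is established in exactly the generality required here.
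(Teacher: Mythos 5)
Your proposal is correct and, in the end, takes the same route as the paper: Lemma \ref{lem-para} is stated there without proof, as a recalled result, with a bare citation to Lemma 7.37 of \cite{NovStras} --- exactly the reference you invoke in your closing paragraph. The preceding maximal-regularity sketch (Neumann Laplacian realisation, $\mathcal{R}$-sectoriality/Weis, identification of the real-interpolation trace space with $W^{2-\frac{2}{r},s}_n(\Omega)$, vector-valued trace embedding, and the $|v|^{s-2}v$/Gr\"onwall uniqueness argument) is a correct outline of what underlies that citation, but none of it appears in the paper itself.
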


We now apply Lemma \ref{lem-para} to the stress equation (\ref{eqsigOB}).
\begin{lemma}\label{lem-OBspara}
Let $d=2$, $\partial \Omega \in C^{2,\mu}$, for $\mu \in (0,1)$,  and $\sigtt_0 \in
\Wtt_{n}^{\frac{1}{2},\frac{4}{3}}(\Omega)$
with
$\sigtt_0 = \sigtt_0^{\rm T} \geq 0$ a.e.\ in $\Omega$. Then we have that
\begin{align}
\!\!\!\sigOB \in L^{\frac{4}{3}}(0,T;\Wtt^{2,\frac{4}{3}}(\Omega))
\Rightarrow \nabx \cdot \sigOB \in L^{\frac{4}{3}}(0,T;\Wt^{1,\frac{4}{3}}(\Omega))
\Rightarrow \nabx \cdot \sigOB \in L^{\frac{4}{3}}(0,T;\Lt^{4}(\Omega)).
\label{Qus2}
\end{align}
\end{lemma}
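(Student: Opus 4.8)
The plan is to apply the parabolic regularity result Lemma \ref{lem-para} to the stress equation \eqref{eqsigOB}, viewed componentwise as a scalar parabolic Neumann problem of the form \eqref{para1}--c with diffusion coefficient $\epsilon$. For each component $(\sigOB)_{k\ell}$ the right-hand side $g$ consists of the terms $\frac{1}{2\lambda}(\Itt - \sigOB)_{k\ell}$, the convective term $-[(\utOB \cdot \nabx)\sigOB]_{k\ell}$, and the velocity-gradient coupling terms $[(\nabxtt \utOB)\sigOB + \sigOB(\nabxtt \utOB)^{\rm T}]_{k\ell}$. First I would verify that $g \in L^{4/3}(0,T;L^{4/3}(\Omega))$: the zeroth-order term lies in $L^\infty(0,T;\Ltt^2(\Omega)) \hookrightarrow L^{4/3}(0,T;\Ltt^{4/3}(\Omega))$ on the bounded interval and bounded domain, while the remaining three terms are exactly those shown to lie in $L^{4/3}(0,T;\Ltt^{4/3}(\Omega))$ in Lemma \ref{OBlem}. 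With $r = s = \frac43 \in (1,\infty)$ and the hypothesis $\sigtt_0 \in \Wtt_n^{1/2,4/3}(\Omega) = \Wtt_n^{2-2/r,s}(\Omega)$, Lemma \ref{lem-para} yields $\sigOB \in L^{4/3}(0,T;\Wtt^{2,4/3}(\Omega)) \cap W^{1,4/3}(0,T;\Ltt^{4/3}(\Omega))$, together with $C([0,T];\Wtt^{2-2/r,s}(\Omega))$; this gives the first assertion in \eqref{Qus2}.

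One technical point to address before invoking uniqueness in Lemma \ref{lem-para}: the weak formulation \eqref{eqsigOB} must be identified with the strong/normal-trace solution of \eqref{para1}--c for the same data. This follows because the solution provided by Lemma \ref{lem-para} is in particular a weak solution of \eqref{eqsigOB} with the given $g$ and $\sigtt_0$, and weak solutions of this linear parabolic problem with the $\epsilon$-diffusion and Neumann boundary condition are unique in the class to which $\sigOB$ belongs (an energy estimate: testing the difference against itself and using Gr\"onwall, exactly as in the uniqueness results referenced in Section \ref{OB}). Hence the $\sigOB$ from Theorem \ref{Qex} coincides with the more regular solution, which is the content of the implication symbol ``$\Rightarrow$'' as it is used here — the left-hand membership is what we are establishing, and the subsequent memberships are its consequences.

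The second implication in \eqref{Qus2} is immediate: if $\sigOB \in L^{4/3}(0,T;\Wtt^{2,4/3}(\Omega))$ then each first-order derivative $\partial_{x_j}(\sigOB)_{k\ell}$ lies in $L^{4/3}(0,T;W^{1,4/3}(\Omega))$, and in particular the entries of $\nabx \cdot \sigOB$, being sums of such derivatives, lie in $L^{4/3}(0,T;\Wt^{1,4/3}(\Omega))$. The third implication is a Sobolev embedding in the spatial variable: for $d = 2$, $W^{1,4/3}(\Omega) \hookrightarrow L^{q}(\Omega)$ with $\frac1q = \frac34 - \frac12 = \frac14$, i.e.\ $q = 4$, so $\nabx \cdot \sigOB \in L^{4/3}(0,T;\Lt^4(\Omega))$, integrating the embedding inequality to the power $4/3$ in time over $(0,T)$.

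The main obstacle is the first implication, and within it the genuinely substantive check is the regularity of the right-hand side $g$: the product terms $(\utOB \cdot \nabx)\sigOB$ and $(\nabxtt \utOB)\sigOB$ require the Gagliardo--Nirenberg-type bounds of Lemma \ref{OBlem} (which convert the $L^\infty_t L^2_x \cap L^2_t H^1_x$ information on $\utOB$ and $\sigOB$ into $L^4_t L^4_x$ bounds, then H\"older's inequality pairs a gradient in $L^2_t L^2_x$ with a factor in $L^4_t L^4_x$ to land in $L^{4/3}_t L^{4/3}_x$). Everything else — matching the boundary condition, checking that $r=s=4/3$ is admissible, and the final embeddings — is routine. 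A minor subtlety worth flagging is that Lemma \ref{lem-para} requires $\partial\Omega \in C^{2,\mu}$, which is why this smoothness hypothesis on $\Omega$ (stronger than the $C^{0,1}$ needed for Theorem \ref{Qex}) is imposed in the statement of the present lemma.
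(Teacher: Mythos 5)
Your proposal is correct and follows the same route as the paper: apply the maximal parabolic regularity of Lemma \ref{lem-para} componentwise to \eqref{eqsigOB} with $r=s=\tfrac43$, using the $L^{4/3}_t L^{4/3}_x$ bounds on the nonlinear source terms from Lemma \ref{OBlem}, then take one spatial derivative and finish with the Sobolev embedding $W^{1,4/3}(\Omega)\hookrightarrow L^4(\Omega)$ in $d=2$. The only difference is that you spell out the identification of the weak solution from Theorem \ref{Qex} with the strong solution produced by Lemma \ref{lem-para} via an energy/Gr\"onwall uniqueness argument, a step the paper leaves implicit but which is indeed needed and handled correctly here.
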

\begin{proof}
Applying Lemma \ref{lem-para} with $r=s=\frac{4}{3}$ to each component of
(\ref{eqsigOB}) and
noting (\ref{Qus1}) yields the first, and hence the second, result in (\ref{Qus2}).
The final result in (\ref{Qus2}) follows from the second result and Sobolev embedding as $d=2$.
\end{proof}

To improve the regularity results (\ref{Qus}), (\ref{Qus1}) and (\ref{Qus2}) further,
we
%Let $G$ be a bounded domain in $\mathbb{R}^d$ and
now consider the Stokes initial-boundary-value
problem for $\nu \in \mathbb{R}_{>0}$:
\begin{subequations}
\begin{alignat}{2}\label{stokes1}
\frac{\partial \vt}{\partial t} - \nu\,\Delta_x \vt + \nabx \pi&= \gt,
\qquad \nabx \cdot \vt = 0 \qquad &&\mbox{in} \ \Omega \times (0,T],\\
\vt  &=\zerot \qquad &&\mbox{on} \ \partial \Omega \times (0,T],
\label{stokes2}\\
\vt(\cdot,0) &= \vt_0(\cdot)  \qquad  &&\mbox{on} \   \Omega;
\label{stokes3}
\end{alignat}
\end{subequations}
and
the Navier--Stokes initial-boundary-value
problem, where (\ref{stokes1}) is replaced by
%\begin{subequations}
\begin{alignat}{2}\label{temam1}
\frac{\partial \vt}{\partial t} + (\vt \cdot \nabx) \vt - \nu\,\Delta_x \vt + \nabx \pi&= \gt,
\qquad \nabx \cdot \vt = 0 \qquad &&\mbox{in} \ \Omega \times (0,T].
%\vt  &=0 \qquad &&\mbox{on} \ \partial \Omega \times (0,T],
%\label{temam2}\\
%\vt(\cdot,0) &= \vt_0(\cdot)  \qquad  &&\mbox{on} \   \Omega.
%\label{temam3}
\end{alignat}
%\end{subequations}
%with $\vt_0$ and $\gt$ are known functions,
%and $\vt$ is the unknown solution.

In order to state a regularity result for (\ref{stokes1}--c),
we require the following definitions.
The first is a generalisation of $\Ht$, let
$\Ltdiv^s(\Omega)$ be the completion of $%\Ct_{0,{\rm div}}^\infty(\Omega):=
\{
\wt \in \Ct^\infty_0(\Omega) : \nabx \cdot \wt = 0 \mbox{ in } \Omega \}$
in $\Lt^s(\Omega)$ for $s \in (1,\infty)$. So $\Ht \equiv \Ltdiv^2(\Omega)$.
Next, we introduce, for $\alpha \in (0,1)$ and $r,\,s \in (1,\infty)$,
\begin{align}
\Dt^{\alpha,r}_s(\Omega)
:= \left \{ \wt \in \Ltdiv^s(\Omega) :
\|\wt\|_{D^{\alpha,r}_s(\Omega)} := \|\wt\|_{L^s(\Omega)} +
\left( \int_0^\infty \|t^{1-\alpha}\,A_s\,{\rm e}^{-t\,A_s}\, \wt \|^r_{L^s(\Omega)}\,t^{-1} \dt
\right)^{\frac{1}{r}} <\infty
\right\},
\label{Dt}
\end{align}
where $A_s= -P_s\,\Delta$ is the Stokes operator with domain $D(A_s)=
\Ltdiv^s(\Omega)\cap \Wt^{1,s}_0(\Omega) \cap \Wt^{2,s}(\Omega)$
and $P_s:\Lt^s(\Omega) \rightarrow \Ltdiv^s(\Omega)$ is the Helmholtz
projection, see \cite{GigaSohr91} for details and Remark \ref{remHD} below.
We now recall the following regularity result for (\ref{stokes1}--c),
%of relevance to us here is encapsulated in the following lemma.
see Theorem 2.8 in \cite{GigaSohr91}.

\begin{lemma}\label{lem-stokes}
Let $\Omega \subset {\mathbb R}^d$ with $\partial \Omega \in C^{2,\mu}$, for $\mu \in (0,1)$.
In addition,  let
%$$
%G \in C^{2,\beta},\qquad
$\vt_0 \in \Dt^{1-\frac{1}{r},r}_{s}(\Omega)$ and $\gt
\in L^r(0,T;\Ltdiv^s(\Omega))$, for $r,\,s \in (1,\infty)$.
Then, there exist unique functions $\vt$ and $\nabx \pi$  satisfying
$$
\vt \in L^r(0,T;\Wt^{2,s}(\Omega))\cap W^{1,r}(0,T;\Lt^{s}(\Omega)),  \qquad
\nabx \pi \in L^{r}(0,T;\Lt^s(\Omega))
$$
and solving (\ref{stokes1}--c).
Moreover, we have that
\begin{align*}
%&
\,\|\vt\|_{L^r(0,T;W^{2,s}(\Omega))}
+ \left\|\frac{\partial \vt}{\partial t} \right\|_{L^r(0,T; L^s(\Omega))}
+ \|\nabx \pi\|_{L^r(0,T; L^{s}(\Omega))}  \nonumber\\
%&\qquad\qquad
\leq C(\nu,r,s,\Omega)\left[
\|\vt_0\|_{D^{1-\frac{1}{r},r}_s(\Omega)} + \|\gt\|_{L^r(0,T;L^s(\Omega))}\right].
\end{align*}
\end{lemma}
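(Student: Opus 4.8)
The plan is to recognize Lemma~\ref{lem-stokes} as a restatement of the maximal $L^r$--$L^s$ regularity theory for the nonstationary Stokes system on a bounded domain, so that the cleanest route in the paper is to cite Theorem~2.8 of \cite{GigaSohr91}; I nonetheless sketch the structure of the argument in case a self-contained proof is wanted. First I would apply the Helmholtz projection $P_s:\Lt^s(\Omega)\to\Ltdiv^s(\Omega)$ to the momentum equation in (\ref{stokes1}), which annihilates the pressure gradient and recasts the problem as the abstract Cauchy problem
\[
\frac{{\rm d}\vt}{{\rm d}t} + \nu\,A_s\,\vt = P_s\,\gt \quad\mbox{in }(0,T], \qquad \vt(0)=\vt_0,
\]
where $A_s=-P_s\,\Delta_x$ is the Stokes operator on $\Ltdiv^s(\Omega)$ with the domain specified after (\ref{Dt}). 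Once the regularity of $\vt$ is established, the pressure gradient is recovered from $\nabx\pi = (I-P_s)\big(\gt + \nu\,\Delta_x\vt\big)$, which lies in $L^r(0,T;\Lt^s(\Omega))$ as soon as $\vt\in L^r(0,T;\Wt^{2,s}(\Omega))$.

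The analytic heart of the matter is that, for bounded $\Omega$ with $\partial\Omega\in C^{2,\mu}$, the operator $A_s$ generates a bounded analytic semigroup on $\Ltdiv^s(\Omega)$ and enjoys maximal $L^r$-regularity. In \cite{GigaSohr91} this is obtained by first showing that $A_s$ admits bounded imaginary powers, $\|A_s^{{\rm i}\theta}\|_{\mathcal L(\Ltdiv^s(\Omega))}\le C\,{\rm e}^{\epsilon_0|\theta|}$ for some $\epsilon_0<\pi/2$, and then invoking the Dore--Venni theorem. For the mild solution
\[
\vt(t) = {\rm e}^{-t\nu\,A_s}\,\vt_0 + \int_0^t {\rm e}^{-(t-\sigma)\nu\,A_s}\,P_s\,\gt(\sigma)\,{\rm d}\sigma
\]
this yields
\[
\left\|\frac{{\rm d}\vt}{{\rm d}t}\right\|_{L^r(0,T;L^s(\Omega))} + \|A_s\vt\|_{L^r(0,T;L^s(\Omega))} \le C\left(\|\vt_0\|_{(\Ltdiv^s(\Omega),\,D(A_s))_{1-1/r,\,r}} + \|P_s\gt\|_{L^r(0,T;L^s(\Omega))}\right),
\]
the convolution term being controlled by the maximal-regularity estimate, while ${\rm e}^{-t\nu\,A_s}\vt_0$ lies in the maximal-regularity class precisely when $\vt_0$ belongs to the real interpolation trace space $(\Ltdiv^s(\Omega),D(A_s))_{1-1/r,\,r}$.

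Third, I would identify this trace space with the space $\Dt^{1-1/r,r}_s(\Omega)$ of (\ref{Dt}): since $A_s$ generates a bounded analytic semigroup, the standard $K$-functional characterization of $(\Ltdiv^s(\Omega),D(A_s))_{1-1/r,\,r}$ is equivalent, with $\alpha=1-1/r$, to the semigroup-orbit norm $\|\wt\|_{L^s(\Omega)} + \big(\int_0^\infty \|t^{1-\alpha}\,A_s\,{\rm e}^{-t\,A_s}\,\wt\|_{L^s(\Omega)}^r\,t^{-1}\,{\rm d}t\big)^{1/r}$ appearing in (\ref{Dt}). Moreover $D(A_s)\hookrightarrow\Wt^{2,s}(\Omega)$ by elliptic regularity for the stationary Stokes resolvent problem --- this is the only place where $\partial\Omega\in C^{2,\mu}$ is needed --- so $\vt\in L^r(0,T;D(A_s))$ upgrades to $\vt\in L^r(0,T;\Wt^{2,s}(\Omega))$, while $\vt\in W^{1,r}(0,T;\Lt^s(\Omega))$ is immediate from the time-derivative bound; the no-slip condition (\ref{stokes2}) is encoded in $D(A_s)\subset\Wt^{1,s}_0(\Omega)$, and the initial condition (\ref{stokes3}) is attained in $\Dt^{1-1/r,r}_s(\Omega)$ by the trace-space property. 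Uniqueness follows by applying the same a priori bound to the difference of two solutions corresponding to $\gt=\zerot$ and $\vt_0=\zerot$.

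The genuine obstacle is the maximal-regularity input itself --- equivalently, the boundedness of the imaginary powers of $A_s$ on $\Ltdiv^s(\Omega)$, or, in more modern terms, the bounded $\mathcal H^\infty$-calculus, respectively the $\mathcal R$-boundedness of the resolvent, of the Stokes operator on an arbitrary bounded $C^{2,\mu}$ domain; everything else is interpolation-space bookkeeping together with the reconstruction of $\nabx\pi$. Since this deep fact is precisely the content of \cite{GigaSohr91}, in the paper I would simply cite Theorem~2.8 there, together with the standard identification of $(\Ltdiv^s(\Omega),D(A_s))_{1-1/r,\,r}$ with $\Dt^{1-1/r,r}_s(\Omega)$, rather than reproduce the proof.
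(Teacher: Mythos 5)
Your proposal is correct and takes essentially the same approach as the paper: the paper also treats this as a direct citation of Theorem~2.8 of \cite{GigaSohr91} and gives no proof of its own. The additional sketch you offer (Helmholtz projection to an abstract Cauchy problem for $A_s$, maximal regularity via bounded imaginary powers and Dore--Venni, identification of $\Dt^{1-1/r,r}_s(\Omega)$ with the real interpolation trace space $(\Ltdiv^s(\Omega),D(A_s))_{1-1/r,r}$, recovery of $\nabx\pi$ via $I-P_s$) is an accurate account of what Giga--Sohr prove, but it is supplementary to, rather than different from, the paper's one-line reference.
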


%Finally, to improve the regularity results (\ref{Qus}), (\ref{Qus1}), (\ref{Qus2}) and
%(\ref{Qus3}), we consider
%The following result is taken from \cite{Temam}.
%Let $G$ be a bounded domain in $\mathbb{R}^d$ and consider

%Here $\nu \in \mathbb{R}_{>0}$ with $\vt_0$ and $\gt$ are known functions,
%and $\vt$ is the unknown solution.
%The regularity result of relevance to us here is encapsulated in the following lemma.
In addition, we recall the following regularity result for (\ref{temam1}), (\ref{stokes2},c),
see Theorem 3.10 on p.213 in \cite{Temam}.

\begin{lemma}\label{lem-temam}
Let $d=2$, $\partial \Omega \in C^2$, $\vt_0 \in \Vt$ and $\gt \in L^2(0,T;\Ltdiv^2(\Omega))$.
Then, there exists a function
$$
\vt \in L^\infty(0,T;\Vt) \cap L^2(0,T;\Ht^2(\Omega))\cap H^1(0,T;\Ht)
$$
solving (\ref{temam1}), (\ref{stokes2},c).
Moreover, we have that
\begin{align*}
&\|\vt\|_{L^\infty(0,T;H^{1}(\Omega))}
+\|\vt\|_{L^2(0,T;H^{2}(\Omega))}
+ \left\|\frac{\partial \vt}{\partial t} \right\|_{L^2(0,T; L^2(\Omega))}
\!\! \leq C(\nu,\Omega)\left[
\|\vt_0\|_{H^1(\Omega)} + \|\gt\|_{L^2(0,T;L^2(\Omega))}\right].
\end{align*}
\end{lemma}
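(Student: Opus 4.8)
The plan is to defer to the classical theory: this is Theorem~3.10 on p.~213 of Temam \cite{Temam}, and the proof will consist in invoking that reference. For orientation, here is the structure of the standard Galerkin argument one would carry out. I would work in the basis $\{\wt_j\}_{j\geq1}$ of eigenfunctions of the Stokes operator $A:=A_2$ on $\Omega$ (legitimate since $\partial\Omega\in C^2$), take $\vt_m:=\sum_{j=1}^m c_{j,m}(t)\,\wt_j$ to be the solution of the projected system, and write $\Pi_m$ for the $\Ht$-orthogonal projection onto $\mathrm{span}\{\wt_1,\dots,\wt_m\}$. First I would test the $m$-th equation with $\vt_m$ and use the diagonal cancellation of the convective term, $\int_\Omega[(\vt_m\cdot\nabx)\vt_m]\cdot\vt_m\dx=0$, to get
\[
\frac12\,\frac{{\rm d}}{{\rm d}t}\|\vt_m\|_{L^2(\Omega)}^2+\nu\,\|\nabx\vt_m\|_{L^2(\Omega)}^2=\int_\Omega\gt\cdot\vt_m\dx,
\]
whence, by Young's and Poincar\'e's inequalities and $\|\vt_0\|_{L^2(\Omega)}\leq C(\Omega)\|\vt_0\|_{H^1(\Omega)}$,
\[
\|\vt_m\|_{L^\infty(0,T;L^2(\Omega))}^2+\nu\,\|\vt_m\|_{L^2(0,T;H^1(\Omega))}^2\leq C(\nu,\Omega)\big[\|\vt_0\|_{H^1(\Omega)}^2+\|\gt\|_{L^2(0,T;L^2(\Omega))}^2\big];
\]
note that this lowest-order bound is genuinely linear in the data, precisely thanks to the diagonal cancellation.

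For the higher-order estimate I would test with $A\vt_m$, invoke the elliptic regularity of the Stokes operator on the $C^2$ domain, $\|\vt_m\|_{H^2(\Omega)}\leq C(\Omega)\|A\vt_m\|_{L^2(\Omega)}$, and the two-dimensional interpolation (Ladyzhenskaya) inequalities $\|\zt\|_{L^4(\Omega)}\leq C\|\zt\|_{L^2(\Omega)}^{1/2}\|\nabx\zt\|_{L^2(\Omega)}^{1/2}$ and $\|\nabx\zt\|_{L^4(\Omega)}\leq C\|\nabx\zt\|_{L^2(\Omega)}^{1/2}\|\zt\|_{H^2(\Omega)}^{1/2}$, to control the convective term:
\[
\Big|\int_\Omega[(\vt_m\cdot\nabx)\vt_m]\cdot A\vt_m\dx\Big|\leq C\,\|\vt_m\|_{L^2(\Omega)}^{1/2}\|\nabx\vt_m\|_{L^2(\Omega)}\|A\vt_m\|_{L^2(\Omega)}^{3/2}\leq\frac{\nu}{2}\|A\vt_m\|_{L^2(\Omega)}^2+\frac{C}{\nu^3}\|\vt_m\|_{L^2(\Omega)}^2\|\nabx\vt_m\|_{L^2(\Omega)}^4.
\]
After absorbing the first term on the left and treating $\int_\Omega\gt\cdot A\vt_m\dx$ likewise, I would be left with a differential inequality for $\|\nabx\vt_m\|_{L^2(\Omega)}^2$ whose Gronwall coefficient $C\nu^{-3}\|\vt_m\|_{L^2(\Omega)}^2\|\nabx\vt_m\|_{L^2(\Omega)}^2$ lies in $L^1(0,T)$ by the first step; Gronwall's lemma then bounds $\{\vt_m\}$ in $L^\infty(0,T;\Vt)\cap L^2(0,T;\Ht^2(\Omega))$. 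Rewriting the Galerkin equation as $\tfrac{\partial\vt_m}{\partial t}=\Pi_m\big[\gt+\nu\,\delx\vt_m-(\vt_m\cdot\nabx)\vt_m\big]$ and estimating $\|(\vt_m\cdot\nabx)\vt_m\|_{L^2(\Omega)}\leq\|\vt_m\|_{L^4(\Omega)}\|\nabx\vt_m\|_{L^4(\Omega)}$ by the same inequalities would then give $\{\tfrac{\partial\vt_m}{\partial t}\}$ bounded in $L^2(0,T;\Ht)$.

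The last step is to pass to the limit $m\to\infty$: the compactness result \eqref{compact1} (Aubin--Lions--Simon) with ${\cal B}_0=\Vt$ and ${\cal B}={\cal B}_1=\Ht$ yields strong convergence of a subsequence in $L^2(0,T;\Ht)$, which, with the uniform bounds, suffices to pass to the limit in the quadratic term; the linear terms pass by weak convergence, $\nabx\pi$ is recovered from the momentum equation by de~Rham's theorem, and the stated bounds for $\vt$ are inherited by weak lower semicontinuity. A caveat on the constant: the Gronwall step forces $C(\nu,\Omega)$ in the statement to be read as depending also --- through an exponential --- on $\|\vt_0\|_{L^2(\Omega)}$ and $\|\gt\|_{L^2(0,T;L^2(\Omega))}$; since in two dimensions these quantities are themselves linearly controlled by the first step, this is the customary form in which the estimate is recorded (Temam, loc.\ cit.) and is all that is used below. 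The main obstacle, and indeed the only genuinely delicate point, is the convective-term bound in the second paragraph: it is there --- not in the passage to the limit --- that the hypothesis $\partial\Omega\in C^2$ enters, everything else being routine.
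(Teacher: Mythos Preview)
Your proposal is correct and matches the paper's approach exactly: the paper gives no proof at all beyond the citation ``see Theorem 3.10 on p.~213 in \cite{Temam}'', which is precisely the reference you invoke. Your additional sketch of the Galerkin argument and the caveat about the Gr\"onwall-induced dependence of the constant on the data are accurate and go beyond what the paper itself provides.
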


\begin{remark} \label{remHD}
{\rm
We note that Lemmas \ref{lem-stokes} and \ref{lem-temam}
can be applied to $\gt \in L^r(0,T;\Lt^s(\Omega))$ (with $r,\,s \in (1,\infty)$
in the case of Lemma \ref{lem-stokes}, and $r=s=2$ in the case of Lemma \ref{lem-temam}),
by using the Helmholtz decomposition of $\gt$ and adjusting the pressure,
since any such $\gt$ can be uniquely decomposed
as
$$ \gt = \gt_0 + \nabx \eta, \quad \mbox{where}\quad \gt_0 \in L^r(0,T;\Ltdiv^s(\Omega))
\quad \mbox{and} \quad \nabx \eta \in L^r(0,T;\Lt^s(\Omega)).$$}
\end{remark}

%We note that Remark \ref{remHD} applies to Lemma \ref{lem-temam}.
On applying Lemmas \ref{lem-stokes} and \ref{lem-temam}
to the flow equation (\ref{equtOB}), we have the following result.

\begin{lemma}\label{lem-OBuGS1}
Let the assumptions of Lemma \ref{lem-OBspara} hold.
In addition let
$\ut_0 \in \Vt \cap
\Dt_{\mathfrak{s}}^{1-\frac{1}{\mathfrak{r}},\mathfrak{r}}(\Omega)$ and
$\ft \in L^2(0,T;\Lt^2(\Omega))$ $\cap
L^\mathfrak{r}(0,T;\Lt^\mathfrak{s}(\Omega))$,
for $\mathfrak{r} \in (1,\frac{4}{3}]$ and $\mathfrak{s} \in (2,4)$.
Then,
we have that
%\begin{subequations}
\begin{align}
&\utOB \in L^{\infty}(0,T;\Vt)\cap L^{2}(0,T;\Ht^{2}(\Omega))\cap H^1(0,T;\Ht)
\nonumber \\
&\hspace{1in}\Rightarrow \nabxtt \utOB \in L^{\infty}(0,T;\Ltt^{2}(\Omega))\cap L^{2}(0,T;\Htt^{1}(\Omega))
\nonumber \\
&\hspace{1in}\Rightarrow \nabxtt \utOB \in L^{4}(0,T;\Ltt^{4}(\Omega)).
\label{Qus4}
\end{align}
In addition, (\ref{Qus4}) yields, for any $\mathfrak{y} \in [1,\infty)$
and any $\mathfrak{z} \in [1,4)$, that
\begin{align}
\utOB \in L^\infty(0,T;\Lt^{\mathfrak{y}}(\Omega))
\qquad\mbox{and} \qquad
(\utOB \cdot \nabx) \utOB  \in L^4(0,T;\Lt^\mathfrak{z}(\Omega)).
\label{Qus5}
\end{align}
%\end{subequations}
Moreover, we have that
\begin{align}
\utOB \in L^{\mathfrak{r}}(0,T;\Wt^{2,\mathfrak{s}}(\Omega))
&\Rightarrow \utOB \in L^{\mathfrak{r}}(0,T;\Wt^{1,\infty}(\Omega)).
\label{Qus6}
\end{align}
\end{lemma}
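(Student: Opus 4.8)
The plan is to bootstrap the basic regularity of the Oldroyd--B pair $(\utOB,\sigOB)$ already established in Theorem~\ref{Qex}, Lemma~\ref{OBlem} and Lemma~\ref{lem-OBspara} by feeding it as data into the maximal-regularity theory for the two-dimensional Navier--Stokes and Stokes systems (Lemmas~\ref{lem-temam} and~\ref{lem-stokes}), in three successive steps. Throughout, the hypothesis $\partial\Omega\in C^{2,\mu}$ supplies the boundary regularity those lemmas require, and Remark~\ref{remHD} is invoked to absorb the non-solenoidal part of each body force into the pressure, so that a forcing merely in $\Lt^s$ is admissible.

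For the first two lines of (\ref{Qus4}): since the test functions in (\ref{equtOB}) are divergence-free we may integrate by parts, $-k\int_\Omega\sigOB:\nabx\wt\dx=k\int_\Omega(\nabx\cdot\sigOB)\cdot\wt\dx$, which exhibits $\utOB$ as a weak solution of the Navier--Stokes system (\ref{temam1}), (\ref{stokes2},c) with body force $\gt:=\ft+k\,\nabx\cdot\sigOB$ and initial datum $\ut_0\in\Vt$. By (\ref{Qus}) we have $\sigOB\in L^2(0,T;\Htt^1(\Omega))$, hence $\nabx\cdot\sigOB\in L^2(0,T;\Lt^2(\Omega))$, and $\ft\in L^2(0,T;\Lt^2(\Omega))$ by assumption, so $\gt\in L^2(0,T;\Lt^2(\Omega))$. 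Lemma~\ref{lem-temam} (with Remark~\ref{remHD}) then produces a strong solution with precisely the stated regularity, and uniqueness of the two-dimensional weak solution with this data identifies it with $\utOB$; the second line of (\ref{Qus4}) is then immediate. The third line follows from the Gagliardo--Nirenberg inequality (\ref{eqinterp}) with $r=4$, $d=2$ (so $\theta=\tfrac12$), which gives $\|\nabxtt\utOB\|_{\Lt^4(\Omega)}^4\le C\,\|\nabxtt\utOB\|_{\Lt^2(\Omega)}^2\,\|\nabxtt\utOB\|_{\Htt^1(\Omega)}^2$, integrated in $t$.

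For (\ref{Qus5}): the bound $\utOB\in L^\infty(0,T;\Vt)$ and the two-dimensional embedding $H^1(\Omega)\hookrightarrow L^{\mathfrak y}(\Omega)$, $\mathfrak y\in[1,\infty)$, give the first assertion; for the second, write $\tfrac1{\mathfrak z}=\tfrac1{\mathfrak p}+\tfrac14$ with $\mathfrak p=\tfrac{4\mathfrak z}{4-\mathfrak z}\in[1,\infty)$ for $\mathfrak z\in[1,4)$, apply H\"older's inequality to obtain $\|(\utOB\cdot\nabx)\utOB\|_{\Lt^{\mathfrak z}(\Omega)}\le\|\utOB\|_{\Lt^{\mathfrak p}(\Omega)}\,\|\nabxtt\utOB\|_{\Lt^4(\Omega)}$, and combine $\utOB\in L^\infty(0,T;\Lt^{\mathfrak p}(\Omega))$ with $\nabxtt\utOB\in L^4(0,T;\Ltt^4(\Omega))$ from (\ref{Qus4}).

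Finally, for (\ref{Qus6}): as $\utOB$ solves the Navier--Stokes system above, it is in particular a weak solution of the Stokes system (\ref{stokes1}--c) with body force $\gt:=\ft+k\,\nabx\cdot\sigOB-(\utOB\cdot\nabx)\utOB$ and with initial datum $\ut_0\in\Dt_{\mathfrak s}^{1-\frac1{\mathfrak r},\mathfrak r}(\Omega)$ by hypothesis. The crux is checking $\gt\in L^{\mathfrak r}(0,T;\Lt^{\mathfrak s}(\Omega))$ for $\mathfrak r\in(1,\tfrac43]$, $\mathfrak s\in(2,4)$: here $\ft$ lies in this space by hypothesis; $\nabx\cdot\sigOB\in L^{4/3}(0,T;\Lt^4(\Omega))$ by Lemma~\ref{lem-OBspara} and (\ref{Qus2}), which embeds into $L^{\mathfrak r}(0,T;\Lt^{\mathfrak s}(\Omega))$ on the bounded set $\Omega\times(0,T)$ since $\mathfrak r\le\tfrac43$ and $\mathfrak s<4$; and $(\utOB\cdot\nabx)\utOB\in L^4(0,T;\Lt^{\mathfrak s}(\Omega))$ by (\ref{Qus5}), which likewise embeds into $L^{\mathfrak r}(0,T;\Lt^{\mathfrak s}(\Omega))$. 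Then Lemma~\ref{lem-stokes} (with Remark~\ref{remHD}), applied with $r=\mathfrak r$, $s=\mathfrak s$, together with uniqueness for the Stokes system, gives $\utOB\in L^{\mathfrak r}(0,T;\Wt^{2,\mathfrak s}(\Omega))$, and the Sobolev embedding $\Wt^{2,\mathfrak s}(\Omega)\hookrightarrow\Wt^{1,\infty}(\Omega)$, valid since $\mathfrak s>2=d$, yields the last implication in (\ref{Qus6}). I do not anticipate a genuine obstacle --- the result is a bootstrap of classical maximal-regularity statements --- but two points need care: identifying the strong solutions furnished by Lemmas~\ref{lem-temam} and~\ref{lem-stokes} with the previously constructed weak solution $\utOB$ (via two-dimensional uniqueness and Remark~\ref{remHD}), and, in the last step, verifying that the nonlinear term $(\utOB\cdot\nabx)\utOB$ belongs to an $\Lt^{\mathfrak s}$-in-space class with $\mathfrak s>2$, which is exactly what the improved integrability of the earlier steps provides.
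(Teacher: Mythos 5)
Your proposal is correct and follows essentially the same route as the paper: apply Lemma~\ref{lem-temam} (via Remark~\ref{remHD}) with forcing $\ft+k\,\nabx\cdot\sigOB\in L^2(0,T;\Lt^2(\Omega))$ to obtain \eqref{Qus4}, then Gagliardo--Nirenberg for the third line, Sobolev embedding and H\"older for \eqref{Qus5}, and finally Lemma~\ref{lem-stokes} with $r=\mathfrak r$, $s=\mathfrak s$ together with \eqref{Qus2} and \eqref{Qus5} for \eqref{Qus6}. You merely make explicit a few points the paper leaves tacit (the precise body-force decomposition, the embeddings of $L^{4/3}(0,T;\Lt^4)$ and $L^4(0,T;\Lt^{\mathfrak s})$ into $L^{\mathfrak r}(0,T;\Lt^{\mathfrak s})$, and the identification of the strong solution with $\utOB$ via uniqueness).
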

\begin{proof}
Applying Lemma \ref{lem-temam} to the flow equation (\ref{equtOB}),
on noting (\ref{Qus}) and Remark \ref{remHD}, yields the first result in (\ref{Qus4}).
The second result in (\ref{Qus4}) follows immediately from the first.
The third result in (\ref{Qus4}) follows from the second and (\ref{eqinterp}).
As $d=2$, it follows from the first result in (\ref{Qus4}) and Sobolev embedding that
the first result in (\ref{Qus5}) holds. The
second result in (\ref{Qus5}) follows from this and the last result in (\ref{Qus4}).

Applying Lemma \ref{lem-stokes} with $r=\mathfrak{r}$ and $s=\mathfrak{s}$ to the
flow equation (\ref{equtOB}),
on noting (\ref{Qus2}), (\ref{Qus5}) and Remark \ref{remHD},
yields the first result in (\ref{Qus6}).
The second result in (\ref{Qus6}) follows from the first and Sobolev embedding as $d=2$.
\end{proof}
%\cite{Sohr01}, \cite{Grisvard},

\begin{theorem} \label{thOBreg}
Let $d=2$, $\partial \Omega \in C^{2,\mu}$, for $\mu \in (0,1)$,
$\ut_0 \in \Vt \cap
\Dt_{\mathfrak{s}}^{1-\frac{1}{\mathfrak{r}},\mathfrak{r}}(\Omega)$
and
$\ft \in L^2(0,T;\Lt^2(\Omega)) \cap
L^\mathfrak{r}(0,T;\Lt^\mathfrak{s}(\Omega))$,
for $\mathfrak{r} \in (1,\frac{4}{3}]$ and $\mathfrak{s} \in (2,4)$,
and $\sigtt_0  = \sigtt^{\rm T}_0  \in
\Wtt^{1,2}_n(\Omega)$.
Then,
\begin{subequations}
\begin{align}
&\ut_{\rm OB} \in L^\infty(0,T;\Vt)
\cap L^2(0,T;\Ht^2(\Omega))
\cap L^{\mathfrak{r}}(0,T;\Wt^{1,\infty}(\Omega))
\cap H^1(0,T;\Ht) \label{utOBregf}\\
\mbox{and} \qquad
&\sigOB \in C([0,T];\Htt^1(\Omega)) \cap L^2(0,T;\Htt^2(\Omega)) \cap H^1(0,T;\Ltt^2(\Omega))
\label{sigOBregf}
\end{align}
\end{subequations}
solves (\ref{equtOB},b).

Moreover, for given $\utOB$ satisfying (\ref{utOBregf}), the solution $\sigOB$ to (\ref{eqsigOB}) is unique.
\end{theorem}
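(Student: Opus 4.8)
The plan is to take the weak solution $(\utOB,\sigOB)$ supplied by Theorem \ref{Qex} and to bootstrap it in three stages: the velocity field, uniqueness of the stress for a prescribed velocity, and then the stress regularity itself. For the velocity I would first check that the present hypotheses imply those of Lemmas \ref{lem-OBspara} and \ref{lem-OBuGS1}: since $\Omega$ is bounded, $\Wtt^{1,2}(\Omega)\hookrightarrow\Wtt^{1,\frac43}(\Omega)\hookrightarrow\Wtt^{\frac12,\frac43}(\Omega)$ and the zero normal-trace condition in the definition of the subscript-$n$ spaces is inherited along these embeddings, so $\sigtt_0\in\Wtt^{1,2}_n(\Omega)$ gives $\sigtt_0\in\Wtt_n^{\frac12,\frac43}(\Omega)$ (and trivially $\sigtt_0\in\Ltt^2(\Omega)$, as needed for Theorem \ref{Qex}). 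Then Lemma \ref{lem-OBspara} delivers (\ref{Qus2}) and Lemma \ref{lem-OBuGS1} delivers (\ref{Qus4})--(\ref{Qus6}); collected, these are precisely (\ref{utOBregf}). For the stress estimate below I record the consequences $\utOB\in L^2(0,T;\Ht^2(\Omega))\hookrightarrow L^2(0,T;\Lt^\infty(\Omega))$, $\nabxtt\utOB\in L^4(0,T;\Ltt^4(\Omega))\cap L^{\mathfrak{r}}(0,T;\Lt^\infty(\Omega))$, and $\sigOB\in L^4(0,T;\Ltt^4(\Omega))$ (Lemma \ref{OBlem}).

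Next I would establish uniqueness of $\sigOB$ for a velocity field satisfying (\ref{utOBregf}). If $\sigtt^{(1)}$ and $\sigtt^{(2)}$ both solve (\ref{eqsigOB}) with the same $\utOB$ and the same initial datum, their difference solves the homogeneous linear stress equation; testing it against itself, the convective term vanishes (since $\nabx\cdot\utOB=0$ and the boundary is no-flux), the relaxation term is dissipative, and the stretching term is bounded by $C\,\|\nabxtt\utOB\|_{L^\infty(\Omega)}\,\|\sigtt^{(1)}-\sigtt^{(2)}\|_{L^2(\Omega)}^2$; as $\|\nabxtt\utOB\|_{L^\infty(\Omega)}\in L^{\mathfrak{r}}(0,T)\subset L^1(0,T)$ by (\ref{Qus6}), Gronwall's lemma forces $\sigtt^{(1)}=\sigtt^{(2)}$.

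For the stress regularity I would run a second-order energy estimate on the strong form (\ref{sigijeq}) (with $\rho\equiv1$): test with $-\delx\sigOB$ and integrate over $\Omega$. The natural boundary condition $\epsilon\,\nabx\sigOB\cdot\nt=\zerott$ kills all boundary terms, leaving $\tfrac12\tfrac{\mathrm{d}}{\mathrm{d}t}\|\nabx\sigOB\|_{L^2(\Omega)}^2+\epsilon\|\delx\sigOB\|_{L^2(\Omega)}^2$ plus a further nonnegative term $\tfrac1{2\lambda}\|\nabx\sigOB\|_{L^2(\Omega)}^2$ on the left, while on the right the transport term is bounded by $\|\utOB\|_{L^\infty(\Omega)}\|\nabx\sigOB\|_{L^2(\Omega)}\|\delx\sigOB\|_{L^2(\Omega)}$ and the stretching term by $C\,\|\nabxtt\utOB\|_{L^4(\Omega)}\|\sigOB\|_{L^4(\Omega)}\|\delx\sigOB\|_{L^2(\Omega)}$. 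Absorbing the $\|\delx\sigOB\|_{L^2(\Omega)}^2$ on the left and applying Gronwall's lemma — valid since $t\mapsto\|\utOB(t)\|_{L^\infty(\Omega)}^2\in L^1(0,T)$, since $t\mapsto\|\nabxtt\utOB(t)\|_{L^4(\Omega)}^2\|\sigOB(t)\|_{L^4(\Omega)}^2\in L^1(0,T)$ (product of two $L^2(0,T)$ functions), and since $\|\nabx\sigtt_0\|_{L^2(\Omega)}<\infty$ — gives $\sigOB\in L^\infty(0,T;\Htt^1(\Omega))$ with $\delx\sigOB\in L^2(0,T;\Ltt^2(\Omega))$, and elliptic regularity for the Neumann problem on the $C^{2,\mu}$ domain upgrades this to $\sigOB\in L^2(0,T;\Htt^2(\Omega))$. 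Reading $\tfrac{\partial\sigOB}{\partial t}$ off from (\ref{sigijeq}) — the terms $(\utOB\cdot\nabx)\sigOB$ (now that $\nabx\sigOB\in L^\infty(0,T;\Ltt^2(\Omega))$ and $\utOB\in L^2(0,T;\Lt^\infty(\Omega))$), $(\nabxtt\utOB)\sigOB$ (since $\nabxtt\utOB$ and $\sigOB$ both lie in $L^4(0,T;\Ltt^4(\Omega))$), $\epsilon\,\delx\sigOB$, and $\tfrac1{2\lambda}(\sigOB-\Itt)$ are all in $L^2(0,T;\Ltt^2(\Omega))$ — gives $\tfrac{\partial\sigOB}{\partial t}\in L^2(0,T;\Ltt^2(\Omega))$, and then $\sigOB\in C([0,T];\Htt^1(\Omega))$ by Lions--Magenes interpolation between $L^2(0,T;\Htt^2(\Omega))$ and $H^1(0,T;\Ltt^2(\Omega))$; this is (\ref{sigOBregf}), and $(\utOB,\sigOB)$ — being the solution of Theorem \ref{Qex} — still satisfies (\ref{equtOB}) and (\ref{eqsigOB}). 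To make the formal test with $-\delx\sigOB$ rigorous I would carry the estimate out on a spectral Galerkin approximation of (\ref{eqsigOB}) in the eigenbasis of the Neumann Laplacian on $\Omega$, where it is an elementary computation, extract a limit enjoying (\ref{sigOBregf}) and solving (\ref{eqsigOB}) for the given $\utOB$, and identify it with $\sigOB$ via the uniqueness just established.

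The main difficulty, and the step I expect to be the real obstacle, is precisely this last bootstrap, because of a circularity: the direct route — applying the parabolic regularity Lemma \ref{lem-para} componentwise to (\ref{eqsigOB}) with $r=s=2$ — would need the source term $(\utOB\cdot\nabx)\sigOB$ in $L^2(0,T;\Ltt^2(\Omega))$, which is not available from (\ref{Qus})--(\ref{Qus2}): there $\nabx\sigOB$ lies only in $L^2(0,T;\Ltt^2(\Omega))$ and, even after Lemma \ref{lem-OBspara} improves it to $L^{\frac43}(0,T;\Lt^4(\Omega))$, pairing against $\utOB\in L^\infty(0,T;\Lt^4(\Omega))$ only yields $(\utOB\cdot\nabx)\sigOB\in L^{\frac43}(0,T;\Ltt^2(\Omega))$, which falls short in the time exponent. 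The energy estimate with test function $-\delx\sigOB$ breaks the circle because it produces $\nabx\sigOB\in L^\infty(0,T;\Ltt^2(\Omega))$ as a by-product, and that is exactly what is needed to put $(\utOB\cdot\nabx)\sigOB$ into $L^2(0,T;\Ltt^2(\Omega))$; the only remaining care is the justification of that test function (the Galerkin step) and the elementary fractional-Sobolev embedding of $\sigtt_0$.
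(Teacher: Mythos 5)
Your proof reaches the correct conclusion and the argument is sound, but the route through the stress regularity is genuinely different from the paper's. The paper never resorts to an $H^1$ energy estimate testing with $-\Delta_x\sigOB$; instead it iterates the maximal parabolic regularity Lemma~\ref{lem-para} twice, first with exponents $(r,s)=(2,\mathfrak{z})$ for $\mathfrak{z}\in[1,2)$ and then with $(r,s)=(2,2)$. The starting point is that (\ref{Qus}), (\ref{Qus1}), (\ref{Qus4}) and (\ref{Qus5}) already give $(\nabxtt\utOB)\,\sigOB\in L^2(0,T;\Ltt^2(\Omega))$ and, pairing $\nabx\sigOB\in L^2(0,T;\Ltt^2(\Omega))$ against $\utOB\in L^\infty(0,T;\Lt^{\mathfrak{y}}(\Omega))$ for any finite $\mathfrak{y}$, also $(\utOB\cdot\nabx)\sigOB\in L^2(0,T;\Ltt^{\mathfrak{z}}(\Omega))$ for every $\mathfrak{z}\in[1,2)$. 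Applying Lemma~\ref{lem-para} componentwise with $s=\mathfrak{z}<2$ then yields $\sigOB\in L^2(0,T;\Wtt^{2,\mathfrak{z}}(\Omega))$, and two-dimensional Sobolev embedding lifts this to $\nabx\sigOB\in L^2(0,T;\Ltt^{\mathfrak{y}}(\Omega))$ for any finite $\mathfrak{y}$, so that the convective term is upgraded to $L^2(0,T;\Ltt^2(\Omega))$ and a second pass with $r=s=2$ gives (\ref{sigOBregf}). Your diagnosis of the ``circularity'' is thus correct only for a \emph{one-shot} application of Lemma~\ref{lem-para} with $r=s=2$; the paper sidesteps it by lowering $s$ below $2$ on the first pass rather than abandoning the maximal regularity framework. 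What each approach buys: the paper's argument is purely linear-parabolic and automatically produces the bootstrapping of \emph{any} solution in $L^\infty(0,T;\Ltt^2(\Omega))\cap L^2(0,T;\Htt^1(\Omega))$ to the class (\ref{sigOBregf}) (which is then used as the first step of the paper's uniqueness proof), whereas your energy estimate is more elementary but has to be justified on a Galerkin scheme, after which uniqueness is needed to identify the Galerkin limit with $\sigOB$; consequently you are forced to prove uniqueness \emph{first}, directly in the class $L^\infty(0,T;\Ltt^2(\Omega))\cap L^2(0,T;\Htt^1(\Omega))$ without the benefit of the upgraded regularity, which requires one to verify that $\partial_t\ztt\in L^2(0,T;[\Htt^1(\Omega)]')$ (using $\nabxtt\utOB\in L^\infty(0,T;\Ltt^2(\Omega))$ and 2D embeddings) so that the test with $\ztt$ itself is legitimate --- a step you gesture at but do not spell out. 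Both orderings work; the paper's is arguably cleaner because the regularity and uniqueness steps decouple.
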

\begin{proof}
The result (\ref{utOBregf}) for $\utOB$ follows immediately from (\ref{Qus4}) and (\ref{Qus6}).
It follows from
(\ref{Qus1}), (\ref{Qus4}),
(\ref{Qus5}) and  (\ref{Qus}) that
$(\nabxtt \utOB)\, \sigOB
\in L^{2}(0,T;\Ltt^{2}(\Omega))$ and
$(\utOB\cdot \nabx) \sigOB \in L^{2}(0,T;\Ltt^{\mathfrak{z}}(\Omega))$  for any
$\mathfrak{z} \in [1,2)$.
Applying Lemma \ref{lem-para} with $r=2$ and $s=\mathfrak{z}$ to each component of
(\ref{eqsigOB}) and
noting the above yields that
$\sigOB \in C([0,T];\Wtt^{1,\mathfrak{z}}(\Omega)) \cap L^2(0,T;\Wtt^{2,\mathfrak{z}}(\Omega))
\cap H^1(0,T;\Ltt^\mathfrak{z}(\Omega))$.
From this and Sobolev embedding, as $d=2$, we obtain that
$\sigOB \in L^2(0,T,\Wtt^{1,\mathfrak{y}}(\Omega))$ for any $\mathfrak{y} \in [1,\infty)$.
Hence, combining this with (\ref{Qus5}) we now have that
$(\utOB\cdot \nabx) \sigOB \in L^{2}(0,T;\Ltt^{2}(\Omega))$.
Applying Lemma \ref{lem-para} again with $r=s=2$ now to each component of
(\ref{eqsigOB}) yields the desired result (\ref{sigOBregf}).

If there existed another solution $\sigOB' \in L^\infty(0,T;\Ltt^2(\Omega))\cap
L^2(0,T;\Htt^1(\Omega))$ to (\ref{eqsigOB}) for given $\utOB$ satisfying
(\ref{utOBregf}), then, as in Lemma 7.1 and the above,
one could establish that $\sigOB'$ satisfies the same regularity
as $\sigOB$ in (\ref{sigOBregf}). Hence the difference
$\ztt= \sigOB-\sigOB'$ satisfies
\begin{align}\label{eqztt}
&\displaystyle\int_{0}^{T}
\int_{\Omega} \frac{\partial \ztt}{\partial t} : \xitt
\dx \dt
+ \int_{0}^T \int_{\Omega}
\left[ \left[ (\utOB \cdot \nabx) \ztt \right] : \xitt
+ \epsilon \,\nabx \ztt
::
\nabx \xitt \right] \dx \dt
\nonumber
\\
&\quad + \int_{0}^T \int_\Omega
\left[ \frac{1}{2\,\lambda}\,\ztt -
\left((\nabxtt \utOB)\,\ztt + \ztt\,(\nabxtt \utOB)^{\rm T} \right)
\right] :
\xitt  \dx  \dt = 0
\quad
\forall \xitt \in L^2(0,T;\Htt^1(\Omega))
\end{align}
and $\ztt(\cdot,0)=\zerott$.
Choosing $\xitt=\chi_{[0,t]}\,\ztt$, where $\chi_{[0,t]}$ denotes the characteristic
function of the interval $[0,t]$,  in the above yields
for all $t \in [0,T]$ that
\begin{align}
&\frac{1}{2}\,\| \ztt(t)\|_{L^2(\Omega)}^2 + \int_0^t
\left[ \epsilon \,\| \nabx \ztt(t')\|_{L^2(\Omega)}^2
+ \frac{1}{2\,\lambda}\,\|
\ztt(t')\|_{L^2(\Omega)}^2
\right] {\rm d}t'
\nonumber \\
& \qquad =  \int_0^t \int_\Omega
\left[(\nabxtt \utOB)\,\ztt + \ztt\,(\nabxtt \utOB)^{\rm T}
\right] :  \ztt \dx\,{\rm d}t'
\leq 2\int_0^t |\utOB(t')|_{W^{1,\infty}(\Omega)}\,\|\ztt(t')\|^2_{L^2(\Omega)}
\,{\rm d}t'.
\label{sigOBuniq}
\end{align}
Applying a Gr\"{o}nwall inequality yields that $\ztt \equiv \zerott$, and hence the required uniqueness
result.
\end{proof}

\begin{remark}\label{Solrem}
{\rm As is noted in \cite{GigaSohr91}, Solonnikov proved in \cite{Sol01} that, for $\Omega \in C^{2,1}$,
$\Dt_{\mathfrak{s}}^{1-\frac{1}{\mathfrak{r}},\mathfrak{r}}(\Omega)$
is the completion of $D(A_\mathfrak{s})$ in $\Wt^{2-\frac{2}{\mathfrak{r}},\mathfrak{s}}(\Omega)$
when $\mathfrak{r}=\mathfrak{s}\neq \frac{3}{2}$ and $d=3$. For the characterization of $\Dt_{\mathfrak{s}}^{1-\frac{1}{\mathfrak{r}},\mathfrak{r}}(\Omega)$ in terms
of Besov spaces of divergence-free vector functions, we refer to the  Appendix in \cite{FarwigGigaHsu16}, where $d=3$ and
$\Omega \in C^{2,1}$, and
Theorem 3.4 in Amann \cite{Amann00}, where $d \geq 2$ and $\Omega \in C^2$. Specifically, $\Dt_{\mathfrak{s}}^{1-\frac{1}{\mathfrak{r}},\mathfrak{r}}(\Omega) %(\Ltdiv^\mathfrak{s}(\Omega),D(A_\mathfrak{s}))_{1-\frac{1}{\mathfrak{r}},\mathfrak{r}}
= \Bt^{2-\frac{2}{\mathfrak{r}}}_{\mathfrak{s},\mathfrak{r},0}(\Omega) \cap \Ltdiv^\mathfrak{s}(\Omega)$, where
$\Bt^{2-\frac{2}{\mathfrak{r}}}_{\mathfrak{s},\mathfrak{r},0}(\Omega) := \{\wt \in \Bt^{2-\frac{2}{\mathfrak{r}}}_{\mathfrak{s},\mathfrak{r}}(\Omega)\,:\, \wt|_{\partial\Omega} = \zerot\}$ for $2 -\frac{2}{\mathfrak{r}}>\frac{1}{\mathfrak{s}}$; $\Bt^{2-\frac{2}{\mathfrak{r}}}_{\mathfrak{s},\mathfrak{r},0}(\Omega) := \{\wt \in \Bt^{2-\frac{2}{\mathfrak{r}}}_{\mathfrak{s},\mathfrak{r}}(\mathbb{R}^d)\,:\, \mbox{supp }(\wt)\subset \overline{\Omega} \}$ for $2 -\frac{2}{\mathfrak{r}}=\frac{1}{\mathfrak{s}}$; and $\Bt^{2-\frac{2}{\mathfrak{r}}}_{\mathfrak{s},\mathfrak{r},0}(\Omega) := \Bt^{2-\frac{2}{\mathfrak{r}}}_{\mathfrak{s},\mathfrak{r}}(\Omega)$ for $0 \leq 2 -\frac{2}{\mathfrak{r}}<\frac{1}{\mathfrak{s}}$, with $\mathfrak{r}, \mathfrak{s} \in (1,\infty)$.
We note, for example,
that $\Bt^r_{s,s}(\Omega)=\Wt^{r,s}(\Omega)$, for $s \in (1, \infty)$ and fractional $r>0$,
and $\Bt^r_{2,2}(\Omega)=\Ht^{r}(\Omega)$, for $r \in {\mathbb N}$ (cf.\ Triebel \cite{T},
Sec.\ 4.4.1, Remark 2 and  Sec.\ 4.6.1, Theorem (b)).
 Consequently, for $\mathfrak{r} = \mathfrak{s}=2$ we have that $\Dt_{2}^{\frac{1}{2},2}(\Omega) = \Bt^{1}_{2,2,0}(\Omega)\cap \Ltdiv^2(\Omega) = \{\wt \in \Bt^{1}_{2,2}(\Omega)\,:\, \wt|_{\partial\Omega} = \zerot\}\cap \Ltdiv^2(\Omega) = \Ht^1_0(\Omega) \cap  \Ltdiv^2(\Omega) = \Vt$. As $\Bt^{1}_{2,2}(\Omega) \hookrightarrow \Bt^{1}_{2,\infty}(\Omega) \hookrightarrow \Bt^{1-\epsilon}_{2,1}(\Omega)
\hookrightarrow \Bt^{1-\epsilon}_{2,\mathfrak{r}}(\Omega)
\hookrightarrow \Bt^{2-\frac{2}{\mathfrak{r}}}_{\mathfrak{s},\mathfrak{r}}(\Omega)$ for all $\mathfrak{r},\mathfrak{s} \in (1,\infty)$ such that $1<\mathfrak{r}\leq \frac{2}{1+\epsilon}$ and $\frac{d}{\mathfrak{s}} + \frac{2}{\mathfrak{r}}\geq 1+ \epsilon + \frac{d}{2}$, $\epsilon \in (0,1)$ (cf.\ \cite{T}, Sec.\ 4.6.1, Theorem (a) and (c)),
it follows that $\Vt = \Dt_{2}^{\frac{1}{2},2}(\Omega) = \Bt^{1}_{2,2,0}(\Omega)\cap \Ltdiv^2(\Omega)
\subset \Bt^{2-\frac{2}{\mathfrak{r}}}_{\mathfrak{s},\mathfrak{r},0}(\Omega) \cap \Ltdiv^2(\Omega)
= \Bt^{2-\frac{2}{\mathfrak{r}}}_{\mathfrak{s},\mathfrak{r},0}(\Omega) \cap (\Lt^{\mathfrak{s}}(\Omega)\cap \Ltdiv^2(\Omega))
= \Bt^{2-\frac{2}{\mathfrak{r}}}_{\mathfrak{s},\mathfrak{r},0}(\Omega) \cap \Ltdiv^{\mathfrak{s}}(\Omega)$
(cf. Theorem III.2.3 in Galdi \cite{Galdi} for the final equality), and therefore $\Vt \subset \Dt_{\mathfrak{s}}^{1-\frac{1}{\mathfrak{r}},\mathfrak{r}}(\Omega)$ for all such
$\mathfrak{r}, \mathfrak{s}$. Thus, in particular, $\Vt \subset \Dt_{\mathfrak{s}}^{1-\frac{1}{\mathfrak{r}},\mathfrak{r}}(\Omega)$ for $d=2$, $\mathfrak{r} \in (1,2)$, and $\frac{1}{\mathfrak{r}} + \frac{1}{\mathfrak{s}}>1$; hence, for $d=2$ also $\Vt \cap
\Dt_{\mathfrak{s}}^{1-\frac{1}{\mathfrak{r}},\mathfrak{r}}(\Omega) = \Vt$
for $\mathfrak{r} \in (1,\frac{4}{3}]$ and $\mathfrak{s} \in (2,4)$.
}
\end{remark}

\section{The Fokker--Planck equation} \label{sec:FP}
\setcounter{equation}{0}
On setting $\ut=\utst \in L^\infty(0,T;\Vt)\cap
L^{1}(0,T;\Wtt^{1,\infty}(\Omega))$ in (\ref{fp0}),
we now want to prove the existence of a weak solution, $\psi=\psi_{\star}= M\, \hpsist$, to this
Fokker--Planck equation.
We restate this as the following problem.

\vspace{2mm}
%\footnote{Introduce $\hat Y :=
%\{ \hat \varphi \in L^1_M(\Omega \times D) : \int_{\Omega \times D} M\,|\qt|^2
%|\hat \varphi| \dq \dx < \infty \}$ ? earlier.}

{\boldmath $({\rm FP})$}: Find $\hpsist : (\xt,\qt,t) \mapsto \hpsist(\xt,\qt,t)$
%\in L^\infty(0,T;\hat Y)
%\cap(0,T;\hat X)\cap H^1(0,T;\hat X')$
such that
\begin{align}\label{hpsiOBeq}
&M\left(\frac{\partial \hpsist}{\partial t} + (\utst \cdot\nabx) \hpsist\right) +
\nabq
\cdot \left((\nabxtt \utst) \, \,\qt\,
M\,\hpsist \right)
=
\epsilon\,M\,\Delta_x\,\hpsist +
\frac{1}{4 \,\lambda}\,
\nabq \cdot \left(
M\,\nabq \hpsist\right)
\end{align}
subject to the following decay/boundary and initial conditions:
\begin{subequations}
\begin{alignat}{2}
&\left|M \left[ \frac{1}{4\,\lambda}
\,\nabq \hpsist
- (\nabxtt \utst) \,\qt\,\hpsist
\right] \right|
\rightarrow 0\quad \mbox{as} \quad |\qt| \rightarrow \infty
\qquad
&&\mbox{on }
\Omega
\times (0,T],
\label{hpsiOBa}\\
&\epsilon\,\nabx \hpsist \cdot \nt =0 \qquad &&\mbox{on }
\partial \Omega \times D\times (0,T],\label{hpsiOBb}\\
&\hpsist(\cdot,\cdot,0)=\hpsi_{0}(\cdot,\cdot)
\qquad&&\mbox{on $\Omega\times D$}.\label{hpsiOBc}
\end{alignat}
\end{subequations}

We shall assume that
\begin{align}
&\hpsi_0 \in L^2_M(\Omega \times D)
\quad \mbox{with} \quad
\hpsi_0
\geq 0 \mbox{ a.e.\ on } \Omega \times D, \quad \nonumber  \\
\mbox{and} \qquad
&[\rho(M\,\hpsi_0)](\xt)=\int_D M(\qt)\,\hpsi_0(\xt,\qt) \dq = 1 \mbox{ for a.e. } \xt \in \Omega.
\label{hpsi0}
\end{align}
In addition, we shall assume in this section that
\begin{align}
\Omega \subset \mathbb{R}^d, \ \mbox{$d=2$ or 3},
\mbox{ with }\partial \Omega \in C^{0,1}
\quad \mbox{and} \quad
\utst \in L^\infty(0,T,\Vt) \cap L^{1}(0,T,\Wt^{1,\infty}(\Omega)).
%\mbox{ for an $\mathfrak{r}>1$}.
\label{hpsidata}
\end{align}

\subsection{A discrete-in-time regularized problem, {\boldmath $({\rm FP}_L^{\Delta t})$}}

Similarly to \cite{BS2011-fene} and \cite{BS2010-hookean},
in order to prove existence of a weak solution to (FP),
we consider a discrete-in-time approximation, (FP$_L^{\Delta t}$), of a regularization of (FP)
based on the parameter $L > 1$,
where the drag term, i.e.\ the term involving $\nabxtt \utst$, in (\ref{hpsiOBeq})
and the corresponding term in (\ref{hpsiOBa})
are modified using the cut-off function
$\beta^L \in C({\mathbb R})$ defined as
\begin{align}
\beta^L(s) := \min(s,L) = \left\{\begin{array}{ll}
s \qquad & \mbox{for $s \leq L$},
\\ L \qquad & \mbox{for $s \geq L$}.
\end{array} \right.
\label{betaLa}
\end{align}
The weak formulation of the regularization of (FP) leads to the following problem involving the cut-off function $\beta^L$.

{\boldmath $({\rm FP}_L)$}: Find $\hpsistL \in L^\infty(0,T;L^2_M(\Omega\times D))
\cap L^2(0,T;\hat X)$ %\cap H^1(0,T;\hat X')$
such that
\begin{align}\label{hpsiOBLeq}
&-\int_{0}^T \int_{\Omega \times D}
M\, \hpsistL \frac{ \partial\hat \varphi}{\partial t}\,
 \dq \dx \dt
+ \int_{0}^T \int_{\Omega \times D} M\left[
\epsilon\, \nabx \hpsistL
- \utst\,\hpsistL \right]\cdot \nabx
\hat \varphi
\,\dq \dx \dt
\nonumber \\
& \quad  +
\int_{0}^T \int_{\Omega \times D} M\left[\frac{1}{4\,\lambda} \nabq \hpsistL - [\,(\nabxtt \utst)
\,\qt]\,
\beta^L(\hpsistL)\right] \cdot \nabq
\hat \varphi \,
\dq \dx \dt = \int_{\Omega \times D} M\,\beta^L(\hpsi_0)\, \hat \varphi
\dq \dx
\nonumber \\
& \hspace{2.8in}
\qquad \mbox{$\forall \hat \varphi \in W^{1,1}(0,T;\hat X)$ with  $\varphi(\cdot,\cdot,T)=0$.}
\end{align}
%where $\delta \in \mathbb{R}_{> 0}$ and
%subject to the initial condition $\hpsistL(\cdot,\cdot,0) = \beta^L(\hpsi_0(\cdot,\cdot))$.

%We require $\delta \in \mathbb{R}_{\geq 0}$ in (\ref{hpsiOBLeq}) in order for the term
%$$ \int_0^T \int_{\Omega \times D} M\,\hpsistL\,\utst \cdot \nabx  \hat \varphi\dq \dx \dt$$
%to be well-defined, we have
%$\hpsistL \in L^\infty(0,T;L^2_M(\Omega\times D))$ and $\ut \in L^{\mathfrak{z}}(0,T;L^\infty(\Omega))$
%for any $\mathfrak{z} \in [1,\infty)$.

We now formulate our discrete-in-time  approximation of (FP$_{L}$).
We set, for   $n=1, \dots,  N$,
\begin{align}
\utst^{\Delta t, +}(\cdot,t) =
\utst^n(\cdot) := \frac{1}{\Delta t}\,\int_{t_{n-1}}^{t_n}
\utst(\cdot,t) \dt \in \Vt\cap\Wt^{1,\infty}(\Omega),
\qquad t \in (t_{n-1}, t_n].
\label{utOBn}
\end{align}
It follows from (\ref{hpsidata}) and (\ref{utOBn}) that
\begin{subequations}
\begin{align}
&\|\utst^{\Delta t, +}\|_{L^\infty(0,T;H^1(\Omega))}
\leq \|\utst\|_{L^\infty(0,T;H^1(\Omega))}
\label{utOBDtsatb}\\
\mbox{and} \qquad
&\utst^{\Delta t, +} \rightarrow \utst \quad \mbox{strongly in }
L^{1}(0,T;\Wt^{1,\infty}(\Omega))
\mbox { as } \Delta t \rightarrow 0_{+}.
\label{utOBncon}
\end{align}
\end{subequations}

%Analogously to defining  $\ut^0 \in \Vt$ for a given initial velocity field
%$\ut_0 \in \Ht$,
Next, we shall assign a certain `smoothed' initial
datum,
\[\hpsi^0 = \hpsi^0(L,\Delta t) \in H^1_M(\Omega \times D),\]
to the given initial datum $\hpsi_0$ such that
\begin{align}
&\int_{\Omega \times D} M \left[ \widehat \psi^0\, \widehat \varphi +
\Delta t\, \left( \nabx \widehat \psi^0 \cdot \nabx \widehat \varphi +
\nabq \widehat \psi^0 \cdot \nabq \widehat \varphi
\right) \right] \dq \dx
= \int_{\Omega \times D} M \,\beta^{L}(\widehat \psi_0)\, \widehat \varphi \dq \dx
\nonumber \\
& \hspace{4in} \forall \widehat \varphi \in
H^1_M(\Omega \times D).
\label{psi0}
\end{align}
For $r\in [1,\infty)$, let
\begin{align}
\hat Z_r
%&
:= \{ \hat \varphi \in L^r_M(\Omega \times D) :
\hat \varphi
\geq 0 \mbox{ a.e.\ on } \Omega \times D
\mbox{; }
\int_D M(\qt)\,\hat \varphi(\xt,\qt) \dq \leq 1 \mbox{ for a.e. } \xt \in \Omega
\}.
\label{hatZ}
\end{align}
%In addition, let ${\cal F}\in C(\mathbb{R}_{>0})$ be defined by
%\begin{align}
%{\cal F}(s):= s\,(\log s -1) + 1, \quad s>0.
%\label{F}
%\end{align}
%As $\lim_{s \rightarrow 0_+} \mathcal{F}(s) = 1$,
%the function $\mathcal{F}$ can be considered
%to be defined and continuous on $[0,\infty)$,
%where it is a nonnegative, strictly convex function
%with $\mathcal{F}(1)=0$.

In the Appendix of \cite{BS2011-feafene}
it is proved for FENE-type potentials
and $\hpsi_0$ satisfying (\ref{hpsi0}), with $\hpsi_0 \in L^2_M(\Omega \times D)$
replaced by the weaker assumption $\mathcal{F}(\hpsi_0)\in L^1_M(\Omega \times D)$,
recall (\ref{F}),
that %, with $K=1$ and
%$\hat \psi_0 \in L^2_M(\Omega \times D)$ in (\ref{inidata}) replaced by
%the weaker condition
%${\cal F}(\hat \psi_0) \in L^1_M
%(\Omega \times D)$,
% that there exists a unique
$\widehat \psi^0 \in H^1_M(\Omega \times D)$, satisfying \eqref{psi0}, is such that
$\widehat \psi^0 \in \widehat Z_2$,
\begin{subequations}
\begin{align}
&\int_{\Omega \times D} M\,\mathcal{F}(\widehat \psi^0) \dq \dx
+ 4\,\Delta t \int_{\Omega \times D} M\,\left[
\big|\nabx \sqrt{\widehat \psi^0} \big|^2 + \big|\nabq \sqrt{\widehat \psi^{0}}\big|^2
\right]\!\dq \dx
%\nonumber \\ & \qquad\qquad\qquad\qquad\qquad\qquad\qquad\qquad\qquad\qquad
\leq \int_{\Omega \times D} M \,\mathcal{F}(\widehat \psi_0) \dq \dx
\label{inidata-1}
\end{align}
and
\begin{align}
\widehat \psi^0 = \beta^L(\widehat \psi^0) \rightarrow \widehat \psi_0 \quad
\mbox{weakly in }L_M^1(\Omega \times D) \quad \mbox{as} \quad L \rightarrow \infty, \quad
\Delta t \rightarrow 0_+.
\label{psi0conv}
\end{align}
\end{subequations}
The proof given in \cite{BS2011-feafene} for FENE-type potentials carries across
immediately to potentials satisfying (\ref{U}). %with $K\geq1$.
In addition, with the stronger assumption (\ref{hpsi0})
on $\hat \psi_0$,
it is easy to show that
\begin{subequations}
\begin{align}
&\int_{\Omega \times D} M\,|\widehat \psi^0|^2 \dq \dx
+ \Delta t \int_{\Omega \times D} M\,\left[
\big|\nabx \widehat \psi^0 \big|^2 + \big|\nabq \widehat \psi^{0}\big|^2
\right]\!\dq \dx
%\nonumber \\ & \qquad\qquad\qquad\qquad\qquad\qquad\qquad\qquad\qquad\qquad
\leq \int_{\Omega \times D} M \,|\widehat \psi_0|^2 \dq \dx
\label{inidata-1str}
\end{align}
and
\begin{align}
\widehat \psi^0 = \beta^L(\widehat \psi^0) \rightarrow \widehat \psi_0 \quad
\mbox{weakly in }L_M^2(\Omega \times D) \quad \mbox{as} \quad L \rightarrow \infty, \quad
\Delta t \rightarrow 0_+.
\label{psi0convstr}
\end{align}
\end{subequations}
Moreover, it follows from (\ref{qrbd}), (\ref{inidata-1str}) and (\ref{hpsi0}),
for any $r \in [0,\infty)$ that
\begin{align}
\int_{\Omega \times D} M\,|\qt|^{r}\,\hpsi^0  \dq \dx
\leq C
\,\|\hpsi^0\|_{L^2_M(\Omega \times D)} \leq C
\,\|\hpsi_0\|_{L^2_M(\Omega \times D)} \leq C.
\label{hpsi0rbd}
\end{align}

Our discrete-in-time  approximation of (FP$_{L}$) is then defined as follows.

%\medskip

%\vspace{2mm}

{\boldmath $({\rm FP}_{L}^{\Delta t})$}:
Let $\hpsistL^0 = \hat \psi^0 \in \hat Z_2$.
Then, for $n =1, \dots,  N$, given
$\hpsistL^{n-1} \in \hat Z_2$,
find $\hpsistL^n \in \hat X \cap \hat Z_2$ such that
\begin{align}
&\int_{\Omega \times D} M\,\frac{\hpsistL^n
- \hpsistL^{n-1}}
{\Delta t}\,\hat \varphi \,\dq \dx
+ \int_{\Omega \times D} M
\left[\epsilon\,\nabx \hpsistL^n
- \utst^{n}\,
\hpsistL^n \right] \cdot \nabx
\hat \varphi\,\dq \dx
\nonumber \\
\bet
&\qquad
+ \int_{\Omega \times D} M
\left[\, \frac{1}{4\, \lambda}\,
\nabq \hpsistL^n
-[\,(\nabxtt \utst^n) \,\qt\,
]\,
\,\beta^L(\hpsistL^{n})\right]\cdot \nabq
\hat \varphi \,\dq \dx
=0
\qquad \forall \hat \varphi \in
\hat X.
\label{hpsiOBLn}
\end{align}

We note that if $\beta^L(\hpsistL^{n})$ in (\ref{hpsiOBLn}) is replaced by
$\hpsistL^{n}$ then the resulting integral is not well-defined.

\begin{lemma}
\label{hpsiOBLnex}
Let the assumptions (\ref{hpsi0}) and (\ref{hpsidata}) hold;
then, there exists a solution $\{
\hpsistL^{n}\}_{n=1}^N$ to (FP$^{\Delta t}_{L}$).
\end{lemma}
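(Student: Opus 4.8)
The plan is to prove existence of $\hpsistL^n$ by induction on $n$, with the inductive step solved by a Galerkin/fixed-point argument on the finite-dimensional subspaces of $\hat X$. Assume $\hpsistL^{n-1} \in \hat Z_2$ is given. The equation \eqref{hpsiOBLn} is, for fixed $n$, a single elliptic problem for $\hpsistL^n \in \hat X$: rewriting it, we seek $\hat\psi \in \hat X$ with
\[
\frac{1}{\Delta t}\int_{\Omega\times D} M\,\hat\psi\,\hat\varphi + \int_{\Omega\times D} M\Big[\epsilon\,\nabx\hat\psi + \tfrac{1}{4\lambda}\nabq\hat\psi\Big]\cdot\nabla\hat\varphi
- \int_{\Omega\times D} M\Big[\utst^n\,\hat\psi\cdot\nabx\hat\varphi + [(\nabxtt\utst^n)\qt]\,\beta^L(\hat\psi)\cdot\nabq\hat\varphi\Big]
= \frac{1}{\Delta t}\int_{\Omega\times D} M\,\hpsistL^{n-1}\,\hat\varphi
\]
for all $\hat\varphi \in \hat X$. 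The left-hand side is not linear in $\hat\psi$ because of $\beta^L$, but $\beta^L$ is bounded and Lipschitz, so the nonlinearity is a compact lower-order perturbation.

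First I would set up a Galerkin basis: by \eqref{cal K}, $C^\infty(\overline\Omega; C^\infty_0(D))$ is dense in $\hat X$, so pick a sequence $\{\hat\chi_k\}_{k\ge1}$ whose finite linear spans $\hat X_m := \mathrm{span}\{\hat\chi_1,\dots,\hat\chi_m\}$ are dense in $\hat X$. On $\hat X_m$, seek $\hat\psi_m = \sum_{k=1}^m \alpha_k\hat\chi_k$ solving \eqref{hpsiOBLn} tested against each $\hat\chi_j$. This is a finite system $\mathbf{G}(\boldsymbol\alpha) = \mathbf{b}$ where $\mathbf{G}$ is continuous (since $\beta^L$ is continuous) and, crucially, \emph{coercive}: testing with $\hat\varphi = \hat\psi_m$ itself and using that $\utst^n$ is divergence-free to kill the $\utst^n\,\hat\psi_m\cdot\nabx\hat\psi_m$ term (it integrates to zero, exactly as in the a priori estimates for (FP)), together with $|\beta^L(\hat\psi_m)| \le L$ and the Cauchy--Schwarz/Young bounds from \eqref{qrbd} to absorb the drag term $\int M\,[(\nabxtt\utst^n)\qt]\,\beta^L(\hat\psi_m)\cdot\nabq\hat\psi_m$ into $\tfrac{1}{8\lambda}\|\sqrt M\,\nabq\hat\psi_m\|^2 + C(L,\utst^n,\Delta t)$, yields $\frac{1}{\Delta t}\|\sqrt M\,\hat\psi_m\|^2 + \epsilon\|\sqrt M\,\nabx\hat\psi_m\|^2 + \tfrac{1}{8\lambda}\|\sqrt M\,\nabq\hat\psi_m\|^2 \le C$ with $C$ independent of $m$. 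Hence $\mathbf{G}(\boldsymbol\alpha)\cdot\boldsymbol\alpha \ge c|\boldsymbol\alpha|^2 - C$ for $|\boldsymbol\alpha|$ large, and Brouwer's fixed-point theorem (in the standard corollary form: a continuous map from a ball into $\mathbb{R}^m$ with $\mathbf{G}(\boldsymbol\alpha)\cdot\boldsymbol\alpha \ge 0$ on the sphere has a zero) gives a solution $\hat\psi_m \in \hat X_m$.

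Next I would pass to the limit $m\to\infty$. The uniform bound above gives a subsequence with $\hat\psi_m \rightharpoonup \hpsistL^n$ weakly in $\hat X$; the compact embedding \eqref{wcomp2} gives strong convergence in $L^2_M(\Omega\times D)$, hence a.e.\ (along a further subsequence), so $\beta^L(\hat\psi_m) \to \beta^L(\hpsistL^n)$ a.e.\ and, being bounded by $L$, strongly in every $L^p_M$ by dominated convergence. This is enough to pass to the limit in every term of \eqref{hpsiOBLn} (the linear terms by weak convergence, the $\beta^L$ term by the strong convergence just described and weak convergence of $\nabq\hat\varphi$-pairings), giving that $\hpsistL^n \in \hat X$ solves \eqref{hpsiOBLn} for all $\hat\varphi \in \hat X_m$ and hence, by density, for all $\hat\varphi \in \hat X$. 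Finally I would verify $\hpsistL^n \in \hat Z_2$: nonnegativity follows by testing with $\hat\varphi = [\hpsistL^n]_- := \min(\hpsistL^n,0) \in \hat X$, using $\beta^L(s)[s]_-' = s\,\mathbf{1}_{s<0} = [s]_-$ where relevant and $\hpsistL^{n-1}\ge0$, to get $\|\sqrt M\,[\hpsistL^n]_-\|^2 \le 0$; and the constraint $\int_D M\,\hpsistL^n\dq \le 1$ follows by testing with a $\qt$-independent $\hat\varphi = \hat\varphi(\xt)$, which annihilates the $\nabq$ terms and reduces \eqref{hpsiOBLn} to the weak form of $\partial_t\rho + \utst^n\cdot\nabx\rho = \epsilon\Delta_x\rho$ at the discrete level for $\rho := \rho(M\hpsistL^n) = \int_D M\,\hpsistL^n\dq$, whose discrete maximum principle (again via testing with $[\rho - 1]_+$) propagates $\rho^{n-1}\le1$ to $\rho^n\le1$; the base case $\hpsistL^0 = \hat\psi^0 \in \hat Z_2$ is \eqref{psi0}--\eqref{inidata-1str}. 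The main obstacle is purely bookkeeping: making sure every integral in \eqref{hpsiOBLn} is finite at each stage — in particular controlling the drag term $\int M\,[(\nabxtt\utst^n)\qt]\,\beta^L(\cdot)\cdot\nabq(\cdot)$, which is exactly why the cut-off $\beta^L$ is there, and the moment bounds \eqref{qrbd} do the job since $\beta^L$ is bounded and $\nabq\hat\varphi \in L^2_M$.
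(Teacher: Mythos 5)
The overall framework you describe (Galerkin projection, Brouwer fixed point, limit passage, then verification of $\hat Z_2$ membership) is a legitimate alternative to the paper's Lax--Milgram plus Schauder construction. However, there is a genuine flaw in the estimates you lean on: the claim that $|\beta^L(\cdot)|\le L$, on which both your coercivity bound and your nonnegativity argument rest, is false. By \eqref{betaLa}, $\beta^L(s)=\min(s,L)=s$ for every $s\le L$, so $\beta^L$ is unbounded \emph{below}. Since the Galerkin approximants $\hat\psi_m$ (which need not be nonnegative -- projections of a nonnegative function onto a span of basis functions generically change sign) can take arbitrarily large negative values, the drag term after testing with $\hat\psi_m$ and integrating by parts in $\qt$ via \eqref{intbypartsi} reduces to $\int_{\Omega\times D} M\,\Gamma^L(\hat\psi_m)\,\qt\qt^{\rm T}:(\nabxtt\utst^n)\dq\dx$ with $(\Gamma^L)'=\beta^L$, and since $\Gamma^L(s)=\tfrac12 s^2$ for $s\le L$ the absolute value of this term is of the order $\int M\,|\qt|^2\,\hat\psi_m^2\dq\dx$ -- a $|\qt|^2$-weighted $L^2_M$ quantity that is \emph{not} controlled by $\|\hat\psi_m\|_{\hat X}^2$. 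So the coercivity estimate does not close. The same obstruction kills your nonnegativity proof: testing the limit equation with $[\hpsistL^n]_-$, the $\beta^L$-term becomes (after the same integration by parts, using $\nabq M=-M\qt$ and $\mathfrak{tr}(\nabxtt\utst^n)=0$)
\begin{align*}
-\tfrac12\int_{\Omega\times D} M\,\qt^{\rm T}(\nabxtt\utst^n)\qt\,[\hpsistL^n]_-^2\dq\dx,
\end{align*}
which has no fixed sign and is again not controlled by $\|[\hpsistL^n]_-\|_{\hat X}^2$, so the conclusion $\|\sqrt M\,[\hpsistL^n]_-\|^2\le 0$ does not follow.

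This is precisely why the paper's proof introduces the additional lower truncation $\beta^L_\delta(s):=\max(\beta^L(s),\delta)$, which \emph{is} bounded (with values in $[\delta,L]$), and then works with the regularized entropy $\mathcal{F}^L_\delta$ of \eqref{GLd} rather than a bare $L^2_M$ test function. Two things happen at once: the boundedness of $\beta^L_\delta$ makes the drag integral and the associated a priori bound well defined, and the algebraic identity $[\mathcal{F}^L_\delta]''=(\beta^L_\delta)^{-1}$ causes the factor $\beta^L_\delta(\hat\psi)$ to cancel with $[\mathcal{F}^L_\delta]''(\hat\psi)$ in $\nabq[\mathcal{F}^L_\delta]'(\hat\psi)$, turning the drag term into one linear in $\nabq\hat\psi$ which can be integrated by parts and bounded. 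Nonnegativity of the limit $\hpsistL^n$ is then obtained not by a negative-part test but from the uniform-in-$\delta$ bound on $\int M\,\mathcal{F}^L_\delta(\hpsistLd^n)$ combined with $\mathcal{F}^L_\delta(s)\ge s^2/(2\delta)$ for $s\le 0$ (\eqref{cFbelow}), which forces the measure of the negative set to vanish as $\delta\to 0_+$. If you want to keep your Galerkin/Brouwer scaffolding, you would still have to insert this $\delta$-regularization (or equivalently replace $\beta^L$ in the construction by the doubly truncated $\max(\min(s,L),0)$) and carry out a variant of the entropy argument; as written, the proposal does not close.
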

\begin{proof}
It is convenient to rewrite (\ref{hpsiOBLn}) as
\begin{align}
a(\hpsistL^n,\hat \varphi) = \ell(\beta^L(\hpsistL^n))
(\hat \varphi) \qquad \forall \hat \varphi \in \hat X,
\label{genLM}
\end{align}
where, for all $\hat \varphi_1,\,\hat \varphi_2 \in \hat X$,
\begin{subequations}
\begin{align} \label{agen}
a(\hat \varphi_1,\hat \varphi_2) &:= \int_{\Omega \times D} M \left(
\hat \varphi_1\,\hat \varphi_2 + \Delta t \left[
\,\epsilon\,\nabx
\hat \varphi_1 - \utst^n\,\hat \varphi_1\right]
\cdot \nabx
\hat \varphi_2
+\, \frac{\Delta t}{4\,\lambda} \,
%\sum_{i=1}^K \sum_{j=1}^K A_{ij}\,
\nabq
\hat \varphi_1 \cdot \nabqi
\hat \varphi_2 \right) \dq \dx,
\end{align}
and, for all $\hat \eta \in L^\infty(\Omega\times D)$
%with $\hat \eta \geq 0$ a.e.\ on $\Omega \times D$,
and $\hat \varphi \in \hat X$,
\begin{align}
\ell(\hat \eta)(\hat \varphi) &:=
\int_{\Omega \times D}
M \left[\hpsistL^{n-1}
\,\hat \varphi
+ \Delta t\,\hat \eta\, [(\nabxtt \utst^n)
\,\qt] \cdot\nabq
\hat \varphi \right]\!\dq \dx.
\label{lgen}
\end{align}
\end{subequations}
On noting (\ref{utOBn}) and that $\hpsistL^{n-1} \in \hat Z_2$,
it is easily deduced that $a(\cdot,\cdot)$ is a
continuous nonsymmetric coercive bilinear functional on $ \hat X \times \hat X$,
and
$\ell(\hat \eta)(\cdot)$ is a continuous linear functional on $\hat X$
for all $\hat \eta \in L^\infty(\Omega \times D)$.

In order to prove existence of a solution to (\ref{hpsiOBLn}),
i.e., (\ref{genLM}),
we
consider a regularized system for a given $\delta \in (0,1)$:
Find $\hpsistLd^n \in \hat X$ such that
\begin{alignat}{2}
a(\hpsistLd^n,\hat \varphi) &= \ell(\beta^L_\delta(\hpsistLd^n))
(\hat \varphi) \qquad &&\forall \hat \varphi \in \hat X,
\label{genLMd}
\end{alignat}
where $\beta^L_\delta(s):=\max(\beta^L(s),\delta)$.
In order to prove the existence of a solution to (\ref{genLMd}),
we consider a fixed-point argument. Given
$\hpsi \in L^2_M(\Omega \times D)$,
%$\hat{\ut} \in \Yt^r$
%with $r>d$,
let $G(\hpsi) \in \hat X$ be such that
\begin{alignat}{2}
a(G(\hpsi),\hat \varphi) &=
\ell(\beta^L_\delta(\hpsi))(\hat \varphi) \qquad
&&\forall \hat \varphi \in \hat X. \label{fix3}
%\bet
\end{alignat}
%
%where, recalling (\ref{Jax5}),
%$\Jtax \hat{\ut} \in \Wt^{1,\infty}(\Omega)$.
The Lax--Milgram theorem yields the
existence of a unique solution
$G(\hpsi) \in
\hat X$ to (\ref{fix3}) for each $\hpsi \in L^2_M(\Omega\times D)$.
%Noting (\ref{tripid}),
%there exists a unique
%solution to (\ref{fix4})
%and so the overall procedure
%(\ref{fix4}) and (\ref{fix3}) is well defined.
Thus the nonlinear map $G: L^2_M(\Omega \times D)
\rightarrow \hat X \subset L^2_M(\Omega \times D)$ is well-defined.
On recalling (\ref{wcomp2}), we have that $G$ is compact.
Next, we show that $G$ is continuous.
Let $\{\hat{\psi}^{(p)}\}_{p \geq 0}$ be such that
$\hpsi^{(p)}
\rightarrow \hpsi$ strongly in
$L^{2}_M(\Omega\times D)$ as $p \rightarrow \infty$.
It follows immediately that
$\beta^L_\delta(\hpsi^{(p)}) \rightarrow
\beta^L_\delta(\hat{\psi})$ strongly in
$L^{2}_M(\Omega\times D)$ as $p \rightarrow \infty$.
As $\|G(\hpsi^{(p)})\|_{\hat X} \leq C(L,(\Delta t)^{-1})$, independent of $p$,
it follows from (\ref{wcomp2}) that there exists a
subsequence $\{G(\hpsi^{(p_k)})\}_{p_k \geq 0}$ and
a function $\hat{\eta}\in \hat X$ such that
$G(\hpsi^{(p_k)}) \rightarrow \hat \eta$ weakly in $\hat X$ and strongly in
$L^2_M(\Omega \times D)$, as $p_k \rightarrow \infty$; see the argument
on p.\ 1233 in \cite{BS2011-fene} for details.
We deduce from the above, the definition of $G$ and the density result (\ref{cal K})
that
\begin{align}
a(\hat{\eta},\hat \varphi)
&= \ell(\beta^L_\delta(\hat \psi))(\hat \varphi)
\qquad \forall \hat \varphi \in C^\infty(\overline{\Omega};C^\infty_0(D)).
\label{Gcont11}
\end{align}
Noting again (\ref{cal K}) yields that (\ref{Gcont11}) holds
for all $\hat \varphi \in \hat X$, and hence
$\hat \eta = G(\hat \psi)\in \hat X$. Therefore
the whole sequence
$G(\hat{\psi}^{(p)})
\rightarrow G(\hat{\psi})$ strongly in $L^2_M(\Omega\times D)$,
as $p \rightarrow \infty$, and so $G$ is continuous.

Finally, to show that $G$ has a fixed point, i.e.\ there exists a solution to
(\ref{genLMd}), using Schauder's fixed point theorem we need to show that there
exists a $C_{\star} \in {\mathbb R}_{>0}$ such that
$\|\hat{\psi}\|_{L^{2}_M(\Omega\times D)} \leq C_{\star}$
for every $\hat{\psi} \in L^2_M(\Omega \times D)$ and $\kappa \in (0,1]$
satisfying $\hat{\psi} = \kappa\, G(\hat{\psi})$; that is,
\begin{align}
a(\hpsi,\hat \varphi) = \kappa\,\ell(\beta^L_\delta(\hpsi),\hat \varphi)
\qquad \forall \hat \varphi \in \hat X.
\label{akappa}
\end{align}
In order to prove this, we introduce the following convex regularization
${\cal F}_{\delta}^L \in
C^{2,1}({\mathbb R})$ of ${\cal F}$ defined, for any $\delta \in (0,1)$ and
$L>1$, by
\begin{align}
 &{\cal F}_{\delta}^L(s) := \left\{
 \begin{array}{ll}
 \textstyle\frac{s^2 - \delta^2}{2\,\delta}
 + s\,(\log \delta - 1) + 1
 \quad & \mbox{for $s \le \delta$}, \\
{\cal F}(s)\ \equiv
s\,(\log s - 1) + 1 & \mbox{for $s \in [\delta,L]$}, \\
  \textstyle\frac{s^2 - L^2}{2\,L}
 + s\,(\log L - 1) + 1
 & \mbox{for $s \ge L$}.
 \end{array} \right. \label{GLd}
\end{align}
We note that
\begin{subequations}
\begin{align}
{\cal F}^L_\delta(s) &\geq \left\{
\begin{array}{ll}
\frac{s^2}{2\,\delta} &\qquad \mbox{for $s \leq 0$},
\\[2mm]
\frac{s^2}{4\,L} - C(L)&\qquad \mbox{for $s \geq 0$}
\end{array}
\right.
\label{cFbelow}\\
\mbox{and} \qquad
([{\cal F}_{\delta}^{L}]'')(s)&=(\beta^L_\delta(s))^{-1} \geq L^{-1}
\qquad \forall s \in \mathbb{R}.
\label{FLdbLd}
\end{align}
\end{subequations}
Choosing $\hat \varphi = [{\cal F}_\delta^L]'(\hat{\psi})$ in (\ref{akappa}),
noting (\ref{FLdbLd}), (\ref{utOBn}) and that $\hpsi\,\nabx[{\cal F}_\delta^L]'(\hat{\psi})
= \nabx {\cal G}_\delta^L(\hat{\psi})$, where $[{\cal G}_\delta^L]'(s)=s/\beta^L_\delta(s)$,
yields that
\begin{align}
&\int_{\Omega \times D} M\left({\cal F}_\delta^L (\hat{\psi})
%\dq \dx \nonumber
+ \Delta t\,L^{-1}
%\int_{\Omega \times D} M
\left[\epsilon
\,|\nabx \hat{\psi}|^2
+ \frac{1}{4\,\lambda} \,|\nabq
\hat{\psi}|^2
\right] \right) \dq \dx
\leq
\int_{\Omega \times D} M\,{\cal F}_\delta^L (\kappa \,\hpsistL^{n-1})
\dq \dx.
\label{Ek}
\end{align}
It is easy to show that $\mathcal{F}^L_\delta(s)$ is nonnegative for all
$s \in \mathbb{R}$, with  $\mathcal{F}^L_\delta(1)=0$.
In addition, for any $\kappa \in (0,1]$,
$\mathcal{F}^L_\delta(\kappa\, s) \leq
\mathcal{F}^L_\delta(s)$ if $s<0$ or $1 \leq \kappa\, s$, and also
$\mathcal{F}^L_\delta(\kappa\, s) \leq \mathcal{F}^L_\delta(0)
\leq 1$ if $0 \leq \kappa\, s \leq 1$.
Thus we deduce that
${\cal F}_\delta^L(\kappa\, s)
\leq {\cal F}_\delta^L(s)+ 1$ for all $s \in {\mathbb R}$,
and $\kappa \in (0,1]$.
Hence, on applying the above bound and (\ref{cFbelow}) to (\ref{Ek})  yields that
$\|\hat{\psi}\|_{L^{2}_M(\Omega\times D)} \leq C_{\star}$
with $C_*$ dependent only on
$\delta$, $L$, $\Delta t$, $\utst^n$ and $\hpsistL^{n-1}$.
Therefore $G$ has a fixed point.
Thus we have proved the existence of a solution
to (\ref{genLMd}).

Choosing $\hat \varphi = [{\cal F}_\delta^L]'(\hpsistLd^n)$ in (\ref{genLMd})
yields, similarly to (\ref{Ek}), that
\begin{align}
&\int_{\Omega \times D} M\left({\cal F}_\delta^L (\hpsistLd^n)
+ \Delta t\,L^{-1}
\left[\epsilon
\,|\nabx \hpsistLd^n|^2
+ \frac{1}{4\,\lambda} \,|\nabq
\hpsistLd^n|^2
\right] \right) \dq \dx
\nonumber \\
& \hspace{3in}\leq
\int_{\Omega \times D} M\,{\cal F}_\delta^L (\hpsistL^{n-1})
\dq \dx \leq C,
\label{Eka}
\end{align}
where $C$ is independent of $\delta$ as $\hpsistL^n \in \hat Z_2$.
We obtain from (\ref{Eka}) and (\ref{cFbelow}) that $\|\hpsistLd^n\|_{\hat X} \leq
C$. Similarly to the continuity argument for the mapping $G$ above,
it follows from (\ref{wcomp2}) that there exists a
subsequence $\{\hpsi_{\star,L,\delta_k}^n\}_{\delta_k \geq 0}$ and
a function $\hpsistL^n \in \hat X$ such that
$\hpsi_{\star,L,\delta_k}^n \rightarrow \hpsistL^n$ weakly in $\hat X$ and strongly in
$L^2_M(\Omega \times D)$, as $\delta_k \rightarrow 0_+$.
The fact that $\hpsistL^n \geq 0$ follows from the first term on the left-hand side
in (\ref{Eka}) and the bound (\ref{cFbelow}).
Hence, we have that
$\beta^L_{\delta_k}(\hpsi_{\star,L,\delta_k}^n) \rightarrow \beta^L(\hpsistL^n)$
strongly in
$L^2_M(\Omega \times D)$, as $\delta_k \rightarrow 0_+$.
Therefore, we can pass to the limit $\delta_k \rightarrow 0_+$ in
(\ref{genLMd}) for $\hat \varphi \in C^\infty(\overline{\Omega};C^\infty_0(D))$
to obtain (\ref{genLM}) for $\hat \varphi \in C^\infty(\overline{\Omega};C^\infty_0(D))$.
The desired result (\ref{genLM}) for all $\hat \varphi \in \hat X$
then follows from the density result (\ref{cal K}).
Finally, to conclude that $\hpsistL^n \in \hat X \cap \hat Z_2$, we need to show
the integral constraint,
$\rho(M\,\hpsistL^n)(\xt) \in [0,1]$ for a.e. $\xt \in \Omega$,
on recalling (\ref{hatZ}) and (\ref{rho1}).
This follows from a maximum principle, see p.\ 1234 in
\cite{BS2011-fene} for details.
\end{proof}

Next, we note the following result.

\begin{lemma} \label{lemsigdef}
Under the assumptions of Lemma \ref{hpsiOBLnex}
the solution $\{\hpsistL^{n}\}_{n=0}^N$ to (FP$^{\Delta t}_{L}$)
is such that
\[\mbox{$\sigtt(M\,\hpsistL^{n}),
\,\sigtt(M\,\beta^L(\hpsistL^{n})) \in \Htt^1(\Omega)$,
\,$\rho(M\,\hpsistL^{n})  \in H^1(\Omega)$,}\]
for $n = 0, \ldots, N$, %and $i,\,j = 1, \ldots, K$,
and satisfy
\begin{subequations}
\begin{align}
&\int_{\Omega} \frac{ \sigtt(M\,\hpsistL^n)
- \sigtt(M\,\hpsistL^{n-1})}
{\Delta t}:\zetatt \dx
\nonumber \\
&\qquad + \int_{\Omega}\left[
\epsilon\,\nabx \,\sigtt(M\,\hpsistL^n)
: \nabx \,\zetatt
-\sigtt(M\,\hpsistL^n) :
 (\utst^{n} \cdot \nabx)\,\zetatt
\right] \! \!\dx
\nonumber \\
&\qquad - \int_{\Omega}
\left[ (\nabxtt \utst^{n}) \,
\sigtt(M\,\beta^L(\hpsistL^n)) +
\sigtt(M\,\beta^L(\hpsistL^n)) \,
(\nabxtt \utst^{n})^{\rm T}
\right] : \zetatt \dx
\nonumber
\\
&\qquad +
\frac{1}{2\, \lambda} \,
\int_{\Omega} \left[ %2\,A_{ij}\,
\sigtt(M\,\hpsistL^{n})
-\rho(M\,\hpsistL^{n})\,\Itt
 %\sum_{m=1}^K \left( A_{im}\,
  %+A_{mj}\,\sigtt_{im}(M\,\hpsistL^{n})
%\right)
\right]
: \zetatt \,\dx =0
\qquad\forall \zetatt \in
\Htt^1(\Omega),
\label{psiGijbd5}\\
&\int_{\Omega} \frac{ \rho(M\,\hpsistL^n)
- \rho(M\,\hpsistL^{n-1})}
{\Delta t}\,\eta \dx
\nonumber \\
&\qquad + \int_{\Omega}\left[
\epsilon\,\nabx \,\rho(M\,\hpsistL^n)
\cdot \nabx \,\eta
-\rho(M\,\hpsistL^n) \,
 (\utst^{n} \cdot \nabx)\,\eta
\right] \!\dx
=0
\qquad \forall \eta \in
H^1(\Omega).
\label{rhoneq}
\end{align}
\end{subequations}
\end{lemma}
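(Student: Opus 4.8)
The plan is to obtain everything by testing the discrete weak formulation \eqref{hpsiOBLn} with moment test functions. For \eqref{psiGijbd5} I would take $\hat\varphi(\xt,\qt) = q_i\,q_j\,\zeta_{ij}(\xt)$, with summation over $i,j$ and $\zetatt = (\zeta_{ij}) \in \Htt^1(\Omega)$, and for \eqref{rhoneq} the $\qt$-independent choice $\hat\varphi(\xt,\qt) = \eta(\xt)$, $\eta \in H^1(\Omega)$. The first thing to verify is that these are admissible, i.e.\ that they lie in $\hat X$, and this is the one point where the Hookean choice \eqref{U} --- equivalently, the Gaussian decay of $M$ --- is genuinely needed: since $\zeta_{ij},\,\nabx\zeta_{ij} \in L^2(\Omega)$ and, by \eqref{additional-1}, $\int_D M\,|\qt|^r\dq < \infty$ for every $r\in[0,\infty)$, the norm $\|q_i\,q_j\,\zeta_{ij}\|_{H^1_M(\Omega\times D)}$ is finite; admissibility of $\eta(\xt)$ is trivial.

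Before substituting, I would record the regularity assertions, which follow from the moment bounds \eqref{eqCttbd}--\eqref{qrbd} exactly as they stand. By Lemma \ref{hpsiOBLnex} we have $\hpsistL^n \in \hat X$ for all $n = 0,\dots,N$ (for $n=0$ because $\hpsistL^0 = \hpsi^0 \in \hat X$), so $\nabx\hpsistL^n,\,\nabq\hpsistL^n \in L^2_M(\Omega\times D)$. Since differentiation in $\xt$ commutes with integration in $\qt$, we have $\nabx\sigtt(M\,\hpsistL^n) = \int_D M\,(\nabx\hpsistL^n)\,\qt\,\qt^{\rm T}\dq$ and $\nabx\rho(M\,\hpsistL^n) = \int_D M\,\nabx\hpsistL^n\dq$, so applying \eqref{eqCttbd} and \eqref{qrbd} with $\hat\varphi$ replaced by $\nabx\hpsistL^n$ gives $\sigtt(M\,\hpsistL^n) \in \Htt^1(\Omega)$ and $\rho(M\,\hpsistL^n) \in H^1(\Omega)$. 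As $\beta^L$ is globally Lipschitz with $\beta^L(0)=0$ and $|(\beta^L)'|\le 1$, the chain rule for Sobolev functions composed with Lipschitz maps yields $\beta^L(\hpsistL^n) \in \hat X$ with $|\nabx\beta^L(\hpsistL^n)| \le |\nabx\hpsistL^n|$ a.e., and the same estimate then gives $\sigtt(M\,\beta^L(\hpsistL^n)) \in \Htt^1(\Omega)$.

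Next I would substitute $\hat\varphi = q_i\,q_j\,\zeta_{ij}$ into \eqref{hpsiOBLn} and evaluate the four groups of terms. The time-difference term produces $\frac{1}{\Delta t}\int_\Omega (\sigtt(M\,\hpsistL^n) - \sigtt(M\,\hpsistL^{n-1})):\zetatt\dx$. Using $\nabx(q_i\,q_j\,\zeta_{ij}) = q_i\,q_j\,\nabx\zeta_{ij}$, the $\nabx$-term gives $\epsilon\int_\Omega \nabx\sigtt(M\,\hpsistL^n):\nabx\zetatt\dx - \int_\Omega \sigtt(M\,\hpsistL^n):(\utst^n\cdot\nabx)\zetatt\dx$. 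For the $\qt$-diffusion term I would write $M\,\nabq\hpsistL^n = \nabq(M\,\hpsistL^n) + M\,\hpsistL^n\,\qt$ (from \eqref{eqM}), integrate by parts in $\qt$ over $D=\mathbb{R}^d$ (the boundary terms at infinity vanishing, justified by approximation using the density result \eqref{cal K}), and invoke $\Delta_q[\qt\,\qt^{\rm T}] = 2\,\Itt$ and $(\qt\cdot\nabq)\,[\qt\,\qt^{\rm T}] = 2\,\qt\,\qt^{\rm T}$ from \eqref{adel}; this collapses to $\frac{1}{2\lambda}\int_\Omega (\sigtt(M\,\hpsistL^n) - \rho(M\,\hpsistL^n)\,\Itt):\zetatt\dx$. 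For the drag term, setting $\at = (\nabxtt\utst^n)\,\qt$ and using $(\at\cdot\nabq)\,[\qt\,\qt^{\rm T}] = \at\,\qt^{\rm T} + \qt\,\at^{\rm T}$ from \eqref{adel} (no integration by parts needed here), one gets $-\int_\Omega [(\nabxtt\utst^n)\,\sigtt(M\,\beta^L(\hpsistL^n)) + \sigtt(M\,\beta^L(\hpsistL^n))\,(\nabxtt\utst^n)^{\rm T}]:\zetatt\dx$. Adding the four contributions is exactly \eqref{psiGijbd5}; note that $q_i\,q_j\,\zeta_{ij}$ only probes the symmetric part of $\zetatt$, which is enough since every matrix appearing in \eqref{psiGijbd5} is symmetric. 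Finally, the choice $\hat\varphi = \eta(\xt)$ annihilates both $\nabq$-terms in \eqref{hpsiOBLn}, and carrying out the $\qt$-integration of the remaining terms delivers \eqref{rhoneq} directly.

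I do not expect a serious obstacle here: once \eqref{hpsiOBLn} is in hand the lemma is essentially a bookkeeping exercise built on the elementary identities \eqref{adel}, \eqref{eqM} and the moment estimates \eqref{eqCttbd}--\eqref{qrbd}. The two points that do require care, and that a full write-up must not skip, are: (i) the admissibility of the test function $q_i\,q_j\,\zeta_{ij}$, which is unbounded in $\qt$ and is controlled only through the Gaussian weight $M$ via \eqref{additional-1} --- this is precisely where the Hookean hypothesis \eqref{U} enters; and (ii) the justification of the integration by parts in $\qt$ on the unbounded domain $D=\mathbb{R}^d$, which rests on the density of $C^\infty_0(D)$ in $H^1_M(D)$ recorded in \eqref{cal K}.
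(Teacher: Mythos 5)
Your proposal is correct and follows essentially the same route as the paper: test the discrete weak formulation \eqref{hpsiOBLn} with the moment test functions $\qt\,\qt^{\rm T}:\zetatt$ and $\eta(\xt)$, note that \eqref{additional-1} is what makes these admissible in $\hat X$, use \eqref{adel} and \eqref{eqM} to convert the $\qt$-integrals into the stress, density and drag terms, and justify the $\qt$-integration by parts over the unbounded domain $D$ by approximating $\hpsistL^n$ via the density result \eqref{cal K}. The only (harmless) organizational difference is that you establish the $\Htt^1(\Omega)$-regularity of $\sigtt(M\hpsistL^n)$, $\sigtt(M\beta^L(\hpsistL^n))$ and $\rho(M\hpsistL^n)$ upfront and then test directly with $\zetatt \in \Htt^1(\Omega)$ — which works, since the error terms from the smooth approximation of $\hpsistL^n$ can be bounded by Cauchy--Schwarz in terms of $\|\zetatt\|_{L^2(\Omega)}$ — whereas the paper first derives the identity for $\zetatt \in \Ctt^\infty(\overline\Omega)$ (where the error terms are bounded via $\|\zetatt\|_{L^\infty(\Omega)}$), establishes the $\Htt^1(\Omega)$-regularity afterwards, and concludes by density of $C^\infty(\overline\Omega)$ in $H^1(\Omega)$.
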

\begin{proof}
%This is carried out on (\ref{psiG}), as opposed to the equivalent form (\ref{eqpsincon}),
%for reasons that will become apparent later.
On noting (\ref{H1M}), (\ref{H1Mnorm}) and (\ref{additional-1}), we have that $\widehat \varphi =
\eta \in \widehat{X}$, for any $\eta \in C^{\infty}(\overline{\Omega})$, and
$\widehat \varphi \in \qt\,\qt^{\rm T}
: \zetatt \in \widehat{X}$, for any $\zetatt \in
\Ctt^{\infty}(\overline{\Omega})$.
The first choice of $\widehat \varphi$
in (\ref{hpsiOBLn})
immediately yields (\ref{rhoneq}) for any $\eta \in  C^{\infty}(\overline{\Omega})$.

The second choice of $\widehat{\varphi}$
in (\ref{hpsiOBLn})
yields, on noting (\ref{C1}) and (\ref{adel}), that,
for $n = 1, \ldots, N$, %and $i,\,j = 1, \ldots, K$
\begin{align}
&\int_{\Omega} \left[\frac{ \sigtt(M\,\hpsistL^n)
- \sigtt(M\,\hpsistL^{n-1})}
{\Delta t}:\zetatt
+ \epsilon\,\nabx \,\sigtt(M\,\hpsistL^n)
:: \nabx \,\zetatt
-\sigtt(M\,\hpsistL^n) :
 (\utst^{n} \cdot \nabx)\,\zetatt
\right] \! \!\dx
\nonumber \\
&\hspace{2cm} - \int_{\Omega}
\left[ (\nabxtt \utst^{n}) \,
\sigtt(M\,\beta^L(\hpsistL^n)) +
\sigtt(M\,\beta^L(\hpsistL^n)) \,
(\nabxtt \utst^{n})^{\rm T}
\right] : \zetatt \dx
\nonumber
\\
&\hspace{0.9cm} =   - \frac{1}{4\, \lambda} \,
\int_{\Omega \times D}
M\, \left( %\sum_{r=1}^K  \left[\,
%\sum_{s=1}^K A_{rs}\,
\nabq \hpsistL^n
\cdot \nabq (\qt \,\qt^{\rm T}) \right) : \zetatt  \dq \dx
\qquad \forall \zetatt \in
\Ctt^\infty(\overline{\Omega}).
\label{psiGijbd}
\end{align}
Noting (\ref{cal K}), we can approximate $\hpsistL^n$, for fixed $L, \,\Delta t$
and
$n$, by a sequence $\{\hpsistL^{n,m}\}_{m \geq 1}$ such that
\begin{align}
\hpsistL^{n,m} \in C^\infty(\overline{\Omega};C^{\infty}_0(D))
\qquad \mbox{and} \qquad \hpsistL^{n,m} \rightarrow \hpsistL^{n} \quad
\mbox{strongly in } \widehat{X}
\mbox{ as }
m \rightarrow \infty.
\label{psiGijbd1}
\end{align}
Then we have %for $i,\,j = 1, \ldots, K$ and
for any $\zetatt \in
\Ctt^\infty(\overline{\Omega})$ that
\begin{align}
\int_{\Omega \times D}
M\,\left( %\sum_{r=1}^K  \left[\,
%\sum_{s=1}^K A_{rs}\,
\nabq \hpsistL^n
%\right]\,
\cdot %\,
\nabq (\qt \,\qt^{\rm T}) \right) : \zetatt  \dq \dx
&  =
-
%\sum_{r=1}^K
%\sum_{s=1}^K A_{rs} \,
\int_{\Omega \times D} \hpsistL^{n,m}
\left[
\nabq \cdot \left( M \,\nabq (\qt \,\qt^{\rm T}) \right) \right] : \zetatt  \dq \dx
\nonumber \\
& \quad\ +
\int_{\Omega \times D}
M\,%\left( \sum_{r=1}^K
%\left[\,
%\sum_{s=1}^K A_{rs}\,
\nabq (\hpsistL^n-\hpsistL^{n,m})
%\right]\,
\cdot \nabq (\qt \,\qt^{\rm T}) %\right)
: \zetatt  \dq \dx
\nonumber \\
& %\quad
=: T_1 +T_2.
\label{psiGijbd2}
\end{align}
It follows from (\ref{adel}), (\ref{eqM}) and (\ref{C1},b) that
\begin{align}
T_1 &= 2 \int_{\Omega} \left[ \,%A_{ij}\,
\sigtt(M\,\hpsistL^{n,m})- \rho(M\,\hpsistL^{n,m})\,\Itt
%-\sum_{s=1}^K \left( A_{is}\,\sigtt_{sj}(M\,\hpsistL^{n,m})  +A_{sj}\,\sigtt(M\,\hpsistL^{n,m})
%\right)
\right]
: \zetatt
\dx \nonumber \\
&= 2 \int_{\Omega} \left[ \,
\sigtt(M\,\hpsistL^{n})- \rho(M\,\hpsistL^{n})\,\Itt \right]
: \zetatt
\dx
\nonumber \\
&\qquad+ 2 \int_{\Omega} \left[ \,
\sigtt(M\,(\hpsistL^{n,m}-\hpsistL^{n}))- \rho(M\,(\hpsistL^{n,m}-\hpsistL^{n}))\,\Itt \right]
: \zetatt
\dx \nonumber \\
%&=
%- \int_{\Omega} \left[ 2\,A_{ij}\,
%\rho(M \,\hpsistL^{n})\,\Itt
%-\sum_{s=1}^K \left( A_{is}\,\sigtt_{sj}(M\,\hpsistL^{n})  +A_{sj}\,\sigtt(M\,\hpsistL^{n})
%\right) \right]
%: \zetatt
%\dx \nonumber \\
%& \qquad \qquad + \int_{\Omega} \biggl[ 2\,A_{ij}\,
%\rho(M\,(\hpsistL^{n}-\hpsistL^{n,m}))\,\Itt
%\nonumber \\
%& \hspace{0.7in}
%-\sum_{s=1}^K \left( A_{is}\,\sigtt_{sj}(M\,(\hpsistL^{n} - \hpsistL^{n,m}))  +A_{sj}\,
%\sigtt_{is}(M\,(\hpsistL^{n}- \hpsistL^{n,m}))
%\right) \biggr]
%: \zetatt
%\dx \nonumber \\
& =: T_3 + T_4.
\label{psiGijbd3}
\end{align}
Next we note that (\ref{C1},b), (\ref{H1Mnorm}) and (\ref{additional-1}) yield
\begin{align}
|T_2| + |T_4| \leq C\,\|\zetatt\|_{L^\infty(\Omega)}\,
\|\hpsistL^{n} - \hpsistL^{n,m}\|_{H^1_M(\Omega \times D)}.
\label{psiGijbd4}
\end{align}
Hence, it follows from (\ref{psiGijbd})--(\ref{psiGijbd4}) that
(\ref{psiGijbd5}) holds for any $\zetatt \in \Ctt^\infty(\overline{\Omega})$.

Finally,
similarly to (\ref{eqCttbd}), we have, for $\hat \varphi \in L^2_M(\Omega \times D)$, that
\begin{subequations}
\begin{align}
\|\sigtt(M\,\hat \varphi)\|_{L^2(\Omega)} + \|\sigtt(M\,\beta^L(\hat \varphi))\|_{L^2(\Omega)}
+ \|\rho(M\,\hat \varphi)\|_{L^2(\Omega)} \leq C\,\|\hat \varphi\|_{L^2_M(\Omega\times D)},
\label{sigrhoL2}
\end{align}
and, in addition, we have, for $\nabx \hat \varphi \in L^2_M(\Omega\times D)$, that
\begin{align}
\|\nabx \sigtt(M\,\hat \varphi)\|_{L^2(\Omega)}
+ \|\nabx \sigtt(M\,\beta^L(\hat \varphi))\|_{L^2(\Omega)}
+ \|\nabx \rho(M\,\hat \varphi)\|_{L^2(\Omega)}
\leq C\,\|\nabx \hat \varphi\|_{L^2_M(\Omega \times D)}.
\label{sigrhoH1}
\end{align}
\end{subequations}
Hence $\hpsistL^{n} \in \hat X$, recall (\ref{inidata-1str}) for $n=0$, yields that
$\sigtt(M\,\hpsistL^{n}),
\,\sigtt(M\,\beta^L(\hpsistL^{n})) \in \Htt^1(\Omega)$,
\,$\rho(M\,\hpsistL^{n})  \in H^1(\Omega)$,
for $n = 0, \ldots, N$.
%as $\hpsistL^n \in \hat X$, it follows from (\ref{additional-1}), similarly to
%(\ref{sigij0},b), that
%$\sigtt(M\,\hpsistL^{n})$, \linebreak
%$\sigtt(M\,\beta^L(\hpsistL^{n}))$,
%$\rho(M\,\hpsistL^{n})
%\in \Htt^1(\Omega)$, %$i,\,j \ldots, K$ and
%$n = 0, \ldots, N$.
%as
%$\sigtt(M\,\hpsistL^{n}),
%\,\sigtt(M\,\beta^L(\hpsistL^{n})) \in \Htt^1(\Omega)$,
%\,$\rho(M\,\hpsistL^{n})  \in H^1(\Omega)$, $n=0,\ldots,N$,
Combining these, the fact that
$\utst^n \in \Wt^{1,\infty}(\Omega)$,
$n = 1, \ldots, N$,
and that
$C^\infty(\overline{\Omega})$ is dense in $H^1(\Omega)$ yield
that (\ref{psiGijbd5},b) hold. %for all $\zetatt \in \Htt^1(\Omega)$.
\end{proof}

\subsection{Uniform bounds on the solution of {\boldmath $({\rm FP}_L^{\Delta t})$}}

We note the following result.

\begin{lemma} \label{lem:qalphabd}
Let the assumptions of Lemma \ref{hpsiOBLnex} hold.
%In addition, assume for an $\alpha \in {\mathbb R}_{> 0}$ that
%\begin{align}
%\int_{\Omega \times D} M\,|\qt|^{2(1+\alpha)}\,\hpsi^0 \dq \dx \leq C.
%\label{hpsi0a}
%\end{align}
Then, we have, for any $r \in {\mathbb R}_{\geq 0}$, that
\begin{align}
\int_{\Omega \times D} M \,|\qt|^{r}\, \hpsistL^n \,\dq \dx \leq C,
\qquad n=0,\ldots,N.
\label{qalphabd}
\end{align}
\end{lemma}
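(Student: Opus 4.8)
For $n=0$ the bound \eqref{qalphabd} is exactly \eqref{hpsi0rbd}, so the plan is to treat $n \ge 1$ by testing the discrete equation \eqref{hpsiOBLn} with $\hat\varphi = |\qt|^r$. First I would check that this is admissible, i.e.\ that $|\qt|^r \in \hat X$: since $|\qt|^r$ does not depend on $\xt$ we have $\nabx |\qt|^r = \zerot$, while $|\nabq |\qt|^r|^2 = r^2\,|\qt|^{2r-2}$, and both $|\qt|^{2r}$ and $r^2\,|\qt|^{2r-2}$ are integrable against $M$ over $D$ by \eqref{additional-1} (for $0<r<1$ the factor $|\qt|^{2r-2}$ is locally integrable near $\zerot$ because $2r+d-2>0$, and the prefactor $r^2$ removes the term when $r=0$). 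Because $\nabx |\qt|^r = \zerot$, both the centre-of-mass diffusion and the convection terms in \eqref{hpsiOBLn} vanish for this test function. It therefore suffices to establish \eqref{qalphabd} for $r \ge 2$, the range $r \in [0,2)$ following at the end from $|\qt|^r \le 1 + |\qt|^2$ together with the constraint $\int_D M\,\hpsistL^n\dq \le 1$ a.e.\ in $\Omega$, which holds because $\hpsistL^n \in \hat Z_2$ (cf.\ \eqref{hatZ}).

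So fix $r \ge 2$ and set $a_n := \int_{\Omega\times D} M\,|\qt|^r\,\hpsistL^n\dq\dx$, which is finite for each (fixed) $n$, $L$, $\Delta t$ by \eqref{qrbd} since $\hpsistL^n \in \hat X$. Using $\Delta_q |\qt|^r = r\,(r+d-2)\,|\qt|^{r-2}$ together with \eqref{eqM} gives $\nabq\cdot(M\,\nabq |\qt|^r) = M\,\big[\,r\,(r+d-2)\,|\qt|^{r-2} - r\,|\qt|^r\,\big]$, so an integration by parts in $\qt$ (licit for $r\ge 2$ by the Gaussian decay of $M$ and by $\hpsistL^n\in\hat X$) converts the $\qt$-relaxation term in \eqref{hpsiOBLn} into $\frac{r}{4\lambda}\,a_n - \frac{r\,(r+d-2)}{4\lambda}\int_{\Omega\times D}M\,|\qt|^{r-2}\,\hpsistL^n\dq\dx$. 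Hence \eqref{hpsiOBLn} with $\hat\varphi = |\qt|^r$ becomes, for $n=1,\dots,N$,
\begin{align*}
a_n - a_{n-1} + \frac{r\,\Delta t}{4\lambda}\,a_n
= \frac{r\,(r+d-2)\,\Delta t}{4\lambda}\int_{\Omega\times D} M\,|\qt|^{r-2}\,\hpsistL^n\dq\dx + \Delta t\,I_n ,
\end{align*}
where $I_n := \int_{\Omega\times D} M\,\big[(\nabxtt \utst^n)\,\qt\big]\cdot\nabq |\qt|^r\,\beta^L(\hpsistL^n)\dq\dx$. Since $\big|\,\qt^{\rm T}(\nabxtt \utst^n)\,\qt\,\big| \le |\nabxtt \utst^n|\,|\qt|^2$ pointwise and $0 \le \beta^L(\hpsistL^n) \le \hpsistL^n$, I get $|I_n| \le r\,\|\utst^n\|_{W^{1,\infty}(\Omega)}\,a_n$; moreover $|\qt|^{r-2} \le 1 + |\qt|^r$ and $\int_{\Omega\times D}M\,\hpsistL^n\dq\dx \le |\Omega|$. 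Discarding the nonnegative term $\frac{r\,\Delta t}{4\lambda}\,a_n$ (recall $a_n\ge 0$, as $\hpsistL^n\ge 0$) yields
\begin{align*}
a_n \le a_{n-1} + \Delta t\,\Big[\,C_1\,\big(|\Omega| + a_n\big) + r\,\|\utst^n\|_{W^{1,\infty}(\Omega)}\,a_n\,\Big], \qquad C_1 := \frac{r\,(r+d-2)}{4\lambda}.
\end{align*}

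To close this recursion I would use that, by \eqref{utOBn}, $\Delta t\,\|\utst^n\|_{W^{1,\infty}(\Omega)} \le \int_{t_{n-1}}^{t_n}\|\utst(t')\|_{W^{1,\infty}(\Omega)}\,{\rm d}t'$, so that $\sum_{n=1}^N \Delta t\,\|\utst^n\|_{W^{1,\infty}(\Omega)} \le \|\utst\|_{L^1(0,T;W^{1,\infty}(\Omega))} < \infty$ by \eqref{hpsidata}, and in addition $\max_{1\le n\le N}\Delta t\,\|\utst^n\|_{W^{1,\infty}(\Omega)}\to 0$ as $\Delta t\to 0_+$ by absolute continuity of the Lebesgue integral. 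Consequently, for $\Delta t$ small enough the coefficient of $a_n$ on the right-hand side is $\le \tfrac12$ and may be absorbed into the left-hand side, after which a discrete Gr\"onwall inequality gives $a_n \le C$ for $n=0,\dots,N$, with $C$ depending only on $r$, $d$, $\lambda$, $|\Omega|$, $T$, $\|\utst\|_{L^1(0,T;W^{1,\infty}(\Omega))}$ and $\|\hpsi_0\|_{L^2_M(\Omega\times D)}$, hence independent of $n$, $\Delta t$ and $L$. This proves \eqref{qalphabd} for $r\ge 2$; for $r\in[0,2)$, $\int_{\Omega\times D}M\,|\qt|^r\,\hpsistL^n\dq\dx \le \int_{\Omega\times D}M\,(1+|\qt|^2)\,\hpsistL^n\dq\dx \le |\Omega| + C$, completing the argument.

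I expect the only genuinely delicate step to be the interaction between the implicit (backward-Euler) time term and the drag coefficient $\|\utst^n\|_{W^{1,\infty}(\Omega)}$, which is not bounded uniformly in $n$: closing the recursion requires exploiting $\utst\in L^1(0,T;\Wt^{1,\infty}(\Omega))$ so that the per-step weights $\Delta t\,\|\utst^n\|_{W^{1,\infty}(\Omega)}$ are individually small (uniformly in $n$) once $\Delta t$ is small, which is what permits absorbing the implicit $a_n$ contribution while keeping the final constant independent of $\Delta t$ and $L$. The favourable sign of the $\qt$-relaxation term, producing $+\frac{r}{4\lambda}\,a_n$ on the left-hand side of the recursion, is what lets us simply discard it rather than fight it; the remaining ingredients — admissibility of $|\qt|^r$, the $\qt$-integration by parts, the bound $\beta^L(\hpsistL^n)\le\hpsistL^n$, and the reduction of the case $r<2$ to $r=2$ — are routine.
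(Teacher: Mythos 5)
Your proof is correct, and it follows the same basic strategy as the paper (test \eqref{hpsiOBLn} with $\hat\varphi = |\qt|^r$, integrate by parts in $\qt$ using \eqref{eqM}, obtain a recursion for $a_n$, and close with a discrete Gr\"onwall inequality), but it handles the drag term at the current step differently. The paper splits the drag contribution: at the current step $n=m$ it invokes $\beta^L \le L$, which makes the term explicit (independent of $A^m_r$) at the cost of a harmless factor $\Delta t\,L$ (bounded under the constraint $\Delta t \leq (4L^2)^{-1}$ used later); at steps $n<m$ it uses $\beta^L \le \hpsistL^n$ to get $\Delta t\,|\nabxtt\utst^n|_{L^\infty}\,A^n_r$, which is summable. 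You instead use $\beta^L \le \hpsistL^n$ at every step, including $n$ itself, so the implicit $a_n$ appears on the right-hand side with coefficient $\Delta t\,[\,C_1 + r\,\|\utst^n\|_{W^{1,\infty}}\,]$, and you absorb it by showing this coefficient is $\le \tfrac12$ for $\Delta t$ small enough, via absolute continuity of the Lebesgue integral applied to $t\mapsto\|\utst(t)\|_{W^{1,\infty}}\in L^1(0,T)$. That argument is valid, and the Gr\"onwall constant is then controlled by $\exp\big(C\,\|\utst\|_{L^1(0,T;W^{1,\infty}(\Omega))}\big)$ as you say, so the final bound is indeed independent of $n$, $\Delta t$, and $L$. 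A minor difference: the paper absorbs the $|\qt|^{r-2}$ moment via Young's inequality with a sharp $\delta$ into the dissipative $\tfrac{r\Delta t}{4\lambda}\,A^n_r$ term on the left, whereas you use the cruder $|\qt|^{r-2}\le 1+|\qt|^r$ and discard the dissipation; both are fine since the Gr\"onwall exponent only needs to be finite, not sharp. Your reduction of $r\in[0,2)$ to $r\in\{0,2\}$, the admissibility of $|\qt|^r$ as a test function, the identity $\nabq\cdot(M\,\nabq|\qt|^r) = M\,[\,r(r+d-2)\,|\qt|^{r-2}-r\,|\qt|^r\,]$, and the estimate $|I_n|\le r\,\|\utst^n\|_{W^{1,\infty}}\,a_n$ are all correct.
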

\begin{proof}
We first prove (\ref{qalphabd}) for any $r \geq 2$.
Similarly to the proof of Lemma \ref{lemsigdef}, we can choose,
on noting (\ref{additional-1}),
$\widehat{\varphi} = |\qt|^{r}$, for any $r \geq 2$,
in (\ref{hpsiOBLn}). This yields, on noting (\ref{adel}), that,
for $n = 1, \ldots, N$,
\begin{align}
&\int_{\Omega\times D} M\,|\qt|^{r}
\left[\frac{\hpsistL^n
- \hpsistL^{n-1}}
{\Delta t}\right] \!\dq \dx
\nonumber \\&\qquad
=  r\,\int_{\Omega \times D}
M \,|\qt|^{r-2}\left[ [(\nabxtt \utst^{n}) \,\qt]\,\beta^L(\hpsistL^n) -
\frac{1}{4\, \lambda}\,\nabq \hpsistL^n \right]
\cdot \qt\,  \dq \dx.
\label{qab0}
\end{align}
We then approximate $\hpsistL^n$, for fixed $L, \,\Delta t$
and
$n$, by a sequence $\{\hpsistL^{n,m}\}_{m \geq 1}$ satisfying
(\ref{psiGijbd1}). Hence
\begin{align}
&\int_{\Omega \times D}
M \,|\qt|^{r-2}\,\nabq \hpsistL^n
\cdot \qt\,\dq \dx \nonumber \\
& \qquad =
-
\int_{\Omega \times D}
\hpsistL^{n,m}\,\nabq \cdot
(M \,|\qt|^{r-2} \,\qt)\,\dq \dx
+\int_{\Omega \times D}
M \,|\qt|^{r-2}\,\nabq (\hpsistL^n-\hpsistL^{n,m})
\cdot \qt\,\dq \dx  \nonumber \\
& \qquad =: T_1 + T_2.
\label{qab1}
\end{align}
It follows from (\ref{eqM}) and (\ref{U}) that
\begin{align}
T_1 &=
\int_{\Omega \times D} M
\left[|\qt|^{r} - (d+r-2)\,|\qt|^{r-2}\right]
\hpsistL^{n,m}\,
\dq \dx \nonumber \\
&=
\int_{\Omega \times D} M
\left[|\qt|^{r} - (d+r-2)\,|\qt|^{r-2}\right]
\hpsistL^{n}\,
\dq \dx
\nonumber \\
& \qquad +
\int_{\Omega \times D} M
\left[|\qt|^{r} - (d+r-2)\,|\qt|^{r-2}\right]
(\hpsistL^{n,m}-\hpsistL^n)\,
\dq \dx =: T_3 + T_4.
\label{qab2}
\end{align}
Next we note from (\ref{H1Mnorm}) and (\ref{additional-1}) that
\begin{align}
|T_2|+|T_4| \leq C\,\|\hpsistL^n-\hpsistL^{n,m}\|_{H^1_M(\Omega\times D)}.
\label{qab3}
\end{align}
Hence, on setting, for any $\mathfrak{z} \in \mathbb{R}_{\geq 0}$,
\begin{align}
A^n_\mathfrak{z} &= \int_{\Omega \times D} M\,|\qt|^\mathfrak{z}\,\hpsistL^n
\,\dq\,\dx, \qquad n=0,\ldots,N,
\label{qab4}
\end{align}
combining (\ref{qab0})--(\ref{qab3}) and noting (\ref{psiGijbd1}),
$\hpsistL^n \geq 0$
%(\ref{additionalOB1})
%$0\leq \beta^L(\hpsistL^n) \leq \hpsistL^n$
and (\ref{rho1}) yields,
for $n=1,\ldots,N$ and any $\delta \in {\mathbb R}_{>0}$, that
\begin{align}
&\left[1 +  \frac{\Delta t\,r}{4\lambda} \right]A^n_{r}
\nonumber \\ &\qquad
\leq A^{n-1}_{r}
+ \Delta  t \,r \left[
%A^{n}_{2(1+\alpha)}
\int_{\Omega \times D} M\,|\qt|^{r}\,\beta^L(\hpsistL^n)
\,|\nabxtt \utst^n| \dq \dx
+ \frac{d+r-2}{4 \lambda}\,A^{n}_{r-2}
\right]
\nonumber \\
&\qquad
\leq A^{n-1}_{r}
+ \Delta  t \,r \left[
\int_{\Omega \times D} M\,|\qt|^{r}\,\beta^L(\hpsistL^n)
\,|\nabxtt \utst^n|
\dq \dx
\right]
\nonumber \\
& \hspace{2in} + \Delta  t \,r\,\frac{d+r-2}{4 \lambda}
\left[\delta\,A^n_{r} + \delta^{\frac{2-r}{2}}\,
\rho(M\,\hpsistL^n) \right].
\label{qab5}
\end{align}
Choosing $\delta =\frac{1}{2(d+r-2)}$ in (\ref{qab5}),
and summing for $n=1,\ldots,m$, yields, on noting
$\hpsistL^n \in \hat Z_2$,
%(\ref{additionalOB1},b),
(\ref{additional-1}) and (\ref{betaLa}),
that, for $m=1,\ldots,N$
\begin{align}
&A^m_{r} + \frac{r}{8\lambda}
\sum_{n=1}^m \Delta t \,A_{r}^n
\nonumber \\
&\qquad \leq A^0_{r}+ C\left(\Delta t\,L\,|\nabxtt \utst^m|_{L^1(\Omega)}
+ \sum_{n=1}^{m-1} \Delta t \, |\nabxtt \utst^n|_{L^\infty(\Omega)}\,A_{r}^n
+t_m \right).
\label{qab6}
\end{align}
%where $C_\alpha :=2(1+\alpha)\int_D M\,|\qt|^{2(1+\alpha)}\dq$.
Therefore, applying a discrete Gr\"{o}nwall inequality to (\ref{qab6}) yields,
on noting (\ref{hpsi0rbd}) and (\ref{utOBncon}),
%and our assumption on $L$ and $\Delta t$,
the desired result (\ref{qalphabd}) for $r \geq 2$.
The result (\ref{qalphabd}) for $r \in (0,2)$ follows immediately from
(\ref{qalphabd}) for $r=2$ and $r=0$, the latter holding as
$\hpsistL^n \in \hat Z_2$, $n=0,\ldots,N$.
\end{proof}

We now introduce the following definitions, in line with (\ref{utOBn}):
\begin{subequations}
\begin{equation}
\hpsistL(\cdot,t):=\,\frac{t-t_{n-1}}{\Delta t}\,
\hpsistL^n(\cdot)+
\frac{t_n-t}{\Delta t}\,\hpsistL^{n-1}(\cdot),
\quad t\in [t_{n-1},t_n], \quad n=1,\dots,N, \label{hpsiOBLlin}
\end{equation}
\begin{equation}
\hpsistLDtp(\cdot,t):=\hpsistL^n(\cdot),\quad
\hpsistLDtm(\cdot,t):=\hpsistL^{n-1}(\cdot),
\quad t\in(t_{n-1},t_n], \quad n=1,\dots,N. \label{hpsiOBLpm}
\end{equation}
\end{subequations}
We shall adopt $\hpsistL^{\Delta t (,\pm)}$ as a collective symbol
for $\hpsistL^{\Delta t}$, $\hpsistL^{\Delta t,\pm}$.
We note that
\begin{equation}
\hpsistLDt-\hpsistL^{\Delta t,\pm}= (t-t_{n}^{\pm})
\,\frac{\partial \hpsistLDt}{\partial t},
\quad t \in (t_{n-1},t_{n}), \quad n=1,\dots,N, \label{hpsiOBLdt}
\end{equation}
where $t_{n}^{+} := t_{n}$ and $t_{n}^{-} := t_{n-1}$.

Using the above notation,
(\ref{hpsiOBLn}) summed for $n=1, \dots,  N$ can be restated in the following form.

{\boldmath $({\rm FP}_{L}^{\Delta t})$}:
$\hpsistL^{\Delta t,+}(t) \in
\hat X \cap \hat Z_2$
satisfy
\begin{align}\label{hpsiOBLncon}
&\int_{0}^T \int_{\Omega \times D}
M\,\frac{ \partial \hpsistL^{\Delta t}}{\partial t}\,
\hat \varphi \dq \dx \dt
+ \int_{0}^T \int_{\Omega \times D} M\left[
\epsilon\, \nabx \hpsistL^{\Delta t,+}
- \utst^{\Delta t,+}\,\hpsistL^{\Delta t,+} \right]\cdot \nabx
\hat \varphi
\,\dq \dx \dt
\nonumber \\
& \qquad  +
\int_{0}^T \int_{\Omega \times D} M\left[
\frac{1}{4\,\lambda}
\nabq \hpsistL^{\Delta t,+}
-[\,(\nabxtt \utst^{\Delta t,+})
\,\qt]\,
\beta^L(\hpsistL^{\Delta t,+})  \right]\cdot \nabq
\hat \varphi \,
\dq \dx \dt = 0
\nonumber \\
& \hspace{4in}
\qquad \forall \hat \varphi \in L^1(0,T;\hat X)
\end{align}
and the initial condition
$\hpsistL^{\Delta t}(\cdot,\cdot,0) = \hat \psi^0(\cdot,\cdot) \in \hat Z_2$.
We emphasize that (\ref{hpsiOBLncon})
is an equivalent restatement $({\rm FP}_{L}^{\Delta t})$
for which existence of a solution has been established
under assumptions (\ref{hpsi0}) and (\ref{hpsidata}) on the data
(cf. Lemma \ref{hpsiOBLnex}).

Similarly, we %recall (\ref{equncon}) and (\ref{C1}), and
rewrite (\ref{psiGijbd5},b), $n=1,\ldots,N$, using the notation
(\ref{utOBn}) and (\ref{hpsiOBLlin},b)
to obtain:

{\bf (S$^{\Delta t}_{L}$)}:
$\sigtt(M\,\hpsistL^{\Delta t,+})(t),\,\sigtt(M\,\beta^L(\hpsistL^{\Delta t,+}))(t),
\,\rho(M\,\hpsistL^{\Delta t,+})(t) \in
\Htt^1(\Omega)$ satisfy
\begin{subequations}
\begin{align}
&\int_0^T \int_{\Omega} \left[ \frac{ \partial \sigtt(M\,\hpsistL^{\Delta t})}{\partial t} :\zetatt
+ \epsilon\, \nabx \,\sigtt(M\,\hpsistL^{\Delta t,+})
:: \nabx \,\zetatt
-\sigtt(M\,\hpsistL^{\Delta t,+}) :
 (\utst^{\Delta t,+} \cdot \nabx)\,\zetatt
\right] \! \! \dx \dt
\nonumber \\
&\hspace{0.1cm} - \int_0^T \int_{\Omega}
\left[ (\nabxtt \utst^{\Delta t,+}) \,
\sigtt(M\,\beta^L(\hpsistL^{\Delta t,+})) +
\sigtt(M\,\beta^L(\hpsistL^{\Delta t,+})) \,
(\nabxtt \utst^{\Delta t,+})^{\rm T}
\right] : \zetatt \dx \dt
\nonumber
\\
&\hspace{0.1cm} -
\frac{1}{2\, \lambda} \, \int_0^T
\int_{\Omega} \left[ %2\,A_{ij}\,
\rho(M\,\hpsistL^{\Delta t,+})\,\Itt
-%\sum_{s=1}^K \left( A_{is}\,
\sigtt(M\,\hpsistL^{\Delta t,+})  %+A_{sj}\,\sigtt_{is}(M\,
%\hpsistL^{\Delta t, +})
%\right)
\right]
: \zetatt \,\dx \dt
%\nonumber \\&\hspace{3.3in}
=0
\qquad \forall \zetatt \in
L^2(0,T;\Htt^1(\Omega)),
\label{psiGijcont}  \\[2mm]
&\int_0^T \int_{\Omega} \left[ \frac{ \partial \rho(M\,\hpsistL^{\Delta t})}
{\partial t}\eta
+ \epsilon\, \nabx \,\rho(M\,\hpsistL^{\Delta t,+})
\cdot \nabx \eta
-\rho(M\,\hpsistL^{\Delta t,+}) \,
 (\utst^{\Delta t,+} \cdot \nabx)\,\eta
\right] \! \! \dx \dt
=0 \nonumber \\
& \hspace{3.7in}
\qquad \forall \eta \in
L^2(0,T;H^1(\Omega))
\label{rhocont}
\end{align}
\end{subequations}
and the initial conditions
$\sigtt(M\,\hpsistL^{\Delta t}(\cdot,\cdot,0))
= \sigtt(M\,\hat \psi^0(\cdot,\cdot))$,
$\rho(M\,\hpsistL^{\Delta t}(\cdot,\cdot,0))
= \rho(M\,\hat \psi^0(\cdot,\cdot))$.

%We emphasize that (\ref{psiGijcont},b)
%is an equivalent restatement of problem (${\rm S}^{\Delta t}_{L}$),
%for which existence of a solution has been established
%in two spatial dimensions
%if the assumptions (\ref{inidata}) and (\ref{inidatastr}) hold on the data,
%and $4\,L^2 \Delta t \leq 1$ (cf. Lemma \ref{lemsigdef}).

On noting
(\ref{hpsiOBLlin},b), (\ref{rho1}), (\ref{qalphabd}) and that $\hpsistL^{n} \in \widehat{Z}_2$,
$ n=0,\ldots,N$,
we have, for all $r \in [0,\infty)$, that
\begin{subequations}
\begin{equation}\label{additionalOB1}
\hpsistL^{\Delta t(,\pm)} \geq 0\qquad \mbox{a.e. on $\Omega \times D \times [0,T]$,}
\qquad \|\,|\qt|^r\,\hpsistL^{\Delta t(,\pm)}\|_{L^1_M(\Omega \times D)} \leq C
\end{equation}
and
\begin{equation}\label{additionalOB2}
[\rho( M\, \hpsistL^{\Delta t(,\pm)})](\xt,t)=
\int_D M(\qt)\, \hpsistL^{\Delta t(,\pm)}
(\xt,\qt,t)\dq \leq 1\quad \mbox{for a.e. $(\xt,t) \in \Omega \times [0,T]$.}
\end{equation}
\end{subequations}
Moreover, we have the following result.
%For ease$\|\cdot\|$ denote the $L^2$ norm over $\Omega$.
\begin{lemma}\label{lemhpsiOBLnent}
Under the assumptions of Lemma \ref{hpsiOBLnex}
we have %, for $\Delta t$ sufficiently small,
that
\begin{align}
& {\rm ess.sup}_{t\in [0,T]}
\int_{\Omega \times D}
M \,\mathcal{F}(\hpsistL^{\Delta t(,\pm)}(t)) \dq \dx + \frac{1}{\Delta t\,L}
\int_0^T \int_{\Omega \times D} M \,(\hpsistL^{\Delta t, +}
- \hpsistL^{\Delta t, -})^2 \dq \dx \dd t
\nonumber \\
&\hspace{1in} + \int_0^T \int_{\Omega \times D} M \left[
|\nabx \sqrt{\hpsistL^{\Delta t(,\pm)}} |^2 %\dq \dx \dd t'
+ %\int_0^{t_n} \int_{\Omega \times D}M\,
|\nabq \sqrt{\hpsistL^{\Delta t(, \pm}}|^2 \right] \dq \dx \dd t
\leq C.
%\nonumber\\
%&\qquad\leq
%C\int_{\Omega \times D} M \mathcal{F}(\hat\psi^0) \dq \dx
%\leq C\int_{\Omega \times D}
%M \mathcal{F}(\hat\psi_0) \dq \dx.
\label{hpsiOBLent}
\end{align}
%and
%\begin{align}
%{\rm ess.sup}_{t\in [0,T]}
%\int_{\Omega \times D} M\,
%|\qt|^2\,\hpsistL^{\Delta t(,\pm)}(t) \dq \dx &\leq C.
%\label{qt2hpsiOBLbd}
%\end{align}
%\end{subequations}
In addition, we have that
\begin{align}\label{hpsiOBLdtbd}
&\left|\int_{0}^T\int_{\Omega \times D} M\, \frac{\partial \hpsistL^{\Delta t}}{\partial t}\,
\hat \varphi\,
\dq \dx \dt\right|
\leq C\,
\|\hat \varphi\|_{L^2(0,T;W^{1,\infty}(\Omega \times D))}
\qquad
\forall
\hat \varphi \in L^2(0,T;W^{1,\infty}(\Omega \times D)).
\end{align}
\end{lemma}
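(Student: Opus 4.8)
For the entropy bound \eqref{hpsiOBLent} the plan is to test the discrete scheme \eqref{hpsiOBLn} at the $n$-th time level with the regularized logarithm $\hat\varphi = [\mathcal{F}_\delta^L]'(\hpsistL^n)$, where $\mathcal{F}_\delta^L$ is the convex function from \eqref{GLd}. This is admissible: $[\mathcal{F}_\delta^L]'$ is globally Lipschitz (constant $\delta^{-1}$) and of at most linear growth, so $[\mathcal{F}_\delta^L]'(\hpsistL^n)\in\hat X$ since $\hpsistL^n\in\hat X$. After multiplying \eqref{hpsiOBLn} by $\Delta t$, I would use: (i) convexity of $\mathcal{F}_\delta^L$ in the sharpened form $(\hpsistL^n-\hpsistL^{n-1})\,[\mathcal{F}_\delta^L]'(\hpsistL^n)\ge \mathcal{F}_\delta^L(\hpsistL^n)-\mathcal{F}_\delta^L(\hpsistL^{n-1}) + \tfrac12\inf([\mathcal{F}_\delta^L]'')(\hpsistL^n-\hpsistL^{n-1})^2$ together with $[\mathcal{F}_\delta^L]''=(\beta^L_\delta)^{-1}\ge L^{-1}$ from \eqref{FLdbLd}, which yields both a telescoping entropy difference and the term $\tfrac1{2L}\int_{\Omega\times D}M(\hpsistL^n-\hpsistL^{n-1})^2$; (ii) the chain rule $\nabx[\mathcal{F}_\delta^L]'(\hpsistL^n)=[\mathcal{F}_\delta^L]''(\hpsistL^n)\nabx\hpsistL^n$ (and likewise in $\qt$), so the two diffusion terms equal $\Delta t\int_{\Omega\times D}M[\mathcal{F}_\delta^L]''(\hpsistL^n)\big[\epsilon|\nabx\hpsistL^n|^2+\tfrac1{4\lambda}|\nabq\hpsistL^n|^2\big]\ge 0$; (iii) the $\xt$-convection term equals $\Delta t\int_{\Omega\times D}M\,\utst^n\cdot\nabx\mathcal{G}_\delta^L(\hpsistL^n)$ with $(\mathcal{G}_\delta^L)'(s)=s[\mathcal{F}_\delta^L]''(s)$, and vanishes since $\nabx\cdot\utst^n=0$ and $\utst^n\cdot\nt|_{\partial\Omega}=0$ (justified, as in the proof of Lemma \ref{lemsigdef}, by approximating $\hpsistL^n$ in $\hat X$ by functions from $C^\infty(\overline\Omega;C^\infty_0(D))$ via \eqref{cal K}); and, crucially, (iv) the truncation $\beta^L$ in the drag term cancels $[\mathcal{F}_\delta^L]''=(\beta^L_\delta)^{-1}$ up to the harmless factor $\beta^L/\beta^L_\delta\le 1$, so the drag term becomes $\Delta t\int_{\Omega\times D}M\,\tfrac{\beta^L(\hpsistL^n)}{\beta^L_\delta(\hpsistL^n)}\,[(\nabxtt\utst^n)\,\qt]\cdot\nabq\hpsistL^n$.

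Collecting these, one arrives for every $\delta\in(0,1)$ at
\begin{align*}
&\int_{\Omega\times D} M\big[\,\mathcal{F}_\delta^L(\hpsistL^n)-\mathcal{F}_\delta^L(\hpsistL^{n-1})\,\big]
 + \tfrac{1}{2L}\int_{\Omega\times D} M\,(\hpsistL^n-\hpsistL^{n-1})^2
 + \Delta t\int_{\Omega\times D} M\,[\mathcal{F}_\delta^L]''(\hpsistL^n)\Big[\epsilon\,|\nabx\hpsistL^n|^2 + \tfrac{1}{4\lambda}|\nabq\hpsistL^n|^2\Big]\\
&\hspace{2.2cm}\le \Delta t\int_{\Omega\times D} M\,\frac{\beta^L(\hpsistL^n)}{\beta^L_\delta(\hpsistL^n)}\,\big[(\nabxtt\utst^n)\,\qt\big]\cdot\nabq\hpsistL^n .
\end{align*}
I would then let $\delta\to0_+$ at fixed $n,L,\Delta t$: by monotone convergence $[\mathcal{F}_\delta^L]''(\hpsistL^n)|\nabx\hpsistL^n|^2\uparrow \mathbf{1}_{\{\hpsistL^n>0\}}|\nabx\hpsistL^n|^2/\!\min(\hpsistL^n,L)\ge 4|\nabx\sqrt{\hpsistL^n}|^2$ (and likewise in $\qt$); $\int_{\Omega\times D}M\mathcal{F}_\delta^L(\hpsistL^n)\to\int_{\Omega\times D}M\mathcal{F}^L(\hpsistL^n)\ge\int_{\Omega\times D}M\mathcal{F}(\hpsistL^n)$ by dominated convergence (dominating function $\mathcal{F}^L(\hpsistL^n)+1\le C(1+(\hpsistL^n)^2)$, using $\hpsistL^n\in L^2_M(\Omega\times D)$ from Lemma \ref{hpsiOBLnex}); and $\beta^L/\beta^L_\delta\to\mathbf{1}_{\{\hpsistL^n>0\}}$, so the drag term converges to $\Delta t\int_\Omega(\nabxtt\utst^n):\sigtt(M\hpsistL^n)\dx$ by \eqref{intbypartsi} (recall $\mathfrak{tr}(\nabxtt\utst^n)=0$), which is $\le \Delta t\,\|\nabxtt\utst^n\|_{L^\infty(\Omega)}\int_{\Omega\times D}M|\qt|^2\hpsistL^n\dq\dx\le C\Delta t\,\|\nabxtt\utst^n\|_{L^\infty(\Omega)}$ by \eqref{qalphabd}. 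Summing over $n=1,\dots,m$, telescoping and using $\int_{\Omega\times D}M\mathcal{F}^L(\hpsistL^0)=\int_{\Omega\times D}M\mathcal{F}(\hat\psi^0)\le\int_{\Omega\times D}M\mathcal{F}(\hpsi_0)\le C$ (by \eqref{inidata-1}, since $\hpsistL^0=\beta^L(\hpsistL^0)\le L$) and $\sum_n\Delta t\|\nabxtt\utst^n\|_{L^\infty(\Omega)}\le\|\utst\|_{L^1(0,T;W^{1,\infty}(\Omega))}$ (by \eqref{utOBn}) gives the bound at the time nodes; \eqref{hpsiOBLent} then follows by passing to the interpolants \eqref{hpsiOBLlin}--\eqref{hpsiOBLpm}, using convexity of $\mathcal{F}$, the elementary inequality $|\nabx\sqrt{\theta a+(1-\theta)b}|^2\le 2\theta|\nabx\sqrt a|^2+2(1-\theta)|\nabx\sqrt b|^2$, and the $n=0$ estimate $\Delta t\int_{\Omega\times D}M|\nabx\sqrt{\hpsistL^0}|^2\le\tfrac14\int_{\Omega\times D}M\mathcal{F}(\hpsi_0)$ from \eqref{inidata-1}.

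For \eqref{hpsiOBLdtbd} I would test the equivalent restatement \eqref{hpsiOBLncon} with $\hat\varphi\in L^2(0,T;W^{1,\infty}(\Omega\times D))$; this is legitimate since $W^{1,\infty}(\Omega\times D)\hookrightarrow\hat X$ (because $\int_{\Omega\times D}M\dq\dx=|\Omega|$), so $L^2(0,T;W^{1,\infty}(\Omega\times D))\subset L^1(0,T;\hat X)$. Then $\int_0^T\!\!\int M\frac{\partial\hpsistL^{\Delta t}}{\partial t}\hat\varphi$ equals the negative of the other three integral terms, which I estimate as follows. The two diffusion terms are bounded by $\|\nabx\hpsistL^{\Delta t,+}\|_{L^2(0,T;L^1_M(\Omega\times D))}\|\nabx\hat\varphi\|_{L^2(0,T;L^\infty)}$ (and likewise in $\qt$), where $\int_{\Omega\times D}M|\nabx\hpsistL^{\Delta t,+}| = 2\int_{\Omega\times D}M\sqrt{\hpsistL^{\Delta t,+}}\,|\nabx\sqrt{\hpsistL^{\Delta t,+}}|\le 2\big(\int_{\Omega\times D}M\hpsistL^{\Delta t,+}\big)^{1/2}\big(\int_{\Omega\times D}M|\nabx\sqrt{\hpsistL^{\Delta t,+}}|^2\big)^{1/2}\le C\big(\int_{\Omega\times D}M|\nabx\sqrt{\hpsistL^{\Delta t,+}}|^2\big)^{1/2}$ by \eqref{additionalOB2}, so squaring and integrating in $t$ and invoking \eqref{hpsiOBLent} bounds $\|\nabx\hpsistL^{\Delta t,+}\|_{L^2(0,T;L^1_M)}$. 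The $\xt$-convection term is $\le\int_0^T\|\nabx\hat\varphi(t)\|_{L^\infty}\|\utst^{\Delta t,+}(t)\|_{L^1(\Omega)}\,\mathrm{ess\,sup}_{\xt}\big(\int_D M\hpsistL^{\Delta t,+}\dq\big)\dd t\le C\|\utst\|_{L^\infty(0,T;H^1(\Omega))}\|\hat\varphi\|_{L^1(0,T;W^{1,\infty})}$ by \eqref{additionalOB2} and \eqref{utOBDtsatb}. For the drag term I would use the crude bound $\beta^L(\hpsistL^{\Delta t,+})\le L$ and $\int_D M|\qt|\dq\le C$ (from \eqref{additional-1}) to get $\big|\int_0^T\!\!\int M[(\nabxtt\utst^{\Delta t,+})\qt]\beta^L(\hpsistL^{\Delta t,+})\cdot\nabq\hat\varphi\big|\le CL\int_0^T\|\nabq\hat\varphi(t)\|_{L^\infty}\|\nabxtt\utst^{\Delta t,+}(t)\|_{L^1(\Omega)}\dd t\le CL\|\utst\|_{L^\infty(0,T;H^1(\Omega))}\|\hat\varphi\|_{L^1(0,T;W^{1,\infty})}$; since $\|\cdot\|_{L^1(0,T)}\le T^{1/2}\|\cdot\|_{L^2(0,T)}$ this yields \eqref{hpsiOBLdtbd} with a constant independent of $\Delta t$ (it may depend on $L$, which is harmless as \eqref{hpsiOBLdtbd} is only used in the passage $\Delta t\to 0_+$ with $L$ fixed).

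The main obstacle is point (iv): obtaining the square-root gradient bound in \eqref{hpsiOBLent} with a constant that does not degenerate as $\Delta t\to0_+$ (and, in the subsequent limit, as $L\to\infty$). The naive energy estimate \eqref{Eka} only produces the diffusion terms with the spurious weight $\Delta t\,L^{-1}$; recovering the genuine $4\epsilon|\nabx\sqrt{\hpsistL^n}|^2$ bound hinges on the precise algebra by which the $\beta^L$-truncation in the drag term cancels $[\mathcal{F}_\delta^L]''=(\beta^L_\delta)^{-1}$, leaving a drag contribution that — after the limit $\delta\to0_+$ and an integration by parts in $\qt$ via \eqref{intbypartsi} — is controlled by the second moment $\int_{\Omega\times D}M|\qt|^2\hpsistL^n$, already bounded by Lemma \ref{lem:qalphabd}. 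The accompanying technical points (admissibility and the chain rule for the non-smooth test function $[\mathcal{F}_\delta^L]'(\hpsistL^n)$, the divergence-free cancellation of the $\xt$-convection term, and the monotone/dominated convergence passages as $\delta\to0_+$) are routine, handled exactly as in Lemmas \ref{lemsigdef} and \ref{hpsiOBLnex} using the density result \eqref{cal K} and the moment bounds \eqref{additional-1}, \eqref{qalphabd}.
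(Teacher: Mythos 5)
Your argument for the entropy inequality \eqref{hpsiOBLent} is correct, and it is a genuine (if close) variant of the paper's proof. The paper tests \eqref{hpsiOBLncon} with $\chi_{[0,t_n]}\,[\mathcal{F}^L]'(\hpsistL^{\Delta t,+}+\alpha)$ and lets $\alpha\to 0_+$, whereas you test the per--step scheme \eqref{hpsiOBLn} with $[\mathcal{F}_\delta^L]'(\hpsistL^n)$ and let $\delta\to 0_+$. In both cases the key algebraic point is the same: the cut--off $\beta^L$ in the drag term cancels $(\mathcal{F}^{\,\cdot})''$ up to a factor bounded by $1$, so that after the regularization parameter goes to zero and one integrates by parts in $\qt$ via \eqref{intbypartsi}, the drag contribution is bounded by $\Delta t\,\|\nabxtt\utst^n\|_{L^\infty(\Omega)}\int_{\Omega\times D}M\,|\qt|^2\hpsistL^n\,\dq\dx$, which Lemma~\ref{lem:qalphabd} controls uniformly. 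Your route is perhaps slightly cleaner in that it avoids the $O(\sqrt\alpha)$ error term and the Young absorption into the $\qt$-diffusion (compare \eqref{eq:lastterm}), at the cost of a separate monotone/dominated-convergence passage; the paper's route avoids arguing that $\beta^L/\beta^L_\delta\to\mathbf{1}_{\{\cdot>0\}}$ and relying on $\nabla\hpsistL^n=0$ a.e.\ on $\{\hpsistL^n=0\}$. Both are sound.

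There is, however, a genuine gap in your proof of \eqref{hpsiOBLdtbd}. You estimate the drag term using the crude bound $\beta^L(\hpsistL^{\Delta t,+})\le L$ and $\int_D M|\qt|\,\dq\le C$, obtaining a constant proportional to $L$, and then claim this is harmless because ``\eqref{hpsiOBLdtbd} is only used in the passage $\Delta t\to 0_+$ with $L$ fixed.'' That claim is incorrect. In Theorem~\ref{convfinal} the limit is taken \emph{diagonally}, $L\to\infty$ with $\Delta t\le (4L^2)^{-1}$, and \eqref{hpsiOBLdtbd} is needed to supply the $L$- and $\Delta t$-uniform bound on $\|M\,\partial_t\hpsistL^{\Delta t}\|_{L^2(0,T;[H^s(\Omega\times D)]')}$ that feeds into the Dubinski{\u\i} compactness argument and yields \eqref{hpsiOBqdt} and \eqref{5-psitwconL2a}. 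A constant growing like $L$ destroys this. The fix is the one the paper uses: since $\hpsistL^{\Delta t,+}\ge 0$, one has $\beta^L(\hpsistL^{\Delta t,+})\le \hpsistL^{\Delta t,+}$ (not merely $\le L$), so that by Cauchy--Schwarz in $\qt$ with respect to the measure $M\,\hpsistL^{\Delta t,+}\dq$,
\begin{align*}
\Big\|\int_D M\,|\qt|\,\beta^L(\hpsistL^{\Delta t,+})\,\dq\Big\|_{L^2(\Omega)}^2
&\le \int_\Omega \Big(\int_D M\,\hpsistL^{\Delta t,+}\dq\Big)\Big(\int_D M\,|\qt|^2\,\hpsistL^{\Delta t,+}\dq\Big)\dx
\le \big\|\,|\qt|^2\,\hpsistL^{\Delta t,+}\big\|_{L^1_M(\Omega\times D)},
\end{align*}
on using \eqref{additionalOB2}; this is bounded uniformly in $L$ and $\Delta t$ by \eqref{additionalOB1}. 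Combined with $\|\nabxtt\utst^{\Delta t,+}\|_{L^2(0,T;L^2(\Omega))}\le C$ (from \eqref{hpsidata} and \eqref{utOBDtsatb}) this yields the drag estimate \eqref{dterm2} with an $L$-independent constant, which is what \eqref{hpsiOBLdtbd} requires.
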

\begin{proof}
Similarly to (\ref{GLd}), we introduce
the following convex regularization
${\cal F}^L \in
C^{2,1}({\mathbb R})$ of ${\cal F}$ defined, for any $L>1$, by
\begin{align}
{\cal F}^L(s) := \left\{
\begin{array}{ll}
{\cal F}(s) \equiv
s\,(\log s - 1) + 1 & \mbox{for $s \in [0,L]$}, \\
  \textstyle\frac{s^2 - L^2}{2\,L}
 + s\,(\log L - 1) + 1
 & \mbox{for $s \ge L$}.
 \end{array} \right. \label{GL}
\end{align}
We have the following analogues of (\ref{cFbelow},b) for all $s \in \mathbb{R}_{>0}$:
\begin{align}
{\cal F}^{L}(s) \ge {\cal F}(s), \qquad
([{\cal F}^{L}]'')(s)&=(\beta^L(s))^{-1} \geq L^{-1} \qquad
\mbox{and} \qquad
([{\cal F}^{L}]'')(s)\geq s^{-1} \label{FLbL}.
\end{align}
For any $\alpha \in {\mathbb R}_{>0}$,
choosing $\hat \varphi = \chi_{[0,t_n]}\,[{\cal F}^L]'(\hpsistLDtp+\alpha)$,
$n=1,\ldots,N$,
in (\ref{hpsiOBLent}),
noting (\ref{FLbL}), (\ref{utOBn}) and that
$(\hpsistLDtp+\alpha)\,\nabx[{\cal F}^L]'(\hpsistLDtp+\alpha)
= \nabx {\cal G}^L(\hpsistLDtp+\alpha)$,
where $[{\cal G}^L]'(s)=s/\beta(s)$ for $s>0$,
yields, similarly to (\ref{Eka}), that
\begin{align}
&\int_{\Omega \times D} M \,\mathcal{F}^L(\hpsistL^{\Delta t,+}(t_n) + \alpha) \dq \dx
+\,\frac{1}{2 \Delta t\,L}\int_0^{t_n} \int_{\Omega \times D}
M\,(\hpsistL^{\Delta t,+} - \hpsistL^{\Delta t,-})^2 \dq \dx \dd t \nonumber\\
&\qquad \qquad +  \int_0^{t_n} \int_{\Omega \times D} M\left[
\varepsilon\,
\frac{|\nabx \hpsistL^{\Delta t,+}|^2}{\hpsistL^{\Delta t,+} + \alpha} %\dq \dx \dd s
%\nonumber \\ & \qquad
+\,\frac{1}{4\,\lambda}  %\int_0^t \int_{\Omega \times D}M\,
(\mathcal{F}^L)''(\hpsistL^{\Delta t,+} + \alpha)\,|\nabq \hpsistL^{\Delta t,+} |^2
\right] \dq \dx \dd t\nonumber \\
&\qquad \leq \int_{\Omega \times D} M \,\mathcal{F}^L(\hat\psi^0 + \alpha)
\dq \dx \nonumber \\
& \qquad \qquad
+ \int_0^{t_n} \int_{\Omega \times D} M\,[\,(\nabxtt \utst^{\Delta t,+})\,\qt\,]
\frac{\beta^L(\hpsistL^{\Delta t,+})}{\beta^L(\hpsistL^{\Delta t,+} + \alpha)}
\cdot \nabq \hpsistL^{\Delta t,+}\, \dq \dx \dd t =: T_1 + T_2.
\label{eq:energy-psi-summ1}
\end{align}
As $\utst^{\Delta t,+} \in L^\infty(0,T;\Vt)$, it follows from (\ref{intbypartsi})
that
\begin{align}
T_2 &= \int_0^{t_n} \int_{\Omega \times D} M\,\qt\,\qt^{\rm T}\,
\hpsistL^{\Delta t,+} : \nabxtt \utst^{\Delta t,+} \dq \dx \dd t\nonumber\\
&\qquad - \int_0^{t_n} \int_{\Omega \times D} M\,[\,(\nabxtt \utst^{\Delta t,+})\,\qt\,]
\left[1 -\frac{\beta^L(\hpsistL^{\Delta t,+})}{\beta^L(\hpsistL^{\Delta t,+} + \alpha)}\right] %\,\Xi\,
\cdot \nabq \hpsistL^{\Delta t,+}\, \dq \dx \dd t.
\label{entT2}
\end{align}
%
%It remains to bound the fourth term on the right-hand side of \eqref{eq:energy-u+psi}.
%Noting that
%$\beta^L$ is Lipschitz continuous, with Lipschitz constant equal to $1$,
%and $\beta^L(s+\alpha) \geq \alpha$
%for $s \geq 0$ (recall that $0<\alpha < 1 < L$), we have that
As, for all $s \in \mathbb{R}_{\geq 0}$,
\begin{align*}
0 &\leq \left(1 - \frac{\beta^L(s)}{\beta^L(s+ \alpha)}\right)
\frac{1}{\sqrt{({\mathcal F}^L)''(s + \alpha)}}
= \frac{\beta^L(s + \alpha) - \beta^L(s)}{\sqrt{\beta^L(s + \alpha)}}
\leq \frac{\beta^L(s + \alpha) - \beta^L(s)}{\sqrt{\alpha}} \leq \sqrt{\alpha},
%\leq \left\{\begin{array}{cl}
%\sqrt{\alpha}   &    \,\mbox{for  $s \leq L$},\\
%0               &    \,\mbox{for  $s \geq L$}.
%\end{array} \right.
%\label{betaLbd}
\end{align*}
%With this bound and (\ref{growth3}),
%we now focus our attention on the last term in the inequality \eqref{eq:energy-u+psi}.
%Let $\bt: = (b_1,\dots, b_K)$ and $b:=|\bt|_1:= b_1 + \cdots + b_K$;
%as $|q_i| \leq \sqrt{b_i}$, %$i=1,\dots, K$, we have that $|\qt| \leq \sqrt{b}$ for all $\qt \in D$.
%for $t=t_n$, $n \in \{1,\dots,N\}$,
we have, on noting %(\ref{qt2bd}), (\ref{MN}), (\ref{U}), (\ref{FLbL}),
(\ref{additionalOB1}), (\ref{utOBDtsatb},b)
and (\ref{additional-1})
 that
%Similarly to (\ref{T2below}), it follows from (\ref{betaLbd}) and (\ref{additional-1}) that
%
\begin{align}
|T_2| &\leq
\int_0^{t_n} \|\utst^{\Delta t,+}\|_{W^{1,\infty}(\Omega)}\left(
\int_{\Omega \times D} M\,|\qt|^2\,
\hpsistL^{\Delta t,+} \dq \dx \right)\dd t \nonumber \\
%&\left|- 2k \int_0^t \int_{\Omega \times D} M\,
%\sum_{i=1}^K [\,(\nabxtt \uta^{\Delta t, +})\,\qt_i\,]
%\left[1 -\frac{\beta^L(\hpsistL^{\Delta t, +})}{\beta^L(\hpsistL^{\Delta t, +} + \alpha)}\right] %\Xi\,
%\cdot \nabqi \hpsistL^{\Delta t, +}\, \dq \dx \dd s\right|\nonumber\\
%&\qquad \leq 2k \int_0^t \int_{\Omega \times D} M\,  |\nabxtt \uta^{\Delta t, +}|\,|\qt|\,
%%\left[1 -\frac{\beta^L(\hpsistL^{\Delta t, +})}{\beta^L(\hpsistL^{\Delta t, +}
%+ \alpha)}\right]\, %\Xi\,
%|\nabq \hpsistL^{\Delta t, +}|\, \dq \dx \dd s \nonumber \\
%& \qquad \leq 2k \int_0^t \int_{\Omega \times D} M\,  |\nabxtt \uta^{\Delta t, +}|\,|\qt|\,
%\left\{\begin{array}{cl}
%\sqrt{\alpha}   &    \,\mbox{when $\hpsistL^{\Delta t, +} \leq L$},\\
%0               &    \,\mbox{when $\hpsistL^{\Delta t, +} \geq L$}
%\end{array} \right.\nonumber\\
%&\hspace{4.5cm}\times
%\sqrt{(\mathcal{F}^L)''(\hpsistL^{\Delta t, +} + \alpha)}
%\, |\nabq \hpsistL^{\Delta t, +}|\, \dq \dx \dd s \nonumber\\
%&\qquad \leq 2k \sqrt{\alpha} \int_0^t \int_{\Omega \times D} M\,  |\nabxtt \uta^{\Delta %t, +}|\, |\qt|\,
%\sqrt{(\mathcal{F}^L)''(\hpsistL^{\Delta t, +} + \alpha)} %\;\Xi\,
%\; |\nabq \hpsistL^{\Delta t, +}|\, \dq \dx \dd s \nonumber\\
&\qquad + \sqrt{\alpha} \int_0^{t_n}
\int_{\Omega}  |\nabxtt \utst^{\Delta t, +}| \left(\int_D M
\,|\qt|
\sqrt{(\mathcal{F}^L)''(\hpsistL^{\Delta t, +} + \alpha)}
\; |\nabq \hpsistL^{\Delta t, +}|\, \dq \right)\dx \dd t\nonumber
\\
%&\leq
%\int_0^{t_n} \|\utst^{\Delta t,+}\|_{W^{1,\infty}(\Omega)}\left(4\,
%\int_{\Omega \times D} M\,{\cal F}(\hpsistL^{\Delta t,+})   \dq \dx + C \right)\dd t \nonumber \\
%\end{align}
%\begin{align}
%&\qquad \leq 2k \sqrt{\alpha}
%\left(\int_D M\,%\Xi^2\,
%|\qt|^2 \dq\right)^{\frac{1}{2}}
%\nonumber\\
%&\qquad \qquad
%\times
%\left(\int_0^t \int_{\Omega}  |\nabxtt \uta^{\Delta t, +}|
%\left(\int_D M
%(\mathcal{F}^L)''(\hpsistL^{\Delta t, +} + \alpha)
%\, |\nabq \hpsistL^{\Delta t, +}|^2\, \dq \right)^{\frac{1}{2}}
%\dx \dd s \right) \nonumber\\
%&\qquad +
%C\sqrt{\alpha} %\left(\int_0^t \|\nabxtt \uta^{\Delta t, +}\|^2 \dd s\right)^{\frac{1}{2}}
%\nonumber\\
%&\quad \qquad \times
%\left(\int_0^{t_n} \int_{\Omega \times D} M (\mathcal{F}^L)''(\hpsistL^{\Delta t, +} + \alpha)
%\, |\nabq \hpsistL^{\Delta t, +}|^2\, \dq \dx \dd t\right)^{\frac{1}{2}} + C\nonumber\\
&\leq %C\,\int_0^{t_n} \|\utst^{\Delta t,+}\|_{W^{1,\infty}(\Omega)}\left(
%\int_{\Omega \times D} M\,{\cal F}^L(\hpsistL^{\Delta t,+}) \dq \dx \right)\dd t
%\nonumber \\
%&\qquad
\frac{1}{8 \lambda}
\left(\int_0^{t_n} \int_{\Omega \times D} M (\mathcal{F}^L)''(\hpsistL^{\Delta t, +} + \alpha)
\, |\nabq \hpsistL^{\Delta t, +}|^2\, \dq \dx \dd t\right) + C\,(1+\alpha).
%&\quad \qquad +\alpha\, \frac{2\lambda\,k\,C_M}{a_0}
%\,%\frac{2\lambda\, b\, k}{a_0}
%\left(\int_0^t \|\nabxtt \uta^{\Delta t, +}\|^2 \dd s\right),
\label{eq:lastterm}
\end{align}

Combining (\ref{eq:energy-psi-summ1}) and (\ref{eq:lastterm}) yields,
on noting (\ref{FLbL}), that, for $n=1,\ldots,N$ and any $\alpha
\in {\mathbb R}_{>0}$,
\begin{align}
&\int_{\Omega \times D} M \,\mathcal{F}^L(\hpsistL^{\Delta t,+}(t_n) + \alpha) \dq \dx
+\,\frac{1}{2 \Delta t\,L}\int_0^{t_n} \int_{\Omega \times D}
M\,(\hpsistL^{\Delta t,+} - \hpsistL^{\Delta t,-})^2 \dq \dx \dd t \nonumber\\
&\qquad \qquad +  \int_0^{t_n} \int_{\Omega \times D} M\left[
\varepsilon\,
\frac{|\nabx \hpsistL^{\Delta t,+}|^2}{\hpsistL^{\Delta t,+} + \alpha} %\dq \dx \dd s
%\nonumber \\ & \qquad
+\,\frac{1}{8\,\lambda}  %\int_0^t \int_{\Omega \times D}M\,
%(\mathcal{F}^L)''(\hpsistL^{\Delta t,+} + \alpha)\,|\nabq \hpsistL^{\Delta t,+} |^2
\frac{|\nabq \hpsistL^{\Delta t,+}|^2}{\hpsistL^{\Delta t,+} + \alpha}
\right] \dq \dx \dd t\nonumber \\
&\qquad \leq \int_{\Omega \times D} M \,\mathcal{F}^L(\hat\psi^0 + \alpha)
\dq \dx %\nonumber \\
%& \qquad \qquad + C\int_0^{t_n} \|\utst^{\Delta t,+}\|_{W^{1,\infty}(\Omega)}\left(
%\int_{\Omega \times D} M\,{\cal F}^L(\hpsistL^{\Delta t,+}) \dq \dx \right)\dd t
+ C\,(1+\alpha).
\label{enta}
\end{align}
Passing to the limit $\alpha \rightarrow 0_+$ in (\ref{enta}), %applying
%a Gr\"{o}nwall inequality,
noting  that
${\cal F}^L(\hpsi^0)=
{\cal F}^L(\beta^L(\hpsi^0))={\cal F}(\beta^L(\hpsi^0)) \leq {\cal F}(\hpsi^0)$,
%for all $s \in
%\mathbb{R}_{\geq 0}$,
as $L>1$, (\ref{FLbL}), \eqref{inidata-1} and (\ref{hpsi0})
yield the desired result (\ref{hpsiOBLent})
with
$\hpsistL^{\Delta t(,\pm)}$ in
the first and third terms
replaced by $\hpsistL^{\Delta t,+}$.
It follows from (\ref{inidata-1str}) and (\ref{hpsi0}) that
(\ref{hpsiOBLent}) holds with
$\hpsistL^{\Delta t(,\pm)}$ in
the first and third terms
replaced by $\hpsistL^{\Delta t,-}$.
It is then a simple matter to derive the desired result (\ref{hpsiOBLent})
on recalling (\ref{hpsiOBLlin},b), the convexity of ${\cal F}$ and that
$|\nabx \sqrt{\hpsistL^{\Delta t}} |^2
\leq 2(|\nabx \sqrt{\hpsistL^{\Delta t,+} }|^2
+ |\nabx \sqrt{\hpsistL^{\Delta t,-}}|^2)$, see p.\ 44 in \cite{BS2010} for details of the
latter
result.

%The second and third inequality follow from, respectively.

%Next, the bound (\ref{qt2hpsiOBLbd}) follows from
%the first bound in (\ref{hpsiOBLent}) and (\ref{qt2bd}).

Finally, to obtain the bound (\ref{hpsiOBLdtbd}) from (\ref{hpsiOBLncon}),
we have, on noting
(\ref{hpsiOBLent}), (\ref{additionalOB1},b) and (\ref{utOBDtsatb}),
that
\begin{subequations}
\begin{align}
&\left|\int_0^T \int_{\Omega \times D} M \,\nabx \hpsistLDtp \cdot \nabx \hat \varphi
\,\dq\dx\dt\right|
\nonumber \\
& \qquad \leq 2\,\|\hpsistLDtp\|_{L^\infty(0,T;L^1_M(\Omega \times D))}^{\frac{1}{2}}
\|\nabx \sqrt{\hpsistLDtp}\|_{L^2(0,T;L^2_M(\Omega \times D))}
\,\|\nabx \hat \varphi\|_{L^2(0,T;L^\infty(\Omega \times D))}
\nonumber\\
&\qquad \leq C
\,\|\nabx \hat \varphi\|_{L^2(0,T;L^\infty(\Omega \times D))}
\label{dterm1}
\end{align}
and
\begin{align}
&\left|\int_0^T \int_{\Omega \times D}
M\,[\,(\nabxtt \utst^{\Delta t,+})
\,\qt]\,
\beta^L(\hpsistL^{\Delta t,+})\cdot \nabq
\hat \varphi\,
\dq \dx \dt \right|
\nonumber \\
& \qquad \leq \|\int_D M\,|\qt|\,\beta^L(\hpsistLDtp)\dq\|_{L^\infty(0,T;L^2(\Omega))}
\,\|\nabxtt \utst^{\Delta t,+}\|_{L^2(0,T;L^2(\Omega))}
\,\|\nabq \hat \varphi\|_{L^2(0,T;L^\infty(\Omega \times D))}
\nonumber\\
& \qquad \leq C\,
\|\int_D M\,\hpsistLDtp \dq\|_{L^\infty(0,T;L^\infty(\Omega))}
\,\||\qt|^2\,\hpsistLDtp\|_{L^\infty(0,T;L^1_M(\Omega\times D))}
\,\|\nabq \hat \varphi\|_{L^2(0,T;L^\infty(\Omega \times D))}
\nonumber\\
& \qquad \leq C\,
\,\|\nabq \hat \varphi\|_{L^2(0,T;L^\infty(\Omega \times D))}.
\label{dterm2}
\end{align}
\end{subequations}
The remaining two terms in (\ref{hpsiOBLncon}) are bounded similarly.
\end{proof}

\begin{lemma}
\label{psiGijlem}
Let the assumptions of Lemma \ref{hpsiOBLnex} hold.
In addition, if $4\,L^2\,\Delta t \leq 1$ then we have that
\begin{subequations}
\begin{align}
&{\rm ess.sup}_{t\in [0,T]} %\sum_{i,\,j=1}^K %\int_{\Omega}
\|\sigtt(M\,\hpsistL^{\Delta,+})(t)\|^2_{L^2(\Omega)}
%+ %\sum_{n=1}^N \,
%\frac{1}{\Delta t} %\,\sum_{i,\,j=1}^K %\int_{\Omega}
%\int_0^T
%\|\sigtt(M\,\hpsistL^{\Delta t,+})-\sigtt(M\,\hpsistL^{\Delta t,-})\|^2_{L^2(\Omega)} %\dx
%\dt
%\nonumber \\
%& \hspace{1in}
%+\epsilon \,
 %\sum_{n=1}^m \Delta t\,
 %\sum_{i,\,j=1}^K
+ \int_0^T
\|\nabx \,\sigtt(M\,\hpsistL^{\Delta t,+})\|^2_{L^2(\Omega)} \dt
\leq C,
\label{psiGijbdfinlem} \\
&{\rm ess.sup}_{t\in [0,T]}
\|\rho(M\,\hpsistL^{\Delta,+})(t)\|^2_{L^2(\Omega)}
+ \int_0^T
\|\nabx \,\rho(M\,\hpsistL^{\Delta t,+})\|^2_{L^2(\Omega)} \dt
\leq C.
\label{rhobds}
\end{align}
\end{subequations}
\end{lemma}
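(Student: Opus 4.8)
The plan is to run a discrete energy argument directly on the two equations of $({\rm S}^{\Delta t}_L)$ in their $n$-indexed form \eqref{psiGijbd5}--\eqref{rhoneq}: I would test \eqref{rhoneq} with $\eta=\rho(M\,\hpsistL^n)$ and \eqref{psiGijbd5} with $\zetatt=\sigtt(M\,\hpsistL^n)$ — both admissible, since Lemma \ref{lemsigdef} provides $\rho(M\,\hpsistL^n)\in H^1(\Omega)$ and $\sigtt(M\,\hpsistL^n)\in\Htt^1(\Omega)$ — sum over $n$, and re-express the result in the continuous notation \eqref{hpsiOBLlin},b. Writing $\sigtt^n:=\sigtt(M\,\hpsistL^n)$, $\sigtt^n_\beta:=\sigtt(M\,\beta^L(\hpsistL^n))$, $\rho^n:=\rho(M\,\hpsistL^n)$, the first thing I would record is that the $L^\infty(0,T;\Ltt^2(\Omega))$-parts of \eqref{psiGijbdfinlem} and \eqref{rhobds} are already available from Lemma \ref{lem:qalphabd}: from $|\sigtt^n(\xt)|\le\int_D M\,\hpsistL^n\,|\qt|^2\dq$, Cauchy--Schwarz in $\qt$ with weight $M$, and $\int_D M\,\hpsistL^n\dq=\rho^n\le1$ (since $\hpsistL^n\in\hat Z_2$), one obtains $\|\sigtt^n\|_{L^2(\Omega)}^2\le\int_{\Omega\times D}M\,\hpsistL^n\,|\qt|^4\dq \dx\le C$ by Lemma \ref{lem:qalphabd} with $r=4$, and the same for $\sigtt^n_\beta$ (using $\beta^L(\hpsistL^n)\le\hpsistL^n$), while $\|\rho^n\|_{L^2(\Omega)}\le|\Omega|^{1/2}$. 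Hence only the space-gradient terms need work.

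For $\rho$ I would take $\eta=\rho^n$ in \eqref{rhoneq}: since $\utst^n\in\Vt\subset\Ht^1_0(\Omega)$ is divergence-free, $\int_\Omega\rho^n\,(\utst^n\cdot\nabx)\rho^n\dx=\tfrac12\int_\Omega\utst^n\cdot\nabx(|\rho^n|^2)\dx=0$, and with $(\rho^n-\rho^{n-1})\,\rho^n\ge\tfrac12(|\rho^n|^2-|\rho^{n-1}|^2)$ this yields $\tfrac12\|\rho^n\|_{L^2}^2-\tfrac12\|\rho^{n-1}\|_{L^2}^2+\Delta t\,\epsilon\,\|\nabx\rho^n\|_{L^2}^2\le0$; summing and using $\rho^0=\rho(M\,\hat\psi^0)\le1$ gives $\sum_{n=1}^N\Delta t\,\|\nabx\rho^n\|_{L^2}^2\le\tfrac{1}{2\epsilon}|\Omega|$, i.e.\ \eqref{rhobds}.

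For $\sigtt$ I would take $\zetatt=\sigtt^n$ in \eqref{psiGijbd5}: the convective term vanishes for the same reason, the centre-of-mass term gives $\epsilon\,\|\nabx\sigtt^n\|_{L^2}^2$, the relaxation term gives $\tfrac{1}{2\lambda}\|\sigtt^n\|_{L^2}^2-\tfrac{1}{2\lambda}\int_\Omega\rho^n\,\mathfrak{tr}(\sigtt^n)\dx$ with $0\le\int_\Omega\rho^n\,\mathfrak{tr}(\sigtt^n)\dx\le\int_{\Omega\times D}M\,\hpsistL^n\,|\qt|^2\dq \dx\le C$ (Lemma \ref{lem:qalphabd}, together with $\rho^n\le1$ and $\sigtt^n\ge0$), and the drag term $-\int_\Omega[(\nabxtt \utst^n)\,\sigtt^n_\beta+\sigtt^n_\beta\,(\nabxtt \utst^n)^{\rm T}]:\sigtt^n\dx$ is bounded, using the uniform $L^2$ bounds just noted, by $2\,\|\utst^n\|_{W^{1,\infty}(\Omega)}\,\|\sigtt^n_\beta\|_{L^2(\Omega)}\,\|\sigtt^n\|_{L^2(\Omega)}\le C\,\|\utst^n\|_{W^{1,\infty}(\Omega)}$. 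Multiplying \eqref{psiGijbd5} by $\Delta t$, discarding the nonnegative terms $\tfrac12\|\sigtt^n-\sigtt^{n-1}\|_{L^2}^2$ and $\tfrac{\Delta t}{2\lambda}\|\sigtt^n\|_{L^2}^2$, and summing over $n=1,\dots,m$ yields
\[
\tfrac12\|\sigtt^m\|_{L^2}^2+\epsilon\sum_{n=1}^m\Delta t\,\|\nabx\sigtt^n\|_{L^2}^2\le\tfrac12\|\sigtt^0\|_{L^2}^2+\tfrac{CT}{2\lambda}+C\sum_{n=1}^m\Delta t\,\|\utst^n\|_{W^{1,\infty}(\Omega)},
\]
where $\|\sigtt^0\|_{L^2}=\|\sigtt(M\,\hat\psi^0)\|_{L^2}\le C$ by \eqref{sigrhoL2}, \eqref{inidata-1str}, \eqref{hpsi0}, and $\sum_{n=1}^m\Delta t\,\|\utst^n\|_{W^{1,\infty}(\Omega)}\le\|\utst\|_{L^1(0,T;\Wt^{1,\infty}(\Omega))}<\infty$ by \eqref{utOBn}, the triangle inequality and \eqref{hpsidata}. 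This gives $\sum_{n=1}^N\Delta t\,\|\nabx\sigtt^n\|_{L^2}^2\le C$, and since $\hpsistLDtp\equiv\hpsistL^n$ on $(t_{n-1},t_n]$ this is exactly \eqref{psiGijbdfinlem}.

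I do not expect a serious obstacle here: the only term that needs thought is the drag term, because it carries the cut-off $\sigtt(M\,\beta^L(\hpsistL^n))$, and the key point is that this object is already controlled in $L^\infty(0,T;\Ltt^2(\Omega))$ via Lemma \ref{lem:qalphabd} (equivalently, one may invoke the positive-semidefinite ordering $0\le\sigtt^n_\beta\le\sigtt^n$, which follows from $0\le\beta^L(\hpsistL^n)\le\hpsistL^n$ and gives $\|\sigtt^n_\beta\|_{L^2}\le\|\sigtt^n\|_{L^2}$ by Weyl monotonicity of eigenvalues). After these free $L^2$ bounds the estimate closes with an $L^1$-in-time coefficient and without any Gr\"onwall iteration, the constants being independent of $L$ and $\Delta t$. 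The standing hypothesis $4\,L^2\,\Delta t\le1$ simply keeps $\Delta t\,L$ bounded; it does not enter the streamlined argument above, but it is natural to carry it here because $\Delta t$ and $L$ will be coupled in the subsequent limit passages (e.g.\ it turns the $(\Delta t\,L)^{-1}$-weighted time-difference term in \eqref{hpsiOBLent} into a genuinely useful bound as $L\to\infty$).
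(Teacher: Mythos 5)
Your proof is correct, and it takes a genuinely different route from the paper's. The paper also tests the weak formulation \eqref{psiGijcont} with $\chi_{[0,t_n]}\,\sigtt(M\,\hpsistL^{\Delta t,+})$, but it \emph{derives} the $L^\infty(0,T;\Ltt^2(\Omega))$ bound from the energy identity itself via a discrete Gr\"onwall iteration. To make Gr\"onwall close, the drag contribution at the diagonal time level $n=m$ is estimated by using $\|\sigtt(M\,\beta^L(\hpsistL^m))\|_{L^\infty(\Omega)}\le C\,L$ (since $\beta^L\le L$), which after Young's inequality produces a term $\Delta t\,L^2\,\|\sigtt(M\,\hpsistL^m)\|_{L^2(\Omega)}^2$; this is absorbed on the left precisely thanks to the hypothesis $4\,L^2\,\Delta t\le 1$. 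The drag contributions at earlier levels are bounded by $2\,\|\nabxtt\utst^{\Delta t,+}\|_{L^\infty(\Omega)}\,\|\sigtt(M\,\hpsistL^{\Delta t,+})\|_{L^2(\Omega)}^2$ and fed into Gr\"onwall with the $L^1$-in-time coefficient $\|\nabxtt\utst^{\Delta t,+}\|_{L^\infty(\Omega)}$. You instead front-load the uniform $L^\infty(0,T;\Ltt^2(\Omega))$ bound on both $\sigtt(M\,\hpsistL^{\Delta t,+})$ and $\sigtt(M\,\beta^L(\hpsistL^{\Delta t,+}))$ by the Cauchy--Schwarz observation
\[
\|\sigtt(M\,\hpsistL^{n})\|_{L^2(\Omega)}^2
\le \int_\Omega \left(\int_D M\,\hpsistL^n\dq\right)\left(\int_D M\,|\qt|^4\,\hpsistL^n\dq\right)\dx
\le \int_{\Omega\times D} M\,|\qt|^4\,\hpsistL^n\,\dq\dx \le C,
\]
which uses $\rho(M\,\hpsistL^n)\le 1$ and Lemma \ref{lem:qalphabd} with $r=4$, together with the positive-semidefinite ordering $\sigtt(M\,\beta^L(\hpsistL^n))\le\sigtt(M\,\hpsistL^n)$. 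With these bounds already in hand, the drag term is dominated uniformly by $C\,\|\utst^n\|_{W^{1,\infty}(\Omega)}$, which is summable by \eqref{hpsidata} and \eqref{utOBn}, and the gradient bound closes by direct telescoping with no Gr\"onwall and no diagonal absorption. What you gain is a cleaner estimate that makes the hypothesis $4\,L^2\,\Delta t\le 1$ inessential for this particular lemma (though note that Lemma \ref{lem:qalphabd}, which you invoke, itself runs a discrete Gr\"onwall whose diagonal term carries a factor $\Delta t\,L$, so a $\Delta t\,L$-boundedness convention is still implicitly at work upstream). What the paper's formulation buys is structural uniformity: as Remark \ref{rempsiGij} and the discussion in Section \ref{remexist} explain, the Gr\"onwall variant is the one that survives (only for $d=2$, via Gagliardo--Nirenberg) when $\utst$ has merely $L^2(0,T;\Vt)$ regularity, where your shortcut via Lemma \ref{lem:qalphabd} is no longer available.
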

\begin{proof}
%We will assume $d=2$ or $3$ at first, and will only restrict ourselves to $d=2$ when necessary.
Choosing $\zetatt = \chi_{[0,t_n]}\,\sigtt(M\,\hpsistL^{\Delta t,+})$,
$n=1,\ldots,N$,
 in (\ref{psiGijbd5})
%$i,\,j = 1, \ldots, K$ and
%$n=1,\ldots,m$,
yields, on noting %(\ref{Aposdef}),
(\ref{eqCttbd}), (\ref{inidata-1str}), (\ref{hpsi0}), (\ref{utOBDtsatb},b),
%$\utst^{\Delta t,+}  \in L^\infty(0,T;\Vt) \cap L^1(0,T;\Wt^{1,\infty}(\Omega))$,
%$\hpsistL^n \in \hat Z_2$,
(\ref{additionalOB1},b),
%(\ref{qt2hpsiOBLbd}),
(\ref{betaLa}) and (\ref{additional-1})
%(\ref{hpsiOBLent}) %and (\ref{eqinterp})
that
\begin{align}
&\frac{1}{2} %\,\sum_{i,\,j=1}^K %\int_{\Omega}
\,\|\sigtt(M\,\hpsistL)(t_n)\|^2_{L^2(\Omega)} %\dx
+ \frac{1}{2}\,\int_0^{t_n} %\,\sum_{i,\,j=1}^K %\int_{\Omega}
\|\sigtt(M\,\hpsistL^{\Delta t,+})-\sigtt(M\,\hpsistL^{\Delta t,-})\|^2_{L^2(\Omega)} \dt
\nonumber \\
& \qquad
+ %\sum_{n=1}^m \Delta t %\,\sum_{i,\,j=1}^K %\int_{\Omega}
\int_0^{t_n}
\left[ \frac{1}{2\,\lambda}\,\|\sigtt(M\,\hpsistL^{\Delta t,+})\|^2_{L^2(\Omega)}
+ \epsilon\,\|\nabx \,\sigtt(M\,\hpsistL^{\Delta t,+})\|^2_{L^2(\Omega)} \right] \dt
\nonumber \\
& \quad =
\frac{1}{2}\,%\sum_{i,\,j=1}^K %\int_{\Omega}
\|\sigtt(M\,\hpsi^{0})\|^2_{L^2(\Omega)} %\dx
+ \frac{1}{2\,\lambda} %\sum_{n=1}^m \Delta t\,%\sum_{i,\,j=1}^K
\int_0^{t_n}
\int_{\Omega}
%A_{ij}\,
\rho(M\,\hpsistL^{\Delta t,+})\,\Itt : \sigtt(M\hpsistL^{\Delta t,+}) \dx \dt
\nonumber \\
& \qquad
+ %\sum_{n=1}^m \Delta t\,
%\sum_{i,\,j=1}^K
\int_0^{t_n}
\int_{\Omega}
\left[ (\nabxtt \utst^{\Delta t,+}) \,
\sigtt(M\,\beta^L(\hpsistL^{\Delta t,+})) +
\sigtt(M\,\beta^L(\hpsistL^{\Delta t,+})) \,
(\nabxtt \utst^{\Delta t,+})^{\rm T}
\right] :
%\nonumber \\
%& \hspace{4in}
\sigtt(M\,\hpsistL^{\Delta t,+}) \dx \dt
\nonumber \\
& \quad \leq
%\frac{1}{2}\,\sum_{i,\,j=1}^K %\int_{\Omega}
%\|\sigtt_{ij}(M\,\hpsistL^{0})\|^2 %\dx +
C + C\,\Delta t \,L\,\|\nabxtt \utst^{\Delta t,+}(t_n)\|_{L^2(\Omega)}\,
%\sum_{i,\,j=1}^K \!
\|\sigtt(M\,\hpsistL^{\Delta t,+})(t_n)\|_{L^2(\Omega)}
\nonumber \\
& \qquad
+ 2 %\sum_{n=1}^{m-1} \Delta t \,
%\sum_{i,\,j=1}^K \!
\int_0^{t_{n-1}}
\|\nabxtt \utst^{\Delta t,+}\|_{L^\infty(\Omega)} \,
\|\sigtt(M\,\hpsistL^{\Delta t,+})\|^2_{L^2(\Omega)} \dt
\nonumber \\
& \quad \leq
C
%C\,\Delta t\,\|\nabxtt \utst^{\Delta t,+}(t_n)\|^2_{L^2(\Omega)}
+ \Delta t\,L^2\, %\!\sum_{i,\,j=1}^K \!
\|\sigtt(M\,\hpsistL^{\Delta t,+})(t_n)\|^2_{L^2(\Omega)}
%\nonumber \\
%& \qquad
%+ C\sum_{n=1}^{m-1} \Delta t \,
%\sum_{i,\,j=1}^K \!
%\|\nabxtt \utst^{n}\|_{L^\infty(\Omega)} \,\|\sigtt(M\,\hpsistL^n)\|^{2}_{L^2(\Omega)}
%\|\sigtt(M\,\hpsistL^n)\|^{2-\frac{d}{2}}\,
%\|\sigtt(M\,\hpsistL^n)\|_{H^1(\Omega)}^{\frac{d}{2}}
+ 2\int_0^{t_{n-1}}
\|\nabxtt \utst^{\Delta t,+}\|_{L^\infty(\Omega)} \,
\|\sigtt(M\,\hpsistL^{\Delta t,+})\|^2_{L^2(\Omega)}
\dt.
%\nonumber \\
%& \quad \leq
%C +  \Delta t \,L^2\,%\!\sum_{i,\,j=1}^K \!
%\|\sigtt(M\,\hpsistL^m)\|^2_{L^2(\Omega)}
%+ C\sum_{n=1}^{m-1} \Delta t \,
%\sum_{i,\,j=1}^K \!
%\|\nabxtt \utst^{n}\|_{L^\infty(\Omega)} \,\|\sigtt(M\,\hpsistL^n)\|^{2}_{L^2(\Omega)}.
%+ \min(\frac{1}{4\,\lambda},\frac{\varepsilon}{2})\,
%\sum_{n=1}^{m-1} \Delta t \,
%\sum_{i,\,j=1}^K \!
%\|\sigtt(M\,\hpsistL^n)\|_{H^1(\Omega)}^2
%\nonumber \\
%& \qquad
%+ C\sum_{n=1}^{m-1} \Delta t \,
%\sum_{i,\,j=1}^K \!
%\|\nabxtt \utst^{n}\|^{\frac{4}{4-d}} \,
%\|\sigtt(M\,\hpsistL^n)\|^2
%\nonumber \\
%& \quad \leq
%C + 2\,L^2 \Delta t %\!\sum_{i,\,j=1}^K \!
%\,\|\sigtt(M\,\hpsistL^m)\|^2
%+ C \sum_{n=1}^{m-1}
%\Delta t %\!
%\sum_{i,\,j=1}^K \!
%\,\|\nabxtt \utst^{n}\|^{\frac{4}{4-d}} \,
%\|\sigtt(M\,\hpsistL^n)\|^2.
\label{psiGijbd6}
\end{align}
Therefore, %in the case $d=2$,
applying a discrete Gr\"{o}nwall inequality to (\ref{psiGijbd6}) yields,
on noting (\ref{eqCttbd}), (\ref{inidata-1str}), (\ref{hpsi0}) and (\ref{utOBncon})
that
%\begin{align}
%&\max_{n=0,\ldots,N} %\sum_{i,\,j=1}^K %\int_{\Omega}
%\|\sigtt(M\,\hpsistL^{n})\|^2_{L^2(\Omega)}
%+ \sum_{n=1}^N \,%\sum_{i,\,j=1}^K %\int_{\Omega}
%\|\sigtt(M\,\hpsistL^{n})-\sigtt(M\,\hpsistL^{n-1})\|^2_{L^2(\Omega)} %\dx
%\nonumber \\
%& \hspace{1.5in}
%+ \epsilon \,
% \sum_{n=0}^N \Delta t\,%\sum_{i,\,j=1}^K
%\|\nabx \,\sigtt(M\,\hpsistL^{n})\|^2_{L^2(\Omega)}
%\leq C.
%\label{psiGijbdfin}
%\end{align}
%Adopting the notation (\ref{hpsiOBLlin},b), the bounds (\ref{psiGijbdfin})
%yield the desired result
the bounds in
(\ref{psiGijbdfinlem}) hold.
%holds with
%$\hpsistL^{\Delta t(,\pm)}$ in
%the first and third terms
%replaced by $\hpsistL^{\Delta t,+}$.
%It follows from (\ref{sigrhoL2},b) and (\ref{inidata-1str}) that
%(\ref{psiGijbdfinlem}) holds with
%$\hpsistL^{\Delta t(,\pm)}$ in
%the first and third terms
%replaced by $\hpsistL^{\Delta t,-}$.
%It is then a simple matter to derive the desired result (\ref{psiGijbdfinlem})
%on recalling (\ref{hpsiOBLlin},b).
%Finally, the first bound in (\ref{psiGijlem}) follows immediately from
%the second bound in (\ref{psiGijlem}) on recalling additional

Similarly to the above, choosing
 $\eta = \chi_{[0,t_n]}\,\rho(M\,\hpsistL^{\Delta t,+})$,
$n=1,\ldots,N$,
 in (\ref{rhoneq}) yields the bounds (\ref{rhobds}).
\end{proof}

\begin{remark} \label{rempsiGij}
{\em We note that Lemma \ref{psiGijlem} is valid for $d=3$, as well as $d=2$, as it exploits
the $L^\infty(0,T;\Vt) \cap L^{1}(0,T;\Wt^{1,\infty}(\Omega))$ regularity of $\utst$, recall
(\ref{hpsidata}).
If $\utst$ were only in $L^2(0,T;\Vt)$, then one
has to use the Gagliardo-Nirenberg inequality, (\ref{eqinterp}), in the proof of Lemma
\ref{psiGijlem},
and this will lead to a restriction to $d=2$.
This is the same reason why the existence proof for the Oldroyd-B model,
via the convergence of  a finite element approximation,
is restricted to $d=2$ in \cite{barrett-boyaval-09}, see Theorem 7.1 there.
}
\end{remark}

\subsection{Passage to the limit $L \rightarrow \infty$ $(\Delta t \rightarrow 0_+)$ } %:
%existence of weak solutions to Hookean and Rouse bead-spring
%chain models with centre-of-mass diffusion}
%\label{sec:passage.to.limit} \footnote{This section needs to be edited.
%As well as the convergence of (FP$^{\Delta t}_L$) to (FP) and (S$^{\Delta t}_L$) to (S),
%resulting in existence for (P) via the uniqueness result in Theorem \ref{thOBreg},
%we will discuss the problems in a direct proof of existence to (P) via (P$^{\Delta t}_L$).}
%\setcounter{equation}{0}

%Let $\omega(\qt) : = (1+|\qt|)^{2} \, M(\qt)$, then we define
%$L^1_{\omega}(\Omega \times D)$ with norm $\|\cdot\|_{L^1_\omega(\Omega \times D)}$
%as for $L_M^1(\Omega \times D)$ with $M$ replaced by $\omega$.
We are now ready to pass to the limit
$L \rightarrow \infty$, $\Delta t \rightarrow 0_+$ in (FP$^{\Delta t}_L$), (\ref{hpsiOBLncon}),
and (S$^{\Delta t}_L$), (\ref{psiGijcont},b). In view of the assumption on $L$ and $\Delta t$
in Lemma \ref{psiGijlem} we shall choose $\Delta t \leq (4\,L^2)^{-1}$ as $L \rightarrow \infty$.

\begin{theorem}
\label{convfinal} %Suppose that the assumptions \eqref{inidata} and the condition %\eqref{LT},
%$\Delta t \leq \frac{1}{8}\,L^{-2}$,
%relating $\Delta t$ to $L$, hold.
Let the assumptions (\ref{hpsi0}) and (\ref{hpsidata}) hold on the data, and
let $\Delta t \leq (4\,L^2)^{-1}$  as $L \rightarrow \infty$.
Then,
there exists a subsequence of $\{\hpsistL^{\Delta t}\}_{L >1}$ (not indicated),
%with $\Delta t = o(L^{-1})$,
and a function $\hpsist$ such that
%
%\[\ute \in L^{\infty}(0,T;\Lt^2(\Omega))\cap L^{2}(0,T;\Vt)
%\cap H^1(0,T;\Vt'_\sigma),\quad \sigma > 1+ \textstyle{\frac{1}{2}}d,%; \sigma>1,
% \]
%
%and
%
\begin{subequations}
\begin{alignat}{2}
|\qt|^r \, \hpsist &\in L^\infty(0,T;L^1_{M}(\Omega \times D)), \qquad
&&\mbox{for any } r \in [0,\infty),
\label{hpsiOBqr}
\\
%L^\infty(0,T;(L^1_M {\rm log} L^1_M)(\Omega\times D))\cap
\hpsist &\in H^1(0,T; M^{-1}[H^s(\Omega \times D)]'),\qquad
&&\mbox{for any } s>d+1,
\label{hpsiOBqdt}
\end{alignat}
\end{subequations}
with
\begin{equation}\label{mass-conserved}
\hpsist \geq 0 \mbox{ a.e. on $\Omega \times D \times [0,T]$} \quad \mbox{and} \quad
\int_D M(\qt)\,\hpsist(\xt,\qt,t) \dq \leq
1 \mbox{ for a.e. $(x,t) \in \Omega \times [0,T]$},
\end{equation}
%
%and hence $\hpsist \in L^\infty(0,T; L^1_M(\Omega \times D))$;
and finite relative entropy and Fisher information, with
\begin{equation}\label{relent-fisher}
\mathcal{F}(\hpsist) \in L^\infty(0,T;L^1_M(\Omega\times D))\quad
\mbox{and}\quad \sqrt{\hpsist} \in L^{2}(0,T;H^1_M(\Omega \times D));
\end{equation}
%
%whereby $|\qt|^2\,\hat\psi_\epsilon \in L^\infty(0,T; L^1_{M}(\Omega \times D))$;
such that, as $L\rightarrow \infty$ (and thereby $\Delta t \rightarrow 0_+$),
\begin{subequations}
\begin{alignat}{2}
%M^{\frac{1}{2}}\,\hpsiae^{\Delta t (,\pm)} &\rightarrow
%M^{\frac{1}{2}}\,
%\hpsiae &&\quad \mbox{weak* in }
%L^{\infty}(0,T;L^2(\Omega\times D)), \label{psiwconL2}\\
\bet
M^{\frac{1}{2}}\,\nabx \sqrt{\hpsistL^{\Delta t(,\pm)}}
&\rightarrow M^{\frac{1}{2}}\,\nabx \sqrt{\hpsist}
&&\qquad \mbox{weakly in } L^{2}(0,T;\Lt^2(\Omega\times D)), \label{5-psiwconH1a}\\
\bet
M^{\frac{1}{2}}\,\nabq \sqrt{\hpsistL^{\Delta t(,\pm)}}
&\rightarrow M^{\frac{1}{2}}\,\nabq \sqrt{\hpsist}
&&\qquad \mbox{weakly in } L^{2}(0,T;\Lt^2(\Omega\times D)), \label{5-psiwconH1xa}\\
\bet
M\,\frac{\partial \hpsistL^{\Delta t}} {\partial t} &\rightarrow
M\,\frac{\partial \hpsist}{\partial t}
&&\qquad \mbox{weakly in }
L^2(0,T;[H^s(\Omega\times D)]'), \label{5-psitwconL2a}\\
\bet
|\qt|^r\,\beta^L(\hpsistL^{\Delta t (,\pm)}),\,
|\qt|^r\,\hpsistL^{\Delta t (,\pm)} &\rightarrow
|\qt|^r\,\hpsist
&&\qquad \mbox{strongly in }
L^{p}(0,T;L^{1}_{M}(\Omega\times D)),\label{5-psisconL2a}
\end{alignat}
for any $p \in [1,\infty)$. %and any $\delta \in (0,2]$.
\end{subequations}
%\footnote{ $\beta^L(\hpsiae^{\Delta t (,\pm)})$ convergence etc.}

%\footnote{Stronger if $d=2$.}
%

In addition, for $s>d+1$, the function $\hpsist$
%and the measures $\{ \ztt_{ij}\}_{i,\,j =1}^K$
satisfies
%is a global
%weak solution to problem (P$_\epsilon$), recall Definition \ref{Pedef}, in
%the sense that
\begin{align}\label{eqpsinconP}
&-\int_{0}^T
\int_{\Omega \times D} M\,\hpsist\, \frac{\partial \hat\varphi}{\partial t}
\dq \dx \dt
+ \int_{0}^T \int_{\Omega \times D} M\,\left[
\epsilon\, \nabx \hpsist - \utst \,\hpsist \right]\cdot\, \nabx
\hat \varphi
\,\dq \dx \dt
\nonumber \\
&\quad +
\int_{0}^T \int_{\Omega \times D} M\,\left[
\frac{1}{4\,\lambda}\,\nabq \hpsist
-
\left[(\nabxtt \utst)
\,\qt\right] \hpsist \right] \cdot \nabq
\hat \varphi
\dq \dx \dt \nonumber \\
& \quad \quad
= \int_{\Omega \times D} M\, \hat\psi_0(\xt,\qt)\,\hat\varphi(\xt,\qt,0)
\dq \dx
\quad \forall \hat \varphi \in W^{1,1}(0,T;H^s(\Omega\times D))
\mbox{ with $\hat\varphi(\cdot,\cdot,T)=0$}.
\end{align}
%Moreover, the function %$\ute$ is weakly continuous as a mapping from
%$[0,T]$ to $\Ht$, and
%$\hpsist$ is weakly continuous as a mapping from
%$[0,T]$ to $L^1_M(\Omega \times D)$.
%\smallskip
%The functions  $(\ute,\hpsie)$
%satisfy the following energy inequality for a.e. $t \in [0,T]$~\!\!:
%
%
\end{theorem}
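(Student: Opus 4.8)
The plan is to pass to the limit $L\to\infty$ in the weak formulation (\ref{hpsiOBLncon}) of $({\rm FP}_L^{\Delta t})$, keeping $\Delta t\le (4L^2)^{-1}$ throughout (so that $\Delta t\,L\to 0$), along the lines of the analogous limit passages in \cite{BS2011-fene,BS2010-hookean}. First I would collect the uniform-in-$L$ a priori bounds already at hand: from Lemma \ref{lem:qalphabd} and (\ref{hpsiOBLlin},b) one has (\ref{additionalOB1},b); from Lemma \ref{lemhpsiOBLnent} one has the entropy/Fisher-information bound (\ref{hpsiOBLent}) --- which in particular makes $\{\sqrt{\hpsistL^{\Delta t(,\pm)}}\}_{L>1}$ bounded in $L^2(0,T;H^1_M(\Omega\times D))$ and $\{\mathcal{F}(\hpsistL^{\Delta t(,\pm)})\}_{L>1}$ bounded in $L^\infty(0,T;L^1_M(\Omega\times D))$ --- together with the discrete-time-derivative bound (\ref{hpsiOBLdtbd}); since $H^s(\Omega\times D)\hookrightarrow W^{1,\infty}(\Omega\times D)$ for $s>d+1$, the latter makes $\{M\,\partial_t\hpsistL^{\Delta t}\}_{L>1}$ bounded in $L^2(0,T;[H^s(\Omega\times D)]')$. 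Finally, the second term on the left of (\ref{hpsiOBLent}) gives $\|\hpsistL^{\Delta t,+}-\hpsistL^{\Delta t,-}\|_{L^2(0,T;L^2_M(\Omega\times D))}^2\le C\,\Delta t\,L\to 0$; as the linear interpolant $\hpsistL^{\Delta t}$ lies pointwise in $t$ between $\hpsistL^{\Delta t,-}$ and $\hpsistL^{\Delta t,+}$, the three interpolants $\hpsistL^{\Delta t(,\pm)}$ share the same limit.

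Next I would extract, for a subsequence, the weak limits claimed in (\ref{5-psiwconH1a})--(\ref{5-psitwconL2a}); the key point, however, is the strong convergence in (\ref{5-psisconL2a}). I would obtain it, as in \cite{BS2011-fene,BS2010-hookean}, by a Dubinski{\u\i}-type compactness argument (Theorem \ref{thm:Dubinski}). Combining (\ref{hpsiOBLent}) with the mass bound in (\ref{additionalOB1}) (via $\nabx\hpsistL^{\Delta t}=2\sqrt{\hpsistL^{\Delta t}}\,\nabx\sqrt{\hpsistL^{\Delta t}}$, and similarly in $\qt$), together with the moment bound in (\ref{additionalOB1}), shows that $\{\hpsistL^{\Delta t}\}_{L>1}$ is bounded in $L^2(0,T;\mathcal{M})$, where $\mathcal{M}$ is the seminormed set of nonnegative $\hat\varphi$ with $\nabx\hat\varphi,\,\nabq\hat\varphi\in L^1_M(\Omega\times D)$ and $|\qt|^r\hat\varphi\in L^1_M(\Omega\times D)$, equipped with the sum of those $L^1_M$-norms as its seminorm; the embedding $\mathcal{M}\hookrightarrow L^1_M(\Omega\times D)$ is compact, by a Rellich--Kondrachov argument on $\Omega$ times a bounded ball in $D$ together with a tail estimate in $\qt$ from the moment bound. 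Applying Theorem \ref{thm:Dubinski} with $\mathcal{A}_0=L^1_M(\Omega\times D)$, $\mathcal{A}_1=M^{-1}[H^s(\Omega\times D)]'$ and (\ref{hpsiOBLdtbd}) then yields $\hpsistL^{\Delta t}\to\hpsist$ strongly in $L^2(0,T;L^1_M(\Omega\times D))$, hence, by Lemma \ref{le:supplementary} and (\ref{additionalOB1}), strongly in $L^p(0,T;L^1_M(\Omega\times D))$ for every $p\in[1,\infty)$; transferring via the smallness of $\hpsistL^{\Delta t}-\hpsistL^{\Delta t,\pm}$ gives the same for $\hpsistL^{\Delta t,\pm}$, and splitting the $\qt$-integral at $|\qt|=R$ and using the higher moment bounds upgrades this to the convergence of $|\qt|^r\hpsistL^{\Delta t(,\pm)}$ in (\ref{5-psisconL2a}). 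The statement for $|\qt|^r\beta^L(\hpsistL^{\Delta t(,\pm)})$ follows since $0\le\hpsistL^{\Delta t(,\pm)}-\beta^L(\hpsistL^{\Delta t(,\pm)})=(\hpsistL^{\Delta t(,\pm)}-L)_+$ and, by the uniform $L^\infty(0,T;L^1_M)$ bound on $\mathcal{F}(\hpsistL^{\Delta t(,\pm)})$, the superlinearity of $\mathcal{F}$, and the moment bounds, $\||\qt|^r(\hpsistL^{\Delta t(,\pm)}-L)_+\|_{L^\infty(0,T;L^1_M(\Omega\times D))}\to 0$ as $L\to\infty$. With strong $L^1_M$-convergence of $\hpsistL^{\Delta t(,\pm)}$ in hand (whence $\sqrt{\hpsistL^{\Delta t(,\pm)}}\to\sqrt{\hpsist}$ strongly in $L^2(0,T;L^2_M(\Omega\times D))$), the identification of the weak limits in (\ref{5-psiwconH1a}), (\ref{5-psiwconH1xa}), (\ref{5-psitwconL2a}) with $M^{1/2}\nabx\sqrt{\hpsist}$, $M^{1/2}\nabq\sqrt{\hpsist}$ and $M\,\partial_t\hpsist$ respectively is then routine.

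It then remains to pass to the limit in (\ref{hpsiOBLncon}), for fixed $\hat\varphi\in W^{1,1}(0,T;H^s(\Omega\times D))$ with $\hat\varphi(\cdot,\cdot,T)=0$, term by term. I would first integrate the time-derivative term by parts in $t$, writing $\int_0^T\int_{\Omega\times D}M\,\partial_t\hpsistL^{\Delta t}\,\hat\varphi\,\dq\dx\dt=-\int_0^T\int_{\Omega\times D}M\,\hpsistL^{\Delta t}\,\partial_t\hat\varphi\,\dq\dx\dt-\int_{\Omega\times D}M\,\hat\psi^0\,\hat\varphi(\cdot,\cdot,0)\,\dq\dx$, and use the strong convergence of $\hpsistL^{\Delta t}$ and the weak $L^2_M$-convergence $\hat\psi^0\to\hpsi_0$ from (\ref{psi0convstr}); for the $\epsilon\,\nabx\hpsist$ and $\frac{1}{4\lambda}\,\nabq\hpsist$ terms I would write $\nabx\hpsistL^{\Delta t,+}=2\sqrt{\hpsistL^{\Delta t,+}}\,\nabx\sqrt{\hpsistL^{\Delta t,+}}$ (and similarly in $\qt$) and combine the strong $L^2_M$-convergence of $\sqrt{\hpsistL^{\Delta t,+}}$ with the weak convergences (\ref{5-psiwconH1a}), (\ref{5-psiwconH1xa}); for the transport term I would pair $\utst^{\Delta t,+}\to\utst$ strongly in $L^1(0,T;\Wt^{1,\infty}(\Omega))$, recall (\ref{utOBncon}), with the strong convergence of $\hpsistL^{\Delta t,+}$ in $L^p(0,T;L^1_M(\Omega\times D))$; and for the drag term I would pair the strong convergence of $\nabxtt\utst^{\Delta t,+}$ in $L^1(0,T;\Lt^\infty(\Omega))$ with the strong convergence of $|\qt|\,\beta^L(\hpsistL^{\Delta t,+})$ in $L^p(0,T;L^1_M(\Omega\times D))$. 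This produces (\ref{eqpsinconP}). The remaining properties are read off: (\ref{hpsiOBqr}) and the inequality $\int_D M\,\hpsist\,\dq\le 1$ in (\ref{mass-conserved}) follow from (\ref{additionalOB1},b) by weak-$\ast$ lower semicontinuity; $\hpsist\ge 0$ follows from the strong $L^1_M$-convergence of nonnegative functions; (\ref{hpsiOBqdt}) follows from the weak bound on $M\,\partial_t\hpsist$; and (\ref{relent-fisher}) follows from (\ref{hpsiOBLent}), (\ref{5-psiwconH1a}), (\ref{5-psiwconH1xa}), using the convexity --- hence weak lower semicontinuity --- of $\mathcal{F}$ and of $\hat\varphi\mapsto\|M^{1/2}\nabx\hat\varphi\|_{L^2}^2+\|M^{1/2}\nabq\hat\varphi\|_{L^2}^2$, applied to $\hpsist$ and $\sqrt{\hpsist}$.

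The main obstacle is the compactness step: constructing the appropriate seminormed set $\mathcal{M}$ on $\Omega\times D$ with $D=\mathbb{R}^d$ unbounded, verifying that $\mathcal{M}\hookrightarrow L^1_M(\Omega\times D)$ is compact, and checking the hypotheses of Theorem \ref{thm:Dubinski}; closely related is the passage $\beta^L(s)\to s$, which relies essentially on the uniform entropy bound to control the mass in the set $\{\hpsistL^{\Delta t(,\pm)}>L\}$. By comparison, the weak/strong convergence arguments used in the term-by-term passage to the limit are routine.
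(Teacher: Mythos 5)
Your proposal follows the same overall strategy as the paper's proof: a Dubinski{\u\i}-type compactness argument (Theorem \ref{thm:Dubinski}) to obtain strong convergence of $\hpsistL^{\Delta t}$ in $L^p(0,T;L^1_M(\Omega\times D))$, the smallness of $\hpsistL^{\Delta t,+}-\hpsistL^{\Delta t,-}$ (forced by the choice $\Delta t\,L\to 0$) to pass the convergence to $\hpsistL^{\Delta t,\pm}$, a $\sqrt{\cdot}$-argument to upgrade to weighted $L^1_M$ convergence in $|\qt|^r$ and to the convergences \eqref{5-psiwconH1a}--\eqref{5-psitwconL2a}, and then a routine term-by-term limit passage in \eqref{hpsiOBLncon}. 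Lemma \ref{le:supplementary} is used identically. The one genuine point of departure is your choice of seminormed set $\mathcal{M}$ in the Dubinski{\u\i} step. The paper takes the Fisher-information seminorm, $[\hat\varphi]_{\mathcal M}= \|\hat\varphi\|_{L^1_M} + \int M\bigl[|\nabx\sqrt{\hat\varphi}|^2+|\nabq\sqrt{\hat\varphi}|^2\bigr]$, uses the compact embedding $H^1_M(\Omega\times D)\hookrightarrow L^2_M(\Omega\times D)$ from \eqref{wcomp2} (transferred to $\sqrt{\hat\varphi}$), and applies the theorem with $\alpha_0=1$, $\alpha_1=2$. You instead use the seminorm $\|\hat\varphi\|_{L^1_M}+\|\nabx\hat\varphi\|_{L^1_M}+\|\nabq\hat\varphi\|_{L^1_M}+\||\qt|^r\hat\varphi\|_{L^1_M}$ and prove $\mathcal{M}\hookrightarrow L^1_M$ compact from scratch (Rellich--Kondrachov on $\Omega\times B_R$ plus a Chebyshev tail estimate via the moment bound), with $\alpha_0=2$. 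Your variant buys two small things: the $L^2(0,T;\mathcal{M})$ bound on $\hpsistL^{\Delta t}$ comes cleanly from Cauchy--Schwarz applied to $\nabx\hpsistL^{\Delta t}=2\sqrt{\hpsistL^{\Delta t}}\,\nabx\sqrt{\hpsistL^{\Delta t}}$ together with $\|\hpsistL^{\Delta t}\|_{L^\infty(0,T;L^1_M)}\le 1$, so $\alpha_0>1$ is satisfied literally, whereas the paper invokes the theorem at the borderline $\alpha_0=1$ (covered by the version discussed in \cite{BS-DUB}); and the compactness argument is self-contained rather than delegated to Appendix~F of \cite{BS2010-hookean} and p.\,1251 of \cite{BS2011-fene}. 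The price is a somewhat heavier seminorm and a longer verification of compactness. Your treatment of $\beta^L$ (via the superlinearity of $\mathcal{F}$) also differs from the paper's (Lebesgue dominated convergence on $\hpsist-\beta^L(\hpsist)$ and Lipschitz continuity of $\beta^L$), but both arguments are sound; the entropy-based argument needs the small caveat that to keep the $|\qt|^r$ weight you should first Cauchy--Schwarz with the moment bound, splitting $|\qt|^r(\hpsistL^{\Delta t(,\pm)}-L)_+$ as a product, exactly in the spirit of the paper's \eqref{qalpha}. Overall the proposal is correct and would yield the theorem; the main technical ingredients --- the Dubinski{\u\i} set-up, the equi-integrability in $\qt$, and the density reduction to $W^{1,1}(0,T;H^s(\Omega\times D))$ test functions --- are the same ones the paper uses, just carried out with slightly different bookkeeping.
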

\begin{proof}
We shall apply
Dubinski{\u\i}'s theorem,
Theorem \ref{thm:Dubinski}, to the sequence $\{\hpsistL^{\Delta t}\}_{L>1}$.
On noting (\ref{hpsiOBLent}),
we select
%
%\begin{align*}
$\mathcal{A}_0 = L^1_{M}(\Omega \times D)$
%\nonumber \\
%&:= \bigg\{ \hat\varphi \in L^1_{\rm loc}(\Omega \times D)\,:\,
%\|\hat\varphi\|_{\mathcal{A}_0} := \int_{\Omega \times D}
%M(\qt)\,
%\omega(\qt)\,
%|\hat\varphi (\xt, \qt)| \dd \xt \dd \qt < \infty \bigg\}
%\end{align*}
%
and
\begin{align}
&\mathcal{M} = \left\{ \hat\varphi \in \mathcal{A}_0\,: \hat\varphi \geq 0 \quad \mbox{with}
\quad \int_{\Omega \times D} M\left[ \left|\nabx \sqrt{\hat\varphi}\right|^2
+ \left|\nabq \sqrt{\hat\varphi}\right|^2\right]\dq \dx < \infty \right\},
\label{calM}
\end{align}
and, for $\hat\varphi \in \mathcal{M}$,  we define
\[ [\hat\varphi]_{\mathcal M}:= \|\hat\varphi\|_{\mathcal A_0} +
\int_{\Omega \times D} M\left[ \left|\nabx
\sqrt{\hat\varphi}\right|^2 + \left|\nabq
\sqrt{\hat\varphi}\right|^2\right] \dq \dx.
\]
Note that $\mathcal{M}$ is a seminormed subset of the Banach
space $\mathcal{A}_0$.
As $H^1_M(\Omega \times D)
%\cap L^2_{\omega}(\Omega \times D)
$ is compactly embedded in $ L^2_{M}(\Omega \times D)$,
recall (\ref{wcomp2}),
%(see Appendix F of this paper for a proof of this),
one can deduce that the embedding
$\mathcal{M} \hookrightarrow \mathcal{A}_0$ is compact
by applying the argument on p.\ 1251 in \cite{BS2011-fene}.
On noting (\ref{hpsiOBLdtbd}) and
as Sobolev embedding yields that $H^{s}(\Omega \times D) \hookrightarrow
W^{1,\infty}(\Omega \times D)$, for $s>d+1$, we choose
$\mathcal{A}_1 := M^{-1} [H^{s}(\Omega \times D)]' := \{\hat \varphi
: M \hat\varphi \in [H^{s}(\Omega \times D)]'\}$, where
$[H^{s}(\Omega \times D)]'$ is the dual of  $H^{s}(\Omega \times D)$,
equipped with the norm
$\|\hat\varphi\|_{\mathcal A_1} := \|M \hat \varphi\|_{[H^{s}(\Omega \times D)]'}.$
For such $s$, it follows from Sobolev embedding,
for any $\hat\varphi \in \mathcal{A}_0 = L^1_M(\Omega \times D)$, that
%With such $s$ it then follows from the Sobolev embedding theorem on $\Omega \times D
%\subset \mathbb{R}^{d \times Kd} \cong \mathbb{R}^{(K+1)d}$ that,
%for any $\hat\varphi \in \mathcal{A}_0$,
%
\begin{align*}
\|\hat\varphi\|_{\mathcal A_1} &= \sup_{\!\!\!\chi
\in H^s(\Omega \times D)}\! \frac{|(M\hat\varphi, \chi)|}{\|\chi\|_{H^s(\Omega \times D)}}
\leq \sup_{\!\!\!\chi \in H^s(\Omega \times D)}\!
\frac{\|\hat\varphi\|_{L^1_M(\Omega \times D)}
\|\chi\|_{L^\infty(\Omega \times D)}}{\|\chi\|_{H^s(\Omega \times D)}}
%\\
%&
\leq %C \,\|\hat \varphi\|_{L^1_M(\Omega \times D)}=
C\, \|\hat\varphi\|_{\mathcal{A}_0}.
\end{align*}
%where $K_0$ is any positive constant that is greater than or
%equal to the constant $K_s$, the norm of the
%continuous linear operator corresponding to the Sobolev embedding
%
%\[(H^{s}(\Omega \times D)\hookrightarrow )~
%  H^{s-1}(\Omega \times D) \hookrightarrow
%  L^\infty(\Omega \times D), \qquad \mbox{$s>1+ \frac{1}{2}(K+1)d$.}\]
Hence,
  %on noting that $L^1_M(\Omega\times D) \equiv M^{-1} \,L^1(\Omega \times D)
%\hookrightarrow M^{-1} \,L^\infty(\Omega \times D)'$,
we have that
  $\mathcal{A}_0
  \hookrightarrow
   \mathcal{A}_1$.
Thus, our choices of  %\hookrightarrow
$\mathcal{A}_0$, $\mathcal{M}$ and
$\mathcal{A}_1$ satisfy the conditions of Theorem \ref{thm:Dubinski}.
Applying this theorem with
%Dubinski{\u\i}'s theorem, with $\mathcal{A}_0$, $\mathcal{A}_1$ and $\mathcal{M}$ as
%in the discussion following the statement of Theorem \ref{thm:Dubinski}, and selecting
$\alpha_0=1$ and $\alpha_1=2$, %in Theorem \ref{thm:Dubinski},
implies that the embedding
%
%\[ L^1(0,T; \mathcal{M})\cap L^2(0,T; \mathcal{A}_1)
\begin{align*}
&\left\{\varphi : [0,T] \rightarrow \mathcal{M}\,:\,
[\varphi]_{L^1(0,T;\mathcal M)} + \left\|\frac{{\rm d}\varphi}{{\rm d}t} \right\|_{L^{2}(0,T;\mathcal{A}_1)}
\!\!\!< \infty   \right\}
%\\ & \hspace{2.5in}
\hookrightarrow L^1(0,T;\mathcal{A}_0)
= L^1(0,T;L^1_{M}(\Omega\times D))
\end{align*}
is compact.
Using this compact embedding, together with the bounds
(\ref{hpsiOBLent}) and (\ref{hpsiOBLdtbd}), in conjunction with
\eqref{additionalOB1} and Sobolev embedding,
we deduce (upon extraction of a subsequence)
strong convergence of $\{\hpsistL^{\Delta t}\}_{L>1}$ in $L^1(0,T;
L^1_{M}(\Omega \times D))$ to an element
$\hpsist \in L^1(0,T; L^1_{M}(\Omega \times D))$, as $L \rightarrow \infty$.

Thanks to the bound on the second term in \eqref{hpsiOBLent}, (\ref{hpsiOBLlin},b) and
(\ref{additional-1}),
%by the
%Cauchy--Schwarz inequality %and an argument similar to the one in \eqref{connection},
we have that
\begin{align}
\|\hpsistL^{\Delta t}-\hpsistL^{\Delta t,\pm}\|_{L^1(0,T;L^1_M(\Omega \times D))}
&\leq
%\left(\int_0^T\!\!\! \int_{\Omega \times D} \omega
%\,|\psia^{\Delta t} - \psia^{\Delta t,\pm}|\dq \dx \dt
%\right)^2
%=\left(\int_0^T\!\!\! \int_{\Omega \times D}\!\!
%M\, (1+|\qt|)^{2}\,|\psia^{\Delta t} - \psia^{\Delta t,\pm}|\dq \dx \dt
%\right)^2
%\nonumber \\
%& \hspace{1in}\leq
\frac{T\, |\Omega|}{3}
\left(\int_D M \dq \right) \displaystyle \int_0^T
\displaystyle \int_{\Omega \times D} M\, (\hpsist^{\Delta t,+} - \psist^{\Delta t,-})^2
\dq \dx \dt\nonumber\\
\label{difference}
%\left(\int_0^T\!\!\! \int_{\Omega \times D} M\,
%|\psia^{\Delta t} - \psia^{\Delta t,+}|\dq \dx \dt\right)^2
&\leq C\,\Delta t\,L.
%\textstyle \frac{1}{3}C_\star T\, |\Omega|\,\Delta t\, L
%\displaystyle \left(\int_D M \,(1+|\qt|)^{4} \dq \right).
\end{align}
On %noting from (\ref{U}) that the integral over $D$ is finite and
recalling  that $\Delta t \leq  4\,L^{-2}$, %and using the triangle inequality in the
%$L^1(0,T;L^1_{\omega}(\Omega \times D))$ norm,
together
with \eqref{difference} and the strong  convergence of $\{\hpsistL^{\Delta t}\}_{L>1}$ to
$\hpsist$ in $L^1(0,T; L^1_{M}(\Omega \times D))$,
we deduce, as $L \rightarrow \infty$,
strong convergence of $\{\hpsistL^{\Delta t,\pm}\}_{L>1}$ in
$L^1(0,T; L^1_{M}(\Omega \times D))$ to the same element
$\hpsist \in L^1(0,T; L^1_{M}(\Omega \times D))$.
%The inequality \eqref{difference} then implies strong
%convergence of $\{\psia^{\Delta t,-}\}_{L>1}$ to $\hat\psi_\epsilon$, also.
This completes the proof of \eqref{5-psisconL2a} for $\hpsist^{\Delta t(,\pm)}$
with $r=0$ and $p=1$.

From %\eqref{additional1} and \eqref{additional2} we have that
%$ \|\psia^{\Delta t (,\pm)}\|_{L^1_M(\Omega \times D)} \leq |\Omega|$ for a.e.
%$t$ iin $[0,T]$ and all $L>1$.
%
%In other words,
the first bound in (\ref{hpsiOBLdtbd}) we have that
$\{\hpsistL^{\Delta t (,\pm)}\}_{L>1}$ are bounded in
$L^\infty(0,T;L^1_{M}(\Omega \times D))$.
By  Lemma \ref{le:supplementary},  the strong convergence of these
%$\{\psia^{\Delta t(,\pm)}\}_{L>1}$
to $\hpsist$ in $L^1(0,T;L^1_{M}(\Omega \times D))$,
shown above, then implies
strong convergence in $L^p(0,T;L^1_{M}(\Omega \times D))$
to the same limit for all values of $p \in [1,\infty)$. That completes the proof of
\eqref{5-psisconL2a} for $\hpsist^{\Delta t(,\pm)}$ with $r=0$ and any $p\in [1,\infty)$.

Strong convergence in $L^p(0,T;L^1_{M}(\Omega \times D))$
%$p \geq 1$,
%implies strong convergence in
%$L^p(0,T;L^1_M(\Omega \times D))$,
for $p \geq 1$, implies convergence almost everywhere on
$\Omega \times D \times [0,T]$ of a subsequence.
Hence it follows from \eqref{additionalOB1} that
$\hpsist \geq 0$ a.e.\ on $\Omega \times D \times [0,T]$.
Applying Fubini's theorem, one can deduce from the above and (\ref{additionalOB2}) that
\begin{equation}\label{1boundonpsi}
\int_D M(\qt)\,\hpsist(\xt,\qt,t)\dq \leq 1
\qquad \mbox{for a.e. $(x,t) \in \Omega \times [0,T]$.}
\end{equation}
%
%see \cite{BS2011-fene} for details.
%We will show later that the inequality here can in fact be sharpened to an equality.
Since $\mathcal{F}$ is nonnegative, one can deduce from Fatou's lemma
and (\ref{hpsiOBLent}) that, for a.e. $t \in [0,T]$,
\begin{align}
\int_{\Omega \times D} M(\qt)\, \mathcal{F}(\hpsist(\xt,\qt,t))\dq \dx
%\nonumber\\&&\qqquad
&\leq \mbox{lim inf}_{L \rightarrow \infty}
\int_{\Omega \times D} M(\qt)\, \mathcal{F}(\hpsistL^{\Delta t(,\pm)}(\xt,\qt,t)) \dq \dx
\leq C.
%\nonumber \\
%&\leq \frac{1}{2k}\,[{\sf B}(\ut_0,\ft, \hat\psi_0)]^2,
\label{fatou-app}
\end{align}
%
%where the second inequality in \eqref{fatou-app} stems from
%the bound on the fourth term on the left-hand side of (\ref{eq:energy-u+psi-final2}).
%Once again, see \cite{BS2011-fene} for details.
As the
expression on the left-hand side of \eqref{fatou-app} is nonnegative, we deduce
the first result in (\ref{relent-fisher}).
%$\mathcal{F}(\hpsist)$
%belongs to $L^\infty(0,T;L^1_M(\Omega \times D))$, as asserted in the statement of the theorem.
Similarly, one can deduce from (\ref{additionalOB1}) that,
for any $r \in [0,\infty)$ and a.e.\ $t\in [0,T]$,
\begin{align}
\int_{\Omega \times D} M(\qt)\,|\qt|^r\,\hpsist(\xt,\qt,t)\dq \dx
&\leq \mbox{lim inf}_{L \rightarrow \infty}
\int_{\Omega \times D} M(\qt)\,|\qt|^r\, \hpsistL^{\Delta t,+}(\xt,\qt,t) \dq \dx
\leq C.
%\nonumber \\
%&\leq \frac{1}{2k}\,[{\sf B}(\ut_0,\ft, \hat\psi_0)]^2,
\label{qt2-app}
\end{align}
%where the second inequality in \eqref{qt2-app} stems from
%the bound on the sixth term on the left-hand side of (\ref{eq:energy-u+psi-final2}).
Hence (\ref{hpsiOBqr}) holds.
%$|\qt|^r\,\hpsist$, any $r \in [0,\infty)$,
%belongs to $L^\infty(0,T;L^1_M(\Omega \times D))$, as asserted in the statement of the theorem.

Since $|\sqrt{c_1} - \sqrt{c_2}\,
|\leq \sqrt{|c_1-c_2 |}$ for any $c_1,\,c_2 \in {\mathbb R}_{\geq 0}$,
we have, for any $r \in [0,\infty)$ and $p \in [2,\infty)$, on noting (\ref{qt2-app}) that
\begin{align}
&\|\,|\qt|^{r}\,(\hpsist-\hpsistL^{\Delta t(,\pm)})\|_{L^p(0,T;L^1_M(\Omega \times D))}
\nonumber \\
& \qquad \leq \left\|\,|\qt|^{2r}\,\left(\sqrt{\hpsist}+\sqrt{\hpsistL^{\Delta t(,\pm)}}
\right)^2\,
\right\|_{L^{\infty}(0,T;L^1_M(\Omega \times D))}^{\frac{1}{2}}\,
\left\|\left(\sqrt{\hpsist}-\sqrt{\hpsistL^{\Delta t(,\pm)}}\right)^2\,
\right\|_{L^{\frac{p}{2}}(0,T;L^1_M(\Omega \times D))}^{\frac{1}{2}}
\nonumber \\
& \qquad \leq 2^{\frac{1}{2}}\,\|\,|\qt|^{2r}\,(\hpsist+\hpsistL^{\Delta t(,\pm)})\,
\|_{L^\infty(0,T;L^1_M(\Omega \times D))}^{\frac{1}{2}}\,
\|\hpsist-\hpsistL^{\Delta t(,\pm)}\|_{L^{\frac{p}{2}}(0,T;L^1_M(\Omega \times D))}
^{\frac{1}{2}} \nonumber \\
& \qquad \leq C\,
\|\hpsist-\hpsistL^{\Delta t(,\pm)}\|_{L^{\frac{p}{2}}(0,T;L^1_M(\Omega \times D))}
^{\frac{1}{2}}.
%\nonumber \\
%\left( \int_0^T  \|\,|\qt|^{2}\,(\sqrt{\hat \psi}+\sqrt{\psia^{\Delta t(,\pm)}})^2\,
%\|_{L^1_M(\Omega \times D)}^{\frac{p}{2}}\,
%\|\,|\qt|^{2(\alpha-1)}\,(\sqrt{\hat \psi}-\sqrt{\psia^{\Delta t(,\pm)}})^2\,
%\|_{L^1_M(\Omega \times D)}^{\frac{p}{2}} \dt \right)^{\frac{1}{p}}
%\nonumber \\
%& \quad \leq C\,
%\|\,|\qt|^{2(\alpha-1)}\,(\hat \psi-\psia^{\Delta t(,\pm)})\,
%\|_{L^{\frac{p}{2}}(0,T;L^1_M(\Omega \times D))}^{\frac{1}{2}},
\label{qalpha}
\end{align}
Hence, for any $p\in [2,\infty)$,
the desired result (\ref{5-psisconL2a}) for $\hpsist^{\Delta t(,\pm)}$
with any $r\in (0,\infty)$ follows from
(\ref{5-psisconL2a}) for $\hpsist^{\Delta t(,\pm)}$
with $r=0$; thus, by H\"older's inequality, it is also true for $p \in [1,2)$.
Therefore, we have completed the proof of (\ref{5-psisconL2a})
for $\hpsist^{\Delta t(,\pm)}$.
The proof of (\ref{5-psisconL2a})
for $\beta^L(\hpsist^{\Delta t(,\pm)})$ follows, on noting (\ref{betaLa}), that,
for any $r\in [0,\infty)$ and $p\in [1,\infty)$,
\begin{align}
&\|\,|\qt|^{r}\,(\hpsist-\beta^L(\hpsistL^{\Delta t(,\pm)}))\|_{L^p(0,T;L^1_M(\Omega \times D))}
\nonumber \\
& \qquad \leq
\|\,|\qt|^{r}\,(\hpsist-\beta^L(\hpsist))\|_{L^p(0,T;L^1_M(\Omega \times D))}
+ \|\,|\qt|^{r}\,(\beta^L(\hpsist)-
\beta^L(\hpsistL^{\Delta t(,\pm)}))\|_{L^p(0,T;L^1_M(\Omega \times D))}
\nonumber \\
& \qquad \leq
\|\,|\qt|^{r}\,(\hpsist-\beta^L(\hpsist))\|_{L^p(0,T;L^1_M(\Omega \times D))}
+ \|\,|\qt|^{r}\,(\hpsist-
\hpsistL^{\Delta t(,\pm)})\|_{L^p(0,T;L^1_M(\Omega \times D))}.
\label{betacon}
\end{align}
The first term converges to zero using Lebesgue's dominated convergence
and the convergence of $\beta^L(\hpsist)$ to $\hpsist$ a.e.\ on
$\Omega \times D \times (0,T)$ as $L \rightarrow \infty$.
Hence, (\ref{5-psisconL2a})
for $\beta^L(\hpsist^{\Delta t(,\pm)})$ follows from
(\ref{5-psisconL2a})
for $\hpsist^{\Delta t(,\pm)}$.

It follows from $|\sqrt{c_1} - \sqrt{c_2}\,
|\leq \sqrt{|c_1-c_2 |}$ for any $c_1,\,c_2 \in {\mathbb R}_{\geq 0}$
and (\ref{5-psisconL2a}) with $r=0$ that
\begin{align}
M^{\frac{1}{2}}\,\sqrt{\hpsistL^{\Delta t(,\pm)}} \rightarrow
M^{\frac{1}{2}}\,\sqrt{\hpsist} \qquad \mbox{strongly in } L^p(0,T;L^2(\Omega \times D)),
\label{ML2conv}
\end{align}
as $L \rightarrow \infty$.
The weak convergence results (\ref{5-psiwconH1a}--c) are then easily deduced,
see p.\ 1268 in \cite{BS2011-fene} for details. In addition,
(\ref{hpsiOBqdt}) and the second result in (\ref{relent-fisher}) hold.

%However, the proof of \eqref{CwconL2a} is considerably more complicated, and will be given below.

%\smallskip
%After all these technical preparations we are now ready to return to (\ref{equncon},b)
%and pass to the limit $L\rightarrow \infty$ (and thereby also $\Delta t \rightarrow
%0_+$); we shall also prove \eqref{CwconL2a} and will also pass to the limit on the initial
%conditions for (\ref{equncon},b). %and \eqref{eqpsincon}.
%Since there are quite a few terms to deal with,
%we shall discuss them one at a time, starting with equation
%\eqref{eqpsincon}, and followed by equation \eqref{equncon}.

\smallskip

%\textit{Step 3.}
We now pass to the limit $L \rightarrow \infty$
(and $\Delta t \rightarrow 0_+$) in (FP$^{\Delta t}_L$),
\eqref{hpsiOBLncon}.
We shall take at first
$\hat\varphi \in {\cal E}:= \{ \hat \varphi \in
C^1([0,T];C^\infty(\overline{\Omega};C^\infty_0(D)))
: \hat \varphi(\cdot,\cdot,T)=0\}$.
Integration by parts with respect to $t$ on the first term in \eqref{hpsiOBLncon} gives
\begin{align}
\int_{0}^T \int_{\Omega \times D} M\,\frac{ \partial \hpsistL^{\Delta t}}{\partial t}\,
\hat \varphi \,\dq \dx \dt
= &- \int_0^T \int_{\Omega \times D}
M\, \hpsistL^{\Delta t}\, \frac{\partial \hat\varphi}{\partial t} \dq \dx \dt \nonumber\\
&\quad - \int_{\Omega \times D} M(\qt) \,\hpsi^0(\xt,\qt)\,
\hat\varphi(\xt, \qt, 0) \dq \dx
\qquad \forall \hat \varphi \in {\cal E}.
\label{partialint}
\end{align}
%
%for all $\hat \varphi \in {\cal E}$.
%$\hat \varphi \in C^1([0,T];C^\infty(\overline{\Omega};C^\infty_0(D)))$ such that
%$\hat\varphi(\cdot,\cdot,T)=0$.
Using \eqref{5-psisconL2a}
and (\ref{psi0convstr}), we immediately
have that, as $L \rightarrow \infty$ (and $\Delta t \rightarrow 0_+$),
the first term on the right-hand
side of \eqref{partialint} converges to the first term on the left-hand side of
\eqref{eqpsinconP} and the second term on the right-hand side of \eqref{partialint}
converges to $-\int_{\Omega \times D} \hat\psi_0 (\xt, \qt)\, \hat\varphi(\xt,\qt,0)
\dq \dx$, resulting
in the first term on the right-hand side of \eqref{eqpsinconP}.
On rewriting $\nabx \hpsistL^{\Delta t,+} = 2\,\sqrt{\hpsistL^{\Delta t,+}}\,
\nabx \sqrt{\hpsistL^{\Delta t,+}}$, and similarly $\nabq \hpsistL^{\Delta t,+}$,
it is a simple matter to pass to the limit
$L \rightarrow \infty$ (and $\Delta t \rightarrow 0_+$) in the remaining terms of
\eqref{hpsiOBLncon} using (\ref{hpsiOBqr}), (\ref{5-psiwconH1a},b,d), (\ref{ML2conv})
and (\ref{utOBDtsatb},b) to obtain (\ref{eqpsinconP})
for all $\hat \varphi \in {\cal E}$.
%$C^1([0,T];C^\infty(\overline{\Omega};C^\infty_0(D)))$ such that
%$\hat\varphi(\cdot,\cdot,T)=0$.

Finally, we note
that for any $s \geq 0$, $C^\infty(\overline{\Omega};C^\infty_0(D))$ is dense
in $L^2(\Omega;H^s(D))\cap H^s(\Omega;L^2(D))
=H^s(\Omega \times D)$, and so ${\cal E}$
%$C^1([0,T];C^\infty(\overline{\Omega};C^\infty_0(D)))$
is a dense linear subspace
of the linear space of functions
$W^{1,1}(0,T;H^s(\Omega \times D))$ vanishing at $t=T$.
%also that
%$C([0,T];C^\infty(\overline{\Omega\times D}))$ is a dense subset of
%$L^1(0,T;\hat{X})=L^1(0,T;H^1_M(\Omega \times D))$ thanks to the density
%of $C([0,T]; H^1_M(\Omega \times D))$
%in $L^1(0,T;H^1_M(\Omega \times D))$ (which is a standard result)
%and the density of $C([0,T];C^\infty(\overline{\Omega\times D}))$ in
%$C([0,T]; H^1_M(\Omega \times D))$ (which follows
%from the density of $C^\infty(\overline{\Omega \times D})$
%in the Maxwellian-weighted Sobolev space $H^1_M(\Omega \times D)$, shown in \ref{AppendixC}).
%As each of the terms in \eqref{hpsiOBLncon} has been shown to be a continuous linear
%functional with respect to $\hat\varphi$ on $L^2(0,T;H^s(\Omega \times D))$ for
%$s> d+1$, recall (\ref{dterm1},b) and Sobolev embedding,
%and therefore also on $W^{1,1}(0,T;H^s(\Omega \times D))$ for $s>d+1$,
%which is (continuously) embedded in $L^2(0,T;H^s(\Omega \times D))$ for $s>d+1$,
%the replacement of $L^2(0,T;H^s(\Omega \times D))$ by its dense linear subspace $C([0,T];C^\infty
%(\overline{\Omega};C^\infty_0(D))$ for the purposes of the argument below
%the use of such test functions for the purposes of the argument below
%is thus fully justified.
It follows from this, (\ref{hpsiOBqr},b), (\ref{relent-fisher})
and (\ref{hpsidata}) that
(\ref{eqpsinconP})
holds for all $\hat \varphi \in W^{1,1}(0,T;H^s(\Omega \times D))$,
for any $s > d+1$, vanishing at $t=T$.
\end{proof}

\begin{lemma}\label{convsig}
Let the assumptions of Theorem \ref{convfinal} hold.
Then we have, on possibly extracting a further subsequence of $\{\hpsistL^{\Delta t}\}_{L>1}$,
that, as $L\rightarrow \infty$ (and thereby $\Delta t \rightarrow 0_+$),
\begin{subequations}
\begin{alignat}{2}
\sigtt(M\,\beta^L(\hpsistL^{\Delta t,+})), \,\sigtt(M\,\hpsistL^{\Delta t,+})
&\rightarrow
\sigtt(M\,\hpsist)
&&\quad \mbox{strongly in }
L^p(0,T;\Ltt^1(\Omega)),
\label{CsconL1a}\\
%\sigtt(M\,\beta^L(\hpsistL^{\Delta t,+})), \,
\sigtt(M\,\hpsistL^{\Delta t,+})
&\rightarrow
\sigtt(M\,\hpsist)
&&\quad \mbox{weak* in }
L^{\infty}(0,T;\Ltt^2(\Omega)),
%\Vt_\sigma').
\label{CwconLinfa}\\
%\nabx\cdot\sum_{i=1}^K\Ctt_i(M\,\hpsiae^{\Delta t (,\pm)})
\sigtt(M\,\hpsist^{\Delta t,+})
&\rightarrow %\nabx \cdot\sum_{i=1}^K\Ctt_i(M\,\hat\psi_\epsilon)
\sigtt(M\,\hpsist)
&&\quad \mbox{weakly in }
L^{2}(0,T;\Htt^1(\Omega));
%\Vt_\sigma').
\label{CwconL2a}
\end{alignat}
\end{subequations}
and
\begin{subequations}
\begin{alignat}{2}
\rho(M\,\hpsistL^{\Delta t,+})
&\rightarrow
\rho(M\,\hpsist)
&&\quad \mbox{strongly in }
L^p(0,T;L^1(\Omega)),
\label{rsconL1}\\
\rho(M\,\hpsistL^{\Delta t,+})
&\rightarrow
\rho(M\,\hpsist)
&&\quad \mbox{weak* in }
L^{\infty}(0,T;L^2(\Omega)),
\label{rwconL2a}\\
\rho(M\,\hpsistL^{\Delta t,+})
&\rightarrow %\nabx \cdot\sum_{i=1}^K\Ctt_i(M\,\hat\psi_\epsilon)
\rho(M\,\hpsist)
&&\quad \mbox{weakly in }
L^{2}(0,T;H^1(\Omega)),
\label{rwconWH1}
\end{alignat}
\end{subequations}
for any $p \in [1,\infty)$. In addition, it follows that
\begin{align}
&\sigtt(M\,\hpsist) \in
L^\infty(0,T;\Ltt^2(\Omega)) \cap L^2(0,T;\Htt^1(\Omega))
\nonumber \\
\quad\mbox{and}\quad
&\rho(M\,\hpsist) \in
L^\infty(0,T;\Ltt^2(\Omega)) \cap L^2(0,T;\Htt^1(\Omega))
\label{sigreq}
\end{align}
satisfy
\begin{subequations}
\begin{align}\label{eqsigpsi}
&\displaystyle-\int_{0}^{T}
\int_{\Omega} \sigtt(M\,\hpsist)  :  \frac{\partial \xitt}{\partial t}
\dx \dt
\nonumber \\
& \qquad + \int_{0}^T \int_{\Omega}
\left[ \left[ (\utst \cdot \nabx) \sigtt(M\,\hpsist) \right] : \xitt
+ \epsilon \,\nabx \sigtt(M\,\hpsist)
::
\nabx \xitt \right] \dx \dt
\nonumber
\\
&\qquad + \int_{0}^T \int_\Omega
\left[ \frac{1}{2\,\lambda}\left( \sigtt(M\,\hpsist)- \rho(M\,\hpsist)\,\Itt\right) -
\left((\nabxtt \utst)\,\sigtt(M\,\hpsist) + \sigtt(M\,\hpsist)\,(\nabxtt \utst)^{\rm T} \right)
\right] :
\xitt  \dx  \dt \nonumber \\
&\quad
=
\int_\Omega [\sigtt(M\,\hpsi_0)](\xt) :  \xitt(\xt ,0) \dx
\qquad
\forall \xitt \in W^{1,1}(0,T;\Htt^1(\Omega)) \mbox{ {\rm with} $\xitt(\cdot,T)=\zerott$},
\\
&\displaystyle-\int_{0}^{T}
\int_{\Omega} \rho(M\,\hpsist) \, \frac{\partial \eta}{\partial t}
\dx \dt
+ \int_{0}^T \int_{\Omega}
\left[ \left[ (\utst \cdot \nabx) \rho(M\,\hpsist) \right] \,\eta
+ \epsilon \,\nabx \rho(M\,\hpsist)
\cdot
\nabx \eta \right] \dx \dt
\nonumber\\
&\quad
=
\int_\Omega [\rho(M\,\hpsi_0)](\xt) \,\eta(\xt ,0) \dx
\qquad
\forall \eta \in W^{1,1}(0,T;H^1(\Omega)) \mbox{ {\rm with} $\eta(\cdot,T)=0$}.
\label{eqrho}
\end{align}
\end{subequations}
\end{lemma}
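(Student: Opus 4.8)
The plan is to pass to the limit $L\to\infty$ (and thereby $\Delta t\to0_+$) in the discrete stress identities (S$^{\Delta t}_{L}$), namely (\ref{psiGijcont},b), by combining the strong convergences already available from Theorem \ref{convfinal} with the uniform estimates of Lemma \ref{psiGijlem}.

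\emph{Strong $\Ltt^1$ convergence.} The convergences (\ref{CsconL1a}) and (\ref{rsconL1}) I would deduce at once, with no further extraction of subsequences, from the elementary bounds $\|\sigtt(M\,\hat\varphi_1)-\sigtt(M\,\hat\varphi_2)\|_{\Ltt^1(\Omega)}\le\||\qt|^2\,(\hat\varphi_1-\hat\varphi_2)\|_{L^1_M(\Omega\times D)}$ (valid since $|\qt\,\qt^{\rm T}|=|\qt|^2$) and $\|\rho(M\,\hat\varphi_1)-\rho(M\,\hat\varphi_2)\|_{L^1(\Omega)}\le\|\hat\varphi_1-\hat\varphi_2\|_{L^1_M(\Omega\times D)}$, together with (\ref{5-psisconL2a}) applied with $r=2$, resp.\ $r=0$, to both $\hpsistL^{\Delta t(,\pm)}$ and $\beta^L(\hpsistL^{\Delta t(,\pm)})$.

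\emph{Weak convergences and regularity.} Lemma \ref{psiGijlem} bounds $\{\sigtt(M\,\hpsistL^{\Delta t,+})\}_{L>1}$ uniformly in $L^\infty(0,T;\Ltt^2(\Omega))\cap L^2(0,T;\Htt^1(\Omega))$ and $\{\rho(M\,\hpsistL^{\Delta t,+})\}_{L>1}$ uniformly in $L^\infty(0,T;L^2(\Omega))\cap L^2(0,T;H^1(\Omega))$; extracting a further subsequence and identifying the limits through the strong convergence of the previous step yields (\ref{CwconLinfa}), (\ref{CwconL2a}), (\ref{rwconL2a}), (\ref{rwconWH1}), and weak lower semicontinuity of norms gives the regularity claim (\ref{sigreq}). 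I would also observe that $\sigtt(M\,\hpsistL^{\Delta t,+})$, $\sigtt(M\,\beta^L(\hpsistL^{\Delta t,+}))$ and their difference are all symmetric positive semidefinite (because $\hpsistL^{\Delta t,+}\ge\beta^L(\hpsistL^{\Delta t,+})\ge0$), whence, by monotonicity of eigenvalues under the Loewner order, $\|\sigtt(M\,\beta^L(\hpsistL^{\Delta t,+}))(t)\|_{\Ltt^2(\Omega)}\le\|\sigtt(M\,\hpsistL^{\Delta t,+})(t)\|_{\Ltt^2(\Omega)}$; thus $\{\sigtt(M\,\beta^L(\hpsistL^{\Delta t,+}))\}_{L>1}$ is also bounded in $L^\infty(0,T;\Ltt^2(\Omega))$ and, by the first step, converges weak-$*$ there to $\sigtt(M\,\hpsist)$. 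This comparison is needed because no uniform $L^2_M$ bound on $\hpsistL^{\Delta t,+}$ — which would otherwise control the truncated stress through (\ref{sigrhoL2}) — is available.

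\emph{Passage to the limit in the stress equation.} I would take $\zetatt\in C^1([0,T];\Ctt^\infty(\overline\Omega))$ with $\zetatt(\cdot,T)=\zerott$ in (\ref{psiGijcont}), integrate by parts in $t$ in the first term (producing $-\int_0^T\!\int_\Omega\sigtt(M\,\hpsistL^{\Delta t}):\partial_t\zetatt$ and the boundary contribution $-\int_\Omega\sigtt(M\,\hpsi^0):\zetatt(\cdot,0)$), and pass to the limit $L\to\infty$ term by term: the $\partial_t$ and reaction terms by the first step; the boundary term because $\hpsi^0\to\hpsi_0$ weakly in $L^2_M(\Omega\times D)$ (recall (\ref{psi0convstr})) while $\qt\,\qt^{\rm T}:\zetatt(\cdot,0)\in L^2_M(\Omega\times D)$ (as $\int_D M\,|\qt|^4\dq<\infty$ by (\ref{additional-1})); the diffusion term by (\ref{CwconL2a}); the convection term $-\sigtt(M\,\hpsistL^{\Delta t,+}):(\utst^{\Delta t,+}\cdot\nabx)\zetatt$ because $(\utst^{\Delta t,+}\cdot\nabx)\zetatt\to(\utst\cdot\nabx)\zetatt$ strongly in $L^1(0,T;\Lt^2(\Omega))$ (using (\ref{utOBncon}) and boundedness of $\nabx\zetatt$) paired against the weak-$*$ $L^\infty(0,T;\Ltt^2(\Omega))$ convergence of $\sigtt(M\,\hpsistL^{\Delta t,+})$; and the drag term, after rewriting it (via $(AB):C=A:(CB^{\rm T})$) as $\sigtt(M\,\beta^L(\hpsistL^{\Delta t,+})):\big((\nabxtt\utst^{\Delta t,+})^{\rm T}\zetatt+\zetatt\,(\nabxtt\utst^{\Delta t,+})\big)$, because $(\nabxtt\utst^{\Delta t,+})^{\rm T}\zetatt\to(\nabxtt\utst)^{\rm T}\zetatt$ strongly in $L^1(0,T;\Ltt^2(\Omega))$ by (\ref{utOBncon}), again paired with the weak-$*$ convergence recorded above (and the cut-off $\beta^L$ disappears precisely because (\ref{5-psisconL2a}) holds for $\beta^L(\hpsistL^{\Delta t(,\pm)})$). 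This yields (\ref{eqsigpsi}) for smooth $\zetatt$; a final integration by parts in $x$ — legitimate since $\sigtt(M\,\hpsist)\in L^2(0,T;\Htt^1(\Omega))$ and $\utst\in L^\infty(0,T;\Vt)$ is divergence-free — converts $-\sigtt(M\,\hpsist):(\utst\cdot\nabx)\zetatt$ into $[(\utst\cdot\nabx)\sigtt(M\,\hpsist)]:\zetatt$, and a density argument (each term of (\ref{eqsigpsi}) being continuous in $\xitt$ with respect to the $W^{1,1}(0,T;\Htt^1(\Omega))$ norm, thanks to (\ref{sigreq}) and (\ref{hpsidata})) extends the identity to all admissible $\xitt$. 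The scalar equation (\ref{eqrho}) is obtained in the same, easier, way from (\ref{rhocont}).

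\emph{Main obstacle.} The delicate point is the passage to the limit in the nonlinear, $\beta^L$-truncated drag term: since $\nabxtt\utst^{\Delta t,+}$ is controlled, and converges, only in $L^1(0,T;\Lt^\infty(\Omega))$ — not in $L^2$ — it cannot simply be multiplied against a merely weakly convergent stress. The resolution is to pair the factor that converges weak-$*$ in $L^\infty(0,T;\Ltt^2(\Omega))$ against the factor that converges strongly in $L^1(0,T;\Ltt^2(\Omega))$; this forces the use of the uniform $L^\infty(0,T;\Ltt^2)$ bound on the truncated stress supplied by the Loewner comparison above, together with the strong $L^p(0,T;L^1_M)$ convergence of $|\qt|^2\,\beta^L(\hpsistL^{\Delta t(,\pm)})$ from Theorem \ref{convfinal}.
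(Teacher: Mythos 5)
Your argument is correct and follows the same structure as the paper's own proof: strong $\Ltt^1$ convergence from (\ref{5-psisconL2a}) together with the pointwise estimates behind (\ref{sigL1bd}) and (\ref{rhoL1bd}); weak/weak$^*$ convergences and the regularity (\ref{sigreq}) from the uniform estimates of Lemma~\ref{psiGijlem}, with limits identified through the strong $\Ltt^1$ convergence; passage to the limit in (\ref{psiGijcont},b) for smooth test functions (integration by parts in $t$, boundary term via (\ref{psi0convstr}) and (\ref{eqCttbd}), the diffusion term via (\ref{CwconL2a}), and the convection/drag terms via the bilinear pairing of a strongly and a weakly convergent factor); a final integration by parts in $\xt$ to put the convection term into the stated form; and a density argument. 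The one genuinely extra technical observation in your write-up --- that $\zerott\leq\sigtt(M\,\beta^L(\hpsistL^{\Delta t,+}))\leq\sigtt(M\,\hpsistL^{\Delta t,+})$ in the Loewner order, so that monotonicity of ordered eigenvalues transfers the $L^\infty(0,T;\Ltt^2(\Omega))$ bound of (\ref{psiGijbdfinlem}) to the truncated stress --- is sound and fills in a detail the paper leaves implicit when it asserts it is ``a simple matter'' to pass to the limit in the drag term. Note, however, that this comparison is not strictly \emph{forced}: an equally valid route is to use only the uniform $L^\infty(0,T;\Ltt^1(\Omega))$ bound on $\sigtt(M\,\beta^L(\hpsistL^{\Delta t,+}))$, which follows directly from (\ref{additionalOB1}), upgrade (\ref{CsconL1a}) to a.e.\ $t$-convergence in $\Ltt^1(\Omega)$ along a further subsequence, and then apply dominated convergence in $t$ against the integrable majorant $t\mapsto\|\nabxtt\utst(\cdot,t)\|_{L^\infty(\Omega)}\in L^1(0,T)$.
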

\begin{proof}
The desired result (\ref{CsconL1a}) follows immediately from (\ref{C1}) and
(\ref{5-psisconL2a}) as
\begin{align}
\|\sigtt(M\,\hat \varphi)\|_{L^p(0,T;L^1(\Omega))}
\leq \|\,|\qt|^2\, \hat \varphi\|_{L^p(0,T;L^1_M(\Omega \times D))}
\quad \forall \hat \varphi\mbox{ s.t. } |\qt|^2\, \hat \varphi \in L^p(0,T;L^1_M(\Omega \times D)).
\label{sigL1bd}
\end{align}
%The desired results (\ref{CwconLinfa},c) follow immediately from (\ref{psiGijbdfinlem})
%and (\ref{CsconL1a}).
The desired result (\ref{rsconL1}) follows immediately from
(\ref{5-psisconL2a}) as
\begin{align}
\|\rho(M\,\hat \varphi)\|_{L^p(0,T;L^1(\Omega))}
\leq \|\hat \varphi\|_{L^p(0,T;L^1_M(\Omega \times D))}
\qquad \forall \hat \varphi \in L^p(0,T;L^1_M(\Omega \times D)).
\label{rhoL1bd}
\end{align}
The desired results
(\ref{CwconLinfa},c) and
(\ref{rwconL2a},c) follow immediately from (\ref{psiGijbdfinlem},b), (\ref{CsconL1a})
and (\ref{rsconL1}). Hence, (\ref{sigreq}) holds.

We now pass to the limit $L \rightarrow \infty$
(and $\Delta t \rightarrow 0_+$) in (S$^{\Delta t}_L$),
(\ref{psiGijcont},b).
We shall take at first
$\xitt \in
C^1([0,T];\Ctt^\infty(\overline{\Omega}))$
with $\xitt(\cdot,\cdot,T)=\zerott$ and
$\eta \in
C^1([0,T];C^\infty(\overline{\Omega}))$
with $\eta(\cdot,\cdot,T)=0$.
 Integration by parts with respect to $t$ on the first term in \eqref{psiGijcont} gives,
for all $\xitt \in C^1([0,T];\Ctt^\infty(\overline{\Omega}))$
with $\xitt(\cdot,\cdot,T)=\zerott$,
\begin{align}
\int_{0}^T \int_{\Omega} \frac{ \partial \sigtt(M\,\hpsistL^{\Delta t})}{\partial t} :
\xitt \dx \dt
= &- \int_0^T \int_{\Omega}
\sigtt(M\, \hpsistL^{\Delta t})\, \frac{\partial \xitt}{\partial t} \dx \dt
%\nonumber\\ &\quad
- \int_{\Omega} [\sigtt( M\,\hpsi^0)](\xt)\,
\xitt(\xt, 0) \dx.
\label{partialinta}
\end{align}
%
%for all $\hat \varphi \in {\cal E}$.
%$\hat \varphi \in C^1([0,T];C^\infty(\overline{\Omega};C^\infty_0(D)))$ such that
%$\hat\varphi(\cdot,\cdot,T)=0$.
Using \eqref{CwconLinfa}, (\ref{eqCttbd})
and (\ref{psi0convstr}), we immediately
have that, as $L \rightarrow \infty$ (and $\Delta t \rightarrow 0_+$),
the first term on the right-hand
side of \eqref{partialint} converges to the first term on the left-hand side of
\eqref{eqsigpsi} and the second term on the right-hand side of \eqref{partialinta}
converges to $$-\int_{\Omega \times D} [\sigtt(M \hpsi_0)](\xt):\xitt(\xt,0)
\dx,$$ resulting
in the first term on the right-hand side of \eqref{eqsigpsi}.
It is a simple matter to pass to the limit
$L \rightarrow \infty$ (and $\Delta t \rightarrow 0_+$) in the remaining terms of
(\ref{psiGijcont}) using (\ref{CsconL1a}--c), (\ref{rwconL2a})
and (\ref{utOBDtsatb},b) to obtain (\ref{eqsigpsi})
for all $\xitt \in
C^1([0,T];\Ctt^\infty(\overline{\Omega}))$ such that
$\xitt(\cdot,T)=\zerott$.
Similarly to the above, we can pass to the limit
$L \rightarrow \infty$ (and $\Delta t \rightarrow 0_+$) in
(\ref{rhocont}) using (\ref{rsconL1}--c) and
(\ref{utOBDtsatb},b) to obtain (\ref{eqrho})
for all $\eta \in
C^1([0,T];C^\infty(\overline{\Omega}))$ such that
$\eta(\cdot,T)=0$.

Finally, we note
that
$C^1([0,T];C^\infty(\overline{\Omega}))$
is a dense linear subspace of the linear space $W^{1,1}(0,T;H^1(\Omega))$.
%also that
%$C([0,T];C^\infty(\overline{\Omega\times D}))$ is a dense subset of
%$L^1(0,T;\hat{X})=L^1(0,T;H^1_M(\Omega \times D))$ thanks to the density
%of $C([0,T]; H^1_M(\Omega \times D))$
%in $L^1(0,T;H^1_M(\Omega \times D))$ (which is a standard result)
%and the density of $C([0,T];C^\infty(\overline{\Omega\times D}))$ in
%$C([0,T]; H^1_M(\Omega \times D))$ (which follows
%from the density of $C^\infty(\overline{\Omega \times D})$
%in the Maxwellian-weighted Sobolev space $H^1_M(\Omega \times D)$, shown in \ref{AppendixC}).
%As each of the terms in \eqref{hpsiOBLncon} has been shown to be a continuous linear
%functional with respect to $\hat\varphi$ on $L^2(0,T;H^s(\Omega \times D))$ for
%$s> d+1$, recall (\ref{dterm1},b) and Sobolev embedding,
%and therefore also on $W^{1,1}(0,T;H^s(\Omega \times D))$ for $s>d+1$,
%which is (continuously) embedded in $L^2(0,T;H^s(\Omega \times D))$ for $s>d+1$,
%the replacement of $L^2(0,T;H^s(\Omega \times D))$ by its dense linear subspace $C([0,T];C^\infty
%(\overline{\Omega};C^\infty_0(D))$ for the purposes of the argument below
%the use of such test functions for the purposes of the argument below
%is thus fully justified.
It follows from this, (\ref{sigreq})
and (\ref{hpsidata}) that
(\ref{eqsigpsi})
holds for all $\xitt \in W^{1,1}(0,T;\Htt^1(\Omega))$ vanishing at $t=T$
and (\ref{eqrho})
holds for all $\eta \in W^{1,1}(0,T;H^1(\Omega))$ vanishing at $t=T$.
\end{proof}

\begin{remark}\label{rho=1rem}
{\rm As we assume that
$[\rho(M\,\hpsi_0)](\xt)=\int_D M(\qt)\,\hpsi_0(\xt,\qt) \dq = 1
\mbox{ for a.e. } \xt \in \Omega$, recall (\ref{hpsi0}), it follows that
$\rho(M\,\hpsist) \equiv 1$ is the unique solution to the linear problem
(\ref{eqrho}). Similarly, $\rho(M\,\hpsistL^n)\equiv 1$, $n=0,\ldots,N$ is the unique
solution of (\ref{rhoneq}).}
\end{remark}

\section{The Hookean dumbbell model}\label{HDM}
\setcounter{equation}{0}

Putting together the results in the previous two sections we have the following result.

\begin{theorem}\label{EXP}
Let $d=2$ and $\partial \Omega \in C^{2,\mu}$, for $\mu \in (0,1)$.
Let
\begin{align}
\ut_0 \in \Vt %\cap \Dt_{\mathfrak{s}}^{1-\frac{1}{\mathfrak{r}},%\mathfrak{r}}(\Omega)
\qquad
\mbox{and} \qquad
\ft \in L^2(0,T;\Lt^2(\Omega)) \cap
L^\mathfrak{r}(0,T;\Lt^\mathfrak{s}(\Omega)),
\end{align}
for $\mathfrak{r} \in (1,\frac{4}{3}]$ and $\mathfrak{s} \in (2,4)$.
Let $\hat \psi_0$ satisfy (\ref{hpsi0}) with $\sigtt_0 :=\sigtt(M\,\hpsi_0)
\in W^{1,2}_n(\Omega)$.
It follows that there exist $\utOB$ and $\sigOB=\sigOB^{\rm T}$
satisfying (\ref{utOBregf},b) and solving the Oldroyd-B system (\ref{equtOB},b).

In addition, there exists $\hpsiOB$ satisfying
\begin{subequations}
\begin{alignat}{2}
|\qt|^r \, \hpsiOB &\in L^\infty(0,T;L^1_{M}(\Omega \times D)), \qquad
&&\mbox{for any } r \in [0,\infty),
\label{hpsiOBqrF}
%\hpsist &\in H^1(0,T; M^{-1}H^s(\Omega \times D)'),\qquad
%&&\mbox{for any } s>d+1,
%\label{hpsiOBqdt}
\end{alignat}
%\end{subequations}
%
with
\begin{equation}\label{mass-conservedF}
\hpsiOB \geq 0 \mbox{ a.e. on $\Omega \times D \times [0,T]$} \quad \mbox{and} \quad
\int_D M(\qt)\,\hpsiOB(\xt,\qt,t) \dq =
1 \mbox{ for a.e. $(x,t) \in \Omega \times [0,T]$},
\end{equation}
%
%and hence $\hpsist \in L^\infty(0,T; L^1_M(\Omega \times D))$;
%and finite relative entropy and Fisher information, with
%
\begin{equation}\label{relent-fisherF}
\mathcal{F}(\hpsiOB) \in L^\infty(0,T;L^1_M(\Omega\times D))\quad
\mbox{and}\quad \sqrt{\hpsiOB} \in L^{2}(0,T;H^1_M(\Omega \times D));
\end{equation}
\end{subequations}
and solving, for $s>d+1$,
\begin{align}\label{eqpsinconPF}
&-\int_{0}^T
\int_{\Omega \times D} M\,\hpsiOB\, \frac{\partial \hat\varphi}{\partial t}
\dq \dx \dt
+ \int_{0}^T \int_{\Omega \times D} M\,\left[
\epsilon\, \nabx \hpsiOB - \utOB \,\hpsiOB \right]\cdot\, \nabx
\hat \varphi
\,\dq \dx \dt
\nonumber \\
&\quad +
\int_{0}^T \int_{\Omega \times D} M\,\left[
\frac{1}{4\,\lambda}\,\nabq \hpsiOB
-
\left[(\nabxtt \utOB)
\,\qt\right] \hpsiOB \right] \cdot \nabq
\hat \varphi
\dq \dx \dt \nonumber \\
& \quad \quad
= \int_{\Omega \times D} M\, \hat\psi_0(\xt,\qt)\,\hat\varphi(\xt,\qt,0)
\dq \dx
\quad \forall \hat \varphi \in W^{1,1}(0,T;H^s(\Omega\times D))
\mbox{ with $\hat\varphi(\cdot,\cdot,T)=0$}.
\end{align}

Moreover, we have that $\sigOB=\sigtt(M\,\hpsiOB)$.
Hence, $(\utOB,\hpsiOB)$ solve the Hookean dumbbell model, (\ref{equtOB})
and (\ref{eqpsinconPF}); and $(\utOB,\sigOB=\sigtt(M\,\hpsiOB))$ solve the
Oldroyd-B model, (\ref{equtOB},b).
\end{theorem}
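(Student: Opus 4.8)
The plan is to assemble the results of Sections \ref{OB} and \ref{sec:FP} and close the loop through the uniqueness of the Oldroyd--B stress equation for a fixed regular velocity. First I would check that the hypotheses of Theorem \ref{thOBreg} are satisfied with $\sigtt_0 := \sigtt(M\,\hpsi_0)$: for $d=2$, Remark \ref{Solrem} gives $\Vt \cap \Dt_{\mathfrak{s}}^{1-\frac{1}{\mathfrak{r}},\mathfrak{r}}(\Omega) = \Vt$ for $\mathfrak{r} \in (1,\frac{4}{3}]$ and $\mathfrak{s} \in (2,4)$, so $\ut_0 \in \Vt$ suffices; the hypothesis on $\ft$ is exactly that of Theorem \ref{thOBreg}; and $\sigtt_0$ is symmetric (since $\qt\,\qt^{\rm T}$ is) and positive semidefinite (since $\at^{\rm T}(\qt\,\qt^{\rm T})\,\at = (\at \cdot \qt)^2 \ge 0$ and $M\,\hpsi_0 \ge 0$), and lies in $\Wtt^{1,2}_n(\Omega)$ by assumption. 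Theorem \ref{thOBreg} then yields $\utOB$ satisfying (\ref{utOBregf}) and $\sigOB = \sigOB^{\rm T} \ge 0$ satisfying (\ref{sigOBregf}) solving (\ref{equtOB},b), together with uniqueness of $\sigOB$ among solutions of (\ref{eqsigOB}) in $L^\infty(0,T;\Ltt^2(\Omega)) \cap L^2(0,T;\Htt^1(\Omega))$ for this $\utOB$.

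Next I would take $\utst := \utOB$ as the given velocity field in the Fokker--Planck analysis. By (\ref{utOBregf}) and $\mathfrak{r} > 1$ on the finite interval $[0,T]$, $\utOB \in L^\infty(0,T;\Vt) \cap L^1(0,T;\Wt^{1,\infty}(\Omega))$, so (\ref{hpsidata}) holds (as $\partial \Omega \in C^{2,\mu} \subset C^{0,1}$), while $\hpsi_0$ satisfies (\ref{hpsi0}) by assumption. Theorem \ref{convfinal} then produces $\hpsist$ with the properties (\ref{hpsiOBqr})--(\ref{relent-fisher}) solving (\ref{eqpsinconP}) for $\utst = \utOB$; defining $\hpsiOB := \hpsist$ gives precisely (\ref{hpsiOBqrF})--(\ref{relent-fisherF}) and (\ref{eqpsinconPF}). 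Lemma \ref{convsig} moreover shows that $\sigtt(M\,\hpsiOB) \in L^\infty(0,T;\Ltt^2(\Omega)) \cap L^2(0,T;\Htt^1(\Omega))$ and $\rho(M\,\hpsiOB) \in L^\infty(0,T;L^2(\Omega)) \cap L^2(0,T;H^1(\Omega))$ solve (\ref{eqsigpsi}) and (\ref{eqrho}); since $\int_D M\,\hpsi_0 \dq = 1$ a.e.\ in $\Omega$, Remark \ref{rho=1rem} gives $\rho(M\,\hpsiOB) \equiv 1$, which in particular promotes the mass inequality in (\ref{mass-conserved}) to the equality asserted in (\ref{mass-conservedF}).

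Substituting $\rho(M\,\hpsiOB) \equiv 1$ into (\ref{eqsigpsi}) shows that $\sigtt(M\,\hpsiOB)$ solves exactly the Oldroyd--B stress equation (\ref{eqsigOB}) with velocity $\utOB$ and initial datum $\sigtt(M\,\hpsi_0) = \sigtt_0$. As $\sigtt(M\,\hpsiOB)$ has the requisite regularity and $\utOB$ satisfies (\ref{utOBregf}), the uniqueness statement of Theorem \ref{thOBreg} forces $\sigOB = \sigtt(M\,\hpsiOB)$; in particular $\sigtt(M\,\hpsiOB) = \sigtt(M\,\hpsiOB)^{\rm T} \ge 0$. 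Hence the extra-stress in the momentum equation (\ref{equtOB}) is $\sigOB = \sigtt(M\,\hpsiOB)$, so $(\utOB, \hpsiOB)$ solves the Hookean dumbbell model (\ref{equtOB}), (\ref{eqpsinconPF}), and $(\utOB, \sigOB = \sigtt(M\,\hpsiOB))$ solves the Oldroyd--B model (\ref{equtOB},b).

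The analytically substantial work has already been carried out in Sections \ref{OB}--\ref{sec:FP}; the only load-bearing step here is the identification $\sigOB = \sigtt(M\,\hpsiOB)$, which relies entirely on the uniqueness of the stress-diffusive Oldroyd--B stress equation for a fixed, sufficiently regular velocity --- this is why the $L^{\mathfrak{r}}(0,T;\Wt^{1,\infty}(\Omega))$ regularity of $\utOB$, and hence the maximal-regularity machinery of Section \ref{OB}, is indispensable. Everything else is routine verification of compatibility of hypotheses and bookkeeping of the regularity and entropy bounds.
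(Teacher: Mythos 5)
Your proposal is correct and follows essentially the same route as the paper's own proof: invoke Theorem \ref{thOBreg} for $(\utOB,\sigOB)$, feed $\utst=\utOB$ into Theorem \ref{convfinal} and Lemma \ref{convsig} to obtain $\hpsiOB$ and the evolution of $\sigtt(M\,\hpsiOB)$, use Remark \ref{rho=1rem} to set $\rho(M\,\hpsiOB)\equiv 1$, and then close via the uniqueness half of Theorem \ref{thOBreg} to identify $\sigOB=\sigtt(M\,\hpsiOB)$. The only difference is that you spell out the hypothesis checks (the inclusion $\Vt\cap\Dt_{\mathfrak{s}}^{1-1/\mathfrak{r},\mathfrak{r}}(\Omega)=\Vt$ from Remark \ref{Solrem}, the symmetry and positive semidefiniteness of $\sigtt_0=\sigtt(M\,\hpsi_0)$, and the promotion of the mass inequality to equality) that the paper leaves implicit.
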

\begin{proof}
Theorem \ref{thOBreg} immediately yields the existence of
$\utOB$ and $\sigOB=\sigOB^{\rm T}$
satisfying (\ref{utOBregf},b) and solving the Oldroyd-B system (\ref{equtOB},b).
By applying Theorem \ref{convfinal} with $\utst=\utOB$ yields
the existence of $\hpsiOB$ satisfying (\ref{hpsiOBqrF}--c)
and solving (\ref{eqpsinconPF}). Finally, by applying Lemma \ref{convsig} with $\utst=\utOB$ yields that
$\sigtt(M\,\hpsiOB) \in L^\infty(0,T;\Ltt^2(\Omega))\cap L^2(0,T;\Htt^1(\Omega))$
satisfies
\begin{align}\label{eqsigpsiF}
&\displaystyle-\int_{0}^{T}
\int_{\Omega} \sigtt(M\,\hpsiOB) : \frac{\partial \xitt}{\partial t}
\dx \dt
\nonumber \\
& \qquad + \int_{0}^T \int_{\Omega}
\left[ \left[ (\utOB \cdot \nabx) \sigtt(M\,\hpsiOB) \right] : \xitt
+ \epsilon \,\nabx \sigtt(M\,\hpsiOB)
::
\nabx \xitt \right] \dx \dt
\nonumber
\\
&\qquad + \int_{0}^T \int_\Omega
\left[ \frac{1}{2\,\lambda}\left( \sigtt(M\,\hpsiOB)- \Itt\right) -
\left((\nabxtt \utOB)\,\sigtt(M\,\hpsiOB) + \sigtt(M\,\hpsiOB)\,(\nabxtt \utOB)^{\rm T} \right)
\right] :
\xitt  \dx  \dt \nonumber \\
&\quad
=
\int_\Omega [\sigtt(M\,\hpsi_0)](\xt) : \xitt(\xt ,0) \dx
\qquad
\forall \xitt \in W^{1,1}(0,T;\Htt^1(\Omega)) \mbox{ {\rm with} $\xitt(\cdot,T)=\zerott$},
\end{align}
where we have noted from Remark \ref{rho=1rem} that $\rho(M\,\hpsiOB)\equiv 1$.
Comparing (\ref{eqsigpsiF}) and (\ref{eqsigOB}), and recalling
the uniqueness result of Theorem \ref{thOBreg}
yields that $\sigOB= \sigtt(M\,\hpsiOB)$.

Hence, $(\utOB,\hpsiOB)$ solve the Hookean dumbbell model, (\ref{equtOB})
and (\ref{eqpsinconPF}); and $(\utOB,\sigOB=\sigtt(M\,\hpsiOB))$ solve the
Oldroyd-B model, (\ref{equtOB},b).
\end{proof}

\section{Concluding remarks: subsolutions and stress-defect measure}\label{remexist}\setcounter{equation}{0}
We close the paper with a discussion aimed at explaining why an alternative,
apparently more direct, approach to proving the existence of large-data global weak
solutions to problem (P), along the lines of our paper \cite{BS2010-hookean}, fails
in the case of the Hookean model, and why we had to resort in this paper to a different
line of reasoning than in \cite{BS2010-hookean}. We shall describe this direct approach
for both $d=2$ and $d=3$ space dimensions, and will also indicate the improvements
that can be achieved in the case of $d=2$. As will be explained below, however, these
improvements are still insufficient to complete the proof of existence of
global weak solutions to the model with that approach, even in the case of
$d=2$. On the other hand, for both $d=2$ and $d=3$, this direct approach still
establishes the existence of global weak \textit{subsolutions},
in a sense to be made precise below.

A direct existence proof for problem (P) would follow the approach in \cite{BS2010-hookean}
based on a discrete-in-time regularization of (P), similar to (FP$_{L}^{\Delta t}$),
(\ref{hpsiOBLncon}):

{\boldmath $({\rm P}_{L}^{\Delta t})$}:
Find $(\utLDtp(t),\hpsiLDtp(t)) \in
\Vt \times (\hat X \cap \hat Z_2)$
s.t.
\begin{subequations}
\begin{align}\label{ULDtcon}
&\displaystyle\int_{0}^{T} \int_\Omega  \frac{\partial \utLDt}{\partial t}\cdot
\wt \dx \dt
+ \int_{0}^T \int_{\Omega}
\left[ \left[ (\utLDtm \cdot \nabx) \utLDtp \right]\,\cdot\,\wt
+ \nu \,\nabxtt \utLDtp
:
\nabx \wt \right] \dx \dt
\nonumber\\
&\hspace{1mm}=\int_{0}^T
\left[ \langle \ft^{\Delta t,+}, \wt\rangle_{H^1_0(\Omega)}
- k\,\int_{\Omega}
\sigtt(M\,
\hpsiLDtp): \nabx
\wt \dx \right] \dt
\qquad \forall \wt \in L^1(0,T;\Vt ),
\end{align}
\begin{align}\label{HPLDtcon}
&\int_{0}^T \int_{\Omega \times D}
M\,\frac{ \partial \hpsiLDt}{\partial t}\,
\hat \varphi \dq \dx \dt
+ \int_{0}^T \int_{\Omega \times D} M\,\left[
\epsilon\, \nabx \hpsiLDtp- \utLDtm\,\hpsiLDtp \right]\cdot\, \nabx
\hat \varphi
\,\dq \dx \dt
\nonumber \\
& \hspace{0.5cm} +
\int_{0}^T \int_{\Omega \times D} M
\left[ \frac{1}{4\,\lambda}
\nabq \hpsiLDtp
-[\,(\nabxtt \utLDtp)
\,\qt]\,
\beta^L(\hpsiLDtp)\right] \cdot \nabq
\hat \varphi
\,\dq \dx \dt = 0
\nonumber \\
& \hspace{3.7in}
\qquad \forall \hat \varphi \in L^1(0,T;\hat X);
\end{align}
\end{subequations}
subject to the initial conditions
$\utLDt(\cdot,0)= \ut^0 \in \Ht$ and
$\hpsiLDt(\cdot,\cdot,0) = \beta^L(\hat \psi^0(\cdot,\cdot)) \in \hat Z_2$.
Following the line of reasoning in \cite{BS2010-hookean} one can establish,
for $\Omega \subset {\mathbb R}^d$, $d=2$ or 3,
with $\partial \Omega \in C^{0,1}$, the existence of
a solution to (P$^{\Delta t}_L$) satisfying the uniform bounds
\begin{align}
&{\rm ess.sup}_{t\in [0,T]} \|\utLDtpm(t)\|_{L^2(\Omega)}^2
+ \frac{1}{\Delta t} \int_0^T \|\utLDtp - \utLDtm\|_{L^2(\Omega)}^2
\dd t + \nu \int_0^T \|\nabxtt \utLDtp\|_{L^2(\Omega)}^2 \dd t\nonumber\\
&\qquad +  {\rm ess.sup}_{t\in [0,T]} \,2k\,\int_{\Omega \times D}\!\!
M \mathcal{F}(\hpsiLDtpm(t)) \dq \dx + \frac{k}{\Delta t\,L}
\int_0^T \int_{\Omega \times D}\!\! M (\hpsiLDtp
- \hpsiLDtm)^2 \dq \dx \dd t
\nonumber \\
&\qquad +\, 8k\,\varepsilon \int_0^T \int_{\Omega \times D} M
|\nabx \sqrt{\hpsiLDtp} |^2 \dq \dx \dd t
+\, \frac{k}{\lambda}  \int_0^T \int_{\Omega \times D}M\,
\,|\nabq \sqrt{\hpsiLDtp}|^2 \,\dq \dx \dd t
\leq C.
\label{PLDtent}
\end{align}
In addition, one can establish that $\hpsiLDtpm \geq 0$ a.e.\ in $\Omega \times D
\times [0,T]$, $\rho(\hpsiLDtpm) =1$ a.e.\ in $\Omega \times [0,T]$, and
\begin{subequations}
\begin{align}
\displaystyle
\left|\int_{0}^{T}\!\! \int_\Omega  \frac{\partial \utLDt}{\partial t}\cdot
\wt \dx \dt\right|&\leq C\,\|\wt\|_{L^2(0,T;V_\mu)}
\qquad \forall \wt \in L^2(0,T;\Vt_\mu), \quad \mbox{for }\mu > \frac{d}{2}+1,
\label{u-time-bound}\\
\left|\int_{0}^T\int_{\Omega \times D} M\, \frac{\partial
\hpsiLDt}{\partial t}\,
\hat \varphi
\dq \dx \dt\right|
& \leq C\,\|\hat \varphi\|_{L^2(0,T;W^{1,\infty}(\Omega \times D)}
\qquad \forall \hat \varphi \in L^2(0,T;W^{1,\infty}(\Omega \times D)),
\label{psi-time-bound}
\end{align}
\end{subequations}
where $\Vt_\mu$ is the completion of
$\{\wt \in \Ct^\infty_0(\Omega) : \nabx \cdot \wt = 0 \mbox{ in } \Omega \}$
in $\Ht_0^1(\Omega)\cap \Wt^{\mu,2}(\Omega)$.
It follows from the fourth bound in (\ref{PLDtent}), (\ref{qt2bd}), (\ref{MN})
and (\ref{U}) that
\begin{align}
{\rm ess.sup}_{t\in [0,T]} \,\int_{\Omega \times D}\!\!
M\,(1+|\qt|^2)\,\hpsiLDtpm(t) \dq \dx \leq C.
\label{HPq2}
\end{align}

Motivated by the form of the denominator of the prefactor of the multiple integral
in the fifth term on the
left-hand side of \eqref{PLDtent} we let $\Delta t = o(L^{-1})$ as $L \rightarrow \infty$,
so as to drive the multiple integral appearing in that term to $0$ in the limit.
Then, one can establish
the existence of a subsequence of $\{(\utLDt, \hpsiLDt)\}_{L >1}$ (not indicated),
and a pair of functions $(\ut, \hpsi)$ such that
\begin{align}\label{Hpsia-u}
\ut \in L^{\infty}(0,T;\Lt^2(\Omega))\cap L^{2}(0,T;\Vt)
\cap H^1(0,T;\Vt'_\mu),\quad \mu > \frac{d}{2} + 1,
\end{align}
and
\begin{align}\label{Hpsia-psi}\hpsi \in L^1(0,T;L^1_{M}(\Omega \times D)) \cap
H^1(0,T; M^{-1}[H^s(\Omega \times D)]'),
\quad s>d+1,
\end{align}
with $\hpsi \geq 0$ a.e. on $\Omega \times D \times [0,T]$,
$\int_D M(\qt)\,\hpsi(\xt,\qt,t) \dq =
1$ for a.e.\ $(x,t) \in \Omega \times [0,T]$,
\begin{equation}\label{Hpsia}
\mathcal{F}(\hpsi) \in L^\infty(0,T;L^1_M(\Omega\times D))\quad
\mbox{and}\quad \sqrt{\hpsi} \in L^{2}(0,T;H^1_M(\Omega \times D)),
\end{equation}
whereby $(1+|\qt|^2)\,\hpsi \in L^\infty(0,T; L^1_{M}(\Omega \times D))$;
such that, as $L\rightarrow \infty$ (and thereby $\Delta t \rightarrow 0_+$),
\begin{subequations}
\begin{alignat}{2}
&\utLDtpm \rightarrow \ut \qquad &&\mbox{weak* in }
L^{\infty}(0,T;{\Lt}^2(\Omega)), \label{5-uwconL2a}\\
&\utLDtpm \rightarrow \ut \qquad &&\mbox{weakly in }
L^{2}(0,T;\Vt), \label{5-uwconH1a}\\
&~\frac{\partial \utLDt}{\partial t} \rightarrow  \frac{\partial \ut}{\partial t}
\qquad &&\mbox{weakly in }
L^2(0,T;\Vt_\mu'), \label{5-utwconL2a}\\
&\utLDtpm \rightarrow \ut \qquad &&\mbox{strongly in }
L^{2}(0,T;\Lt^{\mathfrak{r}}(\Omega)), \label{5-usconL2a}
\end{alignat}
\end{subequations}
where $\mathfrak{r} \in [1,\infty)$ if $d=2$ and
$\mathfrak{r} \in [1,6)$ if $d=3$;
and
\begin{subequations}
\begin{alignat}{2}
\bet
M^{\frac{1}{2}}\,\nabx \sqrt{\hpsiLDtpm} &\rightarrow M^{\frac{1}{2}}\,\nabx
\sqrt{\hpsi}
&&\qquad \mbox{weakly in } L^{2}(0,T;\Lt^2(\Omega\times D)), \label{6-psiwconH1a}\\
\bet
M^{\frac{1}{2}}\,\nabq \sqrt{\hpsiLDtpm} &\rightarrow M^{\frac{1}{2}}\,\nabq
\sqrt{\hpsi}
&&\qquad \mbox{weakly in } L^{2}(0,T;\Lt^2(\Omega\times D)), \label{6-psiwconH1xa}\\
\bet
M\,\frac{\partial \hpsiLDt} {\partial t} &\rightarrow
M\,\frac{\partial \hpsi}{\partial t}
&&\qquad \mbox{weakly in }
L^2(0,T;[H^s(\Omega\times D)]'), \label{6-psitwconL2a}\\
\bet
|\qt|^r\,\beta^L(\hpsiLDtpm),\,
|\qt|^r\,\hpsiLDtpm &\rightarrow
|\qt|^r\,\hpsi
&&\qquad \mbox{strongly in }
L^{p}(0,T;L^{1}_{M}(\Omega\times D)),\label{6-psisconL2a}
\end{alignat}
for all $p \in [1,\infty)$ and any $r \in [0,2)$.
\end{subequations}

The result (\ref{6-psisconL2a}) for $r=0$
follows using Dubinski{\u\i}'s theorem,
Theorem \ref{thm:Dubinski}, and Lemma \ref{le:supplementary},
as in the proof of Theorem
\ref{convfinal}.
The result (\ref{6-psisconL2a}) for $r \in (0,2)$
is proved similarly to
(\ref{qalpha}).
We have for any $\alpha \in [1,2]$ and any $p \in [2,\infty)$ that
\begin{align}
&\|\,|\qt|^{\alpha}\,(\hpsi-\hpsiLDtpm)\,
\|_{L^p(0,T;L^1_M(\Omega \times D))}
\nonumber \\
& \quad \leq \left( \int_0^T  \|\,|\qt|^{2}\,(\sqrt{\hpsi}
+\sqrt{\hpsiLDtpm})^2\,
\|_{L^1_M(\Omega \times D)}^{\frac{p}{2}}\,
\|\,|\qt|^{2(\alpha-1)}\,(\sqrt{\hpsi}-\sqrt{\hpsiLDtpm})^2\,
\|_{L^1_M(\Omega \times D)}^{\frac{p}{2}} \dt \right)^{\frac{1}{p}}
\nonumber \\
& \quad \leq C\,
\|\,|\qt|^{2(\alpha-1)}\,(\hpsi-\hpsiLDtpm)\,
\|_{L^{\frac{p}{2}}(0,T;L^1_M(\Omega \times D))}^{\frac{1}{2}},
\label{qalpha1}
\end{align}
where we have noted (\ref{HPq2})
and that $(1+|\qt|^2)\,\hpsi$
belongs to $L^\infty(0,T;L^1_M(\Omega \times D))$.
Therefore, it follows from (\ref{qalpha1}) that, for any
$\alpha \in [1,2]$ and any $p \in [1,\infty)$,
\begin{align}
\|\,|\qt|^{\alpha}\,(\hpsi-\hpsiLDtpm)\,
\|_{L^{p}(0,T;L^1_M(\Omega \times D))}
\leq \left[ C(T,p)\,
\|\,|\qt|^{2(\alpha-1)}\,(\hpsi-\hpsiLDtpm)\,
\|_{L^{p}(0,T;L^1_M(\Omega \times D))} \right]^{\frac{1}{2}}.
\label{qalphaC}
\end{align}
We deduce from (\ref{qalphaC}) that, for
any $p \in [1,\infty)$ and for any $m \in {\mathbb N}$,
\begin{align}
&\|\,|\qt|^{2-\frac{1}{2^{m-1}}}\,(\hpsi-\hpsiLDtpm)\,
\|_{L^p(0,T;L^1_M(\Omega \times D))}
\leq C(T,p)^{\frac{1}{2^m}} \,
\|\hpsi-\hpsiLDtpm\|_{L^p(0,T;L^1_M(\Omega \times D))}^{\frac{1}{2^{m}}}.
\label{qalpham}
\end{align}
Hence, the desired result (\ref{5-psisconL2a}) follows immediately from (\ref{qalpham}).

Unfortunately, although \eqref{C1} and \eqref{HPq2} imply that
\[ \|\sigtt(M\,\hpsiLDtp)\|_{L^\infty(0,T;L^1(\Omega))} \leq C,\]
where $C$ is independent of $L$ and $\Delta t$, and therefore there exists a symmetric positive semidefinite
$\ztt \in L^{\infty}(0,T;[\Ctt(\overline\Omega)]')$ such that
\begin{equation}
%\bet
%&
\sigtt(M\,\hpsiLDtp)
\rightarrow
\ztt
\qquad \mbox{weak* in }
L^{\infty}(0,T;[\Ctt(\overline\Omega)]'),
\label{6-CwconL2a3}
\end{equation}
it does not follow that $\ztt = \sigtt(M\,\hpsi)$. The reason why we cannot identify $\ztt$
with $\sigtt(M\,\hpsi)$ is because \eqref{6-psisconL2a} only holds for $r \in [0,2)$ (but
not for $r=2$), and the boundedness of the sequence
$\{|\qt|^2\, \hpsiLDtp\}_{L \geq 1}$ in $L^\infty(0,T;L^1_M(\Omega \times D))$,
which is the strongest uniform bound that we have on $|\qt|^2\, \hpsiLDtp$, does not suffice
to deduce that
\[ \int_D M\, \qt\, \qt^{\rm T}\,  \hpsiLDtp \dd \qt \rightarrow \int_D M\, \qt\, \qt^{\rm T}\, \hpsi \dd \qt
 \quad \mbox{weak* in }
L^{\infty}(0,T;[\Ctt(\overline\Omega)]').
\]
If this were true,  it would of course automatically follow by the uniqueness of the weak* limit that $\ztt = \sigtt(M\,\hpsi)$, thus completing the proof of the existence of global
weak solutions to the Hookean model in both $d=2$ and $d=3$ dimensions.

Nevertheless, motivated by notion of \textit{weak solution with a defect measure} in the work of Feireisl
(cf. in particular Sec. 4.3.2 in \cite{Feir1}, the discussion on p.21 in \cite{FN}, and \cite{FRSZ}, as well as pp.~\!7, 8 in the work of
DiPerna \& Lions \cite{DiL} and Definition 1.3 in the paper of Alexandre \& Villani \cite{AV}),
%motivated by the ideas in recent papers by De Lellis and Sz\'ekelyhidi (cf. \cite{DLSz09}, \cite{DLSz12}),
we will show that this direct approach implies the existence of a \textit{subsolution} to the Navier--Stokes--Fokker--Planck
system, which is a weak solution with a defect measure, in a sense to be made precise below, for both $d=2$ and $d=3$.
We begin by noting that, since $\hpsiLDtp \geq 0$ in $\Omega \times D \times [0,T]$, we have that
\[\int_0^T \int_\Omega \left(\int_D M\, \qt\, \qt^{\rm T}\,  \hpsiLDtp \dd \qt \right) :\xitt \dd x \dd t  \geq
\int_0^T \int_\Omega \left(\int_D \chi_R\, M\, \qt\, \qt^{\rm T}\,  \hpsiLDtp \dd \qt \right):\xitt \dd x \dd t, \]
where $\qt \mapsto\chi_R(\qt)$ is the characteristic function of the ball of radius $R$ in $D$ centred at the origin, and
$\xitt \in L^1(0,T;\Ctt(\overline\Omega))$ is a symmetric positive semidefinite $d \times d$ matrix-valued test function.
By fixing $R>0$ and passing to the limit $L \rightarrow \infty$ (with $\Delta t = o(L^{-1})$ as $L \rightarrow \infty$)
using \eqref{6-psisconL2a} with $r=0$, it then follows that
\[ \int_0^T \langle \ztt , \xitt \rangle_{C(\overline\Omega)} \dd t
\geq  \int_0^T \int_\Omega \left(\int_D \chi_R\, M\, \qt\, \qt^{\rm T}\,  \hpsi \dd \qt \right):\xitt \dd x \dd t, \]
where $\langle \cdot , \cdot \rangle_{C(\overline\Omega)}$ denotes the duality pairing between $[\Ctt(\overline\Omega)]'$
and $\Ctt(\overline\Omega)$.

In order to pass to the limit $R \rightarrow \infty$ in the last inequality, we note that the sequence of real-valued
nonnegative functions
$\{\chi_R\, M\, \qt\, \qt^{\rm T}\,\hpsi :\xitt\}_{R>0}$, where $|\qt|^2\,\hpsi \in L^\infty(0,T;L^1_M(\Omega \times D))$, is nondecreasing and converges to
$M\, \qt\, \qt^{\rm T}\,\hpsi:\xitt$
a.e. on $\Omega \times D \times (0,T)$ as $R \rightarrow \infty$. Hence, by Lebesgue's monotone convergence theorem, we can pass to the limit $R \rightarrow \infty$
in the last inequality, resulting in
\begin{align*}
\int_0^T \langle \ztt , \xitt \rangle_{C(\overline\Omega)} \dd t
&\geq  \int_0^T \int_\Omega \left(\int_D M\, \qt\, \qt^{\rm T}\,  \hpsi \dd \qt \right) : \xitt  \dd x \dd t
\\
&= \int_0^T \int_\Omega \sigtt(M\, \hpsi) : \xitt  \dd x \dd t = \int_0^T \langle \sigtt(M\, \hpsi) , \xitt \rangle_{C(\overline\Omega)} \dd t,
\end{align*}
for all  symmetric positive semidefinite $d \times d$ matrix-valued test functions $\xitt \in L^1(0,T;\Ctt(\overline\Omega))$. In other words,
\[\ztt \geq \sigtt(M\, \hpsi)\qquad \mbox{in $L^\infty(0,T;[\Ctt(\overline\Omega)]')$},\qquad \mbox{with}
\qquad \sigtt(M\, \hpsi)
\in L^\infty(0,T;\Ltt^1_M(\Omega \times D)),\]
where the last inclusion is a direct consequence of the fact that $(1+|\qt|^2)\,\hpsi \in L^\infty(0,T;L^1_M(\Omega \times D))$.

One can now pass to the limit $L\rightarrow \infty$ (and thereby $\Delta t \rightarrow 0_+$)
in all other terms of (\ref{ULDtcon},b) using
(\ref{5-uwconL2a}--d) and (\ref{6-psiwconH1a}--d)
similarly to (\ref{hpsiOBLncon}) as in the proof of Theorem \ref{convfinal}.

Thus we have shown the existence of $(\ut, \ztt, \hpsi)$, such that $\ut$ and $\hpsi$
satisfy \eqref{Hpsia-u}--\eqref{Hpsia},
with $\hpsi \geq 0$ a.e. on $\Omega \times D \times [0,T]$, $\int_D M(\qt)\,\hpsi(\xt,\qt,t) \dd q = 1$
for a.e. $(x,t) \in \Omega \times [0,T]$,
%\[ \mathcal{F}(\hpsi) \in L^\infty(0,T;L^1_M(\Omega \times D))\quad \mbox{and}\quad \sqrt{\hpsi} \in L^2(0,T;H^1_M(\Omega \times D)),\]
and
\[ \ztt \in L^\infty(0,T;[\Ctt(\overline\Omega)]'), \qquad \ztt = \ztt^{\rm T} \geq \sigtt(M\hpsi) \geq \zerott,\]
satisfying, for $\mu>\frac{d}{2}+1$,
\begin{subequations}
\begin{align}\label{equtNS-direct}
&\displaystyle-\int_{0}^{T}
\int_{\Omega} \ut \cdot \frac{\partial \wt}{\partial t}
\dx \dt
+ \int_{0}^T \int_{\Omega}
\left[ \left[ (\ut \cdot \nabx) \ut \right]\,\cdot\,\wt
+ \nu \,\nabxtt \ut
:
\nabx \wt \right] \dx \dt
\nonumber
\\
&\qquad = \int_{0}^T
\left[ \langle \ft, \wt\rangle_{H^1_0(\Omega)}
- k\langle \ztt , \nabx \wt\rangle_{C(\overline\Omega)}
\right] \dt
+ \int_\Omega \ut_0(\xt) \cdot \wt(\xt ,0) \dx
\nonumber
\\
& \hspace{2.8in}
\forall \wt \in W^{1,1}(0,T;\Vt_\mu) \mbox{ {\rm with} $\wt(\cdot,T)=\zerot$};
\end{align}
%
% where $\Vt_\mu$ is the closure of
%$\{\vt \in C^\infty_0(\Omega)^d\,:\, \mbox{div }\vt =0\}$
%in $[H^\mu(\Omega)\cap H^1_0(\Omega)]^d$
and, for $s>d+1$,
\begin{align}\label{eqpsinconP-direct}
&-\int_{0}^T
\int_{\Omega \times D} M\,\hpsi\, \frac{\partial \hat\varphi}{\partial t}
\dq \dx \dt
+ \int_{0}^T \int_{\Omega \times D} M\,\left[
\epsilon\, \nabx \hpsi - \ut \,\hpsi \right]\cdot\, \nabx
\hat \varphi
\,\dq \dx \dt
\nonumber \\
&\quad +
\int_{0}^T \int_{\Omega \times D} M\,\left[
\frac{1}{4\,\lambda}\,\nabq \hpsi
-
\left[(\nabxtt \ut)
\,\qt\right] \hpsi \right] \cdot \nabq
\hat \varphi
\dq \dx \dt \nonumber \\
& \quad \quad
= \int_{\Omega \times D} M\, \hat\psi_0(\xt,\qt)\,\hat\varphi(\xt,\qt,0)
\dq \dx
\quad \forall \hat \varphi \in W^{1,1}(0,T;H^s(\Omega\times D))
\mbox{ with $\hat\varphi(\cdot,\cdot,T)=0$}.
\end{align}
\end{subequations}

The triple $(\ut,\ztt,\hpsi)$ can be therefore viewed as a large-data global weak \textit{subsolution} to the Navier--Stokes--Fokker--Planck
system, which becomes a large-data global weak solution if the \textit{stress-defect measure}
\[ %\mutt:=
\ztt - \sigtt(M\hpsi),\]
which, as was proved above, is a symmetric positive semidefinite Radon measure, is in fact equal to $\zerott$ in
$L^\infty(0,T;[\Ctt(\overline\Omega)]')$.

Although with this direct approach one cannot prove the analogue of (\ref{qalphabd}), as one only has a uniform bound on $\utLDtpm$
in $L^\infty(0,T;\Lt^2(\Omega))\cap L^2(0,T;\Ht^1(\Omega))$
and not in $L^1(0,T;\Wt^{1,\infty}(\Omega))$, one can still establish for $8\,L^2\,\Delta t\leq 1$,
in the case $d=2$ at least, the analogue of (\ref{psiGijbdfinlem},b); that is,
\begin{subequations}
\begin{align}
&{\rm ess.sup}_{t\in [0,T]}
\|\sigtt(M\,\hpsiLDtp)(t)\|^2_{L^2(\Omega)}
+ \int_0^T
\|\nabx \,\sigtt(M\,\hpsiLDtp)\|^2_{L^2(\Omega)} \dt
\leq C,
\label{uLbds} \\
&{\rm ess.sup}_{t\in [0,T]}
\|\rho(M\,\hpsiLDtp)(t)\|^2_{L^2(\Omega)}
+ \int_0^T
\|\nabx \,\rho(M\,\hpsiLDtp)\|^2_{L^2(\Omega)} \dt
\leq C.
\label{rhoLbfs}
\end{align}
\end{subequations}
As was noted in Remark \ref{rempsiGij},
in the analogue of (\ref{psiGijbd6})
one bounds
\begin{align*}
&\int_0^{t_{n-1}} \|\nabxtt \utLDtp\|_{L^2(\Omega)}\,\|\hpsiLDtp\|_{L^4(\Omega)}^2 \dd t
\leq C\!\int_0^{t_{n-1}} \|\nabxtt \utLDtp\|_{L^2(\Omega)}\,
\|\hpsiLDtp\|_{L^2(\Omega)}\,\|\hpsiLDtp\|_{H^1(\Omega)} \dd t,
\end{align*}
where we have used (\ref{eqinterp}) as $d=2$.
In addition, we have that
\begin{alignat}{2}
\bet
&
\sigtt(M\,\hpsiLDtp)
\rightarrow
\ztt
&&\quad \mbox{weak* in }
L^{\infty}(0,T;\Ltt^2(\Omega)).
\label{6-CwconL2a}
\end{alignat}
The key difference between (\ref{5-psiwconH1a}--d)
and (\ref{6-psiwconH1a}--d) is (\ref{6-psisconL2a}).

As (\ref{6-psisconL2a}) is not valid for $r=2$, unlike (\ref{5-psisconL2a}),
we still cannot identify the limit $\ztt$ in (\ref{6-CwconL2a}) with $\sigtt(M\,\hpsi)$
using (\ref{sigL1bd}). This is the only step that fails in this direct
existence proof for (P), and therefore the existence of a nonzero stress-defect cannot be ruled out even in the case of $d=2$
by using this direct approach.

The failure of identifying the limit $\ztt$ in (\ref{6-CwconL2a})
with $\sigtt(M\,\hpsi)$ is why for the existence proof in
\cite{BS2010-hookean}, which covered both $d=2$ and $d=3$,
we required that the mapping $s \mapsto U(s)$ had superlinear growth at infinity, recall (\ref{growth1}--c).
If $U$ satisfies (\ref{growth1}--c), then the analogue of Lemma
\ref{qt2bd}, Lemma 4.1 in \cite{BS2010-hookean}, and
the fourth bound in (\ref{PLDtent})
yield that
\begin{align}
{\rm ess.sup}_{t\in [0,T]} \,\int_{\Omega \times D}\!\!
M\,(1+|\qt|)^{2\vartheta}\,\hpsiLDtpm(t) \dq \dx \leq C,
\label{HPq2new}
\end{align}
the analogue of (\ref{HPq2}). Setting $\omega(\qt)=M(\qt)\,
(1+|\qt|)^{2\vartheta}$ and defining $L^2_\omega(\Omega \times D)$
like $L^2_M(\Omega \times D)$, but with $M$ replaced by $\omega$,
one can show that $H^1_M(\Omega \times D) \cap L^2_\omega(\Omega \times D)
\hookrightarrow L^2_\omega(\Omega \times D)$ is compact if $\vartheta>1$,
see Appendix F in \cite{BS2010-hookean}.
One can then establish the strong convergence result (\ref{6-psisconL2a})
for any $r \in [0,2\vartheta]$, and hence one can then identify
$\ztt$ in (\ref{6-CwconL2a})
with $\sigtt(M\,\hpsi)$, and thus prove the existence of large-data global weak solutions to (P)
with $U$ satisfying (\ref{growth1}--c).
Finally, we note that if $\vartheta=1$, e.g.\ $U(s)=s$,
one can demonstrate that the above embedding is not compact by
considering a counterexample; e.g.\ see Remark 3.17 in \cite{ChenL13}.

%%%%%%%%%%%%%%%%%%%%%%%%%%%%%%%%%%%%%%%%%%%%%%%%%%%%%%%%%%%%%%%%%%%%%%%%%%%%%%%%%%%%%%%%%%%%%%%%%%

%%%%%%%%%%%%%%%%%%%%%%%%%%%%%%%%%%%%%%%%%%%%%%%%%%%%%%%%%%%%%%%%%%%%%%%%%%%%%%%%%%%%%%%%%%%%%%%%%%

%\noindent
%\textbf{Acknowledgement}
%ES was supported by the EPSRC Science and Innovation award to the Oxford Centre
%for Nonlinear PDE (EP/E035027/1).

%\newpage

%\bibliographystyle{ws-m3as}
\bibliographystyle{siam}

%\bibliography{polyjwbesrefs,references_paseky}
\bibliography{polyjwbesrefs}
%%%%%%%%%%%%%%%%%%%%%%%%%%%%%%%%%%%%%%%%%%%%%%%%%%%%%%%%%%%%%%%%%%%%%%%%%%%%%%%%%%%%%%%%%%%%%%%%%%

\bigskip

~\hspace{10.5cm}{\footnotesize \textit{London \& Oxford, \today.}}

%%%%%%%%%%%%%%%%%%%%%%%%%%%%%%
\end{document}